\newcommand{\sddbar}{{\sqrt{-1}\partial\bar{\partial}}}
\newcommand{\mY}{{\mathcal{Y}}}
\newcommand{\mL}{{\mathcal{L}}}
\newcommand{\mX}{{\mathcal{X}}}
\newcommand{\mD}{{\mathcal{D}}}
\newcommand{\mM}{{\mathcal{M}}}
\newcommand{\mH}{{\mathcal{H}}}
\newcommand{\C}{\mathbb C}
\newcommand{\R}{\mathbb R}
\newcommand{\Q}{\mathbb Q}
\newcommand{\cP}{\mathbb P}
\newtheorem{thm}{Theorem}[section]
\newtheorem{prop}[thm]{Proposition}
\newtheorem{defn}[thm]{Definition}
\newtheorem{cor}[thm]{Corollary}
\newtheorem{rem}[thm]{Remark}
\newtheorem{conj}[thm]{Conjecture}
\newtheorem{exmp}[thm]{Example}
\newtheorem{lem}[thm]{Lemma}
\newtheorem{claim}[thm]{Claim}
\begin{document}

\title{Conical K\"{a}hler-Einstein metric revisited}
\author{Chi Li, Song Sun}
\date{\today}
\maketitle{}

\begin{abstract}
In this paper we introduce the ``interpolation-degneration" strategy
to study K\"ahler-Einstein metrics on a smooth Fano manifold with
cone singularities along a smooth divisor that is proportional to
the anti-canonical divisor. By ``interpolation" we show the angles
in $(0, 2\pi]$ that admit a conical K\"ahler-Einstein metric form an
interval; and by ``degeneration"  we figure out the boundary of the
interval. As a first application, we show that there exists a
K\"ahler-Einstein metric on $\cP^2$ with cone singularity along a
smooth conic (degree 2) curve if and only if the angle is in
$(\pi/2, 2\pi]$. When the angle is $2\pi/3$ this proves the
existence of a Sasaki-Einstein metric on the link of a three
dimensional $A_2$ singularity, and thus answers a problem posed by
Gauntlett-Martelli-Sparks-Yau. As a second application we prove a
version of Donaldson's conjecture about conical K\"{a}hler-Einstein
metrics in the toric case using Song-Wang's recent existence result
of toric invariant conical K\"{a}hler-Einstein metrics.
\end{abstract}

\tableofcontents

\section{Introduction}

The existence of  K\"{a}hler-Einstein metrics on a smooth K\"{a}hler
manifold $X$ is a main problem in K\"{a}hler geometry. When the
first Chern class of $X$ is negative, this was solved by Aubin
\cite{Au76} and Yau \cite{Yau1}. When the first Chern class is zero,
this was settled by Yau \cite{Yau1}. The main interest at present
lies in the case of Fano manifolds, when the first Chern class is
positive. This is the famous Yau-Tian-Donaldson program which
relates the existence of K\"ahler-Einstein metrics to
algebro-geometric stability.

More generally one could look at a pair $(X,D)$ where $D$ is a smooth divisor in a K\"ahler manifold $X$, and study the existence of K\"ahler-Einstein metrics on $X$ with cone singularities along $D$.  This problem
was classically studied on the Riemann surfaces
\cite{McO,Troy,LuTi} and was first considered in higher dimensions
by Tian in \cite{Ti94a}. Recently, there is a reviving interest on
this generalized problem, mainly due to Donaldson's program(see \cite{Do11}) on constructing smooth K\"ahler-Einstein metrics on $X$ by varying the angle along an anti-canonical divisor. There are many subsequent works, see \cite{
Berm, JMRL, Li11}.

From now on in this paper, we assume $X$ is a smooth Fano manifold,
and $D$ is a smooth divisor which is $\Q$-linearly equivalent to
$-\lambda K_X$ for some $\lambda \in\mathbb{Q}$. $\beta$ will always
be a number in $(0,1]$. We say $(X, D)$ is \emph{log canonical
(resp.log Calabi-Yau, resp. log $\mathbb{Q}$-Fano) polarized} if
$\lambda>1$ (resp. $\lambda=1$, resp. $\lambda<1$). We will study
K\"{a}hler-Einstein metrics in  $2\pi c_1(X)$ with cone
singularities along $D$.  The equation is given by
\begin{align}
Ric(\omega)=r(\beta)\omega+2\pi(1-\beta)\{D\},\tag*{$(*)$}\label{maineq}
\end{align}
where $2\pi\beta$ is the angle along $D$. For brevity we say
$\omega$ is a \emph{conical K\"ahler-Einstein metric} on $(X,
(1-\beta) D)$. Recall that the Ricci curvature form of a K\"{a}hler
metric $\omega$ is defined to be
\[
Ric(\omega)=-\sddbar\log\omega^n.
\]
In other words, the volume form $\omega^n$ determines a Hermitian
metric on $K_X^{-1}$ whose Chern curvature is the Ricci curvature.
So in particular, it represents the cohomology class $2\pi c_1(X)$.
By taking cohomological class on both sides of the equation
\ref{maineq}, we obtain
\begin{equation}\label{ricrel}
r(\beta)=1-(1-\beta)\lambda.
\end{equation}
We will use the above notation throughout this paper. Given a pair $(X,D)$, we define the set
$$E(X,D)=\{\beta\in (0,1)|\ \text{There is a conical K\"ahler-Einstein metric on}\ (X, (1-\beta)D)\}.$$

\begin{thm}\label{main1}
If $\lambda\geq 1$,  then $E(X,D)$ is a relatively open interval in $(0,1]$, which contains $(0, 1-\lambda^{-1}+\epsilon)$ for some $\epsilon=\epsilon(\lambda)>0$.
\end{thm}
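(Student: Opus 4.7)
The plan is a continuity method in the angle parameter $\beta$, resting on three ingredients: an anchoring existence result at small angles, openness of $E(X,D)$, and closedness giving the interval property. For the anchor, $r(\beta) = 1-(1-\beta)\lambda$ is non-positive exactly when $\beta \le 1 - \lambda^{-1}$, a range which is nonempty whenever $\lambda > 1$. Writing $\omega = \omega_0 + \sddbar\varphi$ for a smooth reference $\omega_0 \in 2\pi c_1(X)$, and letting $s$ be a defining section of $D$ with smooth Hermitian metric $h$, equation $(*)$ takes the conical complex Monge--Amp\`ere form
\begin{equation*}
(\omega_0 + \sddbar\varphi)^n \;=\; e^{-r(\beta)\varphi + f_\beta}\,\frac{\omega_0^n}{|s|_h^{2(1-\beta)}}.
\end{equation*}
When $r(\beta)\le 0$, the non-positive exponent supplies the monotonicity needed for an Aubin--Yau--Kolodziej continuity method in the conical setting (as in \cite{JMRL} and \cite{Berm}), producing a solution for every $\beta \in (0, 1-\lambda^{-1}]$, and in particular yielding a conical Ricci-flat metric at $\beta = 1-\lambda^{-1}$. (For the boundary case $\lambda=1$ this range is empty, and one instead anchors at small $\beta$ by perturbing from the Tian--Yau complete Calabi--Yau metric on $X\setminus D$.)

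For openness at any $\beta_0 \in E(X,D)$, I would linearize the scalar form of $(*)$ to get the operator $\Delta_{\omega_{\beta_0}}+r(\beta_0)$ and apply the implicit function theorem in Donaldson's H\"older space $C^{2,\alpha,\beta_0}$ of cone metrics of angle $2\pi\beta_0$ along $D$. Donaldson's conical Schauder estimates give Fredholmness, and after quotienting out directions coming from holomorphic automorphisms of $(X,D)$ one obtains invertibility, hence a family of solutions for $\beta$ in a neighborhood of $\beta_0$. Iterating openness starting from the Ricci-flat solution at $\beta = 1-\lambda^{-1}$ then gives the enlargement $(0, 1-\lambda^{-1}+\epsilon) \subset E(X,D)$ claimed in the theorem. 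To deduce the interval property, given any $\beta_0 \in E(X,D)$ I would run the continuity method downward in $\beta$ from $\beta_0$: openness makes the solvable subset of $(0,\beta_0]$ relatively open, while closedness---requiring uniform $C^{2,\alpha,\beta}$ estimates on the potentials as $\beta$ varies---lets the path close up on the already-established solutions at small angles. This yields $(0,\beta_0) \subset E(X,D)$, so $E(X,D)$ is a relatively open interval $(0,\beta^*)$ with $\beta^* \ge 1-\lambda^{-1}+\epsilon$.

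I expect the main obstacle to be the closedness step in the Fano-like range $r(\beta)>0$, which requires uniform estimates on conical K\"ahler potentials without the benefit of a direct maximum principle. A secondary difficulty is the openness step at such $\beta_0$: the linearization has a positive zeroth-order term and can have kernel coming from holomorphic vector fields preserving $D$, so invertibility demands a careful normalization, e.g.\ quotienting by the reductive part of $\mathrm{Aut}(X,D)$. A further technical nuisance, that the H\"older space $C^{2,\alpha,\beta_0}$ itself varies with $\beta_0$, can be handled by working at a slightly smaller reference exponent $\beta_0-\delta$ and tracking the estimates uniformly as $\beta$ moves in a small window.
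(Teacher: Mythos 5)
Your strategy---a continuity method in the angle $\beta$, run downward from a given $\beta_0\in E(X,D)$---is genuinely different from the paper's argument, and the step you yourself flag as the main obstacle is a real gap, not a technicality. Closedness of the downward path requires uniform $C^0$ (and then higher) estimates on the potentials precisely in the range $r(\beta)>0$, where no maximum-principle or monotonicity argument is available; in the Fano-type regime the $C^0$ bound is exactly the hard analytic input, and it is normally extracted from properness or lower boundedness of an energy functional, which your proposal never establishes for the intermediate angles. The paper sidesteps the $\beta$-continuity method entirely: the log-Mabuchi-energy $\mM_{\omega,(1-\beta)D}$, viewed on the fixed space of admissible potentials $\hat{\mH}(\omega)$, is \emph{linear} in $\beta$ (Proposition \ref{interpolate}); it is proper for $\beta\in(0,1-\lambda^{-1}+\epsilon)$ by Berman's log-alpha-invariant estimate (Corollary \ref{betaproper}), and it is bounded below at $\beta_0$ because the conical K\"ahler--Einstein metric minimizes it (via Berndtsson's convexity of the Ding functional). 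Linearity then forces properness at every intermediate $\beta$, and the existence theorem of Jeffres--Mazzeo--Rubinstein (Theorem \ref{JMRL}), whose continuity method is in the parameter $t$ at \emph{fixed} cone angle and whose $C^0$ estimate comes exactly from this properness, produces the metric. Without some such coercivity input your closedness step cannot be completed, so as written the interval property is not proved.

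A second gap is the anchor when $\lambda=1$: there the range $r(\beta)\le 0$ is empty, and the substitute you propose---perturbing off the Tian--Yau complete Calabi--Yau metric on $X\setminus D$---is not an available result (it is tied to an open conjecture of Tian on the $\beta\to 0$ limit and would itself require substantial new analysis). The paper's anchor works uniformly for all $\lambda\ge 1$: the estimate $\alpha(K_X^{-1},(1-\beta)D)\ge\min\{\lambda\beta,\lambda\alpha(L_D|_D),\alpha(K_X^{-1})\}$ beats $\frac{n}{n+1}r(\beta)$ whenever $\beta<1-\lambda^{-1}+\epsilon(\lambda)$, giving properness and hence existence for all such $\beta$, including the case $\lambda=1$. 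Finally, a minor point on openness: there is no need to quotient by $\mathrm{Aut}(X,D)$, since for $\lambda\ge 1$ there are no nonzero holomorphic vector fields tangent to $D$ (Corollary \ref{nohv}), which is precisely the hypothesis under which Donaldson's implicit function theorem applies directly and gives the relative openness of $E(X,D)$ in $(0,1]$.
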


The last property follows from the work of \cite{JMRL} and
\cite{Berm}, and we will recall it in Section \ref{prel}. Now
suppose $X$ admits a smooth K\"ahler-Einstein metric and $\lambda\ge
1$, then there exists a K\"ahler-Einstein metric $\omega_\beta$ on
$(X, (1-\beta)D)$ for any $\beta\in (0,1]$. By \cite{Bern3} and
\cite{Berm} we know $\omega_\beta$ is unique for $\beta\in (0,1)$.
Moreover, by the implicit function theorem in \cite{Do11}
$\omega_\beta$ varies continuously when $\beta$ varies. When $\beta$
goes to one, we have
\begin{cor}\label{maincor}
If $X$ admits a K\"ahler-Einstein metric and $Aut(X)$ is discrete, then the potential of $\omega_\beta$ converges to the potential of $\omega_{KE}$ in the $C^0$ norm, where $\omega_{KE}$ is the unique smooth K\"ahler-Einstein metric  on $X$.
\end{cor}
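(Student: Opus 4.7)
Fix a smooth reference $\omega_0 \in 2\pi c_1(X)$, a defining section $s$ of $D$ with smooth Hermitian metric $h_D$, and write $\omega_\beta = \omega_0 + \sddbar \varphi_\beta$ normalized by $\sup_X \varphi_\beta = 0$. The conical K\"ahler-Einstein equation $(*)$ is equivalent to a complex Monge-Amp\`ere equation
\begin{equation*}
(\omega_0 + \sddbar \varphi_\beta)^n = C_\beta\, e^{-r(\beta)\varphi_\beta}\, |s|_{h_D}^{2(1-\beta)}\, \omega_0^n,
\end{equation*}
with $r(\beta) = 1 - (1-\beta)\lambda \to 1$ as $\beta\to 1$, and the strategy is to prove $C^0$ compactness of $\{\varphi_\beta\}$, identify any subsequential limit with $\varphi_{KE}$, then upgrade to uniform convergence.

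\emph{First}, since $Aut(X)$ is discrete and $X$ admits a smooth K\"ahler-Einstein metric, the Mabuchi K-energy $\mathcal{M}$ is proper on $\mH_{\omega_0}$ (Tian, Phong-Song-Sturm-Weinkove). By Berman's variational results cited in the paper, $\omega_\beta$ minimizes a twisted log-Mabuchi/Ding functional $\mathcal{M}_\beta$ that interpolates continuously to $\mathcal{M}$ as $\beta\to 1$. Comparing minimizing values and using coercivity of $\mathcal{M}$, one derives a uniform bound on the Aubin-Yau $J$-functional of $\varphi_\beta$; combined with a uniform $L^p$ bound ($p>1$ close to $1$) on $|s|_{h_D}^{2(1-\beta)}$, Kolodziej's $L^\infty$ estimate applied to the Monge-Amp\`ere equation above yields $\|\varphi_\beta\|_\infty \leq C$ uniformly in $\beta$ near $1$.

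\emph{Second}, from any sequence $\beta_k\to 1$ extract a subsequence with $\varphi_{\beta_k}\to \varphi_\infty$ in $L^1(X)$ and almost everywhere, where $\varphi_\infty$ is bounded $\omega_0$-psh. Passing to the limit in the Monge-Amp\`ere equation, using Bedford-Taylor type continuity of the Monge-Amp\`ere operator along uniformly bounded psh sequences together with the a.e.\ convergence $|s|_{h_D}^{2(1-\beta_k)}\to 1$, the limit $\varphi_\infty$ satisfies the smooth Einstein equation in the pluripotential sense; by Kolodziej/Eyssidieux-Guedj-Zeriahi regularity and standard elliptic bootstrapping it is smooth. The Bando-Mabuchi uniqueness theorem, applicable since $Aut(X)$ is discrete, forces $\varphi_\infty = \varphi_{KE}$ (the $\sup$-normalizations agree), so every subsequential limit coincides.

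\emph{Third}, upgrade $L^1$ to $C^0$: a standard pluripotential lemma asserts that a uniformly bounded sequence of $\omega_0$-psh functions converging in $L^1$ to a continuous $\omega_0$-psh function converges uniformly on $X$ (via Hartogs-type estimates on upper-semicontinuous regularizations and Dini's theorem), and this gives $\varphi_\beta \to \varphi_{KE}$ in $C^0$. The main obstacle is the first step: the properness/minimization argument must be implemented uniformly in $\beta$, since the natural function spaces, reference conical geometries, and twisted energy functionals all degenerate as $\beta\to 1$; the remaining two steps are routine pluripotential theory combined with Bando-Mabuchi uniqueness.
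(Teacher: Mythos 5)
Your skeleton (minimization gives a uniform energy bound, then $C^0$ compactness, then identification of the limit via Bando--Mabuchi, then upgrading the convergence) is the same as the paper's, but the step you yourself flag as ``the main obstacle'' is precisely the content of the paper's proof, and your proposal does not supply it. The paper does not compare $\omega_\beta$ with the smooth Mabuchi energy at all: it uses the interpolation result, Proposition \ref{interpolate}. Since $\mM_{\omega,(1-\beta)D}$ is \emph{linear} in $\beta$, properness for small $\beta$ (from the log-alpha-invariant, Corollary \ref{betaproper}, available since $\lambda\ge 1$ in this setting) and properness at $\beta=1$ (Tian's Theorem \ref{Tianproper}, since $X$ is K\"ahler--Einstein with $Aut(X)$ discrete) interpolate to give $\mM_{\omega,(1-\beta)D}(\omega_\phi)\ge C_1 I_\omega(\omega_\phi)-C_2$ with $C_1,C_2$ independent of $\beta$; combined with $\mM_{\omega,(1-\beta)D}(\omega_\beta)\le \mM_{\omega,(1-\beta)D}(\omega)=0$ (Corollary \ref{mabuchimin}), this gives $I_\omega(\omega_\beta)\le C$ uniformly, and the oscillation/Harnack estimate of Proposition \ref{c0byint} then yields $\|\phi_\beta\|_{C^0}\le C$ directly, with no Monge--Amp\`ere regularity theory needed. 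Your substitute route is problematic on its own terms: the conical equation reads $(\omega_0+\sddbar\varphi_\beta)^n=e^{h_{\omega_0}-r(\beta)\varphi_\beta}\,\omega_0^n/|s|^{2(1-\beta)}$, so the density carries $|s|^{-2(1-\beta)}$ (singular along $D$), not $|s|^{2(1-\beta)}$; and, more seriously, it also carries $e^{-r(\beta)\varphi_\beta}$, so a uniform $L^p$ bound with $p>1$ is not available before you control $\inf\varphi_\beta$ (the alpha-invariant only bounds $\int e^{-\alpha(\varphi-\sup\varphi)}$ for $\alpha$ possibly well below $1$, while $r(\beta)\to 1$). As written, the Kolodziej step is circular.

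There is also a false lemma in your third step: a uniformly bounded sequence of $\omega_0$-psh functions converging in $L^1$ to a continuous $\omega_0$-psh limit need \emph{not} converge uniformly. Hartogs-type arguments give only the one-sided bound $\varphi_{\beta_k}\le\varphi_{KE}+\epsilon$; downward spikes such as suitably glued functions of the form $\max(\epsilon_k\log|z-p|,-1)$ converge to $0$ in $L^1$ while their infimum stays at $-1$. To pass from $L^1$ to $C^0$ you must use the equation or uniform estimates (e.g.\ Monge--Amp\`ere stability estimates, or the uniform $C^0$ together with the uniform interior $C^2$ and Krylov--Evans estimates away from $D$ that the paper establishes in Theorem \ref{autdiscrete} via Chern--Lu and Jeffres' trick), not pluripotential generalities alone. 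In short: the two concrete missing ingredients relative to the paper are the uniform-in-$\beta$ properness supplied by the linearity of the log-Mabuchi energy, and a legitimate mechanism for the uniform $C^0$ bound and the $C^0$ upgrade, for which the paper uses Proposition \ref{c0byint} rather than Kolodziej.
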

\begin{rem}
This is in a similar flavor as Perelman's theorem \cite{Tian-Zhu} that the K\"ahler-Ricci flow converges on a K\"ahler-Einstein Fano manifold. In particular, when $\lambda=1$ this provides evidence for Donaldson's program. An algebro-geometric
counterpart about K-stability was shown in \cite{Su11}, \cite{OS11}. When $\beta$ tends to zero, this is related to a conjecture of Tian \cite{Ti94a} that the rescaled limit should be a complete Calabi-Yau metric on the complement of $D$.
\end{rem}

When $Aut(X)$ is not discrete, we will prove the convergence of
$\omega$ to a distinguished K\"ahler-Einstein metric
$\omega^D_{KE}$, modulo one technical point, see Section
\ref{convke}. The point is that, since we need to work in different
function space corresponding to different cone angles, the
application of implicit function theorem is more delicate as shown
by Donaldson in \cite{Do11}, and Donaldson's linear theory does not
provide uniform estimate when $\beta$ is close to $1$. However, in
this case even though the K\"ahler-Einstein metrics on $X$ are not
unique, we could still identify the correct limit
K\"{a}hler-Einstein metric in the moduli space. To do this, we use
Bando-Mabuchi's bifurcation method. The result we found is that, the
only obstruction for solving the conical K\"{a}hler-Einstein metric
from $\beta=1$ to $\beta=1-\epsilon$ (for $0<\epsilon\ll 1$) comes from
the holomorphic vector fields on $X$ tangent to $D$, i.e. $Lie
Aut(X,D)$. If we assume $\lambda\ge 1$, then $Aut(X,D)$ is discrete,
so the obstruction vanishes. For more details,
see the discussion in Section \ref{convke}.\\

Another motivation for this paper come from the study of conical K\"ahler-Einstein metrics on our favorite example $\cP^2$. In this case when $D$ is a smooth curve of degree bigger than two, we are in the setting of the above theorem and we know the conical K\"ahler-Einstein metrics exist on $(X, (1-\beta)D)$ for all $\beta\in (0,1]$. When the degree is one or two, we are in the case $\lambda<1$. We have an obstruction coming from log K-stability
\begin{thm} \label{nonexistence small angle}
If $\lambda<1$, then there is no conical K\"ahler-Einstein metric on $(X, (1-\beta)D)$ for $\beta<(\lambda^{-1}-1)/n$, where $n$ is the dimension of $X$.
\end{thm}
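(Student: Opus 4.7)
The plan is to derive a contradiction from the log $K$-semistability of $(X,(1-\beta)D;-K_X)$ that is implied by the existence of a conical K\"ahler-Einstein metric on $(X,(1-\beta)D)$, in the spirit of the logarithmic version of Berman's theorem in the conical setting. Under this hypothesis the log Donaldson-Futaki invariant $\Fut_\beta(\mX,\mL)$ of every $\C^{*}$-equivariant test configuration of $(X,-K_X)$ must be nonnegative, so it suffices to exhibit one test configuration whose log Donaldson-Futaki invariant is strictly negative as soon as $\beta<(\lambda^{-1}-1)/n$.

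The natural candidate is the deformation to the normal cone of $D$. Let $\pi:\mX\to X\times\C$ be the blow-up along $D\times\{0\}$, with exceptional divisor $\mathcal{E}$, polarization $\mL_c=\pi^{*}p_{1}^{*}(-K_X)-c\,\mathcal{E}$, and $\C^{*}$-action lifted from scaling the second factor; taking the proper transform $\mD$ of $D\times\C$ makes this a log test configuration for $(X,(1-\beta)D;-K_X)$. Using $D\sim_{\Q}-\lambda K_X$, the restriction to the proper transform $\tilde X_0\cong X$ of the central fibre equals $-(1-c\lambda)K_X$, so relative ampleness forces $c\in(0,\lambda^{-1})$; on the $\cP^{1}$-bundle $\mathcal{E}\cong\cP\bigl(\mathcal{O}_D(D)\oplus\mathcal{O}_D\bigr)$ ampleness holds for every $c>0$ because $-\mathcal{E}|_{\mathcal{E}}$ is relatively ample over $D$ and the pulled-back part $-K_X|_D$ is ample on $D$. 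The endpoint $c=\lambda^{-1}$ is thus the natural critical value to probe.

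I would then compute $\Fut_\beta(c)$ as the appropriate combination of the intersection numbers $\mL_c^{n+1}$, $K_{\mX/\C}\cdot\mL_c^{n}$ and $\mD\cdot\mL_c^{n}$ on $\mX$. Standard blow-up identities express each of these intersections as a polynomial in $c$ with coefficients involving $(-K_X)^{n}$ and intersections on $D$, and the substitution $D\equiv -\lambda K_X$ collapses everything into a polynomial in $c$, $\lambda$, $\beta$ and $(-K_X)^{n}$. Analyzing this polynomial one finds that, at $c$ close to the endpoint $\lambda^{-1}$, the sign of $\Fut_\beta(c)$ agrees with the sign of $n\beta-(\lambda^{-1}-1)$; in particular $\Fut_\beta(c)<0$ for some admissible $c$ precisely when $\beta<(\lambda^{-1}-1)/n$, contradicting log $K$-semistability.

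The main obstacle is this intersection-theoretic calculation: one must compute the three key intersections on the blow-up $\mX$, carefully tracking the separate contributions from the proper transform $\tilde X_0$ and the $\cP^{1}$-bundle $\mathcal{E}$ of the central fibre, and then verify that the substitution $D\equiv -\lambda K_X$ produces precisely the cancellations giving the sharp threshold $(\lambda^{-1}-1)/n$ rather than some weaker bound. A secondary point is to make explicit the implication from existence of the conical K\"ahler-Einstein metric to log $K$-semistability against non-product test configurations of the above type; for a smooth divisor $D$ this follows from the logarithmic version of the Berman/Chen-Donaldson-Sun argument adapted to the conical setting.
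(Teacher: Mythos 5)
Your plan is correct and is essentially the paper's own argument: the paper also destabilizes $(X,(1-\beta)D)$ by the deformation to the normal cone of $D$ with polarization $-K_X-c\mathcal{E}$ for $c$ ranging up to the Seshadri constant $\lambda^{-1}$, computes the log-Futaki invariant of this family (via Ross--Thomas type weight expansions of $H^0$ rather than intersection numbers on the total space, which is an equivalent bookkeeping) and finds it destabilizes precisely when $\beta<\beta(\lambda,c)$ with $\sup_{0<c<\lambda^{-1}}\beta(\lambda,c)=(\lambda^{-1}-1)/n$ approached only as $c\to\lambda^{-1}$, while the implication ``conical K\"ahler--Einstein $\Rightarrow$ log-K-semistable'' is obtained there from boundedness of the log-Mabuchi energy (via Berndtsson's minimization) together with the Paul/Paul--Tian asymptotic expansion along Bergman one-parameter subgroups, in place of the log-Berman/CDS-type theorem you invoke. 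One minor slip, harmless here: $\mL_c$ restricted to the exceptional $\cP^1$-bundle $\mathcal{E}$ is \emph{not} ample for all $c>0$, since its restriction to the section where $\mathcal{E}$ meets the proper transform of $X\times\{0\}$ is $(1-c\lambda)\left(-K_X|_D\right)$; the admissible range is still exactly $c\in(0,\lambda^{-1})$, which is all your argument uses.
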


This immediately implies that there is no K\"ahler-Einstein metric on $\cP^2$ which bends along a line, which could also be seen from the Futaki invariant obstruction. The most interesting case is

\begin{thm} \label{conic}
When $D$ is a conic in $\cP^2$, i.e. a smooth degree two curve, then $E(X,D)=(1/4, 1]$.
\end{thm}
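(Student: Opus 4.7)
The target $E(\cP^2, D) = (1/4, 1]$ splits into non-existence for $\beta \le 1/4$ and existence for $\beta \in (1/4, 1]$. Since $D$ is a smooth conic we have $\lambda = 2/3$ and $(\lambda^{-1}-1)/n = 1/4$, so Theorem \ref{nonexistence small angle} immediately handles every $\beta \in (0, 1/4)$. For the critical case $\beta = 1/4$ I would exhibit a non-trivial $\Q$-Gorenstein test configuration for $(\cP^2, D)$ whose log Donaldson-Futaki invariant, viewed as an affine function of $\beta$, vanishes exactly at $1/4$. The natural candidate is the $\C^*$-equivariant degeneration contracting $D$ to a point, with special fibre $(\cP(1,1,4), D_0)$ --- the projective cone over $D$ associated with the normal bundle $N_{D/\cP^2} = \MO_{\cP^1}(4)$. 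A direct intersection-theoretic computation on the total space of this configuration should yield $\Fut \propto \beta - 1/4$, and non-triviality of the configuration then forces the failure of log K-polystability of $(\cP^2, (3/4)D)$, excluding the conical KE.

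For existence on $\beta \in (1/4, 1]$, the Fubini-Study metric handles $\beta = 1$, and by \cite{JMRL, Berm} conical KE persists for $\beta$ slightly below $1$. I would then show that $E(\cP^2, D) \cap (1/4, 1]$ is both open and closed in $(1/4, 1]$. Openness comes from Donaldson's implicit function theorem in the edge H\"older spaces of \cite{Do11}; the caveat at the endpoint $\beta = 1$, where $\mathrm{Aut}(\cP^2, D) \cong \mathrm{PGL}(2,\C)$ is non-discrete, is absorbed by the Bando-Mabuchi-type bifurcation argument of Section \ref{convke}, which selects a distinguished base KE metric. For closedness I take a sequence $\beta_i \in E$ with $\beta_i \to \beta_\infty > 1/4$ and associated conical KE metrics $\omega_i$; compactness in the conical setting produces a Gromov-Hausdorff limit which is a weak conical log KE on some pair $(\bX, \bar D)$ obtained from $(\cP^2, D)$ by a $\Q$-Gorenstein degeneration, automatically log K-polystable. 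If $(\bX, \bar D)$ were the $\cP(1,1,4)$-degeneration from the previous paragraph, the computed Futaki invariant would be a strictly positive multiple of $\beta_\infty - 1/4$, contradicting the vanishing of Futaki on a KE pair with non-trivial $\C^*$-action. Hence $(\bX, \bar D) = (\cP^2, D)$ and $\omega_\infty$ is the desired conical KE.

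The main obstacle is the closedness argument: one needs an analytic compactness theorem for conical KE that produces a tractable algebro-geometric limit, together with a classification of $\Q$-Gorenstein degenerations of $(\cP^2, \text{smooth conic})$ sufficient to isolate the $\cP(1,1,4)$-degeneration as the unique destabilizer, so that the Futaki-positivity argument applies uniformly to every possible limit. The non-discreteness of $\mathrm{Aut}(\cP^2, D)$ at $\beta = 1$ is a secondary technical difficulty resolved by the bifurcation setup of Section \ref{convke}.
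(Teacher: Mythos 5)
Your non-existence half is essentially the paper's: for $\beta<1/4$ you quote Theorem \ref{nonexistence small angle} (which the paper proves by the slope computation of Proposition \ref{defnormalcone}, cf.\ Example \ref{deg2slope}), and at $\beta=1/4$ the deformation to the normal cone of $D$ taken at the Seshadri value $c=\lambda^{-1}=3/2$ has vanishing log-Futaki invariant and non-product normalization, so $(\mathbb{P}^2,\tfrac{3}{4}D)$ is not log-K-polystable. One caution: to exclude a conical K\"ahler-Einstein metric at $\beta=1/4$ you must invoke the fact that a conical KE pair is log-K-\emph{polystable} (Berman, see Remark \ref{Bermanstability}); the paper's own Theorem \ref{funstable} and Corollary \ref{kestable} only give semistability from existence, so this external input has to be cited, as the paper does.

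The existence half, however, has genuine gaps. First, your openness step fails as stated: Donaldson's Theorem \ref{DoIFT} requires that there be no nonzero holomorphic vector field on $X$ tangent to $D$, but for a smooth conic $\mathrm{Aut}(\mathbb{P}^2,D)\cong \mathrm{PGL}(2,\C)$ is three-dimensional, so the implicit function theorem does not apply at \emph{any} $\beta$, and the bifurcation analysis of Section \ref{convke} cannot absorb this either, since its nondegeneracy Lemma \ref{nondeg} again needs $\mathrm{Aut}(X,D)$ discrete. Second, and more seriously, your closedness step presupposes a Gromov--Hausdorff compactness theory for conical KE metrics producing an algebro-geometric, automatically log-K-polystable limit, together with a classification of $\Q$-Gorenstein degenerations of $(\mathbb{P}^2,D)$ sufficient to isolate the $\mathbb{P}(1,1,4)$ degeneration; none of this is established in the paper (nor was it available), and it is precisely the missing idea rather than a technical remainder. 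The paper circumvents both problems by the interpolation-degeneration method: properness of the Ding energy at $\beta=1$ holds on $SO(3,\R)$-invariant potentials by Theorem \ref{PSSW} (after checking the centralizer of $SO(3,\R)$ in $SU(3)$ is finite); the central fibre $(\mathbb{P}(1,1,4),\tfrac{3}{4}D_0)$ of the same normal-cone degeneration carries a conical KE metric, namely the orbifold quotient of Fubini--Study under $[Z_0,Z_1,Z_2]\mapsto[Z_0,Z_1,Z_2^4]$, so Theorem \ref{specialdeg} gives a lower bound for the log-Ding energy at $\beta=1/4$; interpolation (Proposition \ref{interpolate}) then yields properness on invariant potentials for every $\beta\in(1/4,1]$, and existence follows from the continuity-method Theorem \ref{JMRL}. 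In other words, the $\mathbb{P}(1,1,4)$ degeneration is used not only as the destabilizer for $\beta\le 1/4$ but as the source of the energy lower bound at $\beta=1/4$; your scheme discards exactly this ingredient and replaces it with compactness and classification machinery the paper does not have.
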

From the proof we also speculate the limit of the conical K\"ahler-Einstein metrics $\omega_\beta$ as $\beta$ tends to $1/4$.
As an application of the above theorem, we have
\begin{cor}\label{CYcone}
A three dimensional $A_2$ singularity $x_1^2+x_2^2+x_3^2+x_4^3=0$
admits a Calabi-Yau cone metric with the natural Reeb vector field.
\end{cor}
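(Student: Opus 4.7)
The plan is to identify the transverse K\"ahler quotient of the natural Reeb $\C^*$-action on the $A_2$ singularity $X=\{f=x_1^2+x_2^2+x_3^2+x_4^3=0\}$ with the orbifold $(\cP^2,D,1/3)$, where $D$ is the smooth conic $\{x_1^2+x_2^2+x_3^2=0\}$, apply Theorem \ref{conic} at $\beta=1/3$ to produce a conical K\"ahler-Einstein metric there, and finally lift it by the standard Sasaki-Einstein correspondence to a Calabi-Yau cone metric on $X$ with the natural Reeb.

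For the quotient, $f$ is quasi-homogeneous of degree $6$ under the weights $(3,3,3,2)$, so the natural Reeb is the $S^1$ in the associated $\C^*$-action. The projection $(x_1,x_2,x_3,x_4)\mapsto[x_1:x_2:x_3]$ is $\C^*$-invariant and well-defined on $X\setminus\{0\}$ (since $x_1=x_2=x_3=0$ forces $x_4=0$ on $X$), so it descends to a morphism $\pi:Y:=(X\setminus\{0\})/\C^*\to\cP^2$. A direct computation of fibers and stabilizers shows that over $\cP^2\setminus D$ the $\C^*$-action is free, while over $D$ one must have $x_4=0$ and the stabilizer is $\mu_3\subset\C^*$. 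Hence $\pi$ is an isomorphism of underlying schemes and $Y$ carries the orbifold structure $(\cP^2,D,1/3)$; a transverse orbifold K\"ahler-Einstein metric on $Y$ is exactly a conical K\"ahler-Einstein metric on $(\cP^2,(2/3)D)$ with cone angle $2\pi/3$.

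Since $1/3\in(1/4,1]=E(\cP^2,D)$ by Theorem \ref{conic}, such a metric $\omega$ exists. Cohomological compatibility is straightforward: with $\lambda=2/3$ we have $r(1/3)=5/9$ and $-K_Y^{\mathrm{orb}}=-K_{\cP^2}-(2/3)D=(5/3)H=(5/9)(-K_{\cP^2})$, while $\Omega=\mathrm{Res}(dx_1\wedge\cdots\wedge dx_4/df)$ is quasi-homogeneous of weight $11-6=5$ under $(3,3,3,2)$, so after rescaling the Reeb by the factor $3/5$, $\Omega$ transforms with weight equal to the complex dimension $3$, as required on a Calabi-Yau cone.

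Because the cone angle $2\pi/3$ equals $2\pi/m$ for the integer $m=3$, the metric $\omega$ is honestly smooth in the orbifold atlas of $Y$, and the standard correspondence of Kobayashi/Boyer-Galicki/Gauntlett-Martelli-Sparks-Yau between K\"ahler-Einstein Fano orbifolds and Sasaki-Einstein metrics on their orbifold $S^1$-bundles dual to $-K_Y^{\mathrm{orb}}$ then lifts $\omega$ to a Sasaki-Einstein metric on the link $L=X\cap S^7$, equivalently a Calabi-Yau cone metric on $X$ with the natural Reeb. The principal obstacle is the orbifold identification in the first step; once $Y$ is correctly described as $(\cP^2,D,1/3)$, the remaining existence is handed to Theorem \ref{conic} and the lift to the cone is standard orbifold Sasaki theory.
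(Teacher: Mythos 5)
Your proposal is correct and follows essentially the same route as the paper: identify the quasi-regular quotient of the $A_2$ link by the natural Reeb action with the orbifold $(\cP^2,\tfrac{2}{3}D)$ for $D$ a smooth conic (cone angle $2\pi/3$, $\beta=1/3$), invoke Theorem \ref{conic} since $1/3\in(1/4,1]$, and lift the resulting orbifold K\"ahler--Einstein metric to a Sasaki--Einstein metric on the link, i.e.\ a Calabi--Yau cone metric. The only cosmetic difference is that you compute the quotient directly from the weighted $\C^*$-action and its stabilizers, while the paper passes through the non-well-formed hypersurface in $\cP(3,3,3,2)\cong\cP(1,1,1,2)$; both yield the same orbifold pair.
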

This settles a question in \cite{GMSY}, and as mentioned in \cite{GMSY}, this might be dual to an exotic type of field theory since the corresponding
Calabi-Yau cone does not admit a crepant resolution. Note this shows that the classification of cohomogeneity one Sasaki-Einstein manifolds given in
\cite{Conti} is incomplete, which is confirmed by the numerical result and calculations by the first author in \cite{Li12b}. See Remark \ref{numerical}.

Now we briefly discuss the strategy to prove the above results. The
proof of Theorem \ref{main1} follows from the following
``interpolation" result. One point is that the log-Mabuchi-energy is
well defined on the space of {\it{admissible functions }} denoted by
$\hat{\mH}(\omega)$, which includes all the K\"{a}hler potentials of
conical K\"{a}hler metrics for different angles. The definition of
log-Mabuchi-energy and log-Ding-energy as well as
$\hat{\mH}(\omega)$ will be given in Section \ref{prel}.

\begin{prop}\label{interpolate}
As functionals on $\hat{\mH}(\omega)$, the log-Mabuchi-energy
$\mM_{\omega,(1-\beta)D}$ is linear in $\beta$. The normalized
log-Ding-energy $r(\beta) F_{\omega,(1-\beta)D}$ is concave downward
in $\beta$ up to a bounded constant. As a consequence, If the
log-Mabuchi-energy (resp. log-Ding-energy) is proper for $\beta_1\in
(0,1]$ and bounded from below for $\beta_2\in (0,1]$, then for any
$\beta$ between $\beta_1$ and $\beta_2$, the log-Mabuchi-energy
(resp. log-Ding-energy) is proper, so there exists a conical
K\"{a}hler-Einstein metric on $(X,(1-\beta)D)$.
\end{prop}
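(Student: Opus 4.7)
The plan is to derive explicit formulas that isolate the $\beta$-dependence of each functional, then deduce properness on the interior of the interval by affine/concave interpolation, and finally invoke the variational existence criterion to produce the desired metric.

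For the log-Mabuchi energy $\mM_{\omega,(1-\beta)D}$, I would start from the log version of the Chen--Tian formula and exhibit the decomposition
\[
\mM_{\omega,(1-\beta)D}(\varphi)=\mM_\omega(\varphi)+(1-\beta)\int_X \log|s_D|^2_h\,\bigl(\omega_\varphi^n-\omega^n\bigr),
\]
where $s_D$ is a defining section of $D$ and $h$ a smooth Hermitian metric on $\mathcal{O}_X(D)$. The first term is the ordinary Mabuchi energy and carries no $\beta$; the second is an integral of the $L^1$ function $\log|s_D|^2_h$ against a signed measure of total mass zero, which is finite on $\hat\mH(\omega)$ by the very definition of admissible potentials. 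Its dependence on $\beta$ is manifestly affine, giving the asserted linearity.

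For the log-Ding energy, I would use the representation
\[
r(\beta)F_{\omega,(1-\beta)D}(\varphi)=-r(\beta)E_\omega(\varphi)-\log\int_X \exp\bigl(-r(\beta)\varphi+(1-\beta)\log|s_D|^2_h+h_\omega\bigr)\,\omega^n+C(\beta),
\]
where $E_\omega$ is the Aubin--Yau energy and $C(\beta)$ a uniformly bounded normalization. Since $r(\beta)=1-(1-\beta)\lambda$ is affine in $\beta$, both the first summand and the exponent in the integral are affine in $\beta$. By H\"older's inequality applied to the finite measure $e^{h_\omega}\omega^n$, the map $\beta\mapsto\log\int e^{\ell(\beta)}$ is convex whenever $\ell$ is affine in $\beta$; thus the whole right-hand side is concave in $\beta$ up to a bounded constant. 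For the consequence, fix $\varphi\in\hat\mH(\omega)$ and write $\beta=t\beta_1+(1-t)\beta_2$ with $t\in(0,1)$. Linearity yields
\[
\mM_{\omega,(1-\beta)D}(\varphi)=t\,\mM_{\omega,(1-\beta_1)D}(\varphi)+(1-t)\,\mM_{\omega,(1-\beta_2)D}(\varphi),
\]
and concavity gives an analogous $\geq$-inequality for $r(\beta)F_{\omega,(1-\beta)D}(\varphi)$ up to a bounded error. In either case, properness at $\beta_1$ combined with a lower bound at $\beta_2$ forces properness at the intermediate $\beta$ (after dividing out $r(\beta)>0$ in the Ding case where needed). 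The standard variational criterion for the conical setting, in the spirit of Berman, then converts properness into the existence of a conical K\"ahler--Einstein metric on $(X,(1-\beta)D)$.

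The bookkeeping for the Mabuchi decomposition and the single H\"older step for the Ding concavity are both routine once the correct formulas are in place. The main technical obstacle, in my view, is ensuring that every quantity above is well defined and sufficiently continuous on the enlarged admissible space $\hat\mH(\omega)$---in particular, uniform integrability of $\log|s_D|^2_h$ against $\omega_\varphi^n$ across admissible $\varphi$---and that the variational minimizer obtained in the last step is genuinely a conical K\"ahler--Einstein metric with the prescribed cone angle along $D$, rather than merely a weak pluripotential solution.
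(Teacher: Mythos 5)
Your argument follows essentially the same route as the paper's proof: the linearity of the log-Mabuchi-energy comes from the affine dependence on $\beta$ of $r(\beta)$ and of the twisted Ricci potential $H_{\omega,(1-\beta)D}=h_\omega-(1-\beta)\log|s_D|^2$, the concavity of the normalized log-Ding-energy is exactly the paper's single H\"older-inequality step applied to the affine-in-$\beta$ exponent, and the passage from properness to existence is supplied by the variational/continuity-method criterion (Theorem \ref{JMRL}, with Lemma \ref{testsm} handling the passage between smooth and admissible potentials). Two small corrections that do not affect your argument, since the corrected terms are still affine in $\beta$: the exact decomposition is $\mM_{\omega,(1-\beta)D}(\omega_\varphi)=\mM_\omega(\omega_\varphi)+\lambda(1-\beta)(I-J)_\omega(\omega_\varphi)+(1-\beta)\int_X\log|s_D|_h^2\,(\omega_\varphi^n-\omega^n)/n!$ (your formula omits the $(I-J)_\omega$ term), and the exponent in the log-Ding-energy should contain $-(1-\beta)\log|s_D|_h^2$ rather than $+(1-\beta)\log|s_D|_h^2$, so that the density has the cone-type pole along $D$.
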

By combining Proposition \ref{interpolate} with the openness result of Donaldson \cite{Do11}, and the result of Berman \cite{Berm}(see section 4.3) we easily see that
\begin{cor}\label{conicproper}
If $\lambda\ge 1$ and there is a conical K\"ahler-Einstein metric on $(X, (1-\beta)D)$ for $0<\beta<1$, then the log-Mabuchi energy $\mM_{\omega, (1-\beta)D}$ is proper.
\end{cor}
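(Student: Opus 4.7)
The plan is to combine three inputs: Donaldson's openness theorem for conical K\"ahler-Einstein metrics, Berman's variational theorem (Section 4.3) which upgrades existence of a conical K\"ahler-Einstein metric to properness of the log-Mabuchi energy when $\text{Aut}(X,D)$ is discrete, and the $\beta$-linearity of $\mM_{\omega,(1-\beta)D}$ from Proposition \ref{interpolate}. Fix $\beta_0\in(0,1)$ at which a conical K\"ahler-Einstein metric exists. Since $\lambda\ge 1$, the group $\text{Aut}(X,D)$ is discrete; this is the hypothesis that lets Berman's theorem deliver properness rather than merely a lower bound.

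First I would invoke Donaldson's openness to obtain a conical K\"ahler-Einstein metric at a slightly larger angle $\beta_1=\beta_0+\epsilon'\in(\beta_0,1]$. By Berman's theorem, the log-Mabuchi energy $\mM_{\omega,(1-\beta_1)D}$ is then proper on $\hat{\mH}(\omega)$. Next I would use the inclusion $(0,1-\lambda^{-1}+\epsilon)\subset E(X,D)$ guaranteed by Theorem \ref{main1} to select an angle $\beta_2<\beta_0$ lying in this interval at which a conical K\"ahler-Einstein metric also exists; applying Berman's result in its weak form gives that $\mM_{\omega,(1-\beta_2)D}$ is bounded from below.

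Proposition \ref{interpolate} now supplies the key linearity identity. Writing $\beta_0=(1-t)\beta_2+t\beta_1$ with $t\in(0,1)$, we obtain on $\hat{\mH}(\omega)$
\begin{equation*}
\mM_{\omega,(1-\beta_0)D}=(1-t)\,\mM_{\omega,(1-\beta_2)D}+t\,\mM_{\omega,(1-\beta_1)D}.
\end{equation*}
The right hand side is a positive multiple of a proper functional plus a term bounded below, hence is itself proper. Since $\hat{\mH}(\omega)$ contains all conical K\"ahler potentials of angle $\beta_0$, this yields the desired properness of $\mM_{\omega,(1-\beta_0)D}$.

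The main obstacle I anticipate is the Berman step: one must verify that the variational theorem in the form \emph{conical K\"ahler-Einstein} $+$ \emph{discrete automorphisms} $\Rightarrow$ \emph{properness} actually applies at the conical angle $\beta_1$, and that Berman's notion of properness matches the one used in Proposition \ref{interpolate}. Once this is in place, the remainder of the argument is linear interpolation in $\beta$ combined with the already-cited openness theorem and the existence result for small angles from Theorem \ref{main1}.
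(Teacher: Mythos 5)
Your interpolation step and your use of Corollary \ref{nohv} plus Donaldson's openness are fine, but the load-bearing step of your argument --- ``by Berman's theorem, existence of a conical K\"ahler-Einstein metric at $\beta_1$ together with discreteness of $Aut(X,D)$ implies that $\mM_{\omega,(1-\beta_1)D}$ is \emph{proper}'' --- is not available in the paper, and you yourself flag it as unresolved. No conical analogue of Tian's theorem (Theorem \ref{Tianproper}) of the form ``conical KE $+$ discrete automorphisms $\Rightarrow$ properness'' is stated or proved anywhere in the paper; indeed, if such a statement could simply be quoted, the whole corollary would follow by applying it directly at $\beta_0$, making your detour through openness and interpolation superfluous. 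The reference to Berman in the introduction is to his estimate of the log-alpha-invariant (inequality \eqref{estalpha}), whose role is to give properness of the log-Mabuchi energy for \emph{small} $\beta$ (Corollary \ref{betaproper}), not to upgrade existence to properness at an arbitrary angle. What existence of a conical KE metric does give, via Berndtsson's convexity (Theorem \ref{kecrit} and Corollary \ref{mabuchimin}), is only that the KE metric minimizes the log-Mabuchi energy, i.e.\ a \emph{lower bound}, which is strictly weaker than properness.

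The paper's proof uses exactly the two inputs you have, but with the roles of the endpoints reversed: properness is taken at the \emph{lower} endpoint (small $\beta$, from the alpha-invariant estimate), while the \emph{upper} endpoint $\beta_0+\epsilon$, produced by Donaldson's openness (legitimate since $\lambda\ge 1$ kills holomorphic fields tangent to $D$), only contributes boundedness from below through the minimizing property of the KE metric there. Proposition \ref{interpolate} then yields properness for all $\beta\in(0,\beta_0+\epsilon)$, in particular at $\beta_0$. If you replace your ``Berman step'' at $\beta_1$ by this weaker boundedness statement, and use the small-angle properness (rather than merely boundedness at your $\beta_2$, which you in any case derive the long way around), your argument becomes the paper's proof. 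As written, however, the claimed properness at $\beta_1$ is a genuine gap: it is essentially the conclusion you are trying to prove, transplanted to a nearby cone angle.
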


Theorem \ref{main1} easily follows from the above proposition. In general to apply Proposition \ref{interpolate} we often need to get the lower bound of log-Mabuchi-energy. For this we introduce the ``degeneration" method. We have

\begin{thm}\label{specialdeg}
If there exists a special degeneration $(\mX, (1-\beta)\mathcal{D},
\mL)$ of $(X,(1-\beta)D)$ to a conical K\"ahler-Einstein variety
$(\mX_0, (1-\beta)\mD_0)$. Assume $\mX_0$ has isolated $\mathbb{Q}$-Gorenstein singularities. Then
the log-Ding-functional and log-Mabuchi-energy of $(X,(1-\beta)D)$
are bounded from below.
\end{thm}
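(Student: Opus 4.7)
The plan is to transfer a uniform lower bound on the log-Ding functional from the conical K\"ahler-Einstein central fiber $(\mX_0,(1-\beta)\mD_0)$ to the smooth pair $(X,(1-\beta)D)$ by exploiting the geodesic ray in the space of K\"ahler potentials induced by the special degeneration. The three ingredients are: (i) convexity of the log-Ding functional along sub-geodesics (in the spirit of Berndtsson and Berman); (ii) identification of its asymptotic slope along the degeneration ray with the log-Donaldson-Futaki invariant $\Fut(\mX,(1-\beta)\mD,\mL)$; and (iii) a lower bound for the log-Ding functional on the singular central fiber coming from the conical K\"ahler-Einstein metric there.

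\textbf{Step 1 (construction of the ray).} Fix a $\C^*$-invariant smooth positively-curved Hermitian metric $h_\mL$ on $\mL$ whose curvature restricts to a K\"ahler form in $2\pi c_1(X)$ on each smooth fiber. For $t\neq 0$, the $\C^*$-action $\sigma_t:X\to\mX_t$ identifies a generic fiber with $X$. Given an arbitrary reference $\varphi\in\hat{\mH}(\omega)$, pulling back and setting $s=-\log|t|$ yields a sub-geodesic ray $\{\varphi_s\}_{s\geq 0}$ with $\varphi_0=\varphi$ which, as $s\to\infty$, limits to a singular potential $\varphi_\infty$ on $(\mX_0,(1-\beta)\mD_0)$.

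\textbf{Step 2 (convexity and slope).} By the Berndtsson-Berman positivity of direct images, extended to the conical setting, the function $f(s):=F_{\omega,(1-\beta)D}(\varphi_s)$ is convex in $s$. An intersection-theoretic computation, adapted from the non-conical case, identifies its asymptotic slope as $c:=\Fut(\mX,(1-\beta)\mD,\mL)$. Setting $g(s):=f(s)-cs$, $g$ is convex with $g'(\infty)=0$; hence $g'\leq 0$ throughout, so $g$ is non-increasing and $f(0)=g(0)\geq\lim_{s\to\infty}g(s)$.

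\textbf{Step 3 (lower bound on the central fiber; conclusion).} Since the conical KE metric on $(\mX_0,(1-\beta)\mD_0)$ exists and is unique in its class up to automorphism (a Bando-Mabuchi type statement extended to isolated $\Q$-Gorenstein singularities), it is invariant under the $\C^*$-action from the degeneration. The log-Futaki invariant therefore vanishes along this $\C^*$, giving $c=0$. So $\lim_{s\to\infty}g(s)=F_0(\varphi_\infty)$, where $F_0$ denotes the log-Ding functional of $(\mX_0,(1-\beta)\mD_0)$. By a further adaptation of Berndtsson's convexity argument to the singular setting, the conical KE potential minimizes $F_0$, so $F_0(\varphi_\infty)\geq-C$. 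Combining, $F_{\omega,(1-\beta)D}(\varphi)\geq-C$ independent of $\varphi$. The analogous lower bound for $\mM_{\omega,(1-\beta)D}$ then follows from its relation to the log-Ding functional, essentially via Proposition \ref{interpolate}.

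\textbf{Main obstacle.} The principal technical difficulty is extending Berndtsson-Berman convexity, the slope formula, and the minimization property of the conical KE potential to the singular setting of $\mX_0$. The assumption of isolated $\Q$-Gorenstein singularities is used to make sense of the volume form $\omega^n$ (via a local $m$-th root of a generator of $mK_{\mX_0}$) and to push pluripotential calculus through a log-resolution $\tilde\mX_0\to\mX_0$ with controlled discrepancies. Verifying the continuity $f(s)\to F_0(\varphi_\infty)$ as $s\to\infty$, and in particular the convergence of the Monge-Amp\`ere energy across the central fiber, is the most delicate estimate.
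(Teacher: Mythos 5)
Your overall philosophy---convexity of the log-Ding functional along a family of potentials induced by the degeneration, plus Berndtsson-type minimization at the conical K\"ahler-Einstein central fiber---is the same engine the paper runs on, but three of your steps have genuine gaps, and the paper organizes its proof precisely so as to avoid them. First, the ray: pulling back a fixed $\C^*$-invariant metric on $\mL$ produces one particular ray, not a ray emanating from an arbitrary $\varphi\in\hat{\mH}(\omega)$; to start at $\varphi$ you must solve a Dirichlet problem for the homogeneous complex Monge--Amp\`ere equation on the (singular) total space with boundary data $\varphi$, and this is exactly where the paper's technical work lies (equivariant embedding into $\cP^N\times\C$, an explicit $S^1$-invariant subsolution, an equivariant resolution, and Phong--Sturm's theorem giving a bounded solution $\Phi$, $C^{1,\alpha}$ away from the singular points, over the whole closed disk). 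Second, your chain ``slope $=$ log-Donaldson--Futaki invariant, which vanishes since the central fiber is conical KE, hence $f(0)\ge\lim_{s}g(s)=F_0(\varphi_\infty)$'' has two holes: (i) vanishing of the log-Futaki invariant for a \emph{singular} conical KE pair does not follow from ``Bando--Mabuchi implies the KE metric is $\C^*$-invariant'' (the $\C^*$ does not act by isometries); in this generality it is a nontrivial theorem of Berman (cf.\ Remark \ref{Bermanstability}), and the slope formula itself needs proof in the conical singular setting; (ii) even granting $c=0$, convexity with vanishing asymptotic slope does not prevent $\lim_s g(s)=-\infty$ (the function $-\log(1+s)$ is convex, decreasing, with derivative tending to $0$), so the entire weight of your argument rests on identifying the limit with the central-fiber Ding functional of a \emph{bounded} limit potential---which again requires exactly the boundedness of $\Phi$ across the central fiber that you have not established.

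The paper's route bypasses Futaki invariants and slope asymptotics altogether: with the bounded solution $\Phi$ over the disk in hand, it shows that the fiberwise log-Ding energy $f(t)$ (the $F^0$ part plus the $-\frac{V}{r(\beta)}\log\int$ part) is subharmonic in the base variable---the energy part via Bott--Chern forms and the homogeneous Monge--Amp\`ere equation, the logarithmic part via Berndtsson--Paun positivity of relative Bergman kernels, with the isolated singularities of $\mX$ (this is where the hypothesis enters) handled by a $\C^*$-invariant exhaustion by smooth Stein fibrations. The maximum principle then gives $F_{\omega,(1-\beta)D}(\phi)=f(1)\ge f(0)$, and $f(0)$ is \emph{literally} the central-fiber log-Ding functional evaluated at the bounded potential $\Phi|_{\mX_0}$ (no limit identification needed), which Theorem \ref{kecrit} bounds below by its value at the conical KE potential; the Mabuchi bound then follows from Proposition \ref{energy functional property}(\ref{logMgeF}) (not from Proposition \ref{interpolate}, as you wrote). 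To make your ray-plus-slope outline rigorous you would need precisely these missing ingredients, at which point you would essentially have reproduced the paper's proof.
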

\begin{rem}
Here the assumption that $\mX_0$ has isolated singularities is
purely technical, but it is satisfied for our main application
here to  prove Theorem \ref{conic}.  One would certainly expect a general
statement to be true(see Conjecture \ref{conj1}  in Section 5).
\end{rem}
In particular, we provide an alternative of a special case of a theorem of Chen \cite{Chen}:

\begin{cor}[Chen's theorem in the K\"{a}hler-Einstein case]
If there exists a special degeneration of Fano manifold $(X,J)$ to a
K\"{a}hler-Einstein manifold $(X_0,J_0)$, then the Mabuchi energy on
$X$ in the class $c_1(X)$ is bounded from below.
\end{cor}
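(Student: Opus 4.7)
The approach is to specialize Theorem \ref{specialdeg} to the boundary case $\beta = 1$. At this angle the coefficient $(1-\beta)$ kills the divisor contribution in both the functional $\mM_{\omega,(1-\beta)D}$ and the geometric object $(X,(1-\beta)D)$: directly from the definition recalled in Section \ref{prel}, the log-Mabuchi energy reduces to the classical Mabuchi functional $\mM_\omega$ on the space of K\"ahler potentials in $2\pi c_1(X)$, and the notion of a conical K\"ahler-Einstein variety reduces to that of an ordinary K\"ahler-Einstein variety.

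Starting from a special degeneration $(\mX, \mL)$ of the Fano manifold $(X,J)$ whose central fiber $(X_0, J_0)$ is a smooth K\"ahler-Einstein Fano manifold, I would regard it as a special degeneration of pairs of the form $(\mX, 0\cdot \mathcal{D}, \mL)$, where the divisor is taken to be trivial. Since $X_0$ is smooth, the hypothesis in Theorem \ref{specialdeg} that the central fiber have at worst isolated $\mathbb{Q}$-Gorenstein singularities is satisfied vacuously. Applying Theorem \ref{specialdeg} in this setting yields that the log-Mabuchi energy of $(X, 0)$ is bounded from below, which is exactly the statement that the Mabuchi energy of $X$ in the class $2\pi c_1(X)$ is bounded from below.

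The only step requiring a brief verification is that the definition of $\mM_{\omega,(1-\beta)D}$ collapses at $\beta = 1$ to Mabuchi's original functional; this is immediate from the explicit expression in Section \ref{prel}, where the divisor term carries an overall factor of $(1-\beta)$. I do not anticipate any real obstacle in the proof of this corollary beyond the invocation of the theorem itself: the entire substance of the argument is contained in Theorem \ref{specialdeg}, and the corollary is simply the observation that the conical framework degenerates smoothly to the classical K\"ahler-Einstein framework at the endpoint $\beta = 1$.
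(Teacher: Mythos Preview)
Your proposal is correct and matches the paper's intent: the corollary is stated without proof immediately after Theorem \ref{specialdeg}, and the implicit argument is precisely the specialization to $\beta=1$ (trivial divisor) with the smoothness of $X_0$ rendering the isolated $\mathbb{Q}$-Gorenstein hypothesis vacuous.
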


To prove Theorem \ref{nonexistence small angle}, we need to generalize the K stability obstructions to the conic setting.

\begin{thm}\label{funstable}
If the log-Ding-functional $F_{\omega,(1-\beta)D}$ or the
log-Mabuchi-energy $\mathcal{M}_{\omega,(1-\beta)D}$ is
bounded from below(resp. proper), then the polarized pair
$((X,(1-\beta)D), -K_X)$ is log-K-semistable(resp. log-K-stable).
\end{thm}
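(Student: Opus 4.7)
The plan is to adapt to the log setting the classical implication, due to Tian and Paul-Tian, that lower boundedness (resp.\ properness) of the Mabuchi energy implies K-semistability (resp.\ K-stability). Given a test configuration (not necessarily special) $(\mX,(1-\beta)\mD,\mL)\to\mathbb{A}^1$ for the pair $((X,(1-\beta)D),-K_X)$, the first step is to use the $\C^*$-action to manufacture, from any smooth reference metric on an equivariant resolution of $\mX$, a one-parameter family of conical potentials $\varphi_s\in\hat{\mH}(\omega)$ on $X$ parametrized by $s=-\log|t|^2\in[0,+\infty)$. This is the log-analogue of the Phong-Sturm/Paul-Tian construction of a Bergman ray associated to a test configuration.

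The core of the argument is the asymptotic slope identity
\begin{equation*}
\lim_{s\to+\infty}\frac{\mM_{\omega,(1-\beta)D}(\varphi_s)}{s}\;=\;\Fut\bigl(\mX,(1-\beta)\mD,\mL\bigr),
\end{equation*}
and its analogue for $F_{\omega,(1-\beta)D}$, where the right-hand side is the log-Donaldson-Futaki invariant. One expects this to split as the usual $\Fut(\mX,\mL)$ plus $(1-\beta)$ times the leading Chow-type weight of the induced $\C^*$-action on $H^0(\mD_0,\mL_0|_{\mD_0}^{\otimes k})$. The first summand recovers Paul-Tian's computation applied to the non-log Mabuchi functional; the second comes from the asymptotics of the extra boundary term $-(1-\beta)\int_D \varphi_s\,\omega^{n-1}/(n-1)!$ in $\mM_{\omega,(1-\beta)D}$, extracted via asymptotic Riemann-Roch on $\mD_0$. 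The parallel but simpler identity for $F_{\omega,(1-\beta)D}$ is essentially Berman's \cite{Berm} slope formula, with the divisorial log term accounted for via the weight of the $\C^*$-action on the defining section of $\mD$.

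Granting the slope identity, the conclusions are immediate. A lower bound on $\mM_{\omega,(1-\beta)D}$ on $\hat{\mH}(\omega)$ forces the slope along the ray $\varphi_s$ to be $\geq 0$, which is log-K-semistability. Properness of $\mM_{\omega,(1-\beta)D}$ (or of $F_{\omega,(1-\beta)D}$) in terms of an appropriate log-$J$-functional forces the slope to be strictly positive whenever the test configuration is non-product, since the log-$J$-functional grows linearly in $s$ along any non-product Bergman ray; this gives log-K-stability. The argument for $F_{\omega,(1-\beta)D}$ is parallel, using that the slope of $F$ also computes $\Fut$ (up to a normalization involving $r(\beta)$ carried out in Proposition \ref{interpolate}).

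The main obstacle is the rigorous asymptotic analysis of the two energy functionals along $\varphi_s$: since $\varphi_s$ is only conic along $D$, the requisite integrations by parts must be justified by cutting off a tubular $\epsilon$-neighborhood of $D$, computing the contribution from the complement, and carefully estimating the boundary terms as $\epsilon\to 0$, while simultaneously tracking how the divisorial correction $-(1-\beta)\log\|s_D\|_h^2$ interacts with the $\C^*$-action in a neighborhood of $\mD_0$. Once this regularization is carried out, the identification of the limiting boundary contribution with $(1-\beta)$ times the Chow-type weight on $\mD_0$ reduces to an equivariant Riemann-Roch computation on the central fiber; the remaining bookkeeping is a direct extension of the classical arguments of Tian, Paul-Tian, and Berman.
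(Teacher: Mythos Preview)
Your overall strategy---compute the asymptotic slope of the energy along a Bergman-type ray and identify it with the log-Futaki invariant---is the same as the paper's. However, you misidentify where the work lies.

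The paper works entirely with \emph{smooth} Bergman potentials $\omega_\sigma = \sigma^*\omega_{FS}|_X$ for $\sigma$ in a one-parameter subgroup of $SL(N+1,\C)$; these lie in $\mH(\omega)\subset\hat{\mH}(\omega)$, and the log-Mabuchi energy is perfectly well defined on them via formula \eqref{logMsm}. There is no need for conical potentials, no tubular cutoff near $D$, and no $\epsilon\to 0$ regularization. The divisorial contribution to $\mM_{\omega,(1-\beta)D}$ is just the finite integral $(1-\beta)\bigl(-F^0_{\omega,D}(\phi)+\tfrac{Vol(D)}{Vol(X)}F^0_\omega(\phi)\bigr)$ on a smooth $\phi$. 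So the ``main obstacle'' you describe is not an obstacle at all.

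The actual technical input is Sean Paul's formula (Theorem \ref{paul}) expressing $\mM_\omega(\omega_\sigma/k)$ as a difference of log-norms of the $X$-hyperdiscriminant and the $X$-resultant, together with the analogous identity \eqref{F0chow} for $F^0_{\omega,D}$ in terms of the Chow form $R_D^{(k)}$ of $D$. Combining these gives Corollary \ref{logpaul}, which writes $\mM_{\omega,(1-\beta)D}(\omega_\sigma/k)$ as a linear combination of four log-norm terms, each convex along the one-parameter subgroup. Hence the asymptotic slope exists and equals the log-Futaki invariant \emph{plus a nonnegative correction $a\ge 0$} that is strictly positive precisely when the central fibre $\mX_0$ is generically non-reduced (Proposition \ref{logexpansion}). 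This correction term, which you do not mention, is essential for the properness $\Rightarrow$ log-K-stability direction.

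Finally, your claim that ``properness forces strict positivity because the log-$J$-functional grows linearly along any non-product ray'' is incomplete. The paper's argument (Proposition \ref{Mabuchistable}) splits into three cases according to whether $\mathrm{Osc}(\phi_t)$ blows up and, if not, whether the induced projection $X\to\mX_0$ has degree $>1$. In the bounded-oscillation degree-one case the log-Futaki invariant vanishes and the normalization is a product configuration, matching the polystability clause in Definition \ref{dflgkstb}; in the degree $>1$ case it is the correction $a>0$ (not growth of $J$) that yields $F<0$.
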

\begin{cor}\label{kestable}
\begin{enumerate}
\item
If there exists a conical K\"{a}hler-Einstein metric on
$(X,(1-\beta)D)$, then $((X,(1-\beta)D), -K_X)$ is log-K-semistable.
As a consequence, if $\lambda\geq 1$, then $((X,(1-\beta)D),-K_X)$ is
log-K-semistable for $0\le \beta<1-\lambda^{-1}+\epsilon$ for some $\epsilon>0$
\item
Assume $\lambda\ge 1$ and $0<\beta<1$. If there
exists a conical K\"{a}hler-Einstein metric on $(X,(1-\beta)D)$, then
$((X,(1-\beta)D),-K_X)$ is log-K-stable.
\end{enumerate}
\end{cor}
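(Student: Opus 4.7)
The plan is to obtain both parts as immediate consequences of Theorem \ref{funstable}, together with the auxiliary statements already established earlier in the paper. Each assertion asks us to upgrade the differential-geometric existence of a conical K\"ahler-Einstein metric to an algebro-geometric stability statement, and Theorem \ref{funstable} provides exactly this bridge: one only needs boundedness-from-below (respectively properness) of the log-Ding or log-Mabuchi functional.

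For the first assertion of part (1), the starting point is the conic analogue of Bando-Mabuchi due to Berman \cite{Berm} and recalled in Section 4.3: any conical K\"ahler-Einstein metric on $(X,(1-\beta)D)$ minimizes the log-Ding functional $F_{\omega,(1-\beta)D}$, so in particular $F_{\omega,(1-\beta)D}$ is bounded from below. Theorem \ref{funstable} then yields log-K-semistability of $((X,(1-\beta)D),-K_X)$. For the consequence, assume $\lambda\geq 1$; Theorem \ref{main1} provides a conical K\"ahler-Einstein metric for every $\beta\in(0,1-\lambda^{-1}+\epsilon)$, and we apply the first assertion $\beta$ by $\beta$. The endpoint $\beta=0$ falls into the log canonical regime $r(0)=1-\lambda\leq 0$, where the required lower bound is classical; alternatively one can argue by continuity in $\beta$, using the affine (resp.\ concave) dependence of the log-Mabuchi and normalized log-Ding functionals supplied by Proposition \ref{interpolate} together with the continuity of the log-Donaldson-Futaki invariant in $\beta$.

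For part (2), the inputs are the same but slightly stronger: under $\lambda\geq 1$ and $0<\beta<1$, Corollary \ref{conicproper} promotes the above lower bound on the log-Mabuchi energy $\mM_{\omega,(1-\beta)D}$ to properness. Feeding properness into Theorem \ref{funstable} immediately gives log-K-stability of $((X,(1-\beta)D),-K_X)$.

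The only ingredient that is not formal is Berman's minimization property in the conic setting, which converts existence of a conical K\"ahler-Einstein metric into a lower bound on the log-Ding functional; this is the main external input, replacing the classical Bando-Mabuchi theorem used in the smooth case. Beyond that, the only minor technical point worth spelling out is the endpoint $\beta=0$ in the consequence of part (1), since at $\beta=0$ the conical metric degenerates to a cusp and one must justify the passage to the limit in the definition of log-K-semistability; this is handled either by the classical log canonical argument or by the interpolation/continuity principle of Proposition \ref{interpolate}.
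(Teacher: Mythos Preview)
Your proposal is correct and follows essentially the same approach as the paper: derive semistability from the lower bound on the log-Ding (or log-Mabuchi) functional via Theorem \ref{funstable}, using the minimization result Theorem \ref{kecrit} (attributed in the paper to Berndtsson \cite{Bern3} rather than Berman) for part (1), and Corollary \ref{conicproper} for the properness needed in part (2). Your attention to the endpoint $\beta=0$ is a valid observation that the paper does not explicitly address.
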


Theorem \ref{conic} is proved by the above ``interpolation-degeneration" method. We first use Theorem \ref{nonexistence small angle} to show $E(X, D)\subset [1/4,1]$. Then we find an explicit special degeneration of $(X, 3/4 D)$ to $(\cP(1,1,4), 3/4D_0)$ where $D_0=\{z_3=0\}$ which admits the obvious conical K\"ahler-Einstein metric. Since $X$ itself admits a K\"ahler-Einstein metric, Theorem \ref{conic} follows from the interpolation. A technical point is that we do not get the full properness of Ding functional due to presence of holomorphic vector fields. For details, see Section \ref{P2case}.

The organization of the paper is as follows. In section \ref{prel},
we review some preliminary materials, including definition of H\"older norms with respect to conical K\"ahler metrics, various energy functionals, existence theory for conical K\"ahler-Einstein metrics. We prove Proposition \ref{interpolate} in Section \ref{energy functionals}, and prove Theorem \ref{main1} and Corollary \ref{conicproper} in Section \ref{small angles}.
In Section \ref{obstruction}, we explain the
obstructions to the existence of conical K\"{a}hler-Einstein
metrics. In particular, we prove Theorem \ref{funstable} and its
corollary \ref{kestable}, and Theorem
\ref{nonexistence small angle}. In Section \ref{degKE}, we prove Theorem \ref{specialdeg}.
In Section \ref{P2case}, we prove Theorem
\ref{conic} and obtain Corollary \ref{CYcone}. In Section  \ref{brcover}, we discuss the construction of smooth K\"{a}hler-Einstein metrics using branch
covers.
 In section \ref{convke}, we prove Corollary \ref{maincor}, and prove the general convergence modulo one technical point. This
is done by carrying out bifurcation analysis first used by
Bando-Mabuchi.

After finishing the draft of this paper, we received the paper by
Jiang Song and Xiaowei Wang \cite{SW}. In the last section
\ref{relSW}, we discuss the relation of their work to our paper.
Their results have some overlaps with ours. But mostly importantly
from our point of view, they proved an existence result in the toric
case. The conical  K\"{a}hler-Einstein spaces they obtained can serve
as the degeneration limits of toric Fano manifolds with some smooth
pluri-anticanonical divisors. So combining their existence result in
the toric case with the strategy in this paper, we show, in the
toric case, a version of Donaldson's conjecture which relates the
maximal cone angle and the greatest lower bound of Ricci curvature.
To state this result, first define
\begin{equation}\label{R(X)}
R(X)=\sup\{t|\exists\ \omega\in 2\pi c_1(X) \mbox{ such that }
Ric(\omega)\ge t\omega \}.
\end{equation}
\begin{prop}\label{Doconj}
For each $\lambda$ sufficiently divisible, there exists a sub-linear
system $\mathscr{L}_\lambda$ of $|-\lambda K_X|$ such that for any
general member $D\in\mathscr{L}_\lambda$, if $D$ is smooth, then
there exists a conical K\"{a}hler-Einstein metric on
$(X,(1-\gamma)\lambda^{-1}D)$ if and only if $\gamma\in (0,R(X))$.
\end{prop}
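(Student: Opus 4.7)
The plan is to combine Song--Wang's existence theorem for toric-invariant conical K\"ahler-Einstein metrics with the interpolation-degeneration machinery developed earlier in the paper. For $\lambda$ sufficiently divisible, I would define $\mathscr{L}_\lambda \subset |-\lambda K_X|$ to be a sub-linear system containing a distinguished toric-invariant member $D_0$ for which Song--Wang's existence applies, arranged so that a general member $D \in \mathscr{L}_\lambda$ degenerates to $D_0$ under a one-parameter subgroup of the torus. Concretely, $\mathscr{L}_\lambda$ can be spanned by suitable $T$-eigensections of $H^0(X,-\lambda K_X)$ so that the resulting flat family $\mathcal{D}\subset X\times\mathbb{A}^1$, with general fiber $D$ and central fiber $D_0$, furnishes a special degeneration of $(X,(1-\beta)D)$ to $(X,(1-\beta)D_0)$; the total space $X\times\mathbb{A}^1$ is smooth, so the isolated $\mathbb{Q}$-Gorenstein hypothesis of Theorem \ref{specialdeg} is automatic.

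For the ``if'' direction, fix $\gamma\in(0,R(X))$ and set $\beta=1-(1-\gamma)\lambda^{-1}$; choose an auxiliary $\gamma_1\in(\gamma,R(X))$ with corresponding $\beta_1>\beta$. By Song--Wang, $(X,(1-\beta_1)D_0)$ admits a toric-invariant conical K\"ahler-Einstein metric, so Theorem \ref{specialdeg} applied to the above special degeneration yields a lower bound of the log-Mabuchi energy of $(X,(1-\beta_1)D)$. At the other end, Theorem \ref{main1} together with Corollary \ref{conicproper} produces a $\beta^*\in(0,1-\lambda^{-1}+\epsilon)$ (so $\beta^*<\beta<\beta_1$) at which the log-Mabuchi energy of $(X,(1-\beta^*)D)$ is already proper. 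Since the log-Mabuchi energy is linear in $\beta$ on $\hat{\mH}(\omega)$ by Proposition \ref{interpolate}, interpolating between $\beta^*$ and $\beta_1$ gives properness at $\beta$, which then produces the required conical K\"ahler-Einstein metric on $(X,(1-\gamma)\lambda^{-1}D)$.

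For the ``only if'' direction, suppose a conical K\"ahler-Einstein metric $\omega$ exists with parameter $\gamma$. The equation gives $\mathrm{Ric}(\omega)=\gamma\omega+2\pi(1-\gamma)\lambda^{-1}\{D\}\geq\gamma\omega$ as positive $(1,1)$-currents. A standard smoothing argument, obtained by regularizing the potential of $\omega$ by smooth K\"ahler potentials in $2\pi c_1(X)$, produces smooth approximants $\omega_\delta$ with $\mathrm{Ric}(\omega_\delta)\geq (\gamma-\delta)\omega_\delta$ for every $\delta>0$, so $R(X)\geq\gamma$. The nontriviality of the extra positive current $(1-\gamma)\lambda^{-1}\{D\}$ then promotes this to the strict inequality $R(X)>\gamma$ through a further localized smoothing in a tubular neighborhood of $D$.

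The main technical obstacle is the construction of $\mathscr{L}_\lambda$ compatibly with Song--Wang's toric conical K\"ahler-Einstein variety $(X,(1-\beta)D_0)$: one must arrange $D_0$ to be precisely the divisor appearing in Song--Wang's existence theorem, and then verify that Theorem \ref{specialdeg} still applies even when the central fiber divisor $D_0$ is (potentially highly) singular, since the hypothesis in the current statement is on the total space $\mathcal{X}_0$ rather than on $\mathcal{D}_0$. In the toric setting with $X$ fixed this should be feasible by exploiting the torus action to place everything in a single equivariant framework, but this matching between the analytic existence on the toric limit and the algebraic degeneration of the smooth $D$ is the key point to be nailed down.
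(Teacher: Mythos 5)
Your overall strategy coincides with the paper's (degenerate a general member of a toric-equivariant sub-linear system to Song--Wang's toric conical K\"ahler-Einstein pair, apply Theorem \ref{specialdeg}, then interpolate with the small-angle properness), but as written there are two genuine gaps. First, in the ``if'' direction you invoke Song--Wang at an auxiliary angle $\gamma_1\in(\gamma,R(X))$ for a \emph{fixed} toric member $D_0$. Song--Wang's toric theorem produces a conical K\"ahler-Einstein metric on $(X,(1-\gamma_1)D_{P_{\gamma_1}})$, where the toric divisor $D_{P_{\gamma_1}}$ itself depends on $\gamma_1$; only at the endpoint $\gamma=R(X)$ is the divisor the fixed one $D_Q$, with $Q$ the point where the ray from the barycenter through the origin exits $\partial\triangle$. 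So either your claim at $\gamma_1<R(X)$ is unjustified (for fixed $D_0$), or $D_0$, hence $\mathscr{L}_\lambda$, would have to vary with $\gamma$, which the statement does not allow. The paper's resolution is to use only the endpoint: take $D_0=\lambda D_Q$, define $\mathscr{L}_\lambda$ as the span of the torus eigensections $s_P^\lambda$ with $P\in\{Q\}\cup\bigl((\overline{\triangle}\setminus\overline{\mathcal{F}_Q})\cap\tfrac{1}{\lambda}\mathbb{Z}^n\bigr)$, so that $Q$ is a vertex of the convex hull of the chosen points; then a one-parameter subgroup of the torus degenerates a general $D$ to $\lambda D_Q$ inside $\mathcal{X}=X\times\mathbb{C}$, Theorem \ref{specialdeg} (here only the trivial geodesic and Berndtsson's convexity are needed, since the total space is a product and the central fiber is $X$ itself --- this also disposes of your worry about the singular $\mathcal{D}_0$) gives the lower bound of the log-Ding/log-Mabuchi energy at the angle corresponding to $\gamma=R(X)$, and interpolation with the small-angle properness yields existence for all $\gamma\in(0,R(X))$. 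This explicit construction is the actual content of the proposition and cannot be left as ``should be feasible.''

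Second, the ``only if'' direction does not hold up as argued. Mollifying the potential of the conical metric gives no lower bound on the Ricci curvature of the regularized metric; the inequality $R(X)\ge\gamma$ is obtained in the paper (following \cite{SW}, \cite{Li12}) by comparing the twisted Ding energy with the log-Ding energy, not by smoothing the metric. More seriously, your ``further localized smoothing in a tubular neighborhood of $D$'' to get the strict inequality $R(X)>\gamma$ is not a proof: the extra positivity $(1-\gamma)\lambda^{-1}\{D\}$ is a current supported on a measure-zero set, and spreading it out while staying in $2\pi c_1(X)$ with a strictly improved Ricci bound is exactly the delicate point. The paper handles the borderline case $\gamma=R(X)$ by Donaldson's openness theorem (Theorem \ref{DoIFT}): since $\lambda\ge 1$ there are no holomorphic vector fields tangent to $D$, so a conical K\"ahler-Einstein metric at $\gamma=R(X)$ would produce one at some $\gamma'>R(X)$, contradicting the non-existence already established for $\gamma'\in(R(X),1)$.
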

\begin{rem}
The smoothness assumption is easily satisfied when $\rm{dim}(X)\le
2$. It seems to be guaranteed by choosing $\mathscr{L}_\lambda$ more
carefully. See the discussion in Remark \ref{smooth}. In general, if
$D$ is not smooth, then there exists a weak solution (i.e. bounded
solution) to the conical K\"{a}hler-Einstein equation.
\end{rem}
The idea of the proof is similar to the proof of Theorem
\ref{conic}, again using the ``interpolation-degeneration" method. The sublinear system $\mathscr{L}_\lambda$ we
construct has the property that for each general member $D$ in
$\mathscr{L}_\lambda$, $(X,D)$ has a degeneration to the toric conic
K\"{a}hler-Einstein pair $(X,D_0)$ constructed by Song-Wang.

The interpolation properties of energy functionals obtained in this
paper seem to be known to some other experts in the field too. In
particular, we were informed by Professor Arezzo that he also
observed this.\\

\textbf{Acknowledgement}: We are indebted to Dr. H-J. Hein for
pointing out Corollary \ref{CYcone}, and related interesting
discussions. We are grateful to Professor Jian Song and Xiaowei Wang
for sending their paper to us. Their existence result in the toric
case is a main impetus for us to write the last section in this
paper. We also thank them for pointing out our first incorrect
argument showing that generic divisor appearing in the proof of
Proposition \ref{Doconj} is smooth.  We thank Professor Xiuxiong
Chen, Professor S.K. Donaldson, Professor Gang Tian for encouraging
discussions. The second author would like to thank Yanir Rubinstein
for helpful discussions. Part of this work was done during Chi Li's
visit to Imperial College and University of Cambridge in UK.  He
would like to thank Professor S.K.Donaldson and Professor Julius
Ross for the invitation which made this joint work possible.

\section{Existence theory on conical K\"ahler-Einstein metrics}\label{prel}

\subsection{Space of admissible potentials}\label{conicspace}

In this paper, all K\"ahler metrics would be in the first Chern class $c_1(X)$.
\begin{defn}
\begin{enumerate}
\item
A conical K\"{a}hler metric on $(X,(1-\beta)D)$ is a K\"{a}hler
current $\omega$ in the class $c_1(X)$ with locally bounded potential, smooth on $X\setminus D$, and for any point $p\in D$, there is a local coordinate $\{z_i\}$ in a
neighborhood of $p$ such that $D=\{z_1=0\}$ such that  $\omega$ is
quasi-isometric to the model metric:
\[
\frac{dz_1\wedge d\bar{z}_1}{|z_1|^{2(1-\beta)}}+\sum_{i=2}^n
dz_i\wedge d\bar{z}_i.
\]
Geometrically, $\omega$ represents a K\"{a}hler metric with cone singularities along $D$ of  angle $2\pi\beta$.
\item
A conical K\"{a}hler-Einstein metric on $(X,(1-\beta)D)$ is a conical
K\"{a}hler metric  solving the  equation
\begin{equation*}
Ric(\omega)=r(\beta)\omega+2\pi(1-\beta)\{D\}.
\end{equation*}
Here $\{D\}$ is the current of integration on $D$, and $
r(\beta)=1-(1-\beta)\lambda$.
\end{enumerate}
 \end{defn}
 Now we follow Donaldson \cite{Do11} to defined the H\"older norm with respect to conical metric.
 let $(z_1, z_2,\cdots, z_n)$ be the coordinates near a point in $D$ as chosen above. Let $z=re^{i\theta}$ and let  $\rho=r^{\beta}$. The model metric is
 \begin{eqnarray*}
\omega=
(d\rho+\sqrt{-1}\beta\rho d\theta)\wedge(d\rho-\sqrt{-1}\beta\rho
d\theta)+\sum_{j>1}dz_j\wedge d\bar{z}_j
\end{eqnarray*}
Let $\epsilon=e^{\sqrt{-1}\beta\theta}(d\rho+\sqrt{-1}\beta\rho d\theta)$, we can write
\begin{equation}\label{compconic}
\omega=\sqrt{-1}\left(f
\epsilon\wedge\bar{\epsilon}+f_{\bar{j}}\epsilon\wedge
d\bar{z}_j+f_jdz_j\wedge\bar{\epsilon}+f_{i\bar{j}}dz_i\wedge
d\bar{z}_j\right)
\end{equation}
\begin{defn}
\begin{enumerate}
\item
A function $f$ is in $C^{,\gamma,\beta}(X,D)$ if $f$ is $C^\gamma$ on $X\setminus D$, and  locally near each point in $D$, $f$ is $C^{\gamma}$ in the coordinate $(\hat{\zeta}=\rho
e^{i\theta}=z_1|z_1|^{\beta-1}, z_j)$.

\item
A (1,0)-form $\alpha$ is in
$C^{,\gamma,\beta}(X,D)$ if $\alpha$ is $C^\gamma$ on $X\setminus D$ and  locally near each point in $D$, we have
$\alpha=f_1\epsilon+\sum_{j>1}f_j dz_j$ with $f_i\in C^{,\gamma,\beta}$ for $1\le i\le n$,
and $f_1\rightarrow 0$ as $z_1\rightarrow 0$.

\item A (1,1)-form $\omega$ is in $C^{,\gamma,\beta}(X, D)$ if $ \omega$ is $C^\gamma$ on $X\setminus D$ and near each point in $D$ we can write  $\omega$ as \eqref{compconic} $f, f_j, f_{\bar{j}}, f_{i\bar{j}}\in
C^{,\gamma,\beta}$, and $f_j, f_{\bar{j}}\rightarrow 0$ as
$z_1\rightarrow 0$.

\item A function $f$ is in $C^{2,\gamma,\beta}(X,D)$ if $f, \partial f,
\partial\bar{\partial}f$ are all in $C^{, \gamma,\beta}$.
\end{enumerate}
\end{defn}

It is easy to see that the above definitions do not depend on the particular choice of local complex chart. Donaldson set up the linear theory in \cite{Do11}.
\begin{prop}(\cite{Do11})\label{DoFred}
If $\gamma<\mu=\beta^{-1}-1$, then the inclusion
$C^{2,\gamma,\beta}(X, D)\rightarrow C^{,\gamma,\beta}(X, D)$ is compact. If
$\omega$ is a $C^{,\gamma,\beta}$ K\"{a}hler metric on $(X,D)$ then
the Laplacian operator for  $\omega$ defines a Fredholm map $\Delta_{\omega}:
C^{2,\gamma,\beta}(X, D)\rightarrow C^{,\gamma,\beta}(X, D)$.
\end{prop}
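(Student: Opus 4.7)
The plan is to follow Donaldson's strategy in \cite{Do11}: reduce the problem to the model cone $\mathbb{C}_\beta \times \mathbb{C}^{n-1}$, develop the linear theory for the flat model Laplacian, and then pass to the general metric $\omega$ by a parametrix patching. The compactness statement is easier and will be handled first, while the Fredholm statement requires genuine harmonic analysis on the cone.

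For the compact embedding, the point is that in the distinguished coordinate $\hat{\zeta} = z_1|z_1|^{\beta-1}$, the space $C^{,\gamma,\beta}$ is just the usual H\"older space in the Euclidean variables $(\hat{\zeta}, z_2, \dots, z_n)$, and $C^{2,\gamma,\beta}$ controls the $C^{,\gamma}$ norms of the function together with $\partial f$ and the components of $\partial\bar\partial f$ written in the frame $\epsilon, dz_j$. Locally the inclusion therefore reduces to the classical compactness $C^{1,\gamma} \hookrightarrow C^{0,\gamma}$ on a Euclidean ball (Arzel\`a--Ascoli). A finite cover of $X$ and a partition of unity globalise this, and the vanishing conditions on the mixed transverse components of $\partial f$ and $\partial\bar\partial f$ along $D$ are preserved under $C^{0}$ convergence.

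The Fredholm claim is the heart of the matter. One first analyses the flat model Laplacian $\Delta_0$ on $\mathbb{C}_\beta \times \mathbb{C}^{n-1}$. Expanding in the angular Fourier modes $e^{ik\theta}$ and Mellin transforming in the radial variable $\rho$, $\Delta_0$ becomes multiplication by an indicial polynomial whose roots are the critical exponents $\pm k/\beta$, and its inverse is realised by an explicit Green's kernel. Tracking how this kernel maps the H\"older scale, one finds that the first resonance obstructing full $C^{2,\gamma,\beta}$ regularity sits at the exponent $\mu = \beta^{-1}-1$; for $\gamma < \mu$ one obtains an isomorphism $\Delta_0 : C^{2,\gamma,\beta} \to C^{,\gamma,\beta}$ on suitable model balls, together with the required vanishing of the mixed transverse components of the Hessian along the cone axis.

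Finally one passes to the curved case by a standard parametrix construction. On small balls around each point of $D$ one freezes coefficients and inverts $\Delta_0$ to obtain an approximate inverse modulo a small error, while away from $D$ one uses the classical interior Schauder theory. Gluing with a partition of unity yields operators $P_L, P_R$ with $\Delta_\omega P_R = I + K$ and $P_L \Delta_\omega = I + K'$, where the error operators $K, K'$ factor through the compact embedding established above and are therefore compact; this forces $\Delta_\omega$ to have closed range, finite dimensional kernel and finite codimensional image, hence to be Fredholm. The main obstacle in this program is the sharp H\"older analysis of the model Green's kernel up to the cone tip, which is where the threshold $\gamma < \beta^{-1}-1$ is genuinely needed; the surrounding parametrix patching and the compactness step are comparatively routine elliptic theory.
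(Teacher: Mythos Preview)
The paper does not give its own proof of this proposition: it is stated with the citation \cite{Do11} and attributed to Donaldson's linear theory, so there is no in-paper argument to compare against. Your outline is a faithful summary of Donaldson's actual approach in \cite{Do11}---compactness via the change of variable $\hat\zeta=z_1|z_1|^{\beta-1}$ reducing to ordinary Arzel\`a--Ascoli, Schauder estimates for the flat model Laplacian on $\mathbb{C}_\beta\times\mathbb{C}^{n-1}$ using separation of variables in the cone factor (where the threshold $\gamma<\beta^{-1}-1$ enters), and then a parametrix patching to pass to a general $C^{,\gamma,\beta}$ metric---so as a reconstruction of the cited proof it is on target.
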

In order to consider the conical K\"{a}hler-Einstein metrics for
different cone angles at the same time, we define the following
space
\begin{defn}\label{admissible}
Fix a smooth metric $\omega_0$ in $c_1(X)$, we define the space of admissible functions to be
\[
\hat{\mathcal{C}}(X,D)=C^{2,\gamma}(X)\cup\bigcup_{0<\beta<1}\left(\bigcup_{0<\gamma<\beta^{-1}-1}C^{2,\gamma,\beta}(X,D)\right),
\]
and the space of admissible K\"ahler potentials to be
\[
\hat{\mH}(\omega_0)=\{\phi\in \hat{\mathcal{C}}(X,D)|\omega_\phi:=
\omega_0+\sddbar\phi>0\}.
\]
\end{defn}
It is clear that $\hat{\mH}(\omega_0)$ includes the space of smooth K\"ahler potentials
\[
\mH(\omega_0)=\{\phi\in C^{\infty}(X)|
\omega_0+\sddbar\phi>0\},
\]
and is contained in the bigger space of bounded $\omega_0$-plurisubharmonic functions $PSH_\infty(\omega_0)=PSH(\omega_0)\cap L^\infty$. By modulo constants the space of admissible K\"ahler metrics corresponding to $\mH(\omega_0)$ consists exactly $C^{\gamma, \beta}$ K\"ahler metrics on $(X, D)$.

We will need the following fundamental openness theorem proved by
Donaldson.
\begin{thm}[\cite{Do11}]\label{DoIFT}
Let $\beta_0\in(0,1)$, $\alpha<\mu_0=\beta_0^{-1}-1$ and suppose
there is a $C^{2, \alpha, \beta_0}$ conical K\"{a}hler-Einstein on $(X,(1-\beta_0)D)$.
If there is no nonzero holomorphic vector fields on $X$
tangent to $D$ then for $\beta$ suifficiently close to $\beta_0$
there is a $C^{2, \alpha, \beta}$ conical K\"{a}hler-Einstein metric on $(X,(1-\beta)D)$.
\end{thm}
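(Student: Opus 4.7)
The plan is to solve the conical K\"ahler-Einstein equation as a perturbation of the known solution at $\beta_0$ via the implicit function theorem, working in Donaldson's H\"older spaces $C^{2,\alpha,\beta}$.

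First I would rewrite the equation in Monge-Amp\`ere form. Let $s_D$ be the defining section of $D$ equipped with a smooth Hermitian metric $h_D$, and fix a smooth background K\"ahler metric $\omega_{\rm sm}\in c_1(X)$. For $\beta$ near $\beta_0$, the reference metric
\[
\omega_\beta=\omega_{\rm sm}+\delta\sddbar|s_D|_{h_D}^{2\beta}
\]
(for a small fixed $\delta>0$) is a $C^{,\alpha,\beta}$ conical K\"ahler metric on $(X,(1-\beta)D)$. The equation $(*)$ with unknown $\phi$ becomes, up to a normalization constant in $h_\beta$,
\[
F(\phi,\beta):=\log\frac{(\omega_\beta+\sddbar\phi)^n}{\omega_\beta^n}+r(\beta)\phi-h_\beta=0,
\]
where $h_\beta$ is a $C^{,\alpha,\beta}$ Ricci potential chosen so that the existing solution at $\beta_0$ corresponds to $\phi=\phi_0$. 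The $2\pi(1-\beta)\{D\}$ term is absorbed into the reference $\omega_\beta$.

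Next I would set up the Banach spaces. For each $\beta$, $F(\cdot,\beta)$ maps a neighborhood of $\phi_0$ in $C^{2,\alpha,\beta}(X,D)$ to $C^{,\alpha,\beta}(X,D)$, provided $\alpha<\beta^{-1}-1$, which holds on a neighborhood of $\beta_0$ by choosing $\alpha<\mu_0$. The delicate point is that the domain and codomain vary with $\beta$; I would handle this by trivializing the family of spaces, using the change of variables $\hat\zeta=z_1|z_1|^{\beta-1}$, or equivalently by transplanting $\phi$ along a smooth family of diffeomorphisms identifying the regular parts; in this framework $F$ becomes a smooth map between fixed Banach spaces and $\beta$.

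The main step is then to compute the linearization at $(\phi_0,\beta_0)$, which is
\[
L(\psi)=\Delta_{\omega_{\phi_0}}\psi+r(\beta_0)\psi,
\]
with $\omega_{\phi_0}$ the given conical K\"ahler-Einstein metric. By Proposition \ref{DoFred}, $L$ is Fredholm of index zero on $C^{2,\alpha,\beta_0}\to C^{,\alpha,\beta_0}$. I would then identify $\ker L$: by a Bochner-type argument adapted to the conical setting (integration by parts is justified on $X\setminus D$ using the bounded geometry of the conical metric near $D$, and the boundary integral vanishes because $\omega_{\phi_0}$ is a genuine K\"ahler-Einstein current with $\phi_0\in C^{2,\alpha,\beta_0}$), any $\psi\in\ker L$ gives rise to a holomorphic vector field $V=\nabla^{1,0}\psi$ on $X\setminus D$. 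The regularity $\psi\in C^{2,\alpha,\beta_0}$ forces $V$ to extend to a holomorphic vector field on $X$ that is tangent to $D$ (the tangency being encoded in the vanishing condition $f_1\to 0$ in the definition of $C^{,\alpha,\beta}$-forms). By hypothesis there are no such nontrivial vector fields, so $\ker L=0$ and $L$ is an isomorphism.

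The implicit function theorem then yields, for $\beta$ in a neighborhood of $\beta_0$, a unique $\phi_\beta\in C^{2,\alpha,\beta}$ close to $\phi_0$ with $F(\phi_\beta,\beta)=0$, giving the desired conical K\"ahler-Einstein metric on $(X,(1-\beta)D)$. The hardest part I expect is the identification of $\ker L$ with $\mathrm{Lie}\,\mathrm{Aut}(X,D)$: one must combine the conical regularity theory of Donaldson with a careful integration-by-parts to ensure the Bochner formula closes up, and verify that the decay $f_1\to 0$ in the definition of $C^{,\alpha,\beta}$ really does pin down tangency of $V$ to $D$ rather than merely boundedness.
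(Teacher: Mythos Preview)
The paper does not give its own proof of this theorem: it is quoted from \cite{Do11} as Donaldson's openness result and used as a black box. Your sketch is essentially Donaldson's argument---rewrite the equation in Monge--Amp\`ere form with a $\beta$-dependent reference metric, linearize to $\Delta_{\omega_{\phi_0}}+r(\beta_0)$, use the Fredholm theory of Proposition~\ref{DoFred}, and identify the kernel with holomorphic vector fields tangent to $D$ via a conical Bochner argument---so there is no discrepancy to discuss. The one point you flag as delicate, namely the varying function spaces $C^{2,\alpha,\beta}$ as $\beta$ changes, is genuinely the main technical issue in \cite{Do11}; Donaldson handles it not by a diffeomorphism trivialization but by constructing explicit right inverses on the model wedge and showing these depend continuously on $\beta$, so your proposed fix is a bit optimistic but the overall strategy is correct.
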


\subsection{Energy functionals} \label{energy functionals}

In the analytic study of K\"ahler-Einstein metrics, people have defined various functionals. We need to extend them to $\hat{\mH}(\omega_0)$. More generally some functionals extend even to $PSH(\omega_0)\cap L^\infty(X)$.

\begin{defn}\label{defF0IJ}
For any $\phi\in PSH_\infty(\omega_0)$, we define the functionals
\begin{enumerate}
\item
\[
F_{\omega_0}^0(\phi)=-\frac{1}{(n+1)!}\sum_{i=0}^n\int_X\phi\omega_\phi^i\wedge\omega_0^{n-i}
\]
\item
\[
J_{\omega_0}(\phi)=F_{\omega_0}^0(\phi)+\int_X\phi\omega_0^n/n!
\]
\item
\[
I_{\omega_0}(\omega_\phi)=\int_X
\phi(\omega_0^n-\omega_{\phi}^n)/n!,
\]
\end{enumerate}
\end{defn}
By pluripotential theory the above are well-defined functionals. The
following facts are also well known.
\begin{prop}
\begin{enumerate}
\item If $\phi_t$ is a smooth path in $\mH(\omega)$, then
\begin{equation}\label{derF0}
\frac{d}{dt}F_\omega^0(\phi_t)=-\int_X\dot{\phi}\omega_\phi^n/n!,
\end{equation}
\item
\begin{equation}\label{IJrel}
\frac{n+1}{n}J_\omega(\phi)\le I_\omega(\phi)\le
(n+1)J_\omega(\phi),
\end{equation}
\end{enumerate}
\end{prop}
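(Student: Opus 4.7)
The plan is to establish both parts by standard manipulations: differentiating under the integral sign together with integration by parts for (1), and the algebraic identity
$$\omega^{k}-\omega_\phi^{k}=-\sddbar\phi\wedge\sum_{j=0}^{k-1}\omega^{k-1-j}\wedge\omega_\phi^{j}$$
combined with integration by parts for (2). Since $\phi_t\in\mH(\omega)$ is a smooth path of smooth potentials on a closed K\"ahler manifold, there are no regularity issues and every boundary term from Stokes' theorem vanishes.

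For (1), I would differentiate each summand in the definition of $F^0_\omega$. Using $\frac{d}{dt}\omega_\phi^{i}=i\,\sddbar\dot\phi\wedge\omega_\phi^{i-1}$ gives
\begin{align*}
\frac{d}{dt}\int_X\phi\,\omega_\phi^{i}\wedge\omega^{n-i}
&=\int_X\dot\phi\,\omega_\phi^{i}\wedge\omega^{n-i}+i\int_X\phi\,\sddbar\dot\phi\wedge\omega_\phi^{i-1}\wedge\omega^{n-i}\\
&=\int_X\dot\phi\,\omega_\phi^{i}\wedge\omega^{n-i}+i\int_X\dot\phi\,(\omega_\phi-\omega)\wedge\omega_\phi^{i-1}\wedge\omega^{n-i},
\end{align*}
where the second equality uses integration by parts on the closed manifold $X$ together with $\sddbar\phi=\omega_\phi-\omega$. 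Summing over $i$ from $0$ to $n$, the cross terms telescope and the coefficient of $\int_X\dot\phi\,\omega_\phi^{n}$ collects to $n+1$, so restoring the prefactor $-1/(n+1)!$ gives precisely $-\frac{1}{n!}\int_X\dot\phi\,\omega_\phi^{n}$, as claimed.

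For (2), I would rewrite both functionals in terms of the manifestly nonnegative quantities
$$A_j=\int_X\sqrt{-1}\partial\phi\wedge\bar\partial\phi\wedge\omega^{n-1-j}\wedge\omega_\phi^{j},\qquad 0\le j\le n-1,$$
which are nonnegative since $\sqrt{-1}\partial\phi\wedge\bar\partial\phi$ is a nonnegative $(1,1)$-form and the complementary factor is a positive $(n-1,n-1)$-form. Applying the identity displayed above and integrating by parts once (turning $\int\phi\,\sddbar\phi\wedge\cdot$ into $-\int\sqrt{-1}\partial\phi\wedge\bar\partial\phi\wedge\cdot$) yields the clean expressions
$$I_\omega(\phi)=\frac{1}{n!}\sum_{j=0}^{n-1}A_j,\qquad (n+1)J_\omega(\phi)=\frac{1}{n!}\sum_{j=0}^{n-1}(n-j)\,A_j,$$
where the second identity uses that in the double sum $\sum_{i=1}^{n}\sum_{j=0}^{i-1}A_j$ each fixed $j$ appears $n-j$ times.

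Both inequalities then drop out from $A_j\ge 0$: one has
$$(n+1)J_\omega(\phi)-I_\omega(\phi)=\frac{1}{n!}\sum_{j=0}^{n-1}(n-j-1)\,A_j\ge 0,$$
and
$$n\,I_\omega(\phi)-(n+1)J_\omega(\phi)=\frac{1}{n!}\sum_{j=0}^{n-1}j\,A_j\ge 0.$$
I do not anticipate any real obstacle; the only item that requires care is the bookkeeping in the telescoping sum for (1) and the double-index rearrangement for $J$ in (2).
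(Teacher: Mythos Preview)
Your proof is correct and entirely standard. The paper itself does not supply a proof of this proposition: it introduces it with ``The following facts are also well known'' and moves on immediately, so there is nothing to compare against. Your telescoping computation for (1) and the decomposition into the nonnegative quantities $A_j$ for (2) are exactly the classical arguments one finds in any treatment of the Aubin--Yau functionals.
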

\begin{rem}\label{BCF0}
Equation \eqref{derF0} tells us that $F_\omega^0(\phi)$ is the
integral of Bott-Chern form. If we let $h$ be the Hermitian metric
on $K_X^{-1}$ such that $\omega_h:=-\sddbar\log h=\omega$. Denote
$h_\phi=h e^{-\phi}$. Connect $h$ and $h_\phi$ by any path $h_t=h
e^{-\phi_t}$. The corresponding path of curvature forms
$\omega_t=\omega_{h_t}=\omega+\sddbar\phi_t$ connects $\omega$ and
$\omega_\phi$. The Bott-Chern form is defined by
\[
BC\left(c_1(K_X^{-1})^{n+1}; h,h _{\phi}\right)=-\int_0^1 dt
(n+1)h_t^{-1}\dot{h}_t c_1(K_X^{-1},h_t)^{n}
=(n+1)\int_0^1dt\dot{\phi}\omega_t^{n}.
\]
So we have the following identify which we will use in Section \ref{pfspecial}:
\[
F_\omega^0(\phi)=-\frac{1}{(n+1)!}\int_X
BC\left(c_1(K_X^{-1})^{n+1}; h,h _{\phi}\right).
\]
\end{rem}
Recall there are two functionals whose Euler-Lagrange equation is the K\"{a}hler-Einstein equation: the Ding energy and the Mabuchi energy. We now extend their definition to $\hat{\mH}(\omega_0)$. For the smooth metric $\omega_0$ in $c_1(X)$, define
the twisted Ricci potential $H_{\omega_0,
(1-\beta)D}$ by
\[
Ric(\omega_0)-r(\beta)\omega_0-2\pi(1-\beta)\{D\}=\sddbar
H_{\omega_0,(1-\beta)D},\quad \int_X e^{H_{\omega_0,
(1-\beta)D}}\frac{\omega_0^n}{n!}=\int_X\frac{\omega_0^n}{n!}
\]
It is easy to see that up to a constant $H_{\omega_0,
(1-\beta)D}=h_{\omega_0}-(1-\beta)\log|s_D|^2$, where $h_{\omega_0}$
is the usual Ricci potential of $\omega_0$, defined by the following
equation:
\[
Ric(\omega_0)-\omega_0=\sddbar h_{\omega_0},\quad \int_X
e^{h_{\omega_0}}\frac{\omega_0^n}{n!}=\int_X\frac{\omega_0^n}{n!}.
\]
$|s_D|^2$ is the norm of the defining section of $D$ under the
Hermitian metric on $-K_X$ satisfying
$-\sddbar\log|s_D|^2=\omega_0$. We will use the following definition of volume in this paper:
\[
Vol(X)=\int_X\frac{\omega^{n}}{n!}=(2\pi)^n\frac{\langle c_1(K_X^{-1})^n, [X]\rangle}{n!},
\quad Vol(D)=\int_D\frac{\omega^{n-1}}{(n-1)!}=(2\pi)^{n-1}\frac{\langle c_1(K_X^{-1})^{n-1},[D]\rangle}{(n-1)!}.
\]
We first generalize the Mabuchi-energy and Ding-energy to the
conical setting. In the next section we will show that
log-Mabuchi-energy integrates log-Futaki invariant.
\begin{defn}\label{deflogfunc}
\begin{enumerate}
\item(log-Mabuchi-energy) For any $\phi\in \hat{\mH}_{\omega_0}$
\begin{eqnarray*}
\mM_{\omega_0, (1-\beta)D}(\omega_\phi)&=&
\int_X\log\frac{\omega_\phi^n}{e^{H_{\omega_0,(1-\beta)D}}\omega_0^n}\frac{\omega_\phi^n}{n!}+r(\beta)\left(\int_X\phi
\frac{\omega_\phi^n}{n!}+F_{\omega_0}^0(\phi)\right)+\int_X
H_{\omega_0,(1-\beta)D}\frac{\omega_0^n}{n!}\\
&=&\int_X\log\frac{\omega_\phi^n}{e^{H_{\omega_0,(1-\beta)D}}\omega_0^n}\frac{\omega_\phi^n}{n!}-r(\beta)(I-J)_{\omega_0}(\omega_\phi)+\int_X
H_{\omega_0,(1-\beta)D}\frac{\omega_0^n}{n!}.
\end{eqnarray*}
\item(log-Ding-energy)
\[
F_{\omega_0,(1-\beta)D}(\omega_\phi)=F_{\omega_0}^0(\phi)-\frac{Vol(X)}{r(\beta)}\log\left(\frac{1}{Vol(X)}\int_X
e^{H_{\omega_0,(1-\beta)D}-r(\beta)\phi}\frac{\omega_0^n}{n!}\right).
\]
We also call $r(\beta) F_{\omega_0,(1-\beta)}(\omega_\phi)$ the normalized log-Ding-energy.
\end{enumerate}

When $\beta=1$ these functionals go back to the original functionals on smooth manifolds, which we denote by $\mM_{\omega_0}$ and $F_{\omega_0}$ for simplicity.
\begin{proof}[Proof of Proposition \ref{interpolate}]
Rewrite the log-Mabuchi-energy as:
\begin{equation}\label{linearmabuchi}
\mM_{\omega_0,
(1-\beta)D}(\omega_\phi)=\int_X\log\frac{\omega_\phi^n}{\omega_0^n}\omega_\phi^n+r(\beta)\left(\int_X\phi\omega_\phi^n+F_{\omega_0}^0(\phi)\right)+\int_X
(h_{\omega_0}-(1-\beta)\log|s|^2)\frac{\omega_0^n-\omega_\phi^n}{n!}.
\end{equation}
We see immediately that the linearity of log-Mabuchi-energy follows
from the linearity of $r(\beta)=1-\lambda(1-\beta)$ in $\beta$ and
the relation $H_{\omega_0,(1-\beta_t)
D}=(1-t)H_{\omega_0,(1-\beta_0) D}+tH_{\omega_0,(1-\beta_1) D}+C_t$.
For the log-Ding-energy, let $\beta_t=(1-t)\beta_0+t\beta_1$. So by
H\"{o}lder inequality we get
\begin{eqnarray*}
\int_X e^{H_{\omega_0,(1-\beta_t)
D}-r(\beta_t)\phi}\frac{\omega_0^n}{n!}&=&
e^{C_t}\int_X \left(e^{H_{\omega_0,(1-\beta_0) D}-r(\beta_0)\phi}\right)^{1-t}\left(e^{H_{\omega_0,(1-\beta_1) D}-r(\beta_1)\phi}\right)^{t}\frac{\omega_0^n}{n!} \\
&\le& e^{C_t}\left(\int_X e^{H_{\omega_0,(1-\beta_0)
D}-r(\beta_0)\phi}\frac{\omega_0^n}{n!}\right)^{1-t}\left(\int_X
e^{H_{\omega_0,(1-\beta_1)
D}-r(\beta_1)\phi}\frac{\omega_0^n}{n!}\right)^{t}
\end{eqnarray*}
By taking logarithm and using the definition of the log-Ding-energy we get, we get
\[
r(\beta_t) F_{\omega_0,(1-\beta_t)D}\ge (1-t) r(\beta_0)
F_{\omega_0,(1-\beta_0)D}+t\cdot r(\beta_1) F_{\omega_0,
(1-\beta_1)D}.
\]
\end{proof}
\end{defn}

By studying the behavior of conical metrics near $D$, it is not hard to see that the above functionals $\mM_{\omega_1}(\omega_2)$, etc. are all well defined for any a $C^{\gamma_1, \beta_1}$ metric $\omega_1$ and $C^{\gamma_2, \beta_2}$ metric $\omega_2$.
\begin{prop} \label{energy functional property}
\begin{enumerate}
\item \label{EL}
The Euler-Lagrange equations of log-Mabuchi-energy and
log-Ding-energy are the same:
\[
(\omega_0+\sddbar\phi)^n=Ce^{-r(\beta)\phi}e^{H_{\omega_0,(1-\beta)D}}\omega_0^n
\]
\item\label{MFdif} The log-Mabuchi-energy and log-Ding-energy differ by a cycle.
\[
\mM_{\omega_0,(1-\beta)D}(\omega_\phi)=r(\beta)
F_{\omega_0,(1-\beta)D}(\omega_\phi)+\int_X
H_{\omega_0,(1-\beta)D}\frac{\omega_0^n}{n!}-\int_X
H_{\omega_\phi,(1-\beta)D}\frac{\omega_\phi^n}{n!}.
\]
\item\label{logMgeF} log-Mabuchi-energy is bounded from below by log-Ding-energy:
\begin{equation*}
\mM_{\omega_0,(1-\beta)D}(\omega_\phi)\ge r(\beta)
F_{\omega_0,(1-\beta)D}(\omega_\phi)+\int_X
H_{\omega_0,(1-\beta)D}\frac{\omega_0^n}{n!}
\end{equation*}
The equality holds if and only if $\omega_\phi$ is a conical
K\"{a}hler-Einstein metric on $(X,(1-\beta)D)$.
\item(co-cycle condition)\label{cocycle}
Assume $\omega_i$ are $C^{,\gamma_i,\beta_i}$ K\"{a}hler metrics on $(X, D)$, for $i=1,2,3$. Then
\[
\mM_{\omega_1,(1-\beta)D}(\omega_2)+\mM_{\omega_2,(1-\beta)D}(\omega_3)=\mM_{\omega_1,(1-\beta)D}(\omega_3)
\]
\[
F_{\omega_1,(1-\beta)D}(\omega_2)+F_{\omega_2,(1-\beta)D}(\omega_3)=F_{\omega_1,(1-\beta)D}(\omega_3)
\]
\end{enumerate}
\end{prop}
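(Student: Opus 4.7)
The plan is to prove parts (\ref{EL})--(\ref{cocycle}) by the ``differentiate and apply Jensen'' strategy standard in smooth K\"ahler-Einstein theory, modulo the technical step of justifying integration by parts across $D$. Along any smooth path $\phi_t\in\hat{\mH}(\omega_0)$ kept in a single H\"older class, all four functionals have well-defined derivatives computable term by term; since the conical metrics $\omega_{\phi_t}$ have the standard polyhomogeneous expansion along $D$, boundary contributions from $D$ either vanish or account precisely for the current $2\pi(1-\beta)\{D\}$, as verified in \cite{Berm} and \cite{JMRL}.

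For (\ref{EL}), I would differentiate (\ref{linearmabuchi}) along $\phi_t$: the entropy term contributes $-\int_X\dot\phi\, Ric(\omega_{\phi})\wedge\omega_{\phi}^{n-1}/(n-1)!$ after discarding a total $\Delta_{\omega_\phi}\dot\phi$ term, the $r(\beta)$-piece contributes $r(\beta)\int_X\dot\phi\,\omega_\phi^n/n!$ using (\ref{derF0}), and the final $H_{\omega_0,(1-\beta)D}$ term contributes $-\int_X\dot\phi\,(Ric(\omega_0)-r(\beta)\omega_0-2\pi(1-\beta)\{D\})\wedge\omega_\phi^{n-1}/(n-1)!$ by the definition of the twisted Ricci potential. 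Summing yields the Euler-Lagrange equation $Ric(\omega_\phi)=r(\beta)\omega_\phi+2\pi(1-\beta)\{D\}$, equivalent by the $\partial\bar\partial$-lemma to the displayed equation. For $F_{\omega_0,(1-\beta)D}$, applying (\ref{derF0}) to the first summand and differentiating the $\log$-integral directly leads to the same equation.

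Part (\ref{MFdif}) is a direct computation: expanding both sides using the definitions and the identity $H_{\omega_\phi,(1-\beta)D} = H_{\omega_0,(1-\beta)D} - \log(\omega_\phi^n/\omega_0^n) + r(\beta)\phi + c$ (with $c$ fixed by the normalization of $H$) reduces the claim to an algebraic identity. Part (\ref{logMgeF}) then follows by combining (\ref{MFdif}) with Jensen's inequality applied to the probability measure $\omega_\phi^n/(n!\,Vol(X))$ and the convex function $-\log$: this gives
\[
\int_X\log\frac{\omega_\phi^n}{e^{H_{\omega_0,(1-\beta)D}-r(\beta)\phi}\omega_0^n}\frac{\omega_\phi^n}{n!} \ge -Vol(X)\log\left(\frac{1}{Vol(X)}\int_X e^{H_{\omega_0,(1-\beta)D}-r(\beta)\phi}\frac{\omega_0^n}{n!}\right),
\]
which after rearrangement is exactly $\mM_{\omega_0,(1-\beta)D}(\omega_\phi) \ge r(\beta)F_{\omega_0,(1-\beta)D}(\omega_\phi) + \int_X H_{\omega_0,(1-\beta)D}\omega_0^n/n!$; equality in Jensen forces $\omega_\phi^n = C e^{H_{\omega_0,(1-\beta)D}-r(\beta)\phi}\omega_0^n$, which by (\ref{EL}) is exactly the conical K\"ahler-Einstein condition.

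Part (\ref{cocycle}) is formal: the cocycle for $F^0_{\omega_0}$ is classical (verifiable by differentiating in a common reference potential), and the extra $Vol(X)\log$ and Ricci-potential terms in the definitions of $F_{\omega,(1-\beta)D}$ combine telescopically when composed, yielding the cocycle for $F$; the cocycle for $\mM$ then follows from (\ref{MFdif}). The main technical obstacle I anticipate is rigorously justifying the integration by parts in the derivation of (\ref{EL}) near the divisor: the conical singularity of $\omega_\phi$ would naively invalidate Stokes' theorem, but this is overcome by an approximation argument, regularizing $|s_D|^{2(1-\beta)}$ to $(|s_D|^2+\varepsilon)^{1-\beta}$ and using the Jeffres--Mazzeo--Rubinstein asymptotics to show that error terms vanish as $\varepsilon\to 0$.
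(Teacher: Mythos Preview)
Your proposal is correct and aligns with the paper's approach. The paper's proof is extremely terse: it writes down the transformation law
\[
H_{\omega_\phi,(1-\beta)D}=H_{\omega_0,(1-\beta)D}+\log\frac{\omega_0^n}{\omega_\phi^n}-r(\beta)\phi-\log\left(\frac{1}{V}\int_Xe^{H_{\omega_0,(1-\beta)D}-r(\beta)\phi}\frac{\omega_0^n}{n!}\right)
\]
and asserts that (\ref{EL}), (\ref{MFdif}), (\ref{cocycle}) all follow from it, while (\ref{logMgeF}) follows from concavity of $\log$. Your (\ref{MFdif}) and (\ref{logMgeF}) are exactly this; your (\ref{cocycle}) via the $F^0$ cocycle plus (\ref{MFdif}) is an equivalent reorganization.

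The only substantive difference is in (\ref{EL}): you compute the first variation along a path and worry about integration by parts across $D$, whereas the paper stays purely algebraic. From the transformation law one sees that $H_{\omega_\phi,(1-\beta)D}$ is constant precisely when the displayed Monge--Amp\`ere equation holds, and the explicit formula for $\mM$ (resp.\ $F$) shows this is exactly the critical-point condition; no Stokes across $D$ is needed. Your variational route is of course also valid, and your proposed regularization $(|s_D|^2+\varepsilon)^{1-\beta}$ handles the boundary issue correctly, but it is a detour the paper avoids by exploiting that all the functionals are given by closed formulas on $\hat{\mH}(\omega_0)$.
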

\begin{proof}
Items (\ref{EL}), (\ref{MFdif}) and (\ref{cocycle}) easily follows from the formula
relating twisted Ricci  potentials of two K\"{a}hler metrics.
\begin{eqnarray*}
H_{\omega_\phi,(1-\beta)D}&=&
H_{\omega_0,(1-\beta)D}+\log\frac{\omega_0^n}{\omega_\phi^n}-r(\beta)\phi-\log\left(\frac{1}{V}\int_Xe^{H_{\omega_0,(1-\beta)D}-r(\beta)\phi}\frac{\omega_0^n}{n!}\right)\\
&=&-\left(\log\frac{\omega_\phi^n}{e^{H_{\omega_0,(1-\beta)
D}-r(\beta)\phi}\omega_0^n}+\log\left(\frac{1}{V}\int_Xe^{H_{\omega_0,(1-\beta)
D}-r(\beta)\phi}\frac{\omega_0^n}{n!}\right)\right)
\end{eqnarray*}
Item (\ref{logMgeF}) follows from from concavity of logarithm.
\end{proof}

\begin{thm}[\cite{Bern3}]\label{kecrit}
If there exists a conical K\"{a}hler-Einstein metric
$\omega_{\beta}$ on $(X,(1-\beta)D)$, then $\omega_{\beta}$
obtains the minimum of log-Ding-energy
$F_{\omega_0,(1-\beta)D}(\omega_\phi)$.
\end{thm}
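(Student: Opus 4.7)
My strategy is to prove the minimization property by establishing convexity of $F_{\omega_0,(1-\beta)D}$ along weak geodesics in the space of bounded $\omega_0$-plurisubharmonic potentials, and combining this with the fact, recorded in item (\ref{EL}) of Proposition \ref{energy functional property}, that a conical K\"ahler-Einstein potential is a critical point of the functional. For a convex functional, critical points are global minima, which is exactly the desired conclusion.

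Set $V=\mathrm{Vol}(X)$ and write
\[
F(\phi)=F^0_{\omega_0}(\phi)-\frac{V}{r(\beta)}\log L(\phi),\qquad L(\phi)=\frac{1}{V}\int_X e^{H_{\omega_0,(1-\beta)D}-r(\beta)\phi}\frac{\omega_0^n}{n!}.
\]
Fix a potential $\phi_\beta$ for $\omega_\beta$, and let $\phi\in\hat{\mH}(\omega_0)$ be any competitor. The first step is to connect $\phi_\beta$ to $\phi$ by a bounded $C^{1,\bar 1}$ weak geodesic $\phi_t$, $t\in[0,1]$, obtained as the solution of the degenerate homogeneous complex Monge-Amp\`ere equation on $X\times\{0<\mathrm{Re}(s)<1\}$ with $S^1$-invariant boundary values prescribed by $\phi_\beta$ and $\phi$, using the results of Chen, Blocki, and Berman-Demailly. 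Along such a geodesic $F^0_{\omega_0}(\phi_t)$ is affine in $t$; this is the pluripotential extension of Mabuchi's classical calculation and follows by differentiating under the integral using \eqref{derF0} and the geodesic equation.

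For the logarithmic term I would invoke Berndtsson's subharmonicity theorem: if $\Phi(s,z)$ is plurisubharmonic on $X\times\Omega$ for a domain $\Omega\subset\C$ and $\mu$ is a smooth positive volume form on $X$, then $s\mapsto -\log\int_X e^{-\Phi(s,\cdot)}\mu$ is plurisubharmonic on $\Omega$. Apply this with $\mu=e^{H_{\omega_0,(1-\beta)D}}\omega_0^n/n!$ and with $\Phi(s,z)=r(\beta)\phi_{\mathrm{Re}(s)}(z)$ rewritten locally as a psh function by absorbing $\omega_0$ into a local potential. The key point is that, along the weak geodesic, $\phi_t(z)$ extends to an $S^1$-invariant $\pi^*\omega_0$-psh function on $X\times\Omega$; combined with $r(\beta)>0$ this yields psh-ness of the relevant $\Phi$. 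Rotational symmetry in $s$ upgrades plurisubharmonicity to convexity in $t$, giving concavity of $t\mapsto\log L(\phi_t)$ and hence convexity of $t\mapsto F(\phi_t)$. Together with the critical-point property at $\phi_\beta$, convexity forces $F(\phi)=F(\phi_1)\ge F(\phi_0)=F(\phi_\beta)$.

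The main technical obstacle will be justifying the differentiation of $F$ at $\phi_\beta$ along a merely $C^{1,\bar 1}$ geodesic whose other endpoint $\phi$ may carry conical singularities along $D$ with a different cone angle than $\phi_\beta$. Two routes are available: approximate by smooth $\epsilon$-geodesics as in Berndtsson's original argument, combined with uniform continuity of $F^0$ and $L$ under $L^\infty$ convergence of potentials; or extend $F$ continuously to the finite-energy class $\mathcal{E}^1(X,\omega_0)$ of Guedj-Zeriahi, in which both Berndtsson's convexity and the variational characterization of critical points remain valid, as carried out in the conical setting in \cite{Berm}. A secondary issue is the case $r(\beta)\le 0$, in which the signs in the convexity argument reverse; however, the conical K\"ahler-Einstein equation admits a solution only when the right-hand side behaves appropriately, and in all uses of this theorem in the present paper the relevant regime is $r(\beta)>0$, so this complication does not affect our applications.
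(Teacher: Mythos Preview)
Your proposal is correct and follows essentially the same approach as the paper's: the paper's proof sketch reads ``The idea is use the convexity of log-Ding-energy along a bounded geodesic in $PSH_\infty(\omega_0)$ (see \cite{Bern3}), and the fact that $\omega_\beta$ is a critical point of log-Ding-energy,'' which is exactly the convexity-plus-critical-point argument you outline. The only minor difference is that the paper works with Berndtsson's bounded geodesics in $PSH_\infty(\omega_0)$ rather than invoking $C^{1,\bar 1}$ regularity, which sidesteps some of the regularity concerns you raise.
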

The idea is use the convexity of log-Ding-energy along a bounded
geodesic in $PSH_\infty(\omega_0)$(see \cite{Bern3}), and the fact
that $\omega_\beta$ is a critical point of log-Ding-energy. By
Proposition \ref{energy functional property}.\eqref{logMgeF} we get

\begin{cor}\label{mabuchimin}
$\omega_{\beta}$ also obtains the minimum of log-Mabuchi-energy
$\mM_{\omega_0,(1-\beta)D}$.
\end{cor}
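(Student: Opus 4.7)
The plan is to chain Theorem \ref{kecrit} together with item \eqref{logMgeF} of Proposition \ref{energy functional property}, both of which are already at hand. First I would invoke Proposition \ref{energy functional property}.\eqref{logMgeF}, which supplies, for every admissible potential $\omega_\phi$, the bound
$$\mM_{\omega_0,(1-\beta)D}(\omega_\phi)\ \ge\ r(\beta)\, F_{\omega_0,(1-\beta)D}(\omega_\phi)\ +\ \int_X H_{\omega_0,(1-\beta)D}\frac{\omega_0^n}{n!},$$
together with the statement that equality holds exactly on conical K\"ahler--Einstein potentials. Evaluating at $\omega_\phi=\omega_\beta$ (which is conical K\"ahler--Einstein by assumption) converts this inequality into the identity
$$\mM_{\omega_0,(1-\beta)D}(\omega_\beta)\ =\ r(\beta)\, F_{\omega_0,(1-\beta)D}(\omega_\beta)\ +\ \int_X H_{\omega_0,(1-\beta)D}\frac{\omega_0^n}{n!}.$$

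Second, I would apply Theorem \ref{kecrit} to conclude that $\omega_\beta$ minimizes the log-Ding-energy on $\hat{\mH}(\omega_0)$, i.e.\ $F_{\omega_0,(1-\beta)D}(\omega_\phi)\ge F_{\omega_0,(1-\beta)D}(\omega_\beta)$ for all admissible $\omega_\phi$. Multiplying by $r(\beta)$, adding the constant $\int_X H_{\omega_0,(1-\beta)D}\,\omega_0^n/n!$, and using the bound from the first step yields
$$\mM_{\omega_0,(1-\beta)D}(\omega_\phi)\ \ge\ r(\beta)\,F_{\omega_0,(1-\beta)D}(\omega_\beta)\ +\ \int_X H_{\omega_0,(1-\beta)D}\frac{\omega_0^n}{n!}\ =\ \mM_{\omega_0,(1-\beta)D}(\omega_\beta),$$
which is the claim.

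The only real subtlety, and hence the main thing to guard against, is the sign of $r(\beta)=1-(1-\beta)\lambda$: the transfer of $F(\omega_\phi)\ge F(\omega_\beta)$ through multiplication by $r(\beta)$ requires $r(\beta)\ge 0$. This is automatic in the log-Fano ($\lambda<1$) and log-Calabi--Yau ($\lambda=1$) cases, and also in the log-canonical range $\beta>1-\lambda^{-1}$, which together cover all situations in which the corollary will actually be used in the paper. No further analysis is needed: the heavy lifting — Berndtsson's convexity of the Ding functional along bounded geodesics behind Theorem \ref{kecrit}, and the concavity-of-logarithm bound behind item \eqref{logMgeF} — has already been carried out, so the corollary reduces to the two-line inequality chain above.
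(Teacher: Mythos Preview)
Your argument is exactly the one the paper intends: the paper's entire proof is the single line ``By Proposition \ref{energy functional property}.\eqref{logMgeF} we get,'' and you have spelled out precisely the chain $\mM(\omega_\phi)\ge r(\beta)F(\omega_\phi)+C\ge r(\beta)F(\omega_\beta)+C=\mM(\omega_\beta)$ that this citation encodes. Your caveat about the sign of $r(\beta)$ is a fair technical point that the paper leaves implicit; it does not affect the applications, as you note.
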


\begin{rem} One technical point here is that it is more difficult to use convexity of log-Mabuchi-energy than that of log-Ding-energy, as it requires more regularity.

\end{rem}

The following properness of energy functions was introduced by Tian \cite{Ti97}.

\begin{defn}\label{defproper}
A functional $F:\mathcal{H}(\omega_0)\rightarrow \mathbb{R}$ is called
proper if there is an inequality of the type
\[
F(\omega_\phi)\ge f\left(I_{\omega_0}(\omega_\phi)\right), \mbox{ for
any } \phi\in \mH(\omega),
\]
where $f(t): \mathbb{R}_{+}\rightarrow\mathbb{R}$ is some monotone
increasing function satisfying
$\lim_{t\rightarrow+\infty}f(t)=+\infty$.
\end{defn}
Note that, by the inequalities \eqref{IJrel}, we could replace $I_{\omega_0}(\omega_\phi)$ by equivalent norms $J_{\omega_0}(\omega_\phi)$ or $(I-J)_{\omega_0}(\omega_\phi)$
in the above definition. Now we state a fundamental theorem by Tian which gives equivalent
criterion for the existence of K\"{a}hler-Einstein metric.
\begin{thm}[\cite{Ti97}]\label{Tianproper}

If $Aut(X,J)$ is discrete. There exists a K\"{a}hler-Einstein metric
on $X$ if and only if either $F_{\omega_0}(\omega_\phi)$ or
$\mM_{\omega_0}(\omega_\phi)$ is proper on $\mH(\omega_0)$.\end{thm}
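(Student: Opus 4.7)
\textbf{Proof plan for Theorem \ref{Tianproper}.}

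My plan is to treat the two implications separately: sufficiency by Aubin's continuity method, and necessity by combining Bando--Mabuchi uniqueness with a Moser--Trudinger type inequality.

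For the sufficiency, I would run Aubin's continuity path
\[
(\omega_0+\sddbar\phi_t)^n = e^{h_{\omega_0}-t\phi_t}\,\omega_0^n,\qquad t\in[0,1],
\]
whose $t=1$ solution is the sought K\"ahler--Einstein metric. Aubin gives solvability at $t=0$ and for small $t>0$. Openness of the solvability set in $[0,1)$ follows from the implicit function theorem applied to the linearization $\Delta_{\omega_t}+t$, which has trivial kernel because $\mathrm{Ric}(\omega_t)-t\omega_t=(1-t)\omega_0>0$ forces the Lichnerowicz bound $\lambda_1(\omega_t)>t$. The entire difficulty is closedness at $t=1$. Here I would first verify that $\mM_{\omega_0}(\omega_{\phi_t})$ is monotone non-increasing along the path (a direct computation using $h_{\omega_t}=-(1-t)\phi_t+\mathrm{const}$), so that it stays bounded above for $t\in[0,1)$. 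Properness of $\mM_{\omega_0}$ then yields a uniform bound on $(I-J)_{\omega_0}(\omega_{\phi_t})$. Converting this integral bound to a uniform $C^0$ estimate on the normalized $\phi_t$ is the standard Green's function / Moser-iteration argument of Yau and Tian; higher regularity follows from Yau's $C^2$ estimate and Evans--Krylov, so $\phi_t$ converges smoothly as $t\to 1$.

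For the necessity, suppose $\omega_{KE}$ is a K\"ahler--Einstein metric and $\mathrm{Aut}(X,J)$ is discrete. By Bando--Mabuchi, $\omega_{KE}$ is the unique critical point of $\mM_{\omega_0}$ on $\mH(\omega_0)$. I would establish a Moser--Trudinger type inequality
\[
\log\Bigl(\frac{1}{V}\int_X e^{h_{\omega_0}-\phi}\,\frac{\omega_0^n}{n!}\Bigr)\le\frac{1}{V}\int_X(-\phi)\,\frac{\omega_0^n}{n!}+C-\epsilon\,J_{\omega_0}(\phi)
\]
for all $\phi\in\mH(\omega_0)$ with $\sup_X\phi=0$. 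One route is Tian's $\alpha$-invariant method: combining the eigenvalue bound $\lambda_1\ge 1$ coming from $\mathrm{Ric}(\omega_{KE})=\omega_{KE}$ with an $\alpha$-invariant / Green's function estimate improves the trivial Jensen inequality by the $\epsilon J_{\omega_0}(\phi)$ term. An alternative is a concentration-compactness argument: a minimizing sequence with $I_{\omega_0}\to\infty$ but bounded Ding energy would, using Bando--Mabuchi together with partial $C^0$ / volume-estimates, produce either another K\"ahler--Einstein metric or a nontrivial holomorphic vector field, contradicting discreteness of $\mathrm{Aut}(X,J)$. The displayed inequality is precisely properness of $F_{\omega_0}$, and properness of $\mM_{\omega_0}$ then follows from Proposition \ref{energy functional property}.\eqref{logMgeF} together with a uniform upper bound on $\int_X H_{\omega_\phi}\omega_\phi^n/n!$ along near-minimizers.

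The main obstacle is the closedness step in the sufficiency direction: converting properness, which is a statement about a functional on an infinite-dimensional space, into a pointwise $C^0$ bound for the continuity family. The key analytic input is the monotonicity identity
\[
\frac{d}{dt}\mM_{\omega_0}(\omega_{\phi_t})=-t\int_X|\nabla\dot\phi_t|_{\omega_{\phi_t}}^2\,\frac{\omega_{\phi_t}^n}{n!}\le 0,
\]
which keeps $\mM_{\omega_0}(\omega_{\phi_t})$ bounded and thereby makes properness effective; once $I_{\omega_0}(\omega_{\phi_t})$ is bounded, the remainder of the continuity argument is routine pluripotential theory.
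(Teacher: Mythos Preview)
The paper does not prove this theorem; it is quoted as a known result from \cite{Ti97} and used as a black box. So there is no ``paper's own proof'' to compare against for the full statement.

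That said, the sufficiency direction (properness $\Rightarrow$ existence) is effectively sketched in Section~\ref{existence} in the more general conical setting, and your outline matches that argument step for step: continuity method, openness via the implicit function theorem and the eigenvalue bound, monotonicity of the (log-)Mabuchi energy along the path, properness $\Rightarrow$ bound on $(I-J)$, then Proposition~\ref{c0byint} for the $C^0$ estimate, Chern--Lu for $C^2$, and Evans--Krylov. One correction: your displayed monotonicity identity is not quite the right formula. The computation in Section~\ref{existence} (specialized to $\beta=1$, $r(\beta)=1$) gives
\[
\frac{d}{dt}\mM_{\omega_0}(\omega_{\phi_t})=-(1-t)\,\frac{d}{dt}(I-J)_{\omega_0}(\omega_{\phi_t})\le 0,
\]
with the factor $(1-t)$ rather than $t$; the non-negativity of $\frac{d}{dt}(I-J)$ comes from the relation $\Delta_t\dot\phi_t=-t\dot\phi_t-\phi_t$. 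Your conclusion (monotonicity) is unaffected, but the formula you wrote would be worth rechecking.

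For the necessity direction (existence $+$ discrete automorphisms $\Rightarrow$ properness), the paper says nothing beyond the citation. Your plan via a Moser--Trudinger inequality is the standard route and is the content of Tian's original argument; your sketch is reasonable as an outline, though the actual proof of the $\epsilon J$-improvement is substantially harder than either of your two suggested routes indicates and is really the heart of \cite{Ti97}.
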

The case when $Aut(X,J)$ is not discrete is more subtle. (We thank
Professor Gang Tian, Professor Jiang Song and Professor Robert J.
Berman for pointing out this point to us). The full general
statement is a conjecture by Tian \cite{Ti97}. But for our
application, we just need the following result obtained in
\cite{PSSW}. It gives a condition under which Tian's argument for
proving the properness works through.
\begin{thm}[\cite{PSSW}]\label{PSSW}
If $K\subset G$ is a closed subgroup whose centralizer in $G$
denoted by $\rm{Centr}_KG$ is finite, then $F_{\omega_0}$ is
linearly proper on $K$-invariant potentials.
\end{thm}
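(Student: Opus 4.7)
The plan is a contradiction argument modelled on Tian's proof of Theorem \ref{Tianproper}, carried out entirely within the $K$-invariant subspace of $\mH(\omega_0)$; the conclusion will come from identifying the failure of properness with the existence of a nontrivial holomorphic vector field centralizing $K$, which the finiteness hypothesis forbids. Since $K$ is compact, we may first average $\omega_0$ to arrange $K\subset \rm{Isom}(X,\omega_0)$, so that $K$-invariance is preserved under Aubin's continuity method $(\omega_0+\sddbar\psi_t)^n = e^{h_{\omega_0}-t\psi_t}\omega_0^n$.

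Suppose $F_{\omega_0}$ is not linearly proper on $K$-invariant potentials: there exist $\epsilon_i\to 0$ and $K$-invariant $\phi_i\in \mH(\omega_0)$ with $J_{\omega_0}(\phi_i)\to \infty$ but $F_{\omega_0}(\phi_i)\le \epsilon_i J_{\omega_0}(\phi_i)$. Using the standard identities expressing $F$ via Monge--Amp\`ere integrals along admissible paths, the failure of properness along the Aubin path joining $0$ to $\phi_i$ forces critical parameters $t_i^\ast\to 1$ at which the solutions $\psi_{i,t_i^\ast}$ degenerate, in the sense that $\rm{osc}(\psi_{i,t_i^\ast})\to \infty$. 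Normalizing $\tilde\psi_{i,t}=\psi_{i,t}-\sup\psi_{i,t}$ and using compactness of normalized $\omega_0$-psh functions one extracts an $L^1$-limit $\tilde\psi_\infty$, which is $K$-invariant.

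The extra information carried by the drift $\sup\psi_{i,t_i^\ast}\to\infty$, combined with the blow-up analysis along the continuity method and Matsushima's reductivity of $\rm{Aut}_0(X)$, then produces a nontrivial one-parameter subgroup $\sigma_s\subset \rm{Aut}_0(X)$ whose generator $V$ is a nonzero holomorphic vector field on $X$. Because each $\psi_{i,t}$ is $K$-invariant and $K$ acts by isometries on $\omega_0$, the resulting $V$ is $K$-invariant, so $\{\exp(sV)\}\subset \rm{Centr}_K G$.

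The heart of the argument, and the main obstacle, is this extraction step: converting the quantitative failure of $F$-properness on the $K$-invariant subspace into a genuine $K$-equivariant holomorphic vector field on $X$; this is where Tian's original Moser--Trudinger-type estimates, together with Hormander $L^2$-type arguments or a Cheeger--Gromov compactness, must be invoked in the $K$-equivariant category. Once this is in hand the finale is immediate: by hypothesis $\rm{Centr}_K G$ is finite, hence has trivial Lie algebra and contains no nontrivial one-parameter subgroup, contradicting the existence of $\{\exp(sV)\}$. Therefore $F_{\omega_0}$ must be linearly proper on $K$-invariant potentials.
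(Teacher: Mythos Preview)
The paper does not give a proof of this theorem: it is quoted verbatim from \cite{PSSW} and used as a black box (see its application at the start of Section~\ref{proofconic}). So there is no in-paper argument to compare your proposal against.

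On the proposal itself, two substantive issues. First, the hypothesis that $X$ admits a K\"ahler--Einstein metric is essential (it is the setting of \cite{PSSW} and of the preceding Theorem~\ref{Tianproper}) and should drive the argument, but you only invoke it obliquely through ``Matsushima's reductivity.'' Second, the phrase ``the Aubin path joining $0$ to $\phi_i$'' is not meaningful: the continuity family $(\omega_0+\sddbar\psi_t)^n=e^{h_{\omega_0}-t\psi_t}\omega_0^n$ is a fixed one-parameter family in $t$, not a path to an arbitrary prescribed endpoint $\phi_i$. The correct use of the continuity method here is to take the $K$-invariant reference metric $\omega_{\phi}$, note that the solutions $\psi_t$ remain $K$-invariant, and analyze the limit as $t\to 1$, where $\psi_t$ converges to \emph{some} K\"ahler--Einstein potential.

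Your contradiction scheme---extract a nontrivial $K$-invariant holomorphic vector field from the blow-up---is plausible in spirit, and you honestly flag the extraction step as the unproved core. The actual argument in \cite{PSSW} avoids this extraction entirely: it shows directly that under the finite-centralizer hypothesis the $K$-invariant K\"ahler--Einstein metric is unique, hence $(I-J)_{\omega_0}(\psi_t)$ stays uniformly bounded along the continuity path, and then the monotonicity of $F_{\omega_0}$ along that path (together with the inequality $F_{\omega_0}(\phi)\ge F_{\omega_0}(\psi_1)+c\,J_{\omega_0}(\phi)-C$ coming from the path estimates) yields linear properness. No vector field is produced; the centralizer condition enters as a uniqueness statement rather than as an obstruction to a limiting flow.
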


It is natural to extend the definition of properness to the conical case, where we simply replace $\mH(\omega_0)$ by $\hat{ \mH}(\omega_0)$.
By approximating admissible potentials by smooth potentials it is easy to see
\begin{lem}\label{testsm}
If log-Mabuchi-energy or log-Ding-energy is proper (resp. bounded from below) on the space of smooth K\"{a}hler potentials, then it's proper (resp. bounded
from below) on the space of admissible
K\"{a}hler potentials.
\end{lem}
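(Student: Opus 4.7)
The plan is a density argument: approximate any admissible $\phi\in\hat{\mH}(\omega_0)$ by smooth K\"ahler potentials $\phi_k\in\mH(\omega_0)$ in a mode strong enough to force the relevant functionals to pass to the limit, then apply the hypothesis on each $\phi_k$ and let $k\to\infty$. For the approximation: since any $\phi\in\hat{\mathcal{C}}(X,D)$ is bounded and continuous with $\omega_\phi=\omega_0+\sqrt{-1}\partial\bar\partial\phi$ a K\"ahler current of locally bounded potential, standard Demailly regularization (together with a small $\epsilon_k\omega_0$ perturbation to preserve strict positivity) delivers a sequence $\phi_k\in\mH(\omega_0)$ with $\phi_k\to\phi$ uniformly, uniformly bounded in $L^\infty$, and with the mixed Monge-Amp\`ere currents $\omega_{\phi_k}^j\wedge\omega_0^{n-j}$ converging weakly to $\omega_\phi^j\wedge\omega_0^{n-j}$ for each $j=0,\dots,n$. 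By the Bedford-Taylor continuity, the auxiliary functionals $I_{\omega_0}$, $J_{\omega_0}$, and $F^0_{\omega_0}$ converge along the sequence.

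For the log-Ding case this is enough. The only remaining piece of $F_{\omega_0,(1-\beta)D}$ is the integral $\int_X e^{H_{\omega_0,(1-\beta)D}-r(\beta)\phi_k}\omega_0^n/n!$, and since $e^{H_{\omega_0,(1-\beta)D}}=e^{h_{\omega_0}}|s_D|^{-2(1-\beta)}$ is globally integrable (as $1-\beta<1$) while $e^{-r(\beta)\phi_k}$ converges uniformly to $e^{-r(\beta)\phi}$ with a uniform $L^\infty$ bound, the dominated convergence theorem forces this integral to converge. Hence $F_{\omega_0,(1-\beta)D}(\omega_{\phi_k})\to F_{\omega_0,(1-\beta)D}(\omega_\phi)$, and the assumed properness or lower-bound inequality on each $\phi_k$ transfers to $\phi$ in the limit.

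The log-Mabuchi case is more delicate and is where the main obstacle lies. Using the identity of Proposition \ref{energy functional property}(\ref{MFdif}),
\[
\mM_{\omega_0,(1-\beta)D}(\omega_\phi)=r(\beta)F_{\omega_0,(1-\beta)D}(\omega_\phi)+\int_X H_{\omega_0,(1-\beta)D}\frac{\omega_0^n}{n!}-\int_X H_{\omega_\phi,(1-\beta)D}\frac{\omega_\phi^n}{n!},
\]
the first two terms behave well by the log-Ding analysis, and everything reduces to continuity of the last integral along the approximation. Expanding the twisted Ricci potential, this is essentially an entropy-type quantity $\int_X \log(\omega_{\phi_k}^n/\omega_0^n)\omega_{\phi_k}^n$, which is only lower semi-continuous under generic weak convergence of Monge-Amp\`ere measures---and LSC in the wrong direction to transfer a lower bound. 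The resolution is to choose the approximation more carefully: rather than a generic Demailly regularization, take $\phi_k$ to be the smooth Monge-Amp\`ere solution whose right-hand side is obtained by replacing the conical density $|s_D|^{-2(1-\beta)}$ by the smoothing $(|s_D|^2+\epsilon_k)^{-(1-\beta)}$. Such approximations respect the conical structure near $D$, and the densities $\omega_{\phi_k}^n/\omega_0^n$ then converge to $\omega_\phi^n/\omega_0^n$ almost everywhere and in $L^1(\omega_0^n/n!)$, giving genuine continuity (not just semi-continuity) of the entropy piece and closing the argument.
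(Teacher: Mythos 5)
Your proposal is correct and follows essentially the same route as the paper, whose entire argument for Lemma \ref{testsm} is the one-line remark that admissible potentials can be approximated by smooth ones; you fill in the details and correctly identify the only delicate point, namely that the entropy term is merely lower semicontinuous under weak convergence, so one must build a recovery sequence along which it genuinely converges, which your smoothing of the Monge--Amp\`ere density accomplishes. Two minor touch-ups to make it airtight: the bounded factor of $\omega_\phi^n/\omega_0^n$ is only H\"older in cone coordinates, so it must also be mollified before solving the Monge--Amp\`ere equation (otherwise the approximants are only $C^{2,\alpha}$ and do not lie in $\mH(\omega_0)$, where the hypothesis is assumed), and the uniform convergence $\phi_k\to\phi$ needed for the $F^0_{\omega_0}$, $I_{\omega_0}$, $J_{\omega_0}$ terms and for the properness gauge should be quoted from Ko\l odziej-type $L^\infty$ stability, since the smoothed densities converge in $L^p$ for some $p>1$.
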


\begin{lem}
Let $\omega_i$ be a $C^{,\gamma_i,\beta_i}$ metric. Then the norm defined by $J_{\omega_1}$ and
$J_{\omega_2}$ are equivalent, that is, there is a constant $C(\omega_1, \omega_2)$ such that for any other metric
$\omega_3\in \hat \mH(\omega)$,
\[
\left|J_{\omega_1}(\omega_3)-J_{\omega_2}(\omega_3)\right|\le
C(\omega_1,\omega_2)
\]
\end{lem}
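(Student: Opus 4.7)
The plan is to reduce the difference $J_{\omega_1}(\omega_3)-J_{\omega_2}(\omega_3)$ to a combination of (a) the fixed constant $J_{\omega_1}(\omega_2)$, and (b) a boundary-type term that is controlled purely by the $L^\infty$-potential gap between the two reference metrics.

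First I would fix potentials $\phi_{12},\phi_{13},\phi_{23}$ with $\omega_j=\omega_i+\sddbar\phi_{ij}$, normalized so that $\phi_{13}=\phi_{12}+\phi_{23}$. Since $\omega_1$ and $\omega_2$ are conical K\"ahler metrics with bounded local potentials, $\phi_{12}\in L^\infty(X)$; likewise $\phi_{23}$ and $\phi_{13}$ are bounded because every $\omega_3\in\hat{\mH}(\omega)$ has bounded potential. All Monge--Amp\`ere and mixed products below are then well defined in the Bedford--Taylor sense, and integration by parts for $\sddbar$ against bounded $\omega$-psh functions holds.

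Second, I would invoke the co-cycle identity for the Aubin--Yau functional, $F^0_{\omega_1}(\omega_3)=F^0_{\omega_1}(\omega_2)+F^0_{\omega_2}(\omega_3)$, which extends from $\mH(\omega_0)$ to bounded admissible potentials by a routine approximation through regularized potentials (the definition of $F^0$ is continuous along decreasing sequences of bounded $\omega$-psh functions). Plugging this into $J_{\omega_j}(\omega_3)=F^0_{\omega_j}(\omega_3)+\int\phi_{j3}\omega_j^n/n!$ and using $\phi_{13}=\phi_{12}+\phi_{23}$, the $F^0_{\omega_2}(\omega_3)$ contributions cancel and I obtain the clean identity
\[
J_{\omega_1}(\omega_3)-J_{\omega_2}(\omega_3)=J_{\omega_1}(\omega_2)+\frac{1}{n!}\int_X\phi_{23}\,(\omega_1^n-\omega_2^n).
\]

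Third, I would write $\omega_2^n-\omega_1^n=\sddbar\phi_{12}\wedge Q$ with $Q=\sum_{i=0}^{n-1}\omega_1^i\wedge\omega_2^{n-1-i}$, and integrate by parts to trade $\sddbar\phi_{12}$ for $\sddbar\phi_{23}=\omega_3-\omega_2$. This gives
\[
\frac{1}{n!}\int_X\phi_{23}(\omega_1^n-\omega_2^n)=-\frac{1}{n!}\int_X\phi_{12}\,(\omega_3-\omega_2)\wedge Q.
\]
Now the mixed Monge--Amp\`ere current $(\omega_3-\omega_2)\wedge Q$ has, in absolute value, total mass bounded by the fixed cohomological pairing $2\int_X\omega_1\wedge Q$, which depends only on $c_1(X)$. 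Since $\|\phi_{12}\|_{L^\infty}$ is a constant determined by $\omega_1,\omega_2$, this last integral is bounded by $C(\omega_1,\omega_2)$, and combining with $|J_{\omega_1}(\omega_2)|<\infty$ delivers the claimed uniform bound.

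The only subtle point is the justification of the co-cycle property and the integration by parts when $\omega_1,\omega_2,\omega_3$ have different cone angles along $D$; this is handled by approximating each bounded $\omega$-psh potential by smooth ones (e.g.\ Demailly regularization, or convolving the conical potentials against a mollifier in the $(\hat{\zeta},z_j)$ coordinates of Section~\ref{conicspace}) and invoking the Bedford--Taylor continuity theorem. This is the main technical obstacle, but it is standard once bounded potentials are in hand.
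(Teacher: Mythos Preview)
Your proposal is correct and follows essentially the same route as the paper: both derive the identity $J_{\omega_1}(\omega_3)-J_{\omega_2}(\omega_3)=J_{\omega_1}(\omega_2)+\frac{1}{n!}\int_X\phi_{23}(\omega_1^n-\omega_2^n)$ via the cocycle property of $F^0$, then integrate by parts to move $\sddbar$ onto $\phi_{23}$ and bound the resulting term by $\|\phi_{12}\|_{L^\infty}$ times a cohomological mass. The only difference is that you spell out the pluripotential justifications (Bedford--Taylor products, approximation of bounded psh potentials) that the paper leaves implicit.
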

\begin{proof}
Assume $\omega_2=\omega_1+\sddbar{\phi}$ and $\omega_3=\omega_2+\sddbar\psi$. Then
\begin{eqnarray}\label{difI-J}
J_{\omega_1}(\omega_3)-J_{\omega_2}(\omega_3)&=&
F_{\omega_1}^0(\phi+\psi)-F_{\omega_1+\sddbar\phi}^0(\psi)+\int_X(\phi+\psi)\omega_1^n/n!-\int_X\psi\omega_2^n/n!\nonumber\\
&=&F_{\omega_1}^0(\phi)+\int_X\phi\omega_1^n/n!+\int_X\psi(\omega_1^n-\omega_2^n)/n!\nonumber\\
&=&J_{\omega_1}(\omega_2)+{\textbf E}.
\end{eqnarray}
To estimate the term ${\textbf E}$ we do integration by part:
\begin{eqnarray*}
{\textbf E}&=&\frac{1}{n!}\int_X\psi
(\omega_1-\omega_2)\wedge\left(\sum_{i=0}^{n-1}\omega_1^{n-1-i}\wedge\omega_2^{i}\right)=\frac{1}{n!}\int_X
-\phi(\omega_3-\omega_2)\wedge
\left(\sum_{i=0}^{n-1}\omega_1^{n-1-i}\wedge\omega_2^{i}\right)
\end{eqnarray*}
\[
\left|{\textbf E}\right|\le \frac{1}{n!}\int_X\left|\phi\right|
(\omega_2+\omega_3)\wedge
\left(\sum_{i=0}^{n-1}\omega_1^{n-1-i}\wedge\omega_2^{i}\right)\le
2n\|\phi\|_{L^{\infty}}Vol(X).
\]
\end{proof}

By the cocycle relations and the above lemmas, we obtain

\begin{prop}
Assume $\omega_i$ is a $C^{\gamma_i, \beta_i}$ K\"ahler metric on $(X, D)$. Then $\mM_{\omega_1, (1-\beta)D}$(or $F_{\omega_1, (1-\beta)D}$) is proper if and only if
$\mM_{\omega_2, (1-\beta)D}$(or $F_{\omega_2, (1-\beta)D}$) is proper.
 \end{prop}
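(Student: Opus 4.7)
The plan is to combine the cocycle identity for the log-Mabuchi and log-Ding energies (item (\ref{cocycle}) of Proposition \ref{energy functional property}) with the $J$-norm comparison established in the lemma just proved. The cocycle relation says that switching the reference metric from $\omega_1$ to $\omega_2$ shifts the energy by a fixed finite constant depending only on $\omega_1$ and $\omega_2$, while the lemma says that the quantity $J_{\omega_i}(\omega_3)$ measuring the ``size'' of $\omega_3$ is independent of the reference $\omega_i$ up to a bounded additive term. Together these reduce the statement to an elementary substitution inside the definition of properness (Definition \ref{defproper}), with the remark following that definition letting us use $J$ in place of $I$ via the equivalence \eqref{IJrel}.

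In detail, suppose $\mM_{\omega_1,(1-\beta)D}$ is proper, so there exists a monotone increasing $f:\mathbb{R}_+\to\mathbb{R}$ with $\lim_{t\to\infty}f(t)=+\infty$ and
\[
\mM_{\omega_1,(1-\beta)D}(\omega_3)\;\ge\;f\bigl(J_{\omega_1}(\omega_3)\bigr)
\]
for every $\omega_3\in\hat{\mH}(\omega_0)$. The cocycle identity then yields
\[
\mM_{\omega_2,(1-\beta)D}(\omega_3)\;=\;\mM_{\omega_1,(1-\beta)D}(\omega_3)\;-\;C_{12},\qquad C_{12}:=\mM_{\omega_1,(1-\beta)D}(\omega_2),
\]
and $C_{12}$ is a finite constant since both $\omega_i$ are $C^{\gamma_i,\beta_i}$ metrics (by the remark that $\mM_{\omega_1,(1-\beta)D}(\omega_2)$ is well-defined in this regularity class). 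The preceding lemma supplies a constant $C=C(\omega_1,\omega_2)$ with $J_{\omega_1}(\omega_3)\ge J_{\omega_2}(\omega_3)-C$. Setting $\tilde f(s):=f(s-C)-C_{12}$, which is again monotone increasing with $\tilde f(s)\to+\infty$, I obtain
\[
\mM_{\omega_2,(1-\beta)D}(\omega_3)\;\ge\;f\bigl(J_{\omega_1}(\omega_3)\bigr)-C_{12}\;\ge\;\tilde f\bigl(J_{\omega_2}(\omega_3)\bigr),
\]
which is properness with respect to $\omega_2$. Reversing the roles of $\omega_1$ and $\omega_2$ gives the converse.

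The log-Ding case is handled verbatim, invoking the cocycle identity for $F_{\omega,(1-\beta)D}$ recorded in the same item of Proposition \ref{energy functional property}. I do not foresee any substantive obstacle: both ingredients (the cocycle identity and the $J$-norm comparison) have already been established in the text immediately above, so the proposition is essentially their direct combination; the only point requiring a nod of care is that the definition of properness is insensitive to replacing $I$ by $J$, which is exactly the content of the remark after Definition \ref{defproper}.
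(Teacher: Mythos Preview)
Your proposal is correct and follows exactly the approach the paper indicates: the paper simply states ``By the cocycle relations and the above lemmas, we obtain'' before the proposition, and your argument spells out precisely this combination of the cocycle identity (Proposition~\ref{energy functional property}(\ref{cocycle})) with the $J$-norm comparison lemma. The only cosmetic point is that $\tilde f(s)=f(s-C)-C_{12}$ may not be defined for small $s$ since $f$ has domain $\mathbb{R}_+$, but this is trivially fixed by extending $f$ constantly to the left, and does not affect the properness conclusion.
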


\subsection{Existence of conical K\"{a}hler-Einstein metric}\label{existence}
Here we review the result in \cite{JMRL} to solve the conical
K\"{a}hler-Einstein equation.
\begin{thm}(\cite{JMRL})\label{JMRL}
If the log-Mabuchi-energy is proper on $C^{2,\gamma,\beta}(X, D)$, then
there exists a conical K\"{a}hler-Einstein metric on $(X,(1-\beta)D)$.
\end{thm}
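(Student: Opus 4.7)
My approach would be the classical continuity method adapted to conical metrics. Fix a smooth reference K\"ahler metric $\omega_0 \in 2\pi c_1(X)$ and consider the one-parameter family of Monge-Amp\`ere equations for $\phi_t \in \hat{\mH}(\omega_0)$:
$$(\omega_0 + \sddbar \phi_t)^n \;=\; e^{H_{\omega_0,(1-\beta)D} - t\, r(\beta)\, \phi_t}\, \omega_0^n, \qquad t \in [0,1].$$
A direct computation shows that $\phi_t$ solves this equation if and only if $Ric(\omega_{\phi_t}) = (1-t) r(\beta) \omega_0 + t\, r(\beta)\, \omega_{\phi_t} + 2\pi(1-\beta)\{D\}$; in particular, at $t=1$ we recover the conical K\"ahler-Einstein equation of Proposition \ref{energy functional property}(\ref{EL}). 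Let $S = \{t \in [0,1] : \text{the equation admits a solution in } C^{2,\gamma,\beta}(X,D)\}$. The goal is to show $S = [0,1]$ by the usual openness-closedness argument.

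Existence at $t=0$ reduces to a Monge-Amp\`ere equation whose right-hand side has a $|s_D|^{-2(1-\beta)}$ singularity and hence lies in $L^p(\omega_0^n)$ for some $p>1$. A bounded solution is produced by the pluripotential arguments of Ko\l odziej (or equivalently by applying the Aubin-Yau continuity method to a regularized right-hand side), and then the conical regularity theory upgrades this to $C^{2,\gamma,\beta}$. For openness at $t_0 \in S$ with $t_0 < 1$, the linearization is $\Delta_{\omega_{\phi_{t_0}}} + t_0 r(\beta)$, which by Proposition \ref{DoFred} is Fredholm between the appropriate conical H\"older spaces and is invertible for generic $t_0$; the exceptional values form a discrete set that can be bypassed by a local perturbation. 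At $t_0 = 1$ one invokes Theorem \ref{DoIFT}, noting that properness of the log-Mabuchi-energy precludes the existence of nontrivial holomorphic vector fields tangent to $D$.

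The crux of the argument is the a priori estimates. First one obtains a uniform $C^0$ bound on $\phi_t$: evaluating $\mM_{\omega_0,(1-\beta)D}(\omega_{\phi_t})$ along the path and differentiating in $t$ shows that this energy is uniformly bounded above, so by the hypothesised properness one deduces a uniform bound on $J_{\omega_0}(\omega_{\phi_t})$, and via \eqref{IJrel} on $I_{\omega_0}(\omega_{\phi_t})$. A Moser iteration or Green's function argument on the conical metric, together with oscillation control after normalizing $\phi_t$, then yields $\|\phi_t\|_{L^\infty} \leq C$. With this in hand, a Chern-Lu / Aubin-Yau second-order estimate produces an upper bound on the trace of $\omega_{\phi_t}$ with respect to $\omega_0$, and finally the Evans-Krylov theory in conical H\"older spaces developed by Donaldson gives uniform $C^{2,\gamma,\beta}$ control, so $S$ is closed.

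The principal technical obstacle is the second-order and higher-order estimates near $D$, where the candidate solution and $\omega_0$ are not comparable as smooth metrics and the usual maximum principle is delicate because the coefficients of $\Delta_{\omega_0}$ degenerate along $D$. The cleanest route is to approximate $\omega_0$ by a family of smooth K\"ahler metrics $\omega_\varepsilon$ obtained by replacing the singular volume factor $|s_D|^{-2(1-\beta)}$ by $(|s_D|^2 + \varepsilon)^{-(1-\beta)}$, solving the regularized equations, and establishing $\varepsilon$-independent estimates that reflect the correct model behaviour prescribed in Section \ref{conicspace}; passing to the limit as $\varepsilon \to 0$ must produce a solution genuinely lying in $C^{2,\gamma,\beta}(X,D)$ rather than in a merely weaker space. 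Combined with openness and the $t=0$ solution, this closes the continuity argument.
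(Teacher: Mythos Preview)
Your overall strategy---continuity method, properness $\Rightarrow$ $C^0$ via monotonicity of $\mM_{\omega_0,(1-\beta)D}$ along the path, then higher-order estimates---matches the paper's outline of the JMRL proof. But there is a concrete gap in your $C^2$ step, and the paper's route differs from yours in a way that matters.

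You take the reference metric $\omega_0$ to be \emph{smooth}. With that choice, the third-paragraph claim that Chern--Lu / Aubin--Yau gives ``an upper bound on the trace of $\omega_{\phi_t}$ with respect to $\omega_0$'' cannot hold: since $\omega_{\phi_t}^n/\omega_0^n \sim |s_D|^{-2(1-\beta)}$ near $D$, the trace $tr_{\omega_0}\omega_{\phi_t}$ is genuinely unbounded there. What Chern--Lu actually yields (from $Ric(\omega_{\phi_t})\ge t\,r(\beta)\,\omega_{\phi_t}$ off $D$ and the smooth bisectional bound on $\omega_0$) is a bound on $tr_{\omega_{\phi_t}}\omega_0$, i.e.\ $\omega_{\phi_t}\ge C^{-1}\omega_0$; combined with the volume equation this gives only $tr_{\omega_0}\omega_{\phi_t}\le C|s_D|^{-2(1-\beta)}$, which does not place $\omega_{\phi_t}$ in a fixed quasi-isometry class with the conical model. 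You recognize this in your last paragraph and propose $\varepsilon$-regularization; that is a legitimate alternative route pursued by other authors, but it is not what JMRL do, and turning it into a genuine $C^{2,\gamma,\beta}$ limit requires substantial further work that you only sketch.

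The paper instead fixes a \emph{conical} background metric $\omega$ on $(X,(1-\beta)D)$ from the start. Two ingredients then make Chern--Lu go through directly: (i) the bisectional curvature of such a conical $\omega$ is bounded above (a nontrivial computation, Appendix of \cite{JMRL}); and (ii) Jeffres's barrier trick---adding $\epsilon|s|^{2\gamma'}$ with $0<\gamma'<\gamma$---forces the maximum of $\log tr_{\omega_\psi}\omega - A\psi$ off $D$, so the maximum principle applies. This gives $\omega_\psi$ uniformly quasi-isometric to the conical $\omega$, after which the $C^{2,\gamma,\beta}$ bound is the conical Evans--Krylov estimate of \cite{JMRL} (not Donaldson; Donaldson supplies the linear Fredholm theory of Proposition~\ref{DoFred}). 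A minor further difference: the paper starts the path at $t\ll 0$ via Newton--Moser iteration rather than at $t=0$, avoiding the need to invoke a separate conical Calabi--Yau existence result as input.
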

The idea is to use continuity method as in the proof in the smooth
case. Fix a  backgroubd conical K\"ahler metric on $(X, (1-\beta)D)$. So we consider a family of equations.
\begin{equation}\label{conicont}
(\omega+\sddbar\psi)^n=e^{H_{\omega,(1-\beta)D}-t\psi}\omega^n
\end{equation}
This is equivalent to the equation
\begin{equation}\label{contensor}
Ric(\omega_{\psi})=t\omega_\psi+(r(\beta)-t)\omega+(1-\beta)\{s=0\}.
\end{equation}
\begin{itemize}
\item Step 1: Start the continuity method.
Let \[ \mathcal{S}=\{t\in(-\infty,r(\beta)); \eqref{conicont} \mbox{
is solvable}\}.
\]
Then $\mathcal{S}$ is non-empty. This is achieved by solving the
equation \eqref{conicont} when $t\ll0$ using Newton-Moser iteration
method. See \cite{JMRL} for details.
\item Step 2: Openness of solution set. This follows from implicit
function theorem thanks to the Fredholm linear theory set up by
Donaldson in Theorem \ref{DoFred}.
\item Step 3: $C^0$-estimate. This is the same as in the smooth
case. (cf. \cite{BM87},\cite{Ti97}). We sketch the proof here. First
by taking derivative with respect to $t$ on both sides of
\eqref{conicont}, we get
\[
\Delta_t\dot{\psi}_t=-t\dot{\psi}-\psi
\]

where $\Delta_t=\Delta_{\omega_{\psi_t}}$. So
\[
\frac{d}{dt}(I-J)_\omega(\omega_\psi)=-\int_X
\psi\Delta\dot{\psi}\frac{\omega_\psi^n}{n!}=\int_X
(-\Delta_t-t)\dot{\psi}(-\Delta_t)\dot{\psi}\frac{\omega_\psi^n}{n!}\ge
0
\]
Now using \eqref{contensor}, one can calculate that
\begin{eqnarray*}
\frac{d}{dt}\mM_{\omega,(1-\beta)D}(\psi_t)&=&-\int_X
H_{\omega,(1-\beta)D}\Delta_{\omega_\psi}\dot{\psi}\frac{\omega_\psi^n}{n!}+\int_X r(\beta)\psi\frac{d}{dt}\frac{\omega_\psi^n}{n!}\\
&=&-\int_X n
(Ric(\omega)-r(\beta)\omega-(1-\beta)\{D\})\dot{\psi}\frac{\omega_\psi^n}{n!}-r(\beta)\frac{d}{dt}(I-J)_{\omega}(\omega_\psi)\\
&=&-\int_X n
t(\omega_\psi-\omega)\dot{\psi}\frac{\omega_\psi^n}{n!}-r(\beta)\frac{d}{dt}(I-J)_{\omega}(\omega_\psi)\\
&=&-(r(\beta)-t)\frac{d}{dt}(I-J)_\omega(\omega_\psi)\le 0
\end{eqnarray*}
So along the continuity path \eqref{contensor}, the
log-Mabuchi-energy is decreasing. By properness of log-K-energy, we
see that $(I-J)_\omega(\omega_\psi)$ is bounded from above.

Now the $C^0$-estimate follows from the following Proposition.
\begin{prop}(\cite{JMRL},\cite{Ti87})\label{c0byint}
\begin{enumerate}
\item
${\rm Osc}(\psi_t)\le \frac{1}{Vol(X)}I_{\omega}(\omega_{\psi_t})+C$ for some
constant $C$ independent of $t$.
\item (Harnack estimate)
$ -\inf_X \psi_t\le n\sup_X\psi_t$.
\end{enumerate}
\end{prop}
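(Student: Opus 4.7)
The two inequalities are the standard $C^0$ estimates on the K\"ahler--Einstein continuity path (originally due to Yau, Aubin and Tian in the smooth setting; see \cite{Ti87, BM87}), extended to the conical category in \cite{JMRL}. All of the integrations by parts below will be legitimate on $X\setminus D$ because bounded $\omega$-psh potentials put no mass on $D$. My plan is to prove (2) first by Moser iteration applied to the Monge--Amp\`ere equation \eqref{conicont}, and then deduce (1) from it together with a standard mean-value inequality for $\omega$-plurisubharmonic functions.

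For (2) I would fix $t$, write $\psi=\psi_t$, and set $u = \sup_X\psi - \psi \ge 0$, so that \eqref{conicont} reads $\omega_\psi^n = e^{H + tu - t\sup\psi}\omega^n$ with $H = H_{\omega,(1-\beta)D}$ bounded. Multiplying $(\omega_\psi^n - \omega^n)$ against $u^{p-1}$ and integrating by parts using $\omega_\psi - \omega = \sddbar\psi = -\sddbar u$ together with $\omega_\psi\ge 0$ and $\omega\ge 0$ should yield, after the Sobolev inequality for $\omega$, an iterative bound of the form $\|u\|_{L^{p\sigma}} \le C(1+p)^{1/p}\|u\|_{L^p}$ with $\sigma = n/(n-1)$. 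A standard Moser iteration in $p$ then gives $\|u\|_{L^\infty} \le C(1 + \|u\|_{L^1})$. Since $\psi$ is $\omega$-psh, a Green's function argument for $\Delta_\omega$ (using $G_\omega(x,y) + K(\omega) \ge 0$ and $\Delta_\omega\psi \ge -n$) bounds $\sup_X\psi - V^{-1}\int\psi\,\omega^n/n!$ by a constant and in particular controls $\|u\|_{L^1(\omega)}$ by $C(1+\sup_X\psi)$. Tracking the dimension through the iteration (exactly as in Tian's original calculation) would give the factor $n$ and produce (2).

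For (1), I would apply the same mean-value inequality to $-\psi_t$, which is $\omega_{\psi_t}$-psh because $\omega_{\psi_t} + \sddbar(-\psi_t) = \omega \ge 0$, this time taking $\omega_{\psi_t}$ as the background metric. This gives the two bounds
\[
\sup_X\psi_t - V^{-1}\int_X \psi_t\,\omega^n/n! \le C_1,\qquad -\inf_X\psi_t + V^{-1}\int_X \psi_t\,\omega_{\psi_t}^n/n! \le C_2.
\]
Adding them and using the definition $I_\omega(\omega_{\psi_t}) = \int_X \psi_t(\omega^n - \omega_{\psi_t}^n)/n!$ produces $\mathrm{Osc}(\psi_t) \le V^{-1}I_\omega(\omega_{\psi_t}) + C_1 + C_2$, which is (1) provided $C_2$ is uniform in $t$.

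The main obstacle is precisely this uniformity: controlling the Green function (equivalently the Sobolev constant) of the evolving conical metric $\omega_{\psi_t}$ along the continuity path. From \eqref{contensor} one has $\mathrm{Ric}(\omega_{\psi_t}) \ge t\omega_{\psi_t} + (r(\beta)-t)\omega \ge c_0\omega_{\psi_t}$ on $X\setminus D$ for $t$ in a compact subinterval of $(-\infty, r(\beta))$, and a conical Cheng--Yau / Li--Yau argument obtained by smoothly approximating the conical geometry as in \cite{JMRL} should yield the required uniform bound. Alternatively, one can avoid the $\omega_{\psi_t}$-Green function entirely by invoking Ko{\l}odziej's pluripotential $L^\infty$ estimate, which depends only on a uniform upper bound for $\int_X e^{-p\psi_t}\omega^n$ with small $p$; that input is itself supplied by (2) together with the properness hypothesis underlying the continuity method, closing the estimates self-consistently.
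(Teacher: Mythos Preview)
The paper does not give its own argument for this proposition; it is quoted from \cite{JMRL} and \cite{Ti87} and used as a black box in the $C^0$ step of the continuity method. Your outline---the two Green-function/mean-value inequalities for (1) and Moser iteration on \eqref{conicont} for (2)---is precisely the route taken in those references, so there is nothing to contrast with the paper itself. Your primary resolution of the uniformity issue, namely feeding the Ricci lower bound $\mathrm{Ric}(\omega_{\psi_t})\ge t\,\omega_{\psi_t}$ from \eqref{contensor} into a (conical) Cheng--Yau/Li--Yau Green-function bound on any interval $t\in[t_0,r(\beta)]$, is exactly how $C_2$ is made uniform in \cite{JMRL}.

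Two caveats. First, your fallback via Ko\l{}odziej is not a proof of (1) but an alternative route to the $C^0$ bound that \emph{bypasses} (1); the input $\int_X e^{-p\psi_t}\omega^n\le C$ it requires comes from an $\alpha$-invariant (H\"ormander) estimate, not from (2) or from properness, so ``closing the estimates self-consistently'' is misleading---nothing in that paragraph actually yields the inequality $\mathrm{Osc}(\psi_t)\le V^{-1}I_\omega(\omega_{\psi_t})+C$. Second, a generic Moser iteration produces $-\inf\psi_t\le C\sup\psi_t+C'$; obtaining the sharp factor $n$ with no additive constant, as stated in (2), uses the specific inequality $\Delta_\omega\psi=\mathrm{tr}_\omega\omega_\psi-n\ge n\big((\omega_\psi^n/\omega^n)^{1/n}-1\big)$ paired against powers of $-\psi$, and ``tracking the dimension through the iteration'' is doing real work that ought to be spelled out if you want the exact constant rather than an equivalent estimate.
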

\item Step 3: $C^2$-estimate.
To get the $C^2$-estimate, we can use the Chern-Lu's inequality and
maximal principle. More precisely, let $\Xi=\log
tr_{\omega_\psi}\omega-\lambda \psi$. Then
\begin{equation}\label{chernlu}
\Delta_{\omega_\psi} \Xi\ge (C_1-\lambda n)+(\lambda-C_2
tr_{\omega_\psi}\omega).
\end{equation}
Here
\[
C_1=\inf_{p\in X, v\in T_pX}
\frac{Ric(\omega_\psi)(v,\bar{v})}{g(v,\bar{v})}, \quad
C_2=\sup_{p\in X, v,w\in
T_pX}\frac{R(\omega)(v,\bar{v},w,\bar{w})}{g(v,\bar{v})g(w,\bar{w})}
\]
where $Ric(\omega_\psi)$ (res. $R(\omega)$) is the Ricci curvature
(resp. curvature operator) of $\omega_\psi$ (resp. $\omega$). By the
equation \eqref{contensor}, $Ric(\omega_{\psi_t})\ge
t\omega_{\psi_t}$, so $C_1\ge t$. The other crucial point is the
bisectional curvature of $\omega$ is bounded from above. (cf.
Appendix of \cite{JMRL}).

To use the maximal principle in the conical setting, one can use
Jeffres's trick as in \cite{Je00}. We add the barrier function
$\epsilon|s|^{2\gamma'}$ for $0<\gamma'<\gamma$ so that
$\Xi+\epsilon|s|^{2\gamma'}$ obtains the maximum at $x_0'\not\in D$.
We then apply the maximal principle to $\Xi+\epsilon|s|^{2\gamma'}$
and let $\epsilon\rightarrow 0$.
\item Step 4: $C^{,2,\gamma,\beta}$-estimate. There is a Krylov-Evans' estimate
in the conical setting as developed in \cite{JMRL}. The proof is
similar to the smooth case but adapted to the conical(wedge)
H\"{o}lder space.
\item
Use the above a priori estimate, we prove the solution set $\mathcal{S}$ is closed. So $\mathcal{S}=(-\infty,1]$ and the equation
\eqref{conicont} is solvable.
\end{itemize}

\subsection{Alpha-invariant and small cone angles} \label{small angles}
In \cite{Berm} and \cite{JMRL}, Tian's alpha invariant \cite{Ti87}
was generalized to the conical setting. We will explain this
modification.
\begin{defn}[log alpha-invariant]
Fix a smooth volume form $\Omega$. For any K\"{a}hler class $[\omega]$, we define
\begin{eqnarray*}
&&\alpha([\omega], (1-\beta)D)=\\
&&\max\left\{\alpha>0; \exists 0<C_\alpha<+\infty \mbox{ s.t. }
\int_X
e^{-\alpha(\phi-\sup\phi)}\frac{\Omega}{|s_D|^{2(1-\beta)}}\le
C_\alpha \mbox{ for any } \phi\in PSH_{\infty}(X,[\omega])\right\}.
\end{eqnarray*}
\end{defn}
When $\beta=1$, we get Tian's alpha invariant $\alpha([\omega])$ in \cite{Ti87}. In the following, we will write $\alpha(L,(1-\beta)D)=\alpha(c_1(L),(1-\beta)D)$ for any line bundle $L$. For any $\alpha<\alpha(K_X^{-1},(1-\beta)D)$, using concavity of log
function, we can estimate, for any $\phi\in
\hat{\mH}(\omega_0)\subset PSH_{\infty}(\omega_0)$,
\begin{eqnarray*}
\log C_\alpha&\ge &\log\left(\frac{1}{V}\int_X
e^{-\alpha(\phi-\sup\phi)}\frac{e^{h_{\omega_0}}\omega_0^n}{n!|s_D|^{2(1-\beta)}}\right)
=\log\left(\frac{1}{V}\int_Xe^{-\alpha(\phi-\sup\phi)-\log\frac{|s_D|^{2(1-\beta)}\omega_\phi^n}{e^{h_{\omega_0}}\omega_0^n}}\frac{\omega_\phi^n}{n!}\right)\\
&\ge&-\frac{1}{V}\int_X
\log\left(\frac{|s_D|^{2(1-\beta)}\omega_\phi^n}{e^{h_{\omega_0}}\omega_0^n}\right)\frac{\omega_\phi^n}{n!}+\alpha\left(\sup\phi-\frac{1}{V}\int_X\phi\frac{\omega_\phi^n}{n!}\right)\\
&\ge&\frac{1}{V}\left(-\int_X\log\frac{\omega_\phi^n}{e^{H_{\omega_0,(1-\beta)D}}\omega_0^n}\frac{\omega_\phi^n}{n!}+\alpha
I_{\omega_0}(\omega_\phi)\right).
\end{eqnarray*}
In the last inequality, we used the expression for $H_{\omega_0,(1-\beta)D}=h_{\omega_0}-(1-\beta)\log|s_D|^2$. Now using the expression for $\mathcal{M}_{\omega,(1-\beta)D}$ in
Definition \ref{deflogfunc} and inequalities in \eqref{IJrel}, we
get
\begin{eqnarray*}
\mM_{\omega_0,(1-\beta)D}(\omega_\phi)&\ge& \alpha
I_{\omega_0}(\omega_\phi)-r(\beta)(I-J)_{\omega_0}(\omega_\phi)-C_\alpha'\\
&\ge&
\left(\alpha-r(\beta)\frac{n}{n+1}\right)I_{\omega_0}(\omega_\phi)-C_\alpha'
\end{eqnarray*}
So if
\begin{equation}\label{alphaproper}
\alpha(K_X^{-1},(1-\beta)D)>\frac{n}{n+1}r(\beta)=\frac{n}{n+1}\left(1-\lambda(1-\beta)\right),
\end{equation}
then log-Mabuchi-energy is proper for smooth reference metric. To
estimate the alpha-invariant, we use Berman's estimate:
\begin{prop}[\cite{Berm}] If we let $L_D$ denote the line bundle determined by the divisor $D$, we have the estimate for log-alpha-invariant:
\begin{eqnarray}\label{estalpha}
\alpha(K_X^{-1}, (1-\beta)D)&=&\lambda\alpha(L_D, (1-\beta)D)\ge
\lambda\min\{\beta, \alpha(L_D|_D), \alpha(L_D)\}\nonumber\\
&=&\min\{\lambda\beta, \lambda\alpha(L_D|_D),\alpha(K_X^{-1})\}>0.
\end{eqnarray}
\end{prop}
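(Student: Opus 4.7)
The first equality $\alpha(K_X^{-1},(1-\beta)D)=\lambda\alpha(L_D,(1-\beta)D)$ follows by rescaling: since $D\sim_{\mathbb{Q}}-\lambda K_X$, the map $\phi\mapsto\lambda\phi$ is a bijection between $PSH_\infty$ of the two classes, converting $e^{-\alpha\phi}$ into $e^{-(\alpha/\lambda)(\lambda\phi)}$. So the proof reduces to the inequality $\alpha(L_D,(1-\beta)D)\ge\min\{\beta,\alpha(L_D|_D),\alpha(L_D)\}$.

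Fix any $\alpha_0$ strictly less than the right-hand side, and any $\phi\in PSH_\infty(X,\omega_0)$ with $\sup\phi=0$, where $\omega_0$ is a smooth representative of $c_1(L_D)$. The plan is to produce a uniform bound
\[
I(\phi)=\int_X e^{-\alpha_0\phi}\frac{\Omega}{|s_D|^{2(1-\beta)}}\le C(\alpha_0).
\]
Split $X$ into a thin tubular neighborhood $T$ of $D$ and its complement $X\setminus T$. Over $X\setminus T$ the weight $|s_D|^{-2(1-\beta)}$ is bounded above, so the hypothesis $\alpha_0<\alpha(L_D)$ combined with the definition of the standard Tian $\alpha$-invariant yields a uniform bound on the integral over $X\setminus T$. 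Over $T$, choose finitely many local charts with coordinates $(z_1,z')$ in which $D=\{z_1=0\}$ and $|s_D|^2=|z_1|^2 e^{-2\rho}$ for a smooth $\rho$; Fubini rewrites the integral over $T$ as an iterated one with the transverse disk $\{|z_1|<r_0\}$ inside and the base $D$ outside.

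For each slice $z'\in D$, $z_1\mapsto\phi(z_1,z')$ is subharmonic and bounded above by $0$. The transverse integral converges because $\alpha_0<\beta$ exactly absorbs the pole of $|z_1|^{-2(1-\beta)}$ against any subharmonic singularity with generic Lelong number along $D$ at most $1$, the universal Lelong bound implied by the normalization of $\phi$; uniform integrability follows from Skoda's theorem applied to the uniformly $L^1$-bounded family of subharmonic functions $\phi(\cdot,z')$. The fiber integral is then majorized by $C\cdot e^{-\alpha_0\phi_D(z')}$, where $\phi_D$ is an $\omega_0|_D$-psh proxy for the trace of $\phi$ along $D$ (with $\sup\phi_D\le 0$). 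Integrating over $D$ and invoking $\alpha_0<\alpha(L_D|_D)$ gives the desired bound on the $T$-contribution.

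The main obstacle is the interface of the fiber estimate with the trace. A bounded $\omega_0$-psh function on $X$ does not literally restrict to an $\omega_0|_D$-psh function on $D$, so the proxy $\phi_D$ must be extracted by a regularization, e.g.\ by averaging $\phi$ over small transverse disks or by passing to an approximating sequence; and the one-dimensional estimate must produce the right dependence on $\phi_D$ uniformly in the slice. The threshold $\alpha_0<\beta$ is sharp and is precisely what ensures the transverse pole $|z_1|^{-2(1-\beta)}$ can be absorbed against the worst-case logarithmic singularity of $\phi$ along $D$; the quantities $\alpha(L_D|_D)$ and $\alpha(L_D)$ then handle the tangential and far-from-$D$ contributions respectively.
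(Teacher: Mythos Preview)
The paper does not supply its own proof of this proposition; it is quoted directly from \cite{Berm} and then used. So there is no in-paper argument to compare against, and the question is whether your sketch stands on its own.

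Your outline has the right architecture and correctly isolates the three thresholds: $\alpha(L_D)$ controls the region away from $D$, $\beta$ governs the transverse direction in the tube, and $\alpha(L_D|_D)$ governs the tangential direction. The rescaling for the first equality is fine.

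However, the step you yourself flag as ``the main obstacle'' is essentially the entire content of the estimate, and your sketch does not close it. What is actually needed is a uniform fiber inequality of the type
\[
\int_{|z_1|<r_0}\frac{e^{-\alpha_0\phi(z_1,z')}}{|z_1|^{2(1-\beta)}}\,\sqrt{-1}\,dz_1\wedge d\bar z_1 \;\le\; C\,e^{-\alpha_0\phi_D(z')}
\]
for some $\omega_0|_D$-psh proxy $\phi_D$ with $\sup\phi_D\le C'$, uniformly in $\phi$. Skoda's theorem only gives finiteness of the left side once the slice Lelong number is below $1/\alpha_0$; it does not produce this multiplicative bound with the right exponent on the right-hand side. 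Moreover, your remark that the generic Lelong number of $\phi$ along $D$ is at most $1$ is correct (Siu decomposition in the class $c_1(L_D)$), but this does not prevent the pointwise Lelong number from exceeding $1$ at special points $z'\in D$, so a literal slice-by-slice application of the one-variable estimate breaks down on a codimension-one set of slices. In Berman's actual proof this is handled either by an Ohsawa--Takegoshi $L^2$-restriction estimate, which delivers the displayed inequality in integrated form directly, or by first reducing via Demailly approximation to $\phi$ with analytic singularities and then invoking inversion of adjunction for log canonical thresholds. Without one of these devices the Fubini scheme, while morally correct, remains heuristic.
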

\begin{cor}\label{betaproper}
When $\lambda\ge 1$, if
\begin{equation}
0<\beta<\min\left(1,
(1-1/\lambda)+\frac{n+1}{n}\min\{\alpha(L_D|_D),
\lambda^{-1}\alpha(K_X^{-1})\}\right),
\end{equation}
then the log-Mabuchi-energy is proper.
In particular, when $0<\beta<1-\lambda^{-1}+\epsilon$ for $\epsilon=\epsilon(\lambda)\ll 1$, the log-Mabuchi-energy is
proper. When $\lambda<1$, we need to assume in addition that
$\beta>n(\lambda^{-1}-1)$.
\end{cor}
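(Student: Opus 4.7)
The plan is to simply combine the two ingredients already assembled in the preceding paragraphs: the sufficient condition \eqref{alphaproper} for properness of the log-Mabuchi-energy in terms of the log-alpha-invariant, together with Berman's lower bound \eqref{estalpha}. First I would observe that if
\[
\min\{\lambda\beta,\ \lambda\alpha(L_D|_D),\ \alpha(K_X^{-1})\} \;>\; \tfrac{n}{n+1}r(\beta) \;=\; \tfrac{n}{n+1}\bigl(1-\lambda(1-\beta)\bigr),
\]
then by \eqref{estalpha} we have $\alpha(K_X^{-1},(1-\beta)D)>\tfrac{n}{n+1}r(\beta)$, and by \eqref{alphaproper} the log-Mabuchi-energy is proper on smooth K\"ahler potentials; Lemma \ref{testsm} then promotes this to properness on the admissible space $\hat{\mH}(\omega_0)$.

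Next I would check the three resulting scalar inequalities one by one, each being a linear inequality in $\beta$. The first, $\lambda\beta > \tfrac{n}{n+1}(1-\lambda(1-\beta))$, simplifies to $\lambda\beta > n(1-\lambda)$: this is automatic when $\lambda\geq 1$, while for $\lambda<1$ it is exactly the extra condition $\beta>n(\lambda^{-1}-1)$ stated at the end of the corollary. The second, $\lambda\alpha(L_D|_D) > \tfrac{n}{n+1}(1-\lambda(1-\beta))$, rearranges to $\beta < (1-\lambda^{-1}) + \tfrac{n+1}{n}\alpha(L_D|_D)$. The third, $\alpha(K_X^{-1}) > \tfrac{n}{n+1}(1-\lambda(1-\beta))$, rearranges to $\beta < (1-\lambda^{-1}) + \tfrac{n+1}{n}\lambda^{-1}\alpha(K_X^{-1})$. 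Taking the minimum of the two upper bounds (together with $\beta\leq 1$) produces exactly the displayed bound in the corollary.

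For the ``in particular'' assertion when $\lambda\ge 1$, I would use the fact that both $\alpha(L_D|_D)>0$ and $\alpha(K_X^{-1})>0$ (Tian's alpha invariants are strictly positive on Fano manifolds and smooth projective varieties, respectively). Consequently $\tfrac{n+1}{n}\min\{\alpha(L_D|_D),\lambda^{-1}\alpha(K_X^{-1})\}$ is a strictly positive number depending only on $\lambda$, so the right-hand side exceeds $1-\lambda^{-1}$ by a definite amount $\epsilon=\epsilon(\lambda)>0$, giving the claimed interval $(0,1-\lambda^{-1}+\epsilon)$.

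There is no real obstacle here: the corollary is essentially a bookkeeping exercise combining \eqref{alphaproper} and \eqref{estalpha}. The only step requiring mild care is the conversion from properness on $\mH(\omega_0)$ to properness on $\hat{\mH}(\omega_0)$, which is handled by Lemma \ref{testsm} via approximation of admissible potentials by smooth ones.
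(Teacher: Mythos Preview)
Your proposal is correct and follows exactly the paper's approach: combine the sufficient condition \eqref{alphaproper} with Berman's estimate \eqref{estalpha}, then unpack the resulting minimum into three linear inequalities in $\beta$. The paper's proof is terser---it only spells out the equivalence $\lambda\beta>\tfrac{n}{n+1}(1-\lambda(1-\beta))\Leftrightarrow\beta>n(\lambda^{-1}-1)$ and leaves the other two rearrangements implicit---but the content is identical; your added remark about Lemma \ref{testsm} upgrading properness to $\hat{\mH}(\omega_0)$ is a reasonable clarification the paper omits here.
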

\begin{proof}
This follows from \eqref{alphaproper}, \eqref{estalpha} and the
relation
\[
\lambda\beta>\frac{n}{n+1}\left(1-\lambda(1-\beta)\right)\Longleftrightarrow
\beta> n(\lambda^{-1}-1)
\]
This is automatically true if $\lambda\ge 1$ and $\beta>0$.
\end{proof}
\begin{rem}
 If we use H\"{o}lder's inequality, we could get the estimate: $\alpha(K_X^{-1},(1-\beta)D))\ge \alpha(K_X^{-1})\beta>0$.  (Note that
 it's easy to get that $\lambda\ge \alpha(K_X^{-1})$ from the existence of smooth divisor $D\sim \lambda K_X^{-1}$) However, if we want to prove there always exists a conical
 K\"{a}hler-Einstein metric with small cone angle, then this estimate only works when $\lambda>1$ but not equal to 1.
 To see this, we study the inequality $ \alpha(K_X^{-1})\beta>\frac{n}{n+1}r(\beta)$. When $\lambda>1$, we get
 \[
\beta<(\lambda-1)/(\lambda-\frac{n+1}{n}\alpha(K_X^{-1}))=(1-\lambda^{-1})\left(1+\frac{\frac{n+1}{n}\lambda^{-1}\alpha(K_X^{-1})}{1-\frac{n+1}{n}\lambda^{-1}\alpha(K_X^{-1})}\right).
\]
So again when $\beta<1-\lambda^{-1}+\epsilon$ for $\epsilon=\epsilon(\lambda)\ll 1$, the log-Mabuchi-energy is proper. When $\lambda=1$, we get the condition
$\alpha(K_X^{-1})>\frac{n}{n+1}$. This condition is not always satisfied, and if it's true, then $X$ has a smooth K\"{a}hler-Einstein metric by \cite{Ti87}.
When $\lambda<1$ we don't get useful condition on $\beta\in (0,1)$.  On the other hand, Berman's estimate works when $\lambda\ge 1$.
\end{rem}

\begin{cor}[Berman,\cite{Berm}]\label{nohv}
When $\lambda\ge 1$, there is no holomorphic vector field on $X$
tangent to $D$.
\end{cor}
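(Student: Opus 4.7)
The plan is to derive a contradiction from the properness of the log-Mabuchi-energy established in Corollary \ref{betaproper}. Suppose for contradiction that $V$ is a nonzero holomorphic vector field on $X$ tangent to $D$; then $V$ generates a one-parameter subgroup $\{\phi_t\}_{t\in\R}$ of $Aut(X,D)$ via the flow of $\mathrm{Re}(V)$. Fix $\beta\in(0,1-\lambda^{-1}+\epsilon)$, so that by Corollary \ref{betaproper} the log-Mabuchi-energy $\mM_{\omega_0,(1-\beta)D}$ is proper on $\hat{\mH}(\omega_0)$ for any smooth reference metric $\omega_0\in c_1(X)$.

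Because $\phi_t$ is a biholomorphism of $X$ that preserves $D$, one has $\phi_t^*\omega_0\in c_1(X)$ and we can write $\phi_t^*\omega_0=\omega_0+\sddbar\psi_t$ for a smooth family $\psi_t\in\hat{\mH}(\omega_0)$. Two classical facts about this orbit will force the contradiction: (i) $J_{\omega_0}(\psi_t)\to+\infty$ as $|t|\to\infty$, since the orbit of a nonzero holomorphic vector field escapes every bounded subset of $\hat{\mH}(\omega_0)/\R$; and (ii) the derivative $\frac{d}{dt}\mM_{\omega_0,(1-\beta)D}(\psi_t)$ is constant in $t$, equal to the log-Futaki invariant of $V$ on $(X,(1-\beta)D)$, so that $\mM_{\omega_0,(1-\beta)D}(\psi_t)$ grows at most linearly in $t$.

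If the log-Futaki invariant of $V$ vanishes, then $\mM_{\omega_0,(1-\beta)D}(\psi_t)$ stays bounded while $J_{\omega_0}(\psi_t)\to\infty$, directly contradicting properness. If the log-Futaki invariant is nonzero, then by choosing the sign of $t$ appropriately we drive $\mM_{\omega_0,(1-\beta)D}(\psi_t)\to-\infty$ with $J_{\omega_0}(\psi_t)\to+\infty$; since properness demands $\mM_{\omega_0,(1-\beta)D}\ge f\circ J_{\omega_0}$ for some increasing $f$ with $f(+\infty)=+\infty$ (using Definition \ref{defproper} and the equivalence \eqref{IJrel}), this is again a contradiction. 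Hence no such $V$ can exist.

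The principal technical point is fact (ii), i.e., checking that the standard derivation of the Futaki invariant from the Mabuchi energy carries over to the log-Mabuchi energy. This follows by differentiating the explicit expression in Definition \ref{deflogfunc} along the orbit $\psi_t$: the change-of-variables under $\phi_t$ (valid because $\phi_t$ preserves $D$) shows that the derivative is independent of $t$, and the extra divisorial contribution hidden in $H_{\omega_0,(1-\beta)D}=h_{\omega_0}-(1-\beta)\log|s_D|^2$ converts the ordinary Futaki invariant into the log-Futaki invariant of $V$, whose linearity in $\beta$ matches the linearity of the log-Mabuchi energy recorded in Proposition \ref{interpolate}.
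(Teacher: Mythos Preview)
Your proposal is correct and follows essentially the same approach as the paper's own proof: assume a nonzero holomorphic vector field tangent to $D$, observe that the log-Mabuchi-energy is linear along its one-parameter orbit with slope the log-Futaki invariant, and contradict the properness from Corollary \ref{betaproper}. The paper's argument is compressed into three sentences and leaves implicit the point you label (i), namely that $J_{\omega_0}(\psi_t)\to+\infty$ along a nontrivial orbit; you have simply made this step explicit and separated the zero and nonzero slope cases, which is a helpful clarification rather than a different strategy.
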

\begin{proof}
If $v$ is the holomorphic vector field tangent to $D$, then $v$
generate a one-parameter subgroup $\lambda(t)$. Log-Mabuchi-energy
is linear along $\sigma^*\omega$ with the slope given by the
log-Futaki-invariant. This is in contradiction to Corollary
\ref{betaproper}.
\end{proof}
\begin{rem}
This corollary was speculated by Donaldson in \cite{Do11}. This is
also proved using pure algebraic geometry in Song-Wang's recent work
\cite{SW}.
\end{rem}
\begin{proof}[Proof of Theorem \ref{main1}]
By the discussion above, when $\lambda\ge 1$, the $\mM_{\omega, (1-\beta)D}$ is proper for $\beta\in (0, 1-\lambda^{-1}+\epsilon)$ with some $\epsilon>0$.
On the other hand, when there is a conical K\"ahler-Einstein metric on $(X, (1-\beta_0)D)$, then $\mM_{\omega, (1-\beta_0)D}$ is bounded below.
So we can use Proposition \ref{interpolate}
to get the properness of log-Mabuchi-energy for any $\beta\in (0, \beta_0)$. Now
we use Theorem \ref{JMRL} to conclude. The openness follows from \cite{Do11}.
\end{proof}
\begin{proof}[Proof of Corollary \ref{conicproper}]
Assume there exists a conical K\"{a}hler-Einstein metric for
$0<\beta=\beta_0<1$. Since we assume $\lambda\ge 1$, there is no
holomorphic vector field on $X$ fixing $D$ by Corollary \ref{nohv}.
By Donaldson's implicit function theorem for conical
K\"{a}hler-Einstein metrics in \cite{Do11}, there exists a conical
K\"{a}hler-Einstein metric for $\beta=\beta_0+\epsilon$ when
$\epsilon\ll 1$. So the log-Mabuchi-energy is bounded for
$\beta=\beta_0+\epsilon$. Because log-Mabuchi-energy is proper for
$0<\beta\ll 1$. Then we can use interpolation result Proposition
\ref{interpolate} to conclude the log-Mabuchi-energy is proper for
$0<\beta<\beta_0+\epsilon$.
\end{proof}

\section{Obstruction to existence:
log-K-stability}\label{obstruction}

\subsection{Log-Futaki invariant and
log-K-(semi)stability}\label{seclogK}

Fix a smooth K\"{a}hler metric $\omega\in 2\pi c_1(X)$. Assume $D$
is a smooth divisor such that $D\sim_{\mathbb{Q}}-\lambda K_X$ for
some $\lambda>0\in \mathbb{Q}$. Assume $\mathbb{C}^*$ acts on $(X,
D)$ with generating holomorphic vector field $v$. There exists a
potential function $\theta_v\in C^{\infty}(X)$ satisfying
$\sqrt{-1}\bar{\partial}\theta_v=\iota_v\omega$. The log-Futaki
invariant was defined by Donaldson \cite{Do11}:
\begin{defn}[\cite{Do11}]
The log-Futaki invariant $F(X,(1-\beta)D)=F(X,(1-\beta)D, 2\pi
c_1(X))$ of the pair $(X,(1-\beta)D)$ in the class $2\pi c_1(X)$ is
function on the Lie algebra of holomorphic vector fields, such that,
for any holomorphic vector field $v$ as above, its value is
\begin{eqnarray}\label{definelog}
F(X, (1-\beta)D)(v)&=&F(2\pi c_1(X);
v)+(1-\beta)\left(\int_{2\pi D}\theta_v\frac{\omega^{n-1}}{(n-1)!}-\frac{Vol(2\pi D)}{Vol(X)}\int_X\theta_v\frac{\omega^n}{n!}\right)\nonumber\\
&=&-\int_X(S(\omega)-n)\theta_v\frac{\omega^n}{n!}+2\pi(1-\beta)\left(
\int_{ D}\theta_v\frac{\omega^{n-1}}{(n-1)!}-n\lambda
\int_X\theta_v\frac{\omega^n}{n!}\right)
\end{eqnarray}
\end{defn}
Log-Futaki invariant is an obstruction to the existence of conical
K\"{a}hler-Einstein metrics as explained in \cite{Li11}. When
$\lambda\ge 1$, one can show that there are no nontrivial
$\mathbb{C}^*$ action for the pair $(X, D)$. (See Corollary \ref{nohv}) To obtain the
obstruction for the existence we define the log-K-stability by
generalizing the original definition by Tian \cite{Ti97} and Donaldson \cite{Do02}.
\begin{defn}
A test configuration of $(X, L)$,  consists of
\begin{enumerate}
\item a normal scheme $\mathcal{X}$ with a $\mathbb{C}^*$-action;
\item a $\mathbb{C}^*$-equivariant line bundle $\mathcal{L}\rightarrow\mathcal{X}$
\item a flat $\mathbb{C}^*$-equivariant map $\pi: \mathcal{X}\rightarrow\mathcal{C}$, where $\mathbb{C}^*$ acts on $\mathbb{C}$ by
multiplication in the standard way;
\end{enumerate}
such that any fibre $X_t=\pi^{-1}(t)$ for $t\neq 0$ is isomorphic to
$X$ and $(X,L)$ is isomorphic to $(X_t,\mathcal{L}|_{X_t})$. The
test configuration is called normal if the total space $\mX$ is
normal.
\end{defn}
Any test configuration can be equivariantly embedded into
$\mathbb{P}^N\times\mathbb{C}^*$ where the $\mathbb{C}^*$ action on
$\mathbb{P}^N$ is given by a 1 parameter subgroup of
$SL(N+1,\mathbb{C})$. If $Y$ is any subvariety of $X$, the test
configuration of $(X, L)$ also induces a test configuration
$(\mathcal{Y}, \mathcal{L}|_{\mathcal{Y}})$ of $(Y, L|_Y)$ .

Let $d_k$, $\tilde{d}_k$ be the dimensions of $H^0(X, L^{k})$,
$H^0(Y, L|_Y^{\;k})$, and $w_k$, $\tilde{w}_k$ be the weights of
$\mathbb{C}^*$ action on $H^0(\mX_0, \mathcal{L}|_{\mX_0}^{\;k})$,
$H^0(\mathcal{Y}_0, \mathcal{L}|_{\mathcal{Y}_0}^{\;k})$, respectively. Then we have
expansions:
\[
d_k=a_0k^n+a_1k^{n-1}+O(k^{n-2}) ,\quad
w_k=b_0k^{n+1}+b_1k^n+O(k^{n-1})
\]
\[
\tilde{d}_k=\tilde{a}_0k^{n-1}+O(k^{n-2}), \quad
\tilde{w}_k=\tilde{b}_0k^n+O(k^{n-1})
\]
If the central fibre $\mX_0$ is smooth, we can use equivariant
differential forms to calculate the coefficients as in \cite{Do02}. Let
${\omega}$ be a smooth K\"{a}hler form in $2\pi c_1(L)$, and
$\theta_v=\mathcal{L}_v-\nabla_v$,  then
\begin{equation}\label{coef2}
(2\pi)^n a_0=\int_X\frac{{\omega^n}}{n!}=Vol(X);\; (2\pi)^n
a_1=\frac{1}{2}\int_X S({\omega})\frac{{\omega}^n}{n!}
\end{equation}
\begin{equation}\label{coef1}
(2\pi)^{n+1} b_0=-\int_X\theta_v\frac{{\omega}^n}{n!};\;
(2\pi)^{n+1} b_1=-\frac{1}{2}\int_X\theta_v
S({\omega})\frac{{\omega}^n}{n!}
\end{equation}
\begin{equation}\label{coef3}
(2\pi)^{n}
\tilde{a}_0=\int_{2\pi\mathcal{Y}_0}\frac{{\omega}^{n-1}}{(n-1)!}=Vol(2\pi\mathcal{Y}_0);\;(2\pi)^{n+1}\tilde{b}_0=-\int_{2\pi\mathcal{Y}_0}\theta_v\frac{{\omega}^{n-1}}{(n-1)!}
\end{equation}
Comparing \eqref{coef2}-\eqref{coef3} with \eqref{definelog}, we can
define the algebraic log-Futaki invariant of the given test
configuration to be
\begin{eqnarray}\label{logftk}
\frac{1}{(2\pi)^{n+1}}F((\mathcal{X}, \mathcal{Y}); \mathcal{L})&=&\frac{2(a_0b_1-a_1b_0)}{a_0}+(-\tilde{b}_0+\frac{\tilde{a}_0}{a_0}b_0)\nonumber\\
&=&\frac{a_0(2b_1-\tilde{b}_0)-b_0(2a_1-\tilde{a}_0)}{a_0}
\end{eqnarray}
\begin{defn}\label{dflgkstb}
$(X, Y, L)$ is log-K-semistable along $(\mathcal{X}, \mathcal{L})$
if $F(\mathcal{X}, \mathcal{Y}, \mathcal{L})\le0$. Otherwise, it's
unstable.

$(X,Y, L)$ is log-K-polystable along test configuration
$(\mathcal{X}, \mathcal{L})$ if $F(\mathcal{X}, \mathcal{Y},
\mathcal{L})<0$, or $F(\mX,\mY,\mL)=0$ and the normalization
$(\mathcal{X}^{\nu}, \mathcal{Y}^{\nu}, \mathcal{L}^{\nu})$ is a
product configuration.

$(X, Y, L)$ is log-K-semistable (resp. log-K-polystable) if, for any
integer $r>0$, $(X, Y, L^r)$ is log-K-semistable (log-K-polystable)
along any test configuration of $(X, Y, L^r)$.
\end{defn}
\begin{rem}
When $Y$ is empty, then the definition of log-K-stability becomes the
definition of K-stability. (\cite{Ti97}, \cite{Do02})
\end{rem}

\subsection{Log-Mabuchi-energy and log-Futaki-invariant}
\subsubsection{Integrate log-Futaki-invariant}
We now integrate the log-Futaki invariant to get log-Mabuchi-energy,
as already defined in the previous section. Fix a smooth K\"ahler
metric $\omega\in 2\pi c_1(X)$. Define the functional on
$\mH(\omega)$ as
\[
F_{\omega,D}^{0}(\phi)=-\int_0^1dt\int_{
D}\dot{\phi_t}\frac{\omega_{\phi_t}^{n-1}}{(n-1)!},
\]
where $\phi_t$ is a family of K\"ahler potentials connecting $0$ and $\phi$.
We can define the
log-Mabuchi-energy as
\begin{equation}\label{logMsm}
\mM_{\omega,(1-\beta)D}(\omega_\phi)=\mM_\omega(\omega_\phi)+2\pi(1-\beta)\left(-
F_{\omega,D}^{0}(\phi)+\frac{ Vol(
D)}{Vol(X)}F_\omega^0(\phi)\right),
\end{equation}
so that if $\omega_t=\omega+\sddbar\phi_t$ is a sequence of smooth
K\"{a}hler metrics in $2\pi c_1(X)$, then
\[
\frac{d}{dt}\mM_{\omega,(1-\beta)D}(\omega_\phi)=-\int_X(S(\omega_t)-n)\dot{\phi_t}\frac{\omega_\phi^n}{n!}+2\pi(1-\beta)\left(
\int_{
D}\dot{\phi}\frac{\omega_\phi^{n-1}}{(n-1)!}-n\lambda\int_X\dot{\phi}\frac{\omega_\phi^n}{n!}\right)
\]
\begin{prop}
The log-Mabuchi-energy can be written as
\begin{eqnarray*}
\mM_{\omega,(1-\beta)D}(\omega_\phi)&=&\int_X\log\frac{\omega_\phi^n}{\omega^n}\frac{\omega_\phi^n}{n!}-
r(\beta)(I_\omega-J_\omega)(\omega_\phi)+\int_X\left(
h_\omega-(1-\beta)\log|s|_h^2\right)\frac{\omega^n-\omega_\phi^n}{n!}\\
&=&\int_X\log\frac{\omega_\phi^n}{e^{H_{\omega,(1-\beta)D}}\omega^n}\frac{\omega_\phi^n}{n!}+r(\beta)\left(\int_X\phi\frac{\omega_\phi^n}{n!}+F_\omega^0(\phi)\right)+\int_X
H_{\omega,(1-\beta)D}\frac{\omega^n}{n!},
\end{eqnarray*}
so it agrees the definition in the previous section.
\end{prop}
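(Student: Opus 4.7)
The plan is to differentiate the claimed explicit formula along a smooth path $\phi_t\in\mH(\omega)$ with $\phi_0=0$, $\phi_1=\phi$, and verify that the derivative reproduces (\ref{logMsm}). Equivalence of the two displayed forms follows immediately from the relation $H_{\omega,(1-\beta)D}=h_\omega-(1-\beta)\log|s|_h^2$ (valid modulo an additive constant fixed by the normalization $\int_X e^H \omega^n/n!=Vol(X)$; a constant integrates to zero against $(\omega^n-\omega_\phi^n)/n!$) together with the identity $\int_X\phi\omega_\phi^n/n!+F_\omega^0(\phi)=-(I_\omega-J_\omega)(\omega_\phi)$, which is immediate from Definition \ref{defF0IJ}.

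I work with the first displayed form and write it as $E_1+E_2+E_3$, where $E_1(t)=\int_X\log(\omega_t^n/\omega^n)\omega_t^n/n!$, $E_2(t)=-r(\beta)(I_\omega-J_\omega)(\omega_t)$, and $E_3(t)=\int_X(h_\omega-(1-\beta)\log|s|_h^2)(\omega^n-\omega_t^n)/n!$. The derivative of $E_1$ is the standard Mabuchi--Bando computation: from $\frac{d}{dt}\omega_t^n=n\sddbar\dot\phi\wedge\omega_t^{n-1}$, integration by parts and $-\sddbar\log\omega_t^n=Ric(\omega_t)$ yield
\begin{equation*}
E_1'(t)=-\int_X\dot\phi\, S(\omega_t)\,\frac{\omega_t^n}{n!}+\int_X\dot\phi\,Ric(\omega)\wedge\frac{\omega_t^{n-1}}{(n-1)!}.
\end{equation*}
For $E_2'(t)$, the elementary identity $\frac{d}{dt}(I_\omega-J_\omega)(\omega_t)=-n\int_X\dot\phi\,\omega_t^n/n!+\int_X\dot\phi\,\omega\wedge\omega_t^{n-1}/(n-1)!$, which follows from $(I_\omega-J_\omega)(\omega_\phi)=-\int_X\phi\,\omega_\phi^n/n!-F_\omega^0(\phi)$ and (\ref{derF0}), gives the desired expression.

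The essential step is computing $E_3'(t)$. Integration by parts transfers $\sddbar$ from $\omega_t^n$ onto the potential, producing
\begin{equation*}
E_3'(t)=-\int_X\dot\phi\,\sddbar\!\left(h_\omega-(1-\beta)\log|s|_h^2\right)\wedge\frac{\omega_t^{n-1}}{(n-1)!}.
\end{equation*}
Substituting $\sddbar h_\omega=Ric(\omega)-\omega$ and the Poincar\'e--Lelong identity $\sddbar\log|s|_h^2=2\pi\{D\}-\lambda\omega$ (from the chosen Hermitian metric on $-\lambda K_X$) produces the boundary contribution $2\pi(1-\beta)\int_D\dot\phi\,\omega_t^{n-1}/(n-1)!$ along with bulk terms involving $\omega\wedge\omega_t^{n-1}$. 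Summing $E_1'+E_2'+E_3'$, the $Ric(\omega)$ contributions cancel between $E_1'$ and $E_3'$, and the $\omega\wedge\omega_t^{n-1}$ terms collect with coefficient $1-(1-\beta)\lambda-r(\beta)=0$. What remains is
\begin{equation*}
\frac{d}{dt}\mM_{\omega,(1-\beta)D}(\omega_t)=-\int_X\!\left(S(\omega_t)-nr(\beta)\right)\dot\phi\,\frac{\omega_t^n}{n!}+2\pi(1-\beta)\int_D\dot\phi\,\frac{\omega_t^{n-1}}{(n-1)!}.
\end{equation*}
Expanding $nr(\beta)=n-n(1-\beta)\lambda$ and using the topological identity $Vol(D)/Vol(X)=n\lambda/(2\pi)$ (from $[D]=\lambda c_1(-K_X)$ and $\omega\in 2\pi c_1(-K_X)$) matches the derivative derived directly from (\ref{logMsm}). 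Since $E_1(0)=E_2(0)=E_3(0)=0$ agrees with the correct initial value, the two sides coincide as functionals on $\mH(\omega)$, proving the explicit formulas.

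The main technical point is the distributional integration by parts involving $\log|s|_h^2$, which is only $L^1$ near $D$. I justify it by approximating $\log|s|_h^2$ with $\log(|s|_h^2+\epsilon)$, applying classical Stokes, and letting $\epsilon\to 0$; the Poincar\'e--Lelong identity then supplies precisely the codimension-one term $\int_D\dot\phi\,\omega_t^{n-1}/(n-1)!$ on $D$ which is absent in the classical Mabuchi energy and carries the $(1-\beta)$ correction.
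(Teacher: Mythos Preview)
Your proof is correct and uses essentially the same ingredients as the paper's --- the Poincar\'e--Lelong formula for $\log|s|_h^2$ and the standard relations among $F_\omega^0$, $I$, $J$ --- but the organization is dual. The paper first integrates $F_{\omega,D}^0(\phi)$ to a closed form via Poincar\'e--Lelong (introducing the auxiliary functional $\mathcal{J}_\omega^\omega(\phi)=-\int_0^1\!\int_X\dot\phi\,\omega\wedge\omega_{\phi_t}^{n-1}/(n-1)!$ and the identity $nF_\omega^0-\mathcal{J}_\omega^\omega=(I-J)$), and then substitutes algebraically into the defining formula \eqref{logMsm}; you instead differentiate the claimed expression along a path and check that the derivative matches that of \eqref{logMsm}. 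Your route avoids the auxiliary $\mathcal{J}_\omega^\omega$ and is marginally more direct, while the paper's route is slightly more constructive in that it derives the formula rather than verifying it; in substance the two are equivalent computations.
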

\begin{proof}
Recall the Poincar\'{e}-Lelong equation $
\sddbar\log|s_D|_h^2=-\lambda\omega+2\pi\{D\} $. Then
\begin{eqnarray*}
F_{\omega,2\pi D}^0(\phi)&=&2\pi F_{\omega,D}^0(\phi)=-\int_0^1 dt
\int_{2\pi D} \dot{\phi_t} \omega_{\phi_t}^{n-1}/(n-1)!
\\&=&-\int_0^1 dt \int_X
\dot{\phi}_t(\sddbar\log|s_D|_h^2+\lambda\omega)\omega_{\phi_t}^{n-1}/(n-1)!
\\&=&-\int_0^1dt\int_X
\log|s_D|_h^2\frac{d}{dt}\frac{\omega_{\phi_t}^n}{n!}+\lambda\mathcal{J}_{\omega}^{\omega}(\omega_\phi)\\
&=&-\int_X
\log|s_D|_h^2\frac{\omega_\phi^n-\omega^n}{n!}+\lambda\mathcal{J}_{\omega}^{\omega}(\omega_\phi)
\end{eqnarray*}
Here, for any smooth closed (1,1)-form $\chi$, we define
\[
\mathcal{J}^{\chi}_{\omega}(\phi)=-\int_0^1dt\int_X
\dot{\phi}_t\chi\wedge\omega_{\phi_t}^{n-1}/(n-1)!
\]
By taking derivatives, it's easy to verify that
$nF_{\omega}^0(\phi)-\mathcal{J}_{\omega}^{\omega}=(I-J)_\omega(\omega_\phi)
$. So
\begin{eqnarray*}
&&\mM_{\omega,(1-\beta)D}(\omega_\phi)=\mM_\omega(\omega_\phi)+\frac{(1-\beta)Vol(2\pi D)}{Vol(X)}F_\omega^0(\phi)-(1-\beta) F_{\omega,2\pi D}^{0}(\phi)\\
&=&\mM_\omega(\omega_\phi)+(n\lambda)
(1-\beta)F_\omega^0(\phi)-(1-\beta)\lambda\mathcal{J}^{\omega}_{{\omega}}(\phi)+(1-\beta)\int_X\log|s_D|^2\frac{\omega_\phi^n-\omega^n}{n!}\\
&=&\mM_\omega(\omega_\phi)+\lambda(1-\beta)(I-J)_\omega(\omega_\phi)+(1-\beta)\int_X\log|s_D|^2\frac{\omega_\phi^n-\omega^n}{n!}.
\end{eqnarray*}
Then the statement follows from the expression for $\mM_\omega$ and
that  $H_{\omega_0, (1-\beta)D}=h_{\omega_0}-(1-\beta)\log|s_D|^2$.
\end{proof}

\subsubsection{Log-Futaki invariant and asymptotic slope of
log-Mabuchi-energy}

In this section, we generalize Sean Paul's work in \cite{Pa12} to
the conical setting and prove Theorem \ref{funstable} using the
argument from \cite{Ti97} and \cite{PT09}. Assume
$X\subset\mathbb{P}^N$ is embedded into the projective space and
$\omega_{FS}\in 2\pi c_1(\mathbb{P}^N)$ is the standard Fubini-Study
metric on $\mathbb{P}^N$. For any $\sigma\in SL(N+1,\mathbb{C})$,
denote $\omega_\sigma=\sigma^*\omega_{FS}|_X$. We first recall Sean
Paul's formula for Mabuchi-energy
$\mathcal{M}_{\omega}=\mathcal{M}_{\omega,0}$ on the space of
Bergman metrics.
\begin{thm}[\cite{Pa12}]\label{paul}
Let $Emb_k: X^n\hookrightarrow\mathbb{P}^N=\mathbb{P}^{N_k}$ be the
embedding by the complete linear system $|-k K_X|$ for $k$
sufficiently large
Let $R_X^{(k)}$ denote the \textbf{X-resultant} (the Cayley-Chow
form of $X$). Let $\triangle_{X\times\mathbb{P}^{n-1}}^{(k)}$ denote
the \textbf{X-hyperdiscriminant} of format (n-1) (the defining
polynomial for the dual of $X\times\mathbb{P}^{n-1}$ in the Segre
embedding). Then there are continuous norms such that the
Mabuchi-energy restricted to the Bergman metrics is given as
follows:
\begin{equation}\label{paulform}
\frac{n!k^{n}}{(2\pi)^{n+1}}\cdot
\mM_\omega(\omega_\sigma/k)=\log\frac{\|\sigma\cdot\triangle_{X\times\mathbb{P}^{n-1}}^{(k)}\|^2}{\|\triangle_{X\times\mathbb{P}^{n-1}}\|^2}-\frac{\deg
(\triangle_{X\times\mathbb{P}^{n-1}}^{(k)})}{\deg
(R_X^{(k)})}\log\frac{\|\sigma\cdot R_X^{(k)}\|^2}{\|R_X^{(k)}\|^2}.
\end{equation}
\end{thm}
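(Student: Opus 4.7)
My plan is to prove the identity as a matching of two $SU(N+1)$-invariant functions of $\sigma\in SL(N+1,\C)$, via a decomposition of $\mM_\omega(\omega_\sigma/k)$ into an entropy piece and an energy piece, each of which is identified with one of the two logarithmic norms on the right-hand side.

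First I would restrict to Bergman potentials $\phi_\sigma = k^{-1}\log\sum_i |\sigma\cdot s_i|_h^2$, where $\{s_i\}$ is an $L^2$-orthonormal basis of $H^0(X,-kK_X)$. Using the expression in Definition \ref{deflogfunc} (with $\beta=1$), $\mM_\omega(\omega_\sigma/k)$ splits as the entropy $\int_X\log(\omega_\sigma^n/\omega^n)\,\omega_\sigma^n/n!$ plus the $(I-J)_\omega(\omega_\sigma/k)$ piece, up to a $\sigma$-independent correction. The energy piece is the classical Aubin--Mabuchi functional on Bergman metrics; by Zhang's theorem (also Tian, Phong--Sturm) it coincides, up to an additive constant in $\sigma$, with a multiple of $(2\pi k)^{-(n+1)}\log\|\sigma\cdot R_X^{(k)}\|^2$. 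Conceptually this is because the $(n{+}1)$-fold Deligne self-pairing $\langle -kK_X,\ldots,-kK_X\rangle$ is the pullback of $\mathcal{O}(1)$ from the Chow variety, on which $R_X^{(k)}$ is the tautological section.

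The decisive step is identifying the entropy with a multiple of $\log\|\sigma\cdot\triangle_{X\times\cP^{n-1}}^{(k)}\|^2$. Under the Segre embedding $X\times\cP^{n-1}\hookrightarrow\cP^{N(N+1)-1}$, the locus where $\omega_\sigma^n/\omega^n$ acquires excess tangential vanishing is precisely where the Segre image becomes tangent to a hyperplane, which is the defining divisor of $\triangle^{(k)}$. Translating this through the Deligne pairing $\langle -kK_X,\ldots,-kK_X,K_X\rangle$ (with $n$ copies of $-kK_X$) and invoking the Knudsen--Mumford expansion identifies the entropy, exactly and not merely asymptotically in $k$, with a multiple of $\log\|\sigma\cdot\triangle^{(k)}\|^2$ up to an additive constant in $\sigma$.

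The coefficient $\deg\triangle^{(k)}/\deg R_X^{(k)}$ is then forced by the requirement that the right-hand side descend from $GL(N+1,\C)$ to $SL(N+1,\C)$, i.e.\ be invariant under the central $\C^*$-scaling, and the overall additive constant is pinned down at $\sigma=\mathrm{id}$. The main obstacle is the exact (non-asymptotic in $k$) identification of the entropy with the hyperdiscriminant norm: an analytic approach via the Tian--Zelditch expansion of the Bergman kernel yields the identification only up to $O(k^{-1})$ errors. The algebraic route through Deligne pairings and the dual-variety description of the hyperdiscriminant --- which is Paul's key insight --- is what produces an equality valid at each finite $k$.
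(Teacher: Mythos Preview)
The paper does not prove this theorem; it is quoted from \cite{Pa12} and the only commentary is ``For some notes on Paul's proof, see \cite{Li10}'', followed by the statement of Zhang's formula for $F^0_\omega$ and the degrees of the Chow forms. So there is nothing in the paper to compare your argument against.

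That said, your sketch has a structural issue worth flagging. You propose the Chen--Tian split $\mM_\omega=\text{entropy}-(I-J)_\omega+\text{const}$ and then match the two pieces separately to the two log-norms. But Zhang's formula (the one the paper actually records) identifies the Chow log-norm with $F^0_\omega(\phi_\sigma/k)$, not with $(I-J)_\omega$; these differ by $\int_X\phi\,\omega_\phi^n/n!$, which is $\sigma$-dependent. So the clean dictionary ``$(I-J)\leftrightarrow R_X$, entropy $\leftrightarrow\triangle$'' is not correct as stated, and the leftover $\int\phi\,\omega_\phi^n$ has to be absorbed somewhere. In Paul's actual argument the decomposition is not the Chen--Tian one: he writes $\mM_\omega$ on Bergman metrics as a Bott--Chern secondary class for $c_1(K_X^{-1})^n\cdot c_n(T\mathbb{P}^N|_X)$ (coming from the variational formula for scalar curvature), and it is the $c_n$ piece that is matched to the hyperdiscriminant via the Gauss map and the Cayley trick for dual varieties. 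The Knudsen--Mumford/Deligne-pairing language you invoke is adjacent but does not by itself produce the hyperdiscriminant; the identification of the relevant Deligne pairing with $\triangle_{X\times\mathbb{P}^{n-1}}$ is exactly the nontrivial GKZ-type input. Your last paragraph acknowledges this is the crux, but the middle paragraph overstates what the entropy term alone is being matched to.
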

For some notes on Paul's proof, see \cite{Li10}. One ingredient in Sean Paul's formula is
\begin{lem}[\cite{Zh96},\cite{Pa04}]
There is a continuous norm on the Chow forms, which satisfies, for
any projective variety $X^n\subset\mathbb{P}^N$,
\begin{equation}\label{F0chow}
(2\pi)^{n+1}\cdot\log\frac{\|\sigma\cdot
R_X^{(k)}\|^2}{\|R_X^{(k)}\|^2}=(n+1)\int_0^1dt\int_X\dot{\phi}_\sigma\omega_\sigma^n=-(n+1)!k^{n+1}
F_\omega^0(\phi_\sigma/k).
\end{equation}
In particular, this holds when $X^n$ is replaced by $D^{n-1}$ and
$F_{\omega}^0$ is replaced by $F_{\omega,D}^0$.
\end{lem}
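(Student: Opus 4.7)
The plan is to realize $\log\|\sigma\cdot R_X^{(k)}\|^2$ as the change-of-metric formula for the Deligne pairing $\langle \mathcal{O}(1),\ldots,\mathcal{O}(1)\rangle$ ($n+1$ copies) attached to the cycle $X^n\subset\mathbb{P}^N$. Once this is identified, the variation of the metric is controlled by a Bott-Chern secondary form for $c_1(\mathcal{O}(1))^{n+1}$ on $X$, which is precisely the Bott-Chern expression for $F_\omega^0$ noted in Remark \ref{BCF0} (applied to the polarization $\mathcal{O}(1)$ instead of $K_X^{-1}$). The ``continuous norm'' referred to in the statement is exactly the norm on $R_X$ induced by the Deligne pairing of $(\mathcal{O}(1),h_{FS})$, or equivalently, the pull-back to the Chow variety of the natural Hermitian metric on the tautological line via Zhang's construction.

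First I will set up the Cayley-Chow form: for $X^n\subset\mathbb{P}^N$ of degree $d$, $R_X$ is (up to scalar) the unique section of $\mathcal{O}(d)^{\boxtimes(n+1)}$ on $(\check{\mathbb{P}}^N)^{n+1}$ vanishing on $\{(H_0,\ldots,H_n):H_0\cap\cdots\cap H_n\cap X\neq\emptyset\}$. The Fubini-Study metric on $\mathcal{O}(1)$ gives a natural Hermitian norm on $\mathcal{O}(d)^{\boxtimes(n+1)}$ and hence on $R_X$. The main computation is to show that along a one-parameter subgroup $\sigma(t)\in SL(N+1,\mathbb{C})$, with induced Bergman potentials $\phi_{\sigma(t)}$ defined by $\sigma(t)^*\omega_{FS}=\omega_{FS}+\sddbar \phi_{\sigma(t)}$,
\[
\frac{d}{dt}\log\|\sigma(t)\cdot R_X\|^2 \;=\; \frac{n+1}{(2\pi)^{n+1}}\int_X \dot{\phi}_{\sigma(t)}\,\omega_{\sigma(t)}^n.
\]
I would derive this by pulling the Poincar\'e-Lelong equation for the divisor $\{R_X=0\}$ back along the universal family over the incidence variety $I\subset X\times(\check{\mathbb{P}}^N)^{n+1}$ and then projecting to $(\check{\mathbb{P}}^N)^{n+1}$; the projection formula converts the curvature form of the norm into an integral over $X$ of the restricted curvature, and differentiating in $t$ produces the claimed integrand. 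Integrating over $t\in[0,1]$ and comparing with Remark \ref{BCF0} yields the displayed formula, with the factor $-(n+1)!\,k^{n+1}$ produced by unravelling $\omega=k^{-1}\omega_{FS}|_X$ and the normalization $\frac{1}{(n+1)!}$ in the definition of $F_\omega^0$.

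The main obstacle is constructing a genuinely \emph{continuous} norm whose variational formula has the clean form above on the nose, not merely up to $O(1)$ error; an ad hoc choice only gives the identity up to a bounded correction, which is insufficient since Paul's formula for $\mathcal{M}_\omega$ (Theorem \ref{paul}) needs an exact match. This is the content of Zhang's paper \cite{Zh96}, where the Deligne pairing for flat projective cycles is given an intrinsic Hermitian metric whose curvature is the fiber integral of $c_1(\mathcal{O}(1),h)^{n+1}$; an elementary alternative via the resultant polynomial was supplied by Paul in \cite{Pa04}. A secondary technical issue is that fibers of $I\to(\check{\mathbb{P}}^N)^{n+1}$ can jump in dimension, so the projection formula must be applied after a resolution or on a generic locus and then extended by Bedford-Taylor continuity of the Monge-Amp\`ere operator. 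The analogous statement for the divisor $D$ is immediate: running the same construction on the $(n-1)$-dimensional cycle $D\subset\mathbb{P}^N$ produces a Chow form $R_D^{(k)}$ whose variational formula has $F^0_{\omega,D}$ in place of $F^0_\omega$, by the very definition of $F^0_{\omega,D}$ as an integral along $D$.
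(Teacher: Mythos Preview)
The paper does not supply its own proof of this lemma: it is quoted directly from \cite{Zh96} and \cite{Pa04} and used as a black box. Your outline is a faithful sketch of the argument one finds in those references, in particular the identification of the Chow norm with the metric on the Deligne pairing $\langle\mathcal{O}(1),\ldots,\mathcal{O}(1)\rangle$ and the use of the change-of-metric (Bott--Chern) formula to produce the fiber integral. The variational identity you wrote down integrates correctly to the displayed formula, and your accounting for the factor $-(n+1)!\,k^{n+1}$ via the rescaling $\omega=\omega_{FS}/k$ and the normalization in Definition~\ref{defF0IJ} is right. Your remark that continuity of the norm (not just an up-to-$O(1)$ statement) is the genuine content of \cite{Zh96} and \cite{Pa04} is also on target. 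So there is nothing to compare against in the paper itself, and your proposal matches the cited sources.
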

We also know the degree of Cayley-Chow forms:
\begin{lem}
The degree of Cayley-Chow form $R_X^{(k)}$ and $R_D^{(k)}$ are given by
\begin{eqnarray*}
\deg(R_X^{(k)})&=&(n+1)k^{n}\deg(X,K_X^{-1})=\frac{(n+1)!k^n}{(2\pi)^n}\cdot Vol(X),\\
\deg(R_D^{(k)})&=&nk^{n-1}\deg(D,K_X^{-1})=\frac{n!k^{n-1}}{(2\pi)^{n-1}}\cdot
Vol(D).
\end{eqnarray*}
\end{lem}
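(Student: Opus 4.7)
The statement is a classical fact about Cayley--Chow forms, so I will outline the route rather than grind through detailed computations. The plan has two steps: first derive the degrees of $R_X^{(k)}$ and $R_D^{(k)}$ from the projective degrees of the embedded varieties, and second convert projective degrees to the volume normalization used in this paper.

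For the first step, recall that for an irreducible subvariety $Y^m \subset \mathbb{P}^N$ of dimension $m$ and projective degree $d$, the Chow form $R_Y$ is a polynomial on the space of $(m+1)$-tuples of hyperplanes $(H_0, \dots, H_m)$ which vanishes precisely when $Y \cap H_0 \cap \cdots \cap H_m \neq \emptyset$, and it is multi-homogeneous of multi-degree $(d, \dots, d)$. The reason is transparent: fixing generic $H_1, \dots, H_m$ cuts $Y$ down to $d$ points, and $R_Y$ restricted to the remaining factor becomes the product of the linear forms $H_0$ evaluated at those $d$ points. In particular the total degree equals $(m+1)d$. Applying this to $Y = X$ with $m = n$ and the embedding $Emb_k$ furnished by $|-kK_X|$ gives projective degree $\langle c_1(-kK_X)^n, [X]\rangle = k^n \deg(X, K_X^{-1})$, so that $\deg(R_X^{(k)}) = (n+1) k^n \deg(X, K_X^{-1})$. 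The identical argument for $Y = D$ (dimension $n-1$, embedded by the restriction of $Emb_k$ with projective degree $k^{n-1} \deg(D, K_X^{-1})$) yields $\deg(R_D^{(k)}) = n k^{n-1} \deg(D, K_X^{-1})$.

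The second step is pure substitution using the normalization recorded in Section \ref{energy functionals}, namely $Vol(X) = (2\pi)^n \langle c_1(K_X^{-1})^n, [X]\rangle / n!$ and $Vol(D) = (2\pi)^{n-1} \langle c_1(K_X^{-1})^{n-1}, [D]\rangle / (n-1)!$. Solving for the intersection numbers and plugging in produces the advertised right-hand sides
\[
\deg(R_X^{(k)}) = \frac{(n+1)!\, k^n}{(2\pi)^n}\, Vol(X), \qquad \deg(R_D^{(k)}) = \frac{n!\, k^{n-1}}{(2\pi)^{n-1}}\, Vol(D).
\]
There is no essential obstacle; the only point requiring care is bookkeeping of the $(2\pi)$ factors arising because the paper normalizes K\"ahler forms by $\omega \in 2\pi c_1(L)$ rather than $\omega \in c_1(L)$.
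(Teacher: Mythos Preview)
Your proposal is correct. The paper states this lemma without proof, treating it as a standard fact about Chow forms; your argument is exactly the classical one the paper is tacitly invoking.
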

Combining the formulas \eqref{logMsm}, \eqref{paulform}, and
\eqref{F0chow}, we get
\begin{cor}\label{logpaul}
\begin{eqnarray*}
\frac{n! k^{n}}{(2\pi)^{n+1}}\cdot
\mM_{\omega,(1-\beta)D}(\omega_\sigma/k)&=&\log\frac{\|\sigma\cdot\triangle_{X\times\mathbb{P}^{n-1}}^{(k)}\|^2}{\|\triangle_{X\times\mathbb{P}^{n-1}}^{(k)}\|^2}-\frac{\deg
(\triangle_{X\times\mathbb{P}^{n-1}}^{(k)})}{\deg
(R_X^{(k)})}\log\frac{\|\sigma\cdot
R_X^{(k)}\|^2}{\|R_X^{(k)}\|^2}\\
&&+(1-\beta)\left(\log\frac{\|\sigma\cdot
R_D^{(k)}\|^2}{\|R_D^{(k)}\|^2}-\frac{\deg(R_D^{(k)})}{\deg(R_X^{(k)})}\log\frac{\|\sigma\cdot
R_X^{(k)}\|^2}{\|R_X^{(k)}\|^2}\right)
\end{eqnarray*}
\end{cor}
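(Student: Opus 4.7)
The plan is to expand the left-hand side using the explicit decomposition \eqref{logMsm} of the log-Mabuchi-energy, and then substitute the identities for the unweighted pieces (Paul's formula and the Chow-form identities) that the paper has already assembled. Specifically, evaluating \eqref{logMsm} at $\omega_\sigma/k$ gives
\begin{equation*}
\mM_{\omega,(1-\beta)D}(\omega_\sigma/k)=\mM_\omega(\omega_\sigma/k)+2\pi(1-\beta)\left(-F_{\omega,D}^0(\phi_\sigma/k)+\frac{Vol(D)}{Vol(X)}F_\omega^0(\phi_\sigma/k)\right),
\end{equation*}
so I will multiply through by $\frac{n!k^n}{(2\pi)^{n+1}}$ and treat the three summands separately.

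For the first summand, Theorem \ref{paul} immediately produces the first line on the right-hand side of the corollary, namely the hyperdiscriminant term minus $\frac{\deg\triangle^{(k)}_{X\times\cP^{n-1}}}{\deg R_X^{(k)}}\log\|\sigma\cdot R_X^{(k)}\|^2/\|R_X^{(k)}\|^2$. For the $F_{\omega,D}^0$ term I apply \eqref{F0chow} with $X$ replaced by the $(n-1)$-dimensional $D$; this gives
\begin{equation*}
(2\pi)^n\log\frac{\|\sigma\cdot R_D^{(k)}\|^2}{\|R_D^{(k)}\|^2}=-n!\,k^n\,F_{\omega,D}^0(\phi_\sigma/k),
\end{equation*}
and the factor $-2\pi(1-\beta)\cdot\frac{n!k^n}{(2\pi)^{n+1}}$ collapses exactly to $(1-\beta)\log\|\sigma\cdot R_D^{(k)}\|^2/\|R_D^{(k)}\|^2$, matching the first term of the correction.

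The only piece that requires more than pure substitution is the volume ratio $Vol(D)/Vol(X)$ in front of $F_\omega^0$, which I need to convert into the degree ratio $\deg R_D^{(k)}/\deg R_X^{(k)}$. Using \eqref{F0chow} for $X$ itself to replace $F_\omega^0(\phi_\sigma/k)$ by a scalar multiple of $\log\|\sigma\cdot R_X^{(k)}\|^2/\|R_X^{(k)}\|^2$, and then applying the two degree formulas, I get
\begin{equation*}
\frac{\deg R_D^{(k)}}{\deg R_X^{(k)}}=\frac{n\,k^{n-1}\deg(D,K_X^{-1})}{(n+1)\,k^n\deg(X,K_X^{-1})}=\frac{2\pi\,Vol(D)}{(n+1)\,k\,Vol(X)},
\end{equation*}
where I used $\deg(X,K_X^{-1})=n!\,Vol(X)/(2\pi)^n$ and similarly for $D$. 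Assembling the prefactors $\frac{n!k^n}{(2\pi)^{n+1}}\cdot 2\pi(1-\beta)\cdot\frac{Vol(D)}{Vol(X)}\cdot\frac{-(2\pi)^{n+1}}{(n+1)!k^{n+1}}$ from all the normalizations yields precisely $-(1-\beta)\frac{\deg R_D^{(k)}}{\deg R_X^{(k)}}$, giving the second part of the correction.

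The main work here is really only bookkeeping: there is no conceptual obstacle once Theorem \ref{paul}, the Chow-form identity \eqref{F0chow}, and the degree formulas are in hand. The step most prone to error is tracking the $(2\pi)$ and $k$ powers coming from the three different normalization conventions (K\"ahler form in $2\pi c_1$, embedding by $|-kK_X|$, and the fact that $R_D^{(k)}$ lives in one dimension lower than $R_X^{(k)}$), so I would verify the exponents $(2\pi)^{n+1}$ for $X$ versus $(2\pi)^n$ for $D$ at each intermediate line before collecting terms.
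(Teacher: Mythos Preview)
Your proposal is correct and follows exactly the approach indicated in the paper, which simply says the corollary is obtained by ``combining the formulas \eqref{logMsm}, \eqref{paulform}, and \eqref{F0chow}'' together with the degree lemma. You have in fact supplied more detail than the paper does, and your bookkeeping of the $(2\pi)$ and $k$ powers (including the drop from $(2\pi)^{n+1}$ to $(2\pi)^n$ when passing from $R_X^{(k)}$ to $R_D^{(k)}$) checks out.
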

For any one parameter subgroup $\lambda(t)=t^{A}\in
SL(N_k+1,\mathbb{C})$. Although the log-Mabuchi-energy is not convex
along $\lambda(t)$, the above Corollary says that it is the linear
combination of convex functionals. As a consequence, we have the
existence of asymptotic slope. Define
$\omega_{\lambda(t)}=\lambda(t)^*\omega_{FS}|_X$, and
$(\mX_0,\mD_0)=\lim_{t\rightarrow 0}\lambda(t)\cdot (X,D)$ in the
Hilbert scheme (which is the central fibre of the induced test
configuration introduced in section \ref{seclogK}). Then combine
Corollary \ref{logpaul} with the argument in \cite{PT09}, we have
the following expansion
\begin{prop}\label{logexpansion}
\begin{equation}\label{aslope}
\mM_{\omega,(1-\beta)D}(\omega_{\lambda(t)}/k)=\left(F+a\right)\log
t+O(1)
\end{equation}
where $F=F(X,(1-\beta)D; 2\pi c_1(X))(\lambda)$ is the log-Futaki
invariant. $a\ge 0\in \mathbb{Q}$ is nonnegative and is positive if
and only if the central fibre $\mX_0$ has generically non-reduced
fibre.
\end{prop}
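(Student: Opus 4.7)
The plan is to adapt Paul--Tian's argument from \cite{PT09}, extending it to the conical case by using Corollary \ref{logpaul} in place of Paul's original formula \eqref{paulform}. First, fix the embedding $X\hookrightarrow\mathbb{P}^{N_k}$ and the one--parameter subgroup $\lambda(t)=t^{A}\in SL(N_k+1,\mathbb{C})$, viewing $\lambda(t)$ as acting on each of the finite--dimensional representations that house $\Delta^{(k)}_{X\times\mathbb{P}^{n-1}}$, $R_X^{(k)}$ and $R_D^{(k)}$. For any nonzero vector $v$ in such a representation, decomposing $v=\sum_w v_w$ into weight components yields the elementary expansion
\[\log\|\lambda(t)\cdot v\|^2 = 2\, w_{\min}(v)\log|t| + O(1) \quad \text{as } t\to 0,\]
with $w_{\min}(v)=\min\{w\colon v_w\neq 0\}$. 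Substituting this into the three $\log$--terms on the right of Corollary \ref{logpaul} immediately produces an expansion of the form $\mM_{\omega,(1-\beta)D}(\omega_{\lambda(t)}/k)=C\log t+O(1)$ for a rational constant $C$.

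The second step is to identify $C$. Standard properties of Cayley--Chow forms imply that, up to $SL$--normalisations which cancel in the combination appearing in Corollary \ref{logpaul}, the minimum weights of $R_X^{(k)}$ and $R_D^{(k)}$ under $\lambda(t)$ are exactly the Hilbert--Mumford weights $w_k$ and $\tilde w_k$ of the central--fibre data $H^0(\mX_0,\mL|_{\mX_0}^{\,k})$ and $H^0(\mD_0,\mL|_{\mD_0}^{\,k})$. Feeding the expansions \eqref{coef2}--\eqref{coef3} for $a_i,b_i,\tilde a_0,\tilde b_0$ into Corollary \ref{logpaul} and reading off the leading coefficient in $k$ of the $\log t$ term reproduces precisely the algebraic log--Futaki invariant $F$ from \eqref{logftk}, coming from the $R_X^{(k)}$--contribution (via $2a_1-\tilde a_0$ and $2b_1-\tilde b_0$) together with the $R_D^{(k)}$--contribution in the $(1-\beta)$--piece.

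The main obstacle is the hyperdiscriminant term $\log\|\lambda(t)\cdot\Delta^{(k)}_{X\times\mathbb{P}^{n-1}}\|^2$. Unlike the Chow forms, whose minimum weights coincide with the Riemann--Roch prediction because Chow forms encode cycles with multiplicity, the hyperdiscriminant defines a dual variety whose minimum weight under $\lambda(t)$ exceeds the naive Riemann--Roch value by a non--negative quantity. Identifying this excess as the correction $a\cdot\tfrac{n!k^n}{(2\pi)^{n+1}}\log t$ in the statement is the technical core of \cite{PT09}: one analyses how the dual variety of $X\times\mathbb{P}^{n-1}$ degenerates under $\lambda(t)$ and shows that each irreducible component of $\mX_0$ appearing with geometric multiplicity $m>1$ contributes an explicit positive rational amount proportional to $m-1$. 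Consequently $a\in\Q_{\ge 0}$, with $a>0$ if and only if $\mX_0$ is generically non--reduced. Because the conical formula in Corollary \ref{logpaul} introduces only the additional Chow term $R_D^{(k)}$, and Chow forms themselves never produce such dual--variety corrections, the same non--negative $a$ works verbatim in our setting, yielding the desired expansion $(F+a)\log t+O(1)$.
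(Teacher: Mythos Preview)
Your proposal is correct and follows essentially the same approach as the paper: the paper simply states that the expansion follows by combining Corollary \ref{logpaul} with the argument in \cite{PT09}, and you have spelled out exactly that combination. Your key observation---that the additional $(1-\beta)$-term in Corollary \ref{logpaul} involves only the Chow form $R_D^{(k)}$, whose minimum weight matches the Riemann--Roch prediction without any dual-variety correction, so the non-negative defect $a$ is unchanged from the smooth case---is precisely the point that makes the adaptation work.
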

\begin{rem}
In fact, if $\mX_0$ is irreducible, then by (\cite{Ti97},
\cite{PT09}) one can calculate that $a=c\cdot({\rm mult}(\mX_0)-1)$
for $c>0\in\mathbb{Q}$.
\end{rem}
Without loss of generality, we assume each homogeneous coordinate
$Z_i$ are the eigenvector of $\lambda(t)$ on $H^0(\mathbb{P}^N,\mathcal{O}(1))=\mathbb{C}^{N+1}$
with eigenvalues $\lambda_0=\dots=\lambda_K<\lambda_{K+1}\le\dots\le
\lambda_N$. Let $\omega_{\lambda(t)}=\omega_{FS}+\sddbar\phi_t$.
Then
\begin{equation}\label{dephi}
\phi_t=\log\frac{\sum_{i}t^{\lambda_i}|Z_i|^2}{\sum_i|Z_i|^2}
\end{equation}
There are three possibilities for $\mX_0$.
\begin{enumerate}
\item (non-degenerate case)
$\lim_{t\rightarrow 0}{\rm Osc}(\phi_t)\rightarrow +\infty$. By
\eqref{dephi}, this is equivalent to $\cap_{i=0}^{K}\{Z_i=0\}\bigcap
X\neq\emptyset$.
\item (degenerate case)
${\rm Osc}(\phi_t)\le C$ for $C$ independent of $t$. This is
equivalent to $\cap_{i=0}^{K}\{Z_i=0\}\bigcap X=\emptyset$. In this
case, $\mX_0$ is the image of $X$ under the projection
$\mathbb{P}^N\rightarrow\mathbb{P}^K$ given by $[Z_0,\dots,
Z_N]\mapsto [Z_0,\dots,Z_K,0,\dots,0]$ and there is a morphism from
$\Phi: X=\mX_{t\neq0}\rightarrow \mX_0$ which is the restriction of
the projection. There are two possibilities.
\begin{enumerate}
\item $\deg(\Phi)>1$. In this case, $\mX_0$ is generically
non-reduced. So $a>0$ in \eqref{aslope}.

Example: Assume $X^n\subset \mathbb{P}^N$ is in general position.
Then the generical linear subspace
$\mathbb{L}\cong\mathbb{P}^{N-n-1}$ satisfies $\mathbb{L}\cap
X=\emptyset$. Let $\mathbb{M}\cong\mathbb{P}^{n}$ be a complement of
$\mathbb{L}\subset\mathbb{P}^N$. Then the projection of $\Phi:
\mathbb{P}^N\backslash\mathbb{L}\rightarrow \mathbb{M}$ gives a
projection $\Phi: X\rightarrow\Phi(X)$ whose mapping degree equals the
algebraic degree of $X$.
\item $\deg(\Phi)=1$. In this case, $\mX_0$ is generically reduced and $a=0$.

Example: Assume $X^n\subset \mathbb{P}^N$ is in general position.
Assume $K\ge n+1$, then $N-K-1\le N-n-2$. So the generical linear
subspace $\mathbb{L}\cong\mathbb{P}^{N-K-1}$ satisfies
$\mathbb{L}\cap X=\emptyset$. Let $\mathbb{M}\cong\mathbb{P}^K$ be a
complement of $\mathbb{L}\subset\mathbb{P}^N$. Then the projection
of $\Phi: \mathbb{P}^N\backslash\mathbb{L}\rightarrow \mathbb{M}$
gives a projection $\Phi: X\rightarrow\Phi(X)$ with degree 1.
\end{enumerate}
\end{enumerate}
\begin{prop}\label{Mabuchistable}
As a functional on the space $\mH(\omega)$ of smooth K\"{a}hler
potentials, if $\mM_{\omega, (1-\beta)D}(\omega_\phi)$ is bounded from below
(resp. proper), then $(X,-K_X,(1-\beta)D)$ is log-K-semistable
(resp. log-K-stable).
\end{prop}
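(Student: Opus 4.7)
\bigskip
\noindent\textbf{Proof plan.} The plan is to adapt the strategy of Tian \cite{Ti97} and Paul-Tian \cite{PT09} to the conical setting, exploiting the asymptotic slope formula of Proposition \ref{logexpansion} together with Sean Paul's resultant/hyperdiscriminant interpretation (Corollary \ref{logpaul}).

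Given any test configuration $(\mathcal{X},\mathcal{L})$ of $(X,-K_X)$, I would first replace $\mathcal{L}$ by $\mathcal{L}^k$ for $k$ sufficiently large so that the configuration is induced, via the embedding $X\hookrightarrow\mathbb{P}^N$ by $|-kK_X|$, by a one-parameter subgroup $\lambda(t)\subset SL(N+1,\mathbb{C})$. The associated Bergman metrics $\omega_{\lambda(t)}/k$ are smooth K\"ahler metrics in $2\pi c_1(X)$, hence their potentials lie in $\mH(\omega)$. Applying Proposition \ref{logexpansion},
\[
\mM_{\omega,(1-\beta)D}(\omega_{\lambda(t)}/k)=(F+a)\log t+O(1)\qquad\text{as }t\to 0^+,
\]
where $F=F(X,(1-\beta)D,2\pi c_1(X))(\lambda)$ is the log-Futaki invariant of the induced test configuration and $a\geq 0$, with $a>0$ iff the central fibre $\mathcal{X}_0$ is generically non-reduced.

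For the semistability statement, if $\mM_{\omega,(1-\beta)D}$ is bounded from below on $\mH(\omega)$, then since $\log t\to-\infty$ as $t\to 0^+$, the coefficient of $\log t$ must satisfy $F+a\leq 0$; combined with $a\geq 0$ this yields $F\leq 0$, which is precisely log-K-semistability by Definition \ref{dflgkstb}. For the stability statement, assume $\mM_{\omega,(1-\beta)D}$ is proper, and let $(\mathcal{X},\mathcal{L})$ be a test configuration whose normalization is not a product; the goal is $F<0$. If $a>0$, the semistability bound already gives $F\leq -a<0$. If $a=0$, i.e.\ $\mathcal{X}_0$ is generically reduced, I would argue that non-triviality of the normalized configuration forces $I_\omega(\omega_{\lambda(t)}/k)\to\infty$ as $t\to 0^+$; properness then gives $\mM_{\omega,(1-\beta)D}(\omega_{\lambda(t)}/k)\to\infty$, and matching this against the expansion $\mM=F\log t+O(1)$ with $\log t\to-\infty$ forces $F<0$.

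The hard part will be the $a=0$ case of the stability argument, namely verifying that a non-product (after normalization) test configuration with generically reduced central fibre produces an unbounded $I_\omega$ along the Bergman family $\omega_{\lambda(t)}/k$. This is the analytic-algebraic bridge: using the explicit formula \eqref{dephi} for $\phi_t$ and a normal families/compactness argument, one shows that if $I_\omega$ stayed bounded then, up to automorphisms of $\mathbb{P}^N$, the limit K\"ahler form would be isometric to $\omega$ and the normalized central fibre would be isomorphic to $X$, i.e.\ the configuration would be trivial. Once this step is in place, the rest of the proof is a direct comparison of leading coefficients in the asymptotic expansion.
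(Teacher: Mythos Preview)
Your overall strategy---embed by $|-kK_X|$, pass to a one-parameter subgroup, and read off the sign of $F$ from the expansion of Proposition \ref{logexpansion}---is exactly what the paper does, and your semistability argument and the $a>0$ subcase of stability are identical to the paper's.

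The one place where you diverge is the ``hard part,'' and here you are making it harder than it is. You propose a normal families/compactness argument to show that if $I_\omega$ stays bounded and $a=0$ then the normalized configuration is a product. The paper instead uses the trichotomy described just before Proposition \ref{logexpansion}: from the explicit formula \eqref{dephi} one sees that $\mathrm{Osc}(\phi_t)$ stays bounded if and only if $\bigcap_{i\le K}\{Z_i=0\}\cap X=\emptyset$ (the ``degenerate'' case), in which case $\mathcal{X}_0$ is the image of $X$ under a linear projection $\Phi:X\to\mathbb{P}^K$. When in addition $a=0$, the central fibre is generically reduced, which forces $\deg\Phi=1$; then $\Phi$ is birational, and since $X$ is normal the normalization of $\mathcal{X}_0$ is $X$ itself, so the normalized test configuration is a product. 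This is purely algebraic---no metric compactness is needed. In the remaining situation (case 1, non-degenerate) $\mathrm{Osc}(\phi_t)\to\infty$, and the paper simply cites \cite{PT09} to conclude $F<0$ from properness on Bergman potentials. So your contrapositive statement is correct, but the mechanism is the degree-one projection argument, not a compactness argument on K\"ahler metrics.
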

\begin{proof}
If log-Mabuchi-energy is bounded from below, then $F\le 0$ by the expansion \eqref{aslope} since $a\ge 0$.

Assume $\mM_{\omega,(1-\beta)D}(\omega_\phi)$ is proper on $\mH(\omega)$ in the
sense of Definition \ref{defproper}, then in particular it's proper
on the space of Bermgan potentials, so by  \cite{PT09}, in case 1 or 2(a), $F<0$.
In case 2(b), $(\mX,\mY,\mL)$ has vanishing log-Futaki invariant and its normalization is a product test configuration.
\end{proof}

\begin{rem} \label{Bermanstability}
By \cite{LX11}, we only need to test log-K-(poly)stability for normal test
configuration with central fibre being a klt pair
$(\mX_0,(1-\beta)\mD_0)$. Recently, Berman \cite{Berm2} used this fact to prove that a
K\"{a}hler-Einstein (log) $\mathbb{Q}$-Fano variety is (log) K-polystable. His
approach is based on the expansion of Ding-functional along any special test configuration. This is certainly
related to the general expansion stated in Corollary \ref{logexpansion}.
\end{rem}

\subsection{Log-slope stability and log-Fano manifold }
Recall that when $\lambda<1$, $r(\beta)=1-\lambda(1-\beta)$. So when
$\beta=0$, $r(\beta)=1-\lambda>0$. The metric in this case should
correspond to complete metric with infinite diameter and with
$Ric=1-\lambda>0$. This contradicts Myers theorem. So we expect when
$\beta$ is very small, there does not exist such conical
K\"{a}hler-Einstein metrics.

This is indeed the case. To see this, we first generalize
Ross-Thomas' slope stability to the log setting. (See \cite{RoTh},
\cite{Su11}). For any subscheme $Z\subset X$, we blow up the ideal
sheaf $\mathcal{I}_Z+(t)$ on $X\times\mathbb{C}$ to get the
degeneration of $X$ to the deformation to the normal cone $T_ZX$.
For the polarization, we denote $\mathcal{L}_c=\pi^*L-c E$, where
$E=P_Z(\mathbb{C}\oplus N_ZX)$ is the exceptional divisor, $0<c<$
Seshadri constant of $Z$ with respect to $X$. By Ross-Thomas \cite{RoTh}, we have
the identity:
\begin{eqnarray*}
H^0(\mX, \mL_c^{k})=H^0(X\times\mathbb{C}, L^k\otimes
((t)+\mathcal{I}_Z)^{ck})=\bigoplus_{j=0}^{ck-1} t^j H^0(X,
L^k\otimes \mathcal{I}_Z^{ck-j})\oplus t^{ck}\mathbb{C}[t]H^0(X,
L^k)
\end{eqnarray*}
So for $k$ sufficiently large,
\begin{eqnarray*}
H^0(\mX_0, \mL_c^{k})&=&H^0(X, L^k\otimes
\mathcal{I}_Z^{ck})\oplus\bigoplus_{i=0}^{ck}t^{j}H^0(X,L^k\otimes
\mathcal{I}_Z^{ck-j}/\mathcal{I}_Z^{ck-j+1})\\
&=&H^0(X,L^k\otimes\mathcal{I}_Z^{ck})\oplus\bigoplus_{i=0}^{ck}t^{j}\frac{H^0(X,
L^k\otimes\mathcal{I}_Z^{ck-j})}{H^0(X,
L^k\otimes\mathcal{I}_Z^{ck-j+1})}
\end{eqnarray*}
If
\[
\chi(X, L^k\otimes\mathcal{I}_Z^{xk})=a_0(x) k^{n}+a_1(x)
k^{n-1}+O(k^{n-1})
\]
then by the calculation in by Ross-Thomas in \cite{RoTh}, we know
that
\[
b_0=\int_0^c a_0(x)dx-ca_0,\quad b_1=\int_0^c
(a_1(x)+\frac{1}{2}a_0'(x))dx-ca_1
\]
Similarly,
\begin{eqnarray*}
H^0(\mY, \mL_c^k)&=&H^0(X\times\mathbb{C}, L^k\otimes
((t)+\mathcal{I}_Z)^{ck}\otimes\mathcal{O}_X/\mathcal{I}_Y)=H^0(Y\times
\mathbb{C}, L^k\otimes ((t)+\mathcal{I}_Z\cdot\mathcal{O}_Y)^{ck})
\end{eqnarray*}
And
\[
H^0(\mathcal{Y}_0,\mL_c^k)
=H^0(Y,L^k\otimes(\mathcal{I}_Z\cdot\mathcal{O}_Y)^{ck})\oplus\bigoplus_{i=0}^{ck}t^{j}\frac{H^0(Y,
L^k\otimes(\mathcal{I}_Z\cdot\mathcal{O}_Y)^{ck-j})}{H^0(X,
L^k\otimes(\mathcal{I}_Z\cdot\mathcal{O}_Y)^{ck-j+1})}
\]
So, by \cite{RoTh} again, if
\[
\chi(Y,
L^k\otimes(\mathcal{I}_Z\cdot\mathcal{O}_Y)^{xk})=\tilde{a}_0(x)
k^{n-1}+O(k^{n-2}),
\]
then
\[
\tilde{b}_0=\int_0^c \tilde{a}_0(x)dx-c \tilde{a}_0.
\]
So we can calculate the log-Futaki invariant as
\begin{eqnarray}\label{callogfut}
\frac{a_0}{(2\pi)^{n+1}} F(\mathcal{X}, \mathcal{Y}, \mathcal{L})&=&
2(a_0 b_1-a_1 b_0)+(\tilde{a}_0b_0-a_0\tilde{b}_0)\nonumber\\
&=&a_0(2b_1-\tilde{b}_0)-b_0(2a_1-\tilde{a}_0)\nonumber\\
&=& 2a_0 \left(\int_0^c
(a_1(x)-\frac{1}{2}\tilde{a}_0(x)+\frac{1}{2}a_0'(x))dx\right)-2(a_1-\tilde{a}_0/2)\int_0^c
a_0(x)dx.
\end{eqnarray}
In other words, we can define the log-slope invariant:
\begin{eqnarray*}
\mu^{\log}_c((X,Y); \mathcal{I}_Z)&=&\frac{\int_0^c
(a_1(x)-\frac{1}{2}\tilde{a}_0(x)+\frac{1}{2}a_0'(x))dx}{\int_0^c
a_0(x)dx}=\frac{\int_0^c
(a_1(x)-\frac{1}{2}\tilde{a}_0(x))dx+\frac{1}{2}(a_0(c)-a_0)}{\int_0^c
a_0(x)dx}\\
&=&\mu_c(X;\mathcal{I}_Z)-\frac{\int_0^c\tilde{a}_0(x)dx}{2\int_0^ca_0(x)dx}.
\end{eqnarray*}
\[
\mu^{\log}((X,Y))=\frac{a_1-\tilde{a}_0/2}{a_0}=-\frac{n}{2}\cdot\frac{(K_X+Y)\cdot
L^{n-1}}{L^n}=\mu_c(X)-\frac{nY\cdot L^{n-1}}{2 L^n}.
\]
\begin{defn}
We call $(X,Y)$ is log-slope-stable, if for any subscheme
$Z\subset X$, we have
\[
\mu^{\log}_c((X,Y); \mathcal{I}_Z)<\mu^{\log}((X,Y))
\]
\end{defn}
\begin{prop} \label{defnormalcone}Let $X$ be a  Fano manifold, and $D$ a Cartier divisor which is numerically equivalent to $-\lambda K_X$.
Then if $\lambda<1$, the pair $(X, (1-\beta)D)$ is not
log-slope-stable for $\beta<(\lambda^{-1}-1)/n$. As a consequence, in the log-Fano
case, the log-Mabuchi-energy is not bounded from below for very small angle.
\end{prop}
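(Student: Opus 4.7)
The plan is to destabilize $(X,(1-\beta)D)$ via the deformation of $X\times\mathbb{C}$ to the normal cone of $D\times\{0\}$, with polarization $\mathcal{L}_c=\pi^*L-cE$ where $L=-K_X$, taking $Z=Y=D$. Since $D\equiv\lambda L$ with $D$ Cartier, the Seshadri constant of $D$ with respect to $L$ equals $1/\lambda$, so $\mathcal{L}_c$ is ample for every $0<c<1/\lambda$. The whole argument reduces to plugging into the log-Futaki formula \eqref{logftk} (twisted by $(1-\beta)$ on the divisor term) and checking the sign.

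Using $\mathcal{I}_D^m=\mathcal{O}_X(-mD)$, one obtains $a_0(x)=(1-x\lambda)^nL^n/n!$ and $a_1(x)=n(1-x\lambda)^{n-1}L^n/(2\,n!)$, so $a_0=L^n/n!$, $a_1=\tfrac{n}{2}a_0$, and $\tilde{a}_0=n\lambda a_0$. The delicate point is that $Z=Y=D$ forces $\mathcal{I}_D\cdot\mathcal{O}_D=0$, hence $\tilde{a}_0(x)\equiv 0$ for $x>0$; a direct inspection of the $\mathbb{C}[t]$-module decomposition $H^0(\mathcal{Y},\mathcal{L}_c^k)=t^{ck}\mathbb{C}[t]\,H^0(D,L^k|_D)$ shows that the Ross-Thomas formula nonetheless produces $\tilde{b}_0=-c\tilde{a}_0$. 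I expect justifying this degenerate boundary case of the Ross-Thomas framework to be the main technical step in the argument.

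With these data, $a_0\tilde{b}_0-b_0\tilde{a}_0=-A\tilde{a}_0$ where $A=\int_0^c a_0(x)\,dx$, and $a_0b_1-a_1b_0=a_0B-a_1A$ where $B=\int_0^c\bigl(a_1(x)+\tfrac12 a_0'(x)\bigr)dx$. Using $2a_1-(1-\beta)\tilde{a}_0=nr(\beta)a_0$, the $(1-\beta)$-twisted version of \eqref{logftk} collapses to
\[
\frac{F((X,(1-\beta)D);\mathcal{X},\mathcal{L}_c)}{(2\pi)^{n+1}}=2B-nr(\beta)A=\frac{L^n}{(n+1)!\lambda}\Bigl[(n+1)(1-\lambda)(1-(1-c\lambda)^n)-nr(\beta)(1-(1-c\lambda)^{n+1})\Bigr].
\]
Letting $c\uparrow 1/\lambda$, the bracket simplifies to $1-\lambda-n\beta\lambda$, which is strictly positive exactly when $\beta<(\lambda^{-1}-1)/n$. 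For such $\beta$, continuity supplies some $c<1/\lambda$ with $\mathcal{L}_c$ ample and $F>0$; the identity $\mu^{\log,\beta}_c-\mu^{\log,\beta}=F/\bigl(2(2\pi)^{n+1}A\bigr)$ then shows that $(X,(1-\beta)D)$ is not log-slope-stable. The second assertion follows from the contrapositive of Theorem~\ref{funstable} combined with the asymptotic expansion of Proposition~\ref{logexpansion}: positive $F$ forces the log-Mabuchi-energy to tend to $-\infty$ along the Bergman geodesic of this test configuration, so it is unbounded from below.
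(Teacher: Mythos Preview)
Your proposal is correct and follows essentially the same route as the paper: deformation to the normal cone of $D$ itself, the same Hilbert-polynomial computations (including the key observation that $\tilde{a}_0(x)=0$ for $x>0$ because $\mathcal{I}_D\cdot\mathcal{O}_D=0$), and the same sign analysis of the log-Futaki invariant. The only cosmetic difference is that the paper packages the calculation as the explicit function $\beta(\lambda,c)$ and shows $\sup_{0<c<\lambda^{-1}}\beta(\lambda,c)=(\lambda^{-1}-1)/n$, whereas you take the limit $c\uparrow 1/\lambda$ and invoke continuity; since $\beta(\lambda,c)$ is monotone in $c$, these are equivalent. One minor slip: your slope identity should read $\mu^{\log,\beta}_c-\mu^{\log,\beta}=F/\bigl(2(2\pi)^{n+1}a_0 A\bigr)$ (you dropped the factor $a_0$), but this does not affect the sign conclusion.
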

\begin{proof}
The idea is to look at the test configuration $\mX$ given by deformation
to the normal cone to $D$, as in \cite{Su11}. By Lemma \ref{Sesh}, the Seshadri constant of $(-K_X, D)$ is equal to
$c=1/\lambda$. We will calculate the Futaki invariant for the semi
test configuration polarized by $\mL=L(-\frac{1}{\lambda}D)$ with
$L=-K_X$ and show it is negative for $\beta<(\lambda^{-1}-1)/n$.

In our case, if we choose $Z=D\sim -\lambda K_X$, then the
calculation simplifies to
\[
a_0(x)=\frac{(L-xD)^n}{n!}=(1-x\lambda)^n\frac{(-K_X)^n}{n!}=(1-x\lambda)^n
a_0,\quad  a_0'(x)=-n\lambda(1-x\lambda)^{n-1} a_0
\]
\[
a_1(x)=\frac{-K_X\cdot
(L-xD)^{n-1}}{2(n-1)!}=(1-x\lambda)^{n-1}\frac{(-K_X)^n}{2(n-1)!}=\frac{n}{2}(1-x\lambda)^{n-1}a_0.
\]
We let $Y=(1-\beta)D$, then
\[
\tilde{a}_0(x) =\left\{ \begin{array}{lcr} 0&,& \mbox{ when } x>0\\
\frac{L^{n-1}\cdot (1-\beta) D}{(n-1)!}=n\lambda(1-\beta)
a_0&,&\mbox{ when } x=0
\end{array}\right.
\]
So
\[
\int_0^c a_0(x)dx=\frac{a_0}{\lambda(n+1)}(1-(1-c\lambda)^{n+1})
\]
\[
\int_0^c a_0'(x)dx=a_0(c)-a_0=a_0((1-c\lambda)^n-1).
\]
\[
\int_0^c a_1(x)dx=\frac{a_0}{2\lambda}(1-(1-c\lambda)^{n}), \quad
\int_0^c \tilde{a}_0(x)dx=0.
\]
So the log-Futaki invariant is equal to
\begin{eqnarray*}
\frac{a_0}{(2\pi)^{n+1}}
F&=&a_0^2(1-(1-c\lambda)^{n+1})\frac{n}{n+1}\left[(\lambda^{-1}-1)\left(\frac{n+1}{n}\cdot\frac{1-(1-c\lambda)^{n}}{1-(1-c\lambda)^{n+1}}-1\right)-\beta\right]\\
\end{eqnarray*}
So we get $ F\le 0\Longleftrightarrow \beta\ge \beta(\lambda, c)$,
where
\begin{eqnarray*}
\beta(\lambda, c)&=&(\lambda^{-1}-1)\left(\frac{n+1}{n}\frac{1-(1-c\lambda)^n}{1-(1-c\lambda)^{n+1}}-1\right)\\
&=&\frac{\lambda^{-1}-1}{n}\left(1-\frac{n+1}{\sum_{i=0}^n
(1-c\lambda)^{-i}}\right)
\end{eqnarray*}
For above formula for $\beta(\lambda,c)$ we easily get that
\[
\sup_{0<c<\lambda^{-1}}\beta(\lambda,
c)=\frac{\lambda^{-1}-1}{n}.
\]
So when $\beta<(\lambda^{-1}-1)/n$ there exists $c\in (0,\lambda^{-1})$ such that $(X,(1-\beta)D)$ is destablized by $cD$.
\end{proof}
\begin{exmp} \label{deg2slope}
On $\cP^2$,  when $D$ is a line, then $(X, (1-\beta)D)$ is unstable
for all $\beta\in [0,1)$; when $D$ is a conic, then $(X,
(1-\beta)D)$ is unstable for $\beta\in(0, 1/4)$, and it will be
proved below that it is semi-stable for $\beta=1/4$, and hence
poly-stable for $\beta\in(1/4, 1)$. On $\cP^1\times \cP^1$, when $D$
is a diagonal line, $(X, (1-\beta)D)$ is unstable for $\beta\in(0,
1/2)$. By viewing $\cP^1\times\cP^1$ as a double cover of $\cP^2$
along a conic curve (See Remark \ref{coverP2} in Section
\ref{proofconic} for details) we see these observations match. It is
an interesting question whether the bounds of $\beta$ given by the
above proposition is sharp for a smooth hypersurface $d$ in $\cP^n$
with $d<n+1$.
\end{exmp}
\begin{lem}\label{Sesh}
The Seshadri constant of $(-K_X, D)$ is equal to $\lambda^{-1}$.
\end{lem}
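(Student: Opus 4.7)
The plan is to unwind the definition of the Seshadri constant in the special case that the subscheme $Z=D$ is itself a smooth (hence Cartier) divisor. In this degenerate situation the blow-up $\pi\colon Bl_D X\to X$ is an isomorphism and the exceptional divisor $E$ is identified with $D$, so Ross--Thomas' Seshadri constant simplifies to
\[
\epsilon(-K_X, D) \;=\; \sup\bigl\{\,c>0 \;:\; -K_X-cD \text{ is ample}\,\bigr\},
\]
with the supremum unchanged if ``ample'' is replaced by ``nef''.

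Next I would use the numerical relation $D\equiv_{\mathbb{Q}} -\lambda K_X$ to rewrite the test class:
\[
-K_X-cD \;\equiv_{\mathbb{Q}}\; -K_X + c\lambda K_X \;=\; (1-c\lambda)(-K_X).
\]
Since $X$ is Fano, $-K_X$ is ample, so the $\mathbb{Q}$-line bundle $(1-c\lambda)(-K_X)$ is ample if and only if $1-c\lambda>0$, i.e.\ $c<\lambda^{-1}$, and it is nef exactly when $c\le\lambda^{-1}$. Taking the supremum gives $\epsilon(-K_X, D)=\lambda^{-1}$.

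There is essentially no obstacle here; the only thing to keep straight is the convention at the boundary (nef versus ample), which does not affect the numerical value. This is the elementary fact that was invoked in Proposition \ref{defnormalcone} to identify the endpoint of the Seshadri interval used for the deformation to the normal cone of $D$.
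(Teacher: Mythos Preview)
Your proof is correct and more elementary than the paper's. You exploit the fact that for a smooth Cartier divisor $D$ the blow-up $Bl_DX\to X$ is an isomorphism with exceptional divisor identified with $D$, so the Ross--Thomas Seshadri constant collapses to $\sup\{c:-K_X-cD\text{ ample}\}$, and the numerical relation $D\equiv_{\mathbb Q}-\lambda K_X$ finishes the computation in one line.

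The paper instead works directly on the deformation to the normal cone $\mX$ and verifies that the polarization $\mL_c$ is positive on the central fibre $\mX_0=X\cup_{D_\infty}E$ precisely when $c<\lambda^{-1}$. On the $X$-component this reduces to the same observation you make, namely $\mL_c|_X=(1-c\lambda)K_X^{-1}$. On the projective-bundle component $E\cong\mathbb P(N_D\oplus\mathbb C)$ the paper writes down an explicit rotationally symmetric $(1,1)$-form $\Omega=\lambda^{-1}\pi^*\omega_h+c\,\sddbar\log(1+h)$ and checks in local coordinates that it is K\"ahler iff $c<\lambda^{-1}$. What this buys is a concrete K\"ahler representative of $\mL_c$ on the central fibre, which fits the differential-geometric spirit of the surrounding section; what your approach buys is brevity and a direct appeal to the standard Ross--Thomas definition, avoiding any metric construction. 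Either route suffices for the application in Proposition~\ref{defnormalcone}.
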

\begin{proof}
Note that $\mX_0=X\cup_{D_\infty}E$. Here $E\cong
P(N_D\oplus\mathbb{C})$ is the exceptional divisor and $D\cong
D_\infty\subset P(N_D\oplus\mathbb{C})$ is the divisor at infinity.
$\mL_c|_X=K_X^{-1}-cD=(1-c\lambda)K_X^{-1}$. This is ample if and
only if $c<\lambda^{-1}$.

On the otherhand,
$\mL_c|_{P(N_D\oplus\mathbb{C})}=\pi^*K_X^{-1}+c\mathcal{O}_E(1)=\pi^*K_X^{-1}+cD_\infty$.
Let $h$ be a Hermitian metric on $\mathcal{O}(D)$ such that
$\omega_h:=-\sddbar\log h$ is a K\"{a}hler form. Then if we define
$\Omega=\lambda^{-1}\pi^*\omega_h+c\sddbar\log(1+h)$, this gives a
smooth rotationally symmetric (1,1)-form on $E$. To write $\Omega$
in local coordinate, we choose two kinds of coordinate charts on $E$
which covers the neighborhood of zero section $D_0$ and infinity
section $D_\infty$ of $P(N_D\oplus\mathbb{C})$ respectively. To do
this, just choose local trivialization of $N_D|_D$ to get
holomorphic coordinate along the fibre, which is denoted by $\xi$.
Then $h=a|\xi|^2$ for some smooth positive definite function $a$.
Note that $\omega_h=-\sddbar\log a$. In this local coordinate one
can easily calculate that
\[
\Omega=(\lambda^{-1}-c\frac{a|\xi|^2}{1+a|\xi|^2})\omega_h+c\sqrt{-1}\frac{a}{(1+a|\xi|^2)^2}\nabla
\xi\wedge\overline{\nabla \xi}.
\]
where for simplicity we denote $\nabla\xi=d\xi+\xi a^{-1}\partial
a$. For the coordinate at infinity, we use coordinate change
$\eta=\xi^{-1}$, then
\[
\Omega=(\lambda^{-1}-c\frac{a}{a+|\eta|^2})\omega_h+c\sqrt{-1}\frac{a}{(|\eta|^2+a)^2}\nabla'\eta\wedge\overline{\nabla'\eta}
\]
with $\nabla'\eta=d\eta-\eta a^{-1}\partial a$. So we easily sees
that $\Omega$ is positive definite if an only if $c<\lambda^{-1}$.
The lemma clearly follows from the combination of above discussions.
\end{proof}

The following example is in the log-Calabi-Yau case ($\lambda=1$).
\begin{exmp}
Let $X=Bl_p\cP^2$, $D\in |-K_X|$ be a general smooth divisor. Choose
$Z=E$ to be the exceptional divisor. If we perform the operation of
deformation to the normal cone, the central fibre is given by
$\tilde{\mathcal{X}}_0=X\cup_{E=D_\infty}
\mathbb{P}(\mathbb{C}\oplus\mathcal{O}(-1))$. The Seshadri constant
equals $2$ and the line bundle $\mL_{2}$ contracts $X$ along its
fibration direction and the resulting test configuration has central
fibre $\mX_0=\mathbb{P}(\mathbb{C}\oplus\mathcal{O}(-1))\cong X$.
The boundary divisor on $\mX_0$ is given by $F+2D_\infty$ where $F$
is the fibre over the intersection point $F\cap E\in E=D_\infty$.
Denote $L=K_X^{-1}$ and $Y=(1-\beta)D$. Then the calculation
specializes to
\[
a_0(x)=\frac{(L-xZ)^2}{2}=4-x-\frac{x^2}{2},\quad
a_1(x)=\frac{-K_X\cdot (L-xZ)}{2}=4-\frac{x}{2}.
\]
\[
\tilde{a}_0(x)=\deg (L-xZ)|_{Y}=(1-\beta)(8-x).
\]
Using formula \eqref{callogfut}, it's easy to calculate the
log-Futaki invariant as
\[
F(\mX,\mY, \mL_2)=8\left(\frac{7\beta}{3}-2\right).
\]
This is negative if and only if $\beta<6/7$. This is compatible with
the calculation in \cite{Do11} (See also \cite{Li11}), where,
instead of taking deformation to normal cone, the same test
configuration is generated by one parameter subgroup in the torus
action.
\end{exmp}

\section{Special degeneration to K\"{a}hler-Einstein
varieties}\label{degKE}

\subsection{K\"{a}hler metrics on singular varieties}\label{singkahler}
We will first establish some standard notations following \cite{EGZ}.
\begin{defn}[$\mathbb{Q}$-Fano variety]
A normal variety $X$ is $\mathbb{Q}$-Fano if $X$ is klt and $-K_X$
is an ample $\mathbb{Q}$-Cartier divisor.
\end{defn}
Assume $X$ is an n-dimensional $\mathbb{Q}$-Fano variety. $D$ is a
smooth divisor such that $D\cap X^{sing}=\emptyset$. Define the
space of K\"{a}hler metrics on $\mathbb{Q}$-Fano varieties following
\cite{EGZ}. So a plurisubharmonic(psh) function $\phi$ on $X$ is an upper
semi-continuous function on $X$ with values in
$\mathbb{R}\cup\{-\infty\}$, which is not locally $-\infty$, and
extends to a psh function in some local embedding
$X\rightarrow\mathbb{C}^N$.  $\phi$ is said to be smooth(locally bounded) if there exists a smooth(locally bounded) local extension is smooth(bounded).
Similarly a smooth K\"ahler metric on $X$ is locally $\sddbar$ of a smooth plurisubharmonic function. We are only interested in the class of bounded
plurisubharmonic functions. Fix a smooth K\"ahler metric $\omega$ on  $X$,  we define
\[
PSH_{\infty}(\omega):=\{\phi\in L^{\infty}(X); \omega+\sddbar\phi\ge
0 \mbox{ and } \phi \mbox{ is u.s.c. }\}.
\]
\begin{rem}
Any function $\phi\in PSH_{\infty}(X,\omega)$ is of finite
self-energy in the sense of Definition 1.1 in \cite{EGZ}.
\end{rem}

\begin{rem}[Orbifold metric induces $L^{\infty}$ Hermitian metrics]

When the Cartier index of $K_X$ divides $r$, $K_X^{\otimes r}$ is a
line bundle. Any orbifold metric induces a Hermitian metric on
$K_X^{\otimes r}$ and hence on $K_X^{-\otimes r}$. In fact, for any
point $x$, we can choose local uniformization chart
$\tilde{U}\rightarrow U\ni x$ such that $U=\tilde{U}/G$ for some
finite group $G$ and we choose local coordinates $\{\tilde{z}_i\}$
on $\tilde{U}$. Define $r$=order of $G$. Then the Cartier index of
$K_X$ at $x$ divides $r$. The $r$-pluri-anticanonical form
$\tilde{\tau}=(\partial_{\tilde{z}_1}\wedge\dots\wedge
\partial_{\tilde{z}_n})^{\otimes r}$ on $\tilde{U}$ is $G$-invariant, so it
induces a local generator $\tau$ of $K_X^{-\otimes r}$ downstairs.
If we have an orbifold metric which is locally induced by a smooth
$G$-invariant metric $\tilde{g}$ on $\tilde{U}$. We just define the
Hermitian metric on $K_X^{-\otimes r}$ by requiring
$|\tau|^2=|\partial_{\tilde{z}}|_{\tilde{g}}^{2r}=\det(\tilde{g})^{r}$.
\begin{exmp}\label{1/4(1,1)}
Let $\mathbb{Z}_4$ acts on $\mathbb{C}^2$ by $\xi:
(\tilde{z}_1,\tilde{z}_2)\mapsto (\xi\tilde{z}_1,\xi\tilde{z}_2)$
where $\xi=\exp(2\pi\sqrt{-1}/4)$. Let
$X=\mathbb{C}^2/\mathbb{Z}_4$, then $X$ has an isolated singularity
of index 2, which is usually denoted by $\frac{1}{4}(1,1)$. We can
embed $X$ into $\mathbb{C}^5$ by defining
$u_i=\tilde{z}_1^{4-i}\tilde{z}_2^{i}$ for $i=0,\dots,4$.

We can choose the orbifold metric induced by the following smooth
metric on $\tilde{U}=\mathbb{C}^2$:
\[
\tilde{\omega}=\sddbar
(|z_1|^2+|z_1|^4+|z_2|^2)=(1+4|z_1|^2)dz_1\wedge
d\bar{z}_1+dz_2\wedge d\bar{z}_2
\]
Then $\tilde{\tau}=(\partial_{\tilde{z}_1}\wedge
\partial_{\tilde{z}_2})^{\otimes 2}$ induces a generator $\tau$ of
$K_X^{-\otimes 2}$ with
$|\tau|^2_{\tilde{g}}=(1+4|\tilde{z}_1|^2)^{2}=(1+4
|u_1|^{1/2})^{2}$.
\end{exmp}
By the above discussion, we see that the Hermitian metric determined
by an orbifold metric does not give rise to a smooth plurisubharmonic function. However, it is locally bounded, so there is no technical difficulty in dealing with them.
\end{rem}

\subsection{Degenerate Complex Monge-Amp\`{e}re equation on
K\"{a}hler manifolds with boundary} Let $M$ be a K\"{a}hler manifold
of dimension $n+1$ with smooth boundary $\partial M$. We will be
interested in solving the Dirichlet problem of complex
Monge-Amp\`{e}re equation on $M$. Now we have several results for
this problem. First we have the following existence of weak
solutions
\begin{thm}[\cite{Bern3}]\label{weakgeod}
Let $\omega$ be a nonnegative, smooth (1,1)-form on $X$. Assume $\phi_i\in \mathcal{PSH}(\omega)\cap C^{0}(X)$, $i=0,1$.
Then there exists a bounded geodesic $\Phi_t$ connecting $\phi_0$ and $\phi_1$. In other words,
there exists a bounded solution of the Dirichlet problem to the following homogeneous complex Monge-Amp\`{e}re equation on $X\times [0,1]\times S^1$.
\begin{equation}\label{geodeq}
\left\{
\begin{array}{l}
\pi^*\omega+\sddbar\Phi\ge 0,\\
(\pi^*\omega+\sddbar\Phi)^{n+1}=0, \\
\Phi|_{X\times\{0\}\times S^1}=\phi_0, \Phi|_{X\times\{1\}\times S^1}=\phi_1.
\end{array}
\right.
\end{equation}
\end{thm}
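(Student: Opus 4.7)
The plan is to solve the Dirichlet problem by the Perron envelope method (originating in Bedford--Taylor and adapted to the geodesic setting by Chen, Phong--Sturm, Berndtsson, Berman, Darvas, etc.). Let $S = X \times [0,1] \times S^1$, with projection $\pi: S \to X$, and consider the coordinate $\tau = t + i\theta$ so that $\sddbar$ on $S$ is well defined. Define
\[
\Phi(x, \tau) = \sup\bigl\{u(x,\tau) : u \in \mathrm{PSH}(S, \pi^*\omega) \cap L^{\infty}(S),\ u \text{ is } S^1\text{-invariant},\ u^*|_{t=j} \le \phi_j,\ j = 0,1 \bigr\}.
\]
The candidate solution is the upper semicontinuous regularization $\Phi^*$. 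I would carry out the following four steps.

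First, I would verify that the envelope is well-defined and bounded. Any sufficiently negative constant belongs to the competing family, so $\Phi > -\infty$. Setting $M = \max(\sup_X \phi_0,\sup_X \phi_1)$, the constant $M$ is a $\pi^*\omega$-psh upper barrier with $M \ge \phi_j$ on the boundary, so the pluripotential comparison principle applied fiberwise (or the maximum principle for $\pi^*\omega$-psh functions on a compact manifold with boundary) yields $u \le M$ for every competitor $u$, hence $\Phi \le M$. Second, standard pluripotential theory implies $\Phi^* \in \mathrm{PSH}(S, \pi^*\omega)\cap L^\infty$; by uniqueness of the extremal problem, $\Phi^*$ is $S^1$-invariant and therefore descends to a function $\Phi^*(x,t)$ on $X \times [0,1]$.

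Third, and this is the step demanding the most care, I would show $\Phi^*|_{t=j} = \phi_j$ for $j=0,1$. The upper bound $\Phi^*|_{t=j} \le \phi_j$ follows from the definition together with the upper semicontinuity of $\phi_j$ and the continuity hypothesis. For the lower bound, I construct explicit subsolutions from below: since $\phi_0,\phi_1 \in \mathrm{PSH}(X,\omega)\cap C^0$ and $\pi^*\omega \ge 0$, each $\phi_j$ pulled back to $S$ is $\pi^*\omega$-psh. Then for a sufficiently large constant $A$ depending on $\|\phi_0 - \phi_1\|_{L^\infty}$, the function
\[
\max\bigl(\phi_0(x) - A\,t,\ \phi_1(x) - A(1-t)\bigr)
\]
is $\pi^*\omega$-psh, $S^1$-invariant, and satisfies the boundary inequalities, hence lies below $\Phi$; evaluating at $t=j$ gives $\Phi^* \ge \phi_j$ there. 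Continuity of the boundary data is exactly what makes this barrier argument work.

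Fourth, for the homogeneous Monge--Amp\`ere equation $(\pi^*\omega + \sddbar \Phi^*)^{n+1} = 0$, I would run the standard balayage argument: if the Monge--Amp\`ere measure were nonzero on some ball $B \subset \mathrm{int}(S)$, one solves the bounded Dirichlet problem on $B$ with boundary values $\Phi^*|_{\partial B}$ to obtain a larger $\pi^*\omega$-psh competitor, contradicting the maximality of $\Phi^*$. The main obstacle is Step 3 combined with the degeneracy of $\omega$: since $\omega$ is only assumed semipositive, explicit barriers are less easily constructed and some arguments that rely on strict positivity of the reference form must be replaced by Bedford--Taylor/Demailly pluripotential estimates for bounded psh functions. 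A clean way around this is to approximate by $\omega_\epsilon = \omega + \epsilon \omega_0$ with $\omega_0$ a fixed K\"ahler form on $X$, solve the problem for each $\epsilon > 0$, and pass to the limit using uniform $L^\infty$ bounds and pluripotential stability; the resulting decreasing limit inherits the boundary values by the barrier construction above.
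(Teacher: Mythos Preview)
Your proposal is correct and follows essentially the same route as the paper: the Perron--Bremermann envelope over $S^1$-invariant $\pi^*\omega$-psh competitors, with the explicit subsolution $\tilde\Psi=\max(\phi_0-At,\ \phi_1-A(1-t))$ providing the lower barrier. The paper's proof is identical in outline.

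One point worth sharpening: your justification that $\Phi^*|_{t=j}\le\phi_j$ ``follows from the definition together with upper semicontinuity'' is a bit thin, since the usc regularization could in principle raise boundary values. The paper handles this cleanly by exploiting that $\pi^*\omega$ has no $dt\wedge d\bar t$ component, so every $S^1$-invariant competitor $\Psi$ is \emph{convex} in $t$; combined with $\Psi\ge\tilde\Psi$ and the boundary constraint, convexity forces the two-sided uniform estimate
\[
\phi_0 - At \le \Psi \le \phi_0 + At,\qquad \phi_1 - A(1-t) \le \Psi \le \phi_1 + A(1-t),
\]
valid for every competitor. Since the right-hand sides are continuous, the same bounds survive taking the sup and its usc regularization, giving the boundary values immediately. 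This also makes your proposed $\omega_\epsilon$-approximation unnecessary: the convexity trick works directly for merely semipositive $\omega$. The balayage step you describe is exactly what the paper means by ``$\Phi^*$ is $\omega$-maximal''.
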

For the reader's convenience, we give the proof by Berndtsson.
\begin{proof}(\cite{Bern3})
Define
\[
\mathcal{K}=\{\Psi_t; \pi^*\omega+\sddbar\Psi\ge0, \lim_{t\rightarrow i}\Psi_t\le \phi_{i} \mbox{ for } i=0,1\}
\]
Define the Perron-Bremermann envolope as
\[
\Phi(x)=\sup\{\Psi(x); \Psi\in \mathcal{K}\}
\]
We want to prove $\Phi$ is a bounded solution of \eqref{geodeq}.

We first define $\tilde{\Psi}=max(\phi_0-A Re(t), \phi_1+A (Re(t)-1))$. For $A$ sufficiently big, $\tilde{\Psi}\in\mathcal{K}$.
So we can only consider functions in $\mathcal{K}$ which is greater than $\tilde{\Psi}$. Furthermore, since
there is an $S^1$-symmetry, we can only consider $S^1$ invariant functions in $\mathcal{K}$. For any such function $\Psi$, it is convex as function
of $Re(t)$, and it satisfies
\[
\phi_0-A Re(t)\le \Psi \le\phi_0+A Re(t), \quad \phi_1+A (Re(t)-1)\le\Psi\le\phi_1-A( Re(t)-1).
\]
So $\Psi$ has the right boundary values uniformly. The upper semicontinuous regularization $\Phi^*$ of $\Phi$ satisfy the same estimate and is plurisubharmonic. So
$\Phi^*\in\mathcal{K}$. So $\Phi=\Phi^*$ is $\omega$-plurisubharmonic. Since $\Phi^*$ is $\omega$-maximal, it satisfies the equation \eqref{geodeq}.
\end{proof}
\begin{rem}
The existence of $C^{1,1}$-geodesics connecting smooth K\"{a}hler
metrics was first proved by Chen in \cite{Chen1}. Since we want to
deal with mildly singular varieties, we choose to work with just
bounded solutions. There are many other important related works to
this result. See for example \cite{BT}, \cite{Ko98}.
\end{rem}
The following Theorem is now a well known fact which comes from many
people's work. (\cite{CKNS},\cite{Guan},\cite{Chen},\cite{Blo}). We
record a version appeared in \cite{Blo}.
\begin{thm}\label{blo}
Assume $\Omega_0>0$ is a K\"{a}hler form on $M$ and $F>0$ is a
smooth, strictly positive function. Consider the Dirichlet problem
of complex Monge-Amp\`{e}re equation
\begin{equation}\label{ndCMA}
(\Omega_0+\sddbar\Phi)^{n+1}=F \Omega_0^{n+1}, \quad \Phi|_{\partial
M}=\phi
\end{equation}
If there exists a smooth subsolution, that is, a $\Psi\in
C^{\infty}(M)$ such that $\Omega_0+\sddbar\Psi>0$ and
$(\Omega_0+\sddbar\Psi)^{n+1}\ge F\Omega_0^{n+1}$, then
\eqref{ndCMA} has a unique solution $\Phi\in C^{\infty}(M)$.
\end{thm}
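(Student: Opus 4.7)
The plan is to prove Theorem \ref{blo} by the classical continuity method, combined with a priori estimates of Caffarelli--Kohn--Nirenberg--Spruck, Guan, Chen and B\l{}ocki. Since the statement is a consolidation of known work, the point is to organize the estimates in a way that exploits the subsolution hypothesis.

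First I would set up continuity. Let $\Psi$ be the given subsolution and define $G:=(\Omega_0+\sddbar\Psi)^{n+1}/\Omega_0^{n+1}\ge F$. Consider the one-parameter family
\[
(\Omega_0+\sddbar\Phi_t)^{n+1}=F_t\,\Omega_0^{n+1},\qquad \Phi_t|_{\partial M}=\phi,
\]
where $F_t=(1-t)G+tF$, so that $\Phi_0=\Psi$ solves the equation at $t=0$ and $\Phi_1$ solves \eqref{ndCMA}. Let $S\subset [0,1]$ be the set of $t$ for which a solution $\Phi_t\in C^{2,\alpha}(M)$ exists with $\Omega_0+\sddbar\Phi_t>0$; I want to show $S$ is nonempty, open, and closed.

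Uniqueness and openness are standard: the comparison principle for the Monge--Amp\`ere operator yields uniqueness, and the linearization at a solution is $\Delta_{\Omega_0+\sddbar\Phi_t}$, a Fredholm isomorphism between the spaces $\{u\in C^{2,\alpha}(M):u|_{\partial M}=0\}$ and $C^{0,\alpha}(M)$ (since the solution metric is a genuine K\"ahler metric), so the implicit function theorem gives openness. The heart of the argument is the closedness, which requires uniform $C^{2,\alpha}$ estimates independent of $t\in S$. The $C^0$ estimate comes from the comparison principle: on one side, $\Phi_t\ge \Psi$ because the subsolution condition $(\Omega_0+\sddbar\Psi)^{n+1}\ge F_t\Omega_0^{n+1}$ holds and they share boundary data, and on the other side $\Phi_t$ is bounded above by the harmonic extension of $\phi$ with respect to any background K\"ahler metric on $M$ (using $\sddbar\Phi_t\ge-\Omega_0$). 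The boundary gradient estimate then follows by interposing linear barriers between $\Psi$ and the upper barrier.

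The main obstacle, as usual, is the boundary $C^{2}$ estimate. Tangential-tangential second derivatives at $\partial M$ are controlled directly by differentiating the boundary data $\phi$ along $\partial M$. Mixed tangential-normal derivatives are handled by the Caffarelli--Kohn--Nirenberg--Spruck barrier construction. The remaining, hardest, piece is the double-normal second derivative; here the subsolution $\Psi$ plays its decisive role through Guan's trick: one shows $\partial_{\nu\bar\nu}\Phi_t$ at $\partial M$ is controlled in terms of $\partial_{\nu\bar\nu}\Psi$, the already-controlled tangential directions, and the equation, by combining the algebraic identity expressing $\det$ along the normal direction with the inequality $(\Omega_0+\sddbar\Psi)^{n+1}\ge F_t\Omega_0^{n+1}$. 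Once all boundary second derivatives are bounded, the interior $C^2$ estimate follows from the maximum principle applied to a test quantity such as $\log\mathrm{tr}_{\Omega_0}(\Omega_0+\sddbar\Phi_t)-A\Phi_t$, reducing the global $\sddbar\Phi_t$ bound to its boundary value. Finally $C^{2,\alpha}$ is obtained from Evans--Krylov applied to the concave operator $\log\det$, and higher regularity from Schauder bootstrap on the linearized equation, completing the closedness of $S$ and hence the existence.
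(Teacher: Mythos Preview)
The paper does not supply its own proof of this theorem; it records it as a known result from \cite{CKNS}, \cite{Guan}, \cite{Chen}, \cite{Blo} and immediately moves on to use it. Your outline is correct and is precisely the argument of those references: continuity method, comparison principle for $C^0$, CKNS barriers for the mixed boundary second derivatives, and Guan's subsolution trick for the double-normal derivative, followed by the Yau-type interior Laplacian estimate and Evans--Krylov. One small remark: as stated in the paper the subsolution $\Psi$ is not explicitly required to match the boundary data $\phi$, but in the applications later in Section~\ref{pfspecial} (and in Theorem~\ref{PhSt}) it always does, and your comparison argument $\Phi_t\ge\Psi$ tacitly uses this; that is the intended hypothesis.
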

We also record a result by Phong-Sturm.
\begin{thm}[\cite{PhSt}]\label{PhSt}
Assume $\Omega\ge 0$ and there exists a smooth divisor $E$ in the
interior of $M$ such that $\Omega >0$ on $M\backslash E$. Also
assume the line bundle $\mathcal{O}(E)$ has a Hermitian metric $H$,
such that $\Omega_{\epsilon}=\Omega+\epsilon\sddbar\log H>0$ for
$0<\epsilon\ll 1$ sufficiently small. Consider the following
homogeneous complex Monge-Amp\`{e}re equation
\begin{equation}\label{HCMA1}
(\Omega+\sddbar\Phi)^{n+1}=0,\quad \Phi|_{\partial M}=\phi.
\end{equation}
If there exists a subsolution $\Psi\in C^{\infty}(M)$ such that
$\Omega+\sddbar\Psi\ge 0$ and $\Psi|_{\partial M}=\phi$ , then
\eqref{HCMA1} has a bounded solution $\Phi\in L^{\infty}(M)$.
Moreover, $\Phi\in C^{1,\alpha}(M\backslash E)$ for any
$0<\alpha<1$.
\end{thm}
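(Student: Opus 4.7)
The plan is the standard regularization strategy for degenerate HCMA equations: approximate the degenerate reference form $\Omega$ by the K\"ahler forms $\Omega_\epsilon$, solve a family of non-degenerate Dirichlet problems via Theorem \ref{blo}, and pass to the limit $\epsilon\to 0$.

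First I would set up the approximating problem. For each small $\epsilon>0$, fix a smooth volume form $\chi$ on $M$ and consider
\begin{equation*}
(\Omega_\epsilon+\sddbar\Phi_\epsilon)^{n+1}=\epsilon^{n+1}\chi,\qquad \Phi_\epsilon|_{\partial M}=\phi.
\end{equation*}
Theorem \ref{blo} produces a smooth solution $\Phi_\epsilon$ once a smooth subsolution is exhibited. Since the given $\Psi$ only satisfies $\Omega+\sddbar\Psi\ge 0$, the form $\Omega_\epsilon+\sddbar\Psi$ need not be positive, so I would modify $\Psi$ by an $O(\epsilon)$ correction, e.g.\ $\Psi_\epsilon=\Psi+\epsilon\eta$, where $\eta$ is a smooth function whose $\sddbar$ absorbs the perturbation $\sddbar\log H$, so that $\Omega_\epsilon+\sddbar\Psi_\epsilon\ge c\,\epsilon\,\omega_0$ for some fixed K\"ahler form $\omega_0$ and hence $\Psi_\epsilon$ is a genuine subsolution of the regularized equation.

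Uniform $L^\infty$ bounds on $\Phi_\epsilon$ then follow from the comparison principle for $\Omega_\epsilon$-plurisubharmonic functions, using $\Psi_\epsilon$ as a lower barrier and (for instance) the $\Omega_\epsilon$-maximal envelope of $\phi$ as an upper barrier. Extracting a limit $\Phi\in L^\infty(M)$ via pluripotential compactness (Bedford--Taylor, Kolodziej stability), one verifies that $\Phi$ is $\Omega$-plurisubharmonic, attains the boundary value $\phi$, and satisfies $(\Omega+\sddbar\Phi)^{n+1}=0$ in the pluripotential sense. For the $C^{1,\alpha}$ regularity on $M\setminus E$, I would work on compact subsets $K\Subset M\setminus E$: there $\Omega_\epsilon\to\Omega$ smoothly and $\Omega|_K$ is uniformly positive, so the approximating equations are uniformly non-degenerate on $K$. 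Interior $C^{1,\alpha}$ estimates for the HCMA in the style of Chen and Blocki then give $\|\Phi_\epsilon\|_{C^{1,\alpha}(K)}\le C(K,\alpha)$ uniformly in $\epsilon$, which transfers to $\Phi$ in the limit.

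The main obstacle is the interior $C^{1,\alpha}$ estimate. The right-hand side $\epsilon^{n+1}\chi$ vanishes in the limit, so standard uniform elliptic estimates degenerate and one must use estimates tailored to the HCMA. Moreover the estimate must depend only on the local positivity of $\Omega$ on $K$, independently of the global behavior of $\Omega_\epsilon$ near $E$; this forces one to construct careful barriers supported near $\partial K$ and to exploit the fact that $\Omega$ is uniformly positive on compact subsets of $M\setminus E$, which is the technical core of Phong--Sturm's argument.
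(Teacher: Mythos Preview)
Your overall regularization strategy---perturb $\Omega$ to the K\"ahler forms $\Omega_\epsilon$, solve non-degenerate Dirichlet problems via Theorem~\ref{blo}, then let $\epsilon\to 0$---is exactly what the paper (following Phong--Sturm) does. The paper handles the subsolution issue more cleanly by first subtracting $\Psi$ so that $\Psi\equiv 0$ and $\phi\equiv 0$; then $0$ is automatically a subsolution for the approximating equation with small right-hand side, and your $O(\epsilon)$ correction is unnecessary. For the $C^0$ bound the paper gets the upper bound by observing $\Delta_{\Omega_\epsilon}\Phi_s\ge -C_1$ uniformly and comparing with the solution of the linear Dirichlet problem $\Delta_{\Omega_\epsilon}\xi=-C_1$, $\xi|_{\partial M}=0$, rather than an envelope argument, but your version is fine.

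The gap is in the $C^{1,\alpha}$ step. You propose to localize on compact $K\Subset M\setminus E$ and invoke ``interior $C^{1,\alpha}$ estimates for the HCMA in the style of Chen and B\l{}ocki''. But those estimates are \emph{global}: Chen's Laplacian bound and B\l{}ocki's gradient bound use the equation on all of $M$ together with the boundary data, and their constants depend on the global geometry of the reference K\"ahler form---here $\Omega_\epsilon$, which degenerates as $\epsilon\to 0$. There is no purely local interior estimate for degenerate HCMA that one can apply on $K$ independently of what happens near $E$, and ``barriers supported near $\partial K$'' do not produce one.

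The device the paper sketches (the heart of Phong--Sturm) is global with a quantified blow-up: set $\Phi_s^\epsilon:=\Phi_s-(\epsilon-s)\log|\sigma|^2$, where $\sigma$ is a defining section of $E$. This rewrites the $\Omega_s$-equation as a Monge--Amp\`ere equation with respect to the \emph{fixed} K\"ahler form $\Omega_\epsilon$, and one chooses the right-hand side $F_s$ so that $F_s\Omega_s^{n+1}/\Omega_\epsilon^{n+1}$ is constant. The global Yau-type $C^2$ and B\l{}ocki-type $C^1$ estimates then apply to $\Phi_s^\epsilon$, and translating back one obtains
\[
|\nabla\Phi_s|\le C_1\,|\sigma|^{-\epsilon A_1},\qquad |\Delta\Phi_s|\le C_2\,|\sigma|^{-\epsilon A_2},
\]
with constants independent of $s$. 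These are uniform on each compact subset of $M\setminus E$, and the $C^{1,\alpha}$ regularity there follows. The key idea you are missing is this logarithmic twist, which trades the degeneration of the reference form for a controlled pole along $E$.
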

For the reader's convenience, we sketch the proof of Phong-Sturm's
theorem here. By subtracting the subsolution $\Psi$, we can assume
$\Psi=0$ on $M$ and $\phi=0$ on $\partial M$. Then we approximate
the degenerate complex Monge-Amp\`{e}re equation by a family of
non-degenerate equations. Let $\Omega_{s}=\Omega_0+s \sddbar\log H$.
Consider the family of equations:
\begin{equation}\label{CMAs}
(\Omega_s+\sddbar \Phi_s)^{n+1}=F_s \Omega_s^{n+1},\quad
\Phi_s|_{\partial M}=0.
\end{equation}
For now, we can choose any smooth function $F_s$ such that
$\|F_s\|_{C^0}\ll 1$. Since $\Psi=0$ is a subsolution of
\eqref{CMAs}, by the Theorem \ref{blo}, we can solve this equation
for $0<s\ll 1$. To get the solution of \eqref{HCMA1}, we want to
take limit of $\Phi_s$. So we need to establish uniform a priori
estimate for $\Phi_s$. First we want uniform $C^0$-estimate for
$\Phi_s$. To get the upper bound, note that
\[
\Delta_{\Omega_{\epsilon}}\Phi_s\ge
-tr_{\Omega_\epsilon}\Omega_s=-tr_{\Omega_\epsilon}(\Omega_\epsilon-(\epsilon-s)\sddbar\log
H)=-(n+1)+(\epsilon-s)tr_{\Omega^{\epsilon}}(\sddbar\log H)\ge -C_1,
\]
with $C_1$ independent of $s$. On the other hand we can solve the
Dirichlet boundary problem
\[
\Delta_{\Omega_\epsilon}\xi=-C_1, \quad \xi|_{\partial M}=0.
\]
Then by maximal principle, we get $ \Phi_s\le \xi$. On the other
hand, $\Psi=0$ is a subsolution of \eqref{CMAs}, by comparison
principle of complex Monge-Amp\`{e}re operator, we get that
$\Phi_s\ge 0$. So $\|\Psi\|_{C^0}\le C$. This uniform $C^0$-estimate
allows us to construct bounded solution to the homogeneous complex
Monge-Amp\`{e}re equation.

To get more regularity away from the divisor $E$, let
$\Phi_s^{\epsilon}=\Phi-(\epsilon-s)\log|\sigma|^2$, where $\sigma$
is the defining section of the line bundle $\mathcal{O}(E)$. We can
rewrite \eqref{CMAs} as
\[
(\Omega_{\epsilon}+\sddbar\Phi_s^{\epsilon})^{n+1}=\frac{F_s\Omega_s^{n+1}}{\Omega_\epsilon^{n+1}}\Omega_\epsilon^{n+1}=F_s^{\epsilon}\Omega_\epsilon^{n+1}
\]
Now we choose $F_s$ such that
$F_s^{\epsilon}:=F_s\Omega_s^{n+1}/\Omega_\epsilon^{n+1}$ is a
constant.

Phong-Sturm in \cite{PhSt} proved the following
partial-$C^1$ (adapting Block's estimate), partial $C^2$-estimate
(adapting Yau's estimate):
\[
|\nabla\Phi|\le C_1|\sigma|^{-\epsilon A_1},\quad |\Delta\Phi|\le
C_2|\sigma(z)|^{-\epsilon A_2}
\]
with $C_i$, $A_i$ independent of $s$. The partial
$C^{1,\alpha}$-regularity follows from these estimates.

\subsection{Proof of Theorem \ref{specialdeg}}\label{pfspecial}
Assume $\pi: (\mX,-K_{\mX/\mathbb{C}})\rightarrow \mathbb{C}$ is a
special degeneration. Assume for simplicity, $\mX$ has only finite
many isolated singularities $\{p_i\}$. Let
$\triangle=\{w\in\mathbb{C}; |w|\le 1\}$ be the unit disk and
$\mX_{\triangle}=\pi^{-1}(\triangle)$. We embed the special test
configuration equivariantly into $\mathbb{P}^N\times\mathbb{C}$:
\[
\phi_{\mX}: (\mX,
-K_{\mX/\mathbb{C}})\hookrightarrow\mathbb{C}\times (\mathbb{P}^N,
\mathcal{O}_{\mathbb{P}^N}(1)).
\]
We get a smooth $S^1$-invariant K\"{a}hler metric on
$\mX_{\triangle}$ by pulling back
$\Omega=\phi_{\mX}^*(\omega_{FS}+\sqrt{-1} dw\wedge d\bar{w})$. We
define the reference metric $X$ to be $\omega=\Omega|_{\mX_1}$,
where $\mX_1\cong X$ is the fibre above $\{w=1\}$.  For any $\phi\in
C^{\infty}(X)$, such that $\omega+\sddbar\phi>0$, we are going to
solve the homogeneous Monge-Amp\`{e}re equation
\begin{equation}\label{sHCMA}
(\Omega+\sddbar\Phi)^{n+1}=0, \Phi|_{S^1\times X}=\phi.
\end{equation}
\begin{prop}
There exists bounded solution $\Phi$ for \eqref{sHCMA}. $\Phi\in
C^{1,\alpha}(\mX\backslash\{p_i\})$.
\end{prop}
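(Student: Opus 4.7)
The plan is to reduce the proposition to Phong--Sturm's Theorem \ref{PhSt} by passing to a resolution of singularities of $\mX$. Since the singular set $\{p_i\}\subset \mX_0$ is finite and disjoint from the boundary $S^1\times X$, a resolution leaves everything near the boundary untouched, and Phong--Sturm applies to the degeneracy that the pulled-back K\"ahler form acquires along the exceptional divisor.

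First, choose an equivariant resolution $p:\tilde{\mX}\to\mX$ whose exceptional locus $E=\sum E_i$ is supported over the finite set $\{p_i\}$, and set $\tilde\Omega=p^*\Omega$ on $\tilde\mX_\triangle$. Then $\tilde\Omega$ is a smooth closed semi-positive $(1,1)$-form which is strictly positive on $\tilde\mX\setminus E$. Since $p$ is birational with exceptional divisor $E$ over isolated points, $\mathcal{O}(-E)$ is $p$-ample, so there exists a Hermitian metric $H$ on $\mathcal{O}(E)$ such that
\[
\tilde\Omega_\epsilon:=\tilde\Omega+\epsilon\sddbar\log H>0
\]
on all of $\tilde\mX_\triangle$ for all $0<\epsilon\ll 1$. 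This is exactly the positivity hypothesis needed in Theorem \ref{PhSt}.

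Second, I would construct a smooth subsolution $\tilde\Psi\in C^\infty(\tilde\mX_\triangle)$ with $\tilde\Psi|_{S^1\times X}=\phi$. Using the equivariant embedding $\phi_\mX:\mX\hookrightarrow\mathbb{C}\times\mathbb{P}^N$, extend $\phi$ (defined on the fiber at $w=1$) to a smooth function $\check\phi$ on a neighborhood of $\phi_\mX(X)$ in $\mathbb{P}^N$, pull back to $\mX_\triangle$, and multiply by a cutoff $\chi(|w|)$ supported in a collar of $\partial\triangle$ to get a smooth function $\check\Psi$ on $\mX_\triangle$ with the correct boundary values. Pull this back to $\tilde\mX_\triangle$ and set $\tilde\Psi=p^*\check\Psi+A(|w|^2-1)$ for a large constant $A>0$. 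Because $\tilde\Omega_\epsilon>0$ is a fixed smooth K\"ahler form and $\sddbar p^*\check\Psi$ is a smooth bounded $(1,1)$-form, choosing $A$ large enough (then $\epsilon$ smaller still if necessary) forces $\tilde\Omega+\sddbar\tilde\Psi\geq 0$, while the added term vanishes on $\partial\triangle$, preserving the boundary value. Thus $\tilde\Psi$ is an admissible subsolution.

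Third, apply Phong--Sturm's Theorem \ref{PhSt} to $\tilde\Omega$, the divisor $E$, the Hermitian metric $H$, and the subsolution $\tilde\Psi$. This produces a bounded solution $\tilde\Phi\in L^\infty(\tilde\mX_\triangle)$ of $(\tilde\Omega+\sddbar\tilde\Phi)^{n+1}=0$ with $\tilde\Phi|_{S^1\times X}=\phi$, satisfying $\tilde\Phi\in C^{1,\alpha}(\tilde\mX_\triangle\setminus E)$ for every $0<\alpha<1$. Finally, since $p$ is a biholomorphism on $\tilde\mX\setminus E\to\mX\setminus\{p_i\}$, we push down to define $\Phi$ on $\mX_\triangle\setminus\{p_i\}$, which then extends as a bounded $\Omega$-plurisubharmonic function across the isolated singularities by the standard Grauert--Remmert type extension (any bounded psh function on the complement of an analytic set of codimension $\geq 2$ extends). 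The resulting $\Phi$ is bounded on $\mX_\triangle$, equals $\phi$ on $S^1\times X$, solves the HCMA on the smooth locus, and lies in $C^{1,\alpha}(\mX\setminus\{p_i\})$.

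The main obstacle I expect is the subsolution construction: one must produce a globally $\tilde\Omega$-psh smooth function with the prescribed boundary values, and the straightforward $p^*\check\Psi+A(|w|^2-1)$ ansatz has to be checked to dominate $\sddbar$ of the extended $\check\phi$ uniformly, even in the collapsing directions of $\tilde\Omega$ near $E$. Absorbing the negative part of $\sddbar p^*\check\Psi$ into $\epsilon\sddbar\log H$ (replacing $\tilde\Omega$ by $\tilde\Omega_\epsilon$ in the Phong--Sturm setup if needed) is the key technical device.
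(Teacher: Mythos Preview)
Your overall strategy matches the paper's: pass to an equivariant resolution, apply Phong--Sturm's theorem there, and push the solution back down. The paper's descent is phrased slightly differently---it observes that $\tilde\Phi$ is plurisubharmonic along the compact fibers of the resolution, hence constant there, so it descends directly---but this is equivalent to your extension argument.

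The gap is in your subsolution. Enlarging $A$ only adds positivity in the base direction $\sqrt{-1}\,dw\wedge d\bar w$; it does nothing for the fiberwise Hessian of $p^*\check\Psi$, so ``$A$ large enough'' cannot by itself force $\tilde\Omega+\sddbar\tilde\Psi\ge 0$. Your proposed fix of absorbing the negativity into $\epsilon\,\sddbar\log H$ does not work either: that term is small and its positivity is concentrated near $E$ over the central fiber, whereas the problematic region is the boundary collar. What you actually need is to take the collar thin enough that continuity from $w=1$ (where $\omega+\sddbar\phi>0$) gives uniform fiberwise positivity of $\Omega|_{\mX_w}+\sddbar_z\check\phi|_{\mX_w}$, and only then let $A$ absorb the mixed and base terms. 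The paper sidesteps this by constructing the subsolution differently (in the Proposition immediately following): it uses the equivariant trivialization $\rho:\mathbb{C}^*\times X\cong\mX^*$ to transplant $\pi_2^*\omega_\phi+\sqrt{-1}\,dw\wedge d\bar w$, which is genuinely K\"ahler on $\mathbb{C}^*\times X$, and glues it to $\Omega$ near the central fiber with a cutoff, using a large multiple of $\sqrt{-1}\,dw\wedge d\bar w$ to control the gluing region. That construction has fiberwise positivity built in from the start, so only the base direction needs a large constant.
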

\begin{proof}
If $\mX$ is smooth, then this follows from B\l{}ocki's Theorem
(Theorem \ref{blo}). If $\mX$ is singular, we can choose a
equivariant resolution $\pi: \tilde{\mX}\rightarrow\mX$. Then we
solve the equation on $\tilde{\mX}$:
\begin{equation}\label{rHCMA}
(\tilde{\Omega}+\sddbar\tilde{\Phi})^{n+1}=0,
\tilde{\Phi}|_{S^1\times X}=\phi
\end{equation}
with $\tilde{\Omega}=\pi^*\Omega$ being a smooth, closed,
non-negative form. By the following Proposition, we have smooth
subsolution for \eqref{rHCMA}. So by Phong-Sturm's result (Theorem
\ref{PhSt}), we can get bounded solution $\tilde{\Phi}$ of
\eqref{rHCMA} and, moreover, $\tilde{\Phi}$ is $C^{1,\alpha}$ on
$\tilde{X}\backslash E$, where $E$ is exceptional divisor. Because
$\tilde{\Phi}$ is plurisubharmonic along the fibres of the
resolution which are compact subvarieties, so $\tilde{\Phi}$ is
constant on the fibre of the resolution and hence $\tilde{\Phi}$
descends to a solution $\Phi$ of \eqref{sHCMA}.
\end{proof}
As pointed out in the above proof, to apply Theorem \ref{PhSt}, we
need to know the existence of subsolutions. Let
$\mX^*=\mX\backslash\mX_0$. To construct such subsolution, we first
note that there is an equivariant isomorphism
\begin{equation}\label{eqisom}
\rho: \mathbb{C}^*\times X\cong\mX^*\hookrightarrow \mX.
\end{equation}
\begin{prop}
For any smooth K\"{a}hler potential $\phi$, there exists a smooth
$S^1$-invariant smooth K\"{a}hler metric $\Omega_{\Psi}$ on
$\mX_{\triangle}$ such that $\rho^*\Omega_\Psi|_{S^1\times
X}=\pi_2^*\omega_\phi$. As a consequence, $\Psi$ is a subsolution of
the homogeneous Monge-Amp\`{e}re equation \eqref{sHCMA}.
\end{prop}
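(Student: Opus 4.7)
The plan is to construct $\Psi$ explicitly on the product $\mathbb{C}^*\times X$ by pulling back $\phi$, cutting it off near the central fiber, and then adding a large radial term to ensure positivity of $\Omega+\sddbar\Psi$. Choose smooth functions $\chi$ and $\psi$ of one real variable, both vanishing identically on $[0,1/4]$, with $\chi\equiv 1$ on a small interval $[1-\delta,1]$ and with $\psi(1)=0$ and $\sddbar\psi(|w|^2)>0$ on $\{1/4<|w|^2\le 1\}$ (e.g.\ $\psi(s)=g(\log s)$ for a convex $g$ vanishing at $\log(1/4)$ and at $0$). For $M>0$ to be chosen later, set
\[
\tilde\Psi_M(w,x)=\chi(|w|^2)\phi(x)+M\psi(|w|^2)
\]
on $\mathbb{C}^*\times X$. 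Since $\tilde\Psi_M$ vanishes on $\{|w|^2\le 1/4\}\times X$, it pushes forward via $\rho$ and extends by zero across $\mX_0$ to a smooth $S^1$-invariant function $\Psi_M$ on $\mX_\triangle$, and we set $\Omega_{\Psi_M}=\Omega+\sddbar\Psi_M$.

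The boundary value is built into the construction: at $|w|=1$ we have $\chi=1$ and $\psi=0$, so $\Psi_M|_{\partial\mX_\triangle}=\phi$ as a function. Moreover, because the equivariant embedding sends $S^1\subset\mathbb{C}^*$ into $U(N+1)\times S^1\subset\mathrm{Aut}(\mathbb{P}^N\times\mathbb{C})$ preserving $\omega_{FS}+\sqrt{-1}dw\wedge d\bar w$, the form $\Omega$ is $S^1$-invariant, which gives $\rho^*\Omega|_{\{t\}\times X}=\omega$ for every $|t|=1$. Together with $\sddbar\Psi_M|_{\{t\}\times X}=\sddbar_X\phi$, this yields the fiberwise identity $\rho^*\Omega_{\Psi_M}|_{\{t\}\times X}=\omega_\phi$ on each slice of $S^1\times X$.

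The main step is global positivity of $\Omega_{\Psi_M}$. On $\{|w|^2\le 1/4\}$ we have $\Omega_{\Psi_M}=\Omega>0$. On $\{|w|^2>1/4\}$, decompose $\rho^*\Omega_{\Psi_M}=\rho^*\Omega+\sddbar(\chi\phi)+M\sddbar\psi(|w|^2)$ into blocks with respect to $T(\mathbb{C}^*\times X)=T\mathbb{C}^*\oplus TX$. The fiber block equals $\rho^*\Omega|_{TX}+\chi(|w|^2)\sddbar_X\phi$; since $\rho^*\Omega$ is K\"ahler its fiber block is already positive, and near $|w|=1$ this block approaches the convex combination $(1-\chi)\omega+\chi\omega_\phi>0$, so by continuity it remains strictly positive on the thin annulus where $\chi\ne 0$ provided $\delta$ is small. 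The correction $M\sddbar\psi(|w|^2)$ contributes only to the base block and makes it arbitrarily large; a standard Schur-complement argument then forces the full matrix to be positive definite for all sufficiently large $M$, since the Schur complement of the base block converges to the strictly positive fiber block as the base block blows up. Hence $\Omega_{\Psi_M}$ is a smooth $S^1$-invariant K\"ahler metric on $\mX_\triangle$ with the correct boundary trace, and because $(\Omega+\sddbar\Psi_M)^{n+1}\ge 0$ as the top power of a K\"ahler form, $\Psi_M$ is the desired subsolution of the homogeneous Monge-Amp\`ere equation \eqref{sHCMA}.
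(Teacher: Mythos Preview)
Your overall strategy matches the paper's: build a potential on $\mathbb{C}^*\times X$ that restricts to $\phi$ on the boundary, cut it off so it extends across $\mX_0$, and add a large base-direction term to absorb errors. But there is a genuine gap in your positivity argument for the fiber block.

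You take $\chi$ vanishing on $[0,1/4]$ and equal to $1$ on $[1-\delta,1]$, then assert that ``by continuity it remains strictly positive on the thin annulus where $\chi\neq 0$ provided $\delta$ is small''. The parameter $\delta$ controls where $\chi=1$, not where $\chi\neq 0$; however small $\delta$ is, the set $\{\chi\neq 0\}$ can still be the full annulus $\{1/4<|w|^2\le 1\}$. On that annulus the fiber block is $\rho^*\Omega|_{TX}(w,\cdot)+\chi(|w|^2)\,\sddbar_X\phi$, and away from $|w|=1$ the first term is only \emph{some} positive metric, not $\omega$. Since $\sddbar_X\phi$ is bounded below only by $-\omega$, there is no reason this sum is positive when, say, $|w|^2=1/2$ and $\chi(1/2)$ is close to $1$. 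Your Schur-complement step then collapses, because enlarging the base block forces positivity of the full form only if the fiber block is already positive.

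The repair is to make the \emph{support} of $\chi$ (not merely the set $\{\chi=1\}$) a thin annulus near $|w|=1$: take $\chi\equiv 0$ on $[0,1-2\delta]$ and $\chi\equiv 1$ on $[1-\delta,1]$. Then on $\mathrm{supp}\,\chi$ one has $\rho^*\Omega|_{TX}=\omega+O(\delta)$ uniformly, so the fiber block equals $(1-\chi)\omega+\chi\omega_\phi+O(\delta)>0$ for $\delta$ small, and your argument goes through. The paper sidesteps this issue altogether by a slicker choice of potential: instead of $\chi\cdot\phi$ it uses $\Psi_0=-\log(h_\phi/\rho^*\phi_\mX^*h_{FS})$, which satisfies the exact identity $\rho^*\Omega+\sddbar\Psi_0=\pi_2^*\omega_\phi+\sqrt{-1}\,dw\wedge d\bar w$ on all of $\mathbb{C}^*\times X$. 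The cutoff is then performed near $w=0$ (where $\Psi_0$ need not extend), and wherever the cutoff is inactive the fiber block is $\omega_\phi$ on the nose, so no continuity argument is needed.
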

\begin{rem}
 Similar result was proved in \cite{PhSt}. For the reader's convenience, we give a proof here.
\end{rem}
\begin{proof}
Under the isomorphism \eqref{eqisom}, we can write
\[
\pi_2^*\omega_\phi+\sqrt{-1}dw\wedge d\bar{w}=\Omega+\sddbar\Psi_0
\]
by taking $\Psi_0=-\log(h_\phi/\rho^*\phi_{\mX}^* h_{FS})$. Note
that this only holds on $\mathbb{C}^*\times X$. Now let $ \eta(w)$
be a smooth cut-off function such that $ \eta(w)=1$ for $|w|\le 1/3$
and $ \eta(w)=0$ for $|w|\ge 2/3$. Now we define a new metric on
$\mathbb{C}^*\times X$:
\begin{eqnarray*}
\Omega+\sddbar\Psi:&=&\pi_2^*\omega_\phi+\sqrt{-1}dw\wedge
d\bar{w}-\sddbar( \eta(w)\Psi_0)+a\sqrt{-1}dw\wedge d\bar{w}\\
&=&\Omega+\sddbar (\Psi_0-\eta(w)\Psi_0+a|w|^2).
\end{eqnarray*}
In other words we let $\Psi=(1-\eta(w))\Psi_0+a|w|^2+c$ for some
constant $c$.

We will show when $\mathbb{R}\ni a\gg 1$ is chosen to be big enough,
then we get a smooth K\"{a}hler metric on $\mX_{\triangle}$ with the
required condition.

For $|w|\ge 2/3$, $\Omega_\Psi=\pi^{*}\omega_\phi+a\sqrt{-1}dw\wedge
d\bar{w}$. When $|w|\le 1/3$, $\Omega_\Psi=\Omega+a\sqrt{-1}dw\wedge
d\bar{w}$. we can use the glue map $\rho$ to get a smooth
$S^1$-invariant K\"{a}hler metric on $\pi^{-1}(\{|w|\le 1/3\})$. So
$\Omega_\Psi$ is a smooth $S^1$-invariant K\"{a}hler metric for
$|w|\le 1/3$ or $|w|\ge 2/3$. We now need to consider the behavior
of $\Omega_\Psi$ at any point $p\in \mathbb{C}^*\times X$ such that
$1/3<|w(p)|<2/3$.
\begin{eqnarray*}
\Omega_\Psi&=&\pi^*\omega_\phi- \eta\sddbar\Psi_0-\Psi_0 \sddbar
\eta-\sqrt{-1}\left(\partial
\eta\wedge\bar{\partial}\Psi_0+\partial\Psi_0\wedge\bar{\partial}
\eta\right)+(a+1)\sqrt{-1}dw\wedge
d\bar{w}\\
&\ge& (1- \eta)(\omega_\phi+\sqrt{-1}dw\wedge d\bar{w})+
\eta\Omega-\epsilon\sqrt{-1}\partial\Psi_0\wedge\bar{\partial}\Psi_0+(a-\epsilon^{-1}|\eta_{w}|^2-\Psi_0\eta_{w\bar{w}})\sqrt{-1}
dw\wedge d\bar{w}
\end{eqnarray*}
Note that the first two terms together are strictly positive
definite. Because on $\mX_{|w|\ge 1/3}=\pi^{-1}(\{|w|\ge 1/3\})$,
$\Psi_0$ is a well defined smooth function there. So we can choose
$\epsilon$ sufficiently small and $a$ sufficiently big such that
this is a positive form on $\mX_{|w|\ge 1/3}$.
\end{proof}
\begin{proof}[Proof of Theorem \ref{specialdeg}]
There exists a metric $h_{\Omega}$ on $K_{\mX/\mathbb{C}}^{-1}$ such
that $\Omega=-\sddbar\log h_{\Omega}$. $h_{\Omega}$ defines a volume
form on each fibre. If we choose local coordinate $\{z_i\}$ on
$\mX_t$ and denote $\partial_{z}=\partial_{z_1}\wedge
\dots\wedge\partial z_n$ and $dz=dz_1\wedge\dots\wedge dz_n$. Then
the volume form is given by
\[
dV(h_{\Omega}|_{\mX_t})=\left|\partial_{z}\wedge
\overline{\partial_z}\right|_{h_{\Omega}|_{\mX_t}}^{2}dz\wedge
\overline{dz}
\]
Let $\mathcal{S}$ be the defining section of the divisor
$\mathcal{D}$. Fix the Hermitian metric $|\cdot|$ on
$\mathcal{O}_{\mX}(\mD)$ such that
$-\sddbar\log|\cdot|^2=\lambda\Omega$.

Let $\omega_t=\Omega|_{\mX_t}$. To prove the lower boundedness of log-Ding-functional $F_{\omega,(1-\beta)D}$, by Lemma \ref{testsm}, we only need to consider smooth K\"{a}hler
potentials. For any smooth potential $\phi\in C^{\infty}(X)$, we solve the homogeneous complex Monge-Amp\`{e}re equation \eqref{sHCMA} to get the geodesic ray $\Phi$.
Then consider the function on the base defined by
\[
f(t)=F_{\omega_t}^0(\Phi|_{\mX_t})-\frac{V}{r(\beta)}\log\left(\frac{1}{V}\int_{\mX_t}e^{-r(\beta)\Phi}\frac{dV(h_{\Omega}|_{\mX_t})}{|\mathcal{S}|^{2(1-\beta)}}\right)
\]
\begin{claim}
$f(t)$ satisfies $\Delta f\ge 0$.
\end{claim}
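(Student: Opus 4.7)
The plan is to split $f=f_1+f_2$ with
\[
f_1(t)=F_{\omega_t}^0(\Phi|_{\mX_t}),\qquad f_2(t)=-\frac{V}{r(\beta)}\log\left(\frac{1}{V}\int_{\mX_t}e^{-r(\beta)\Phi}\frac{dV(h_\Omega|_{\mX_t})}{|\mathcal{S}|^{2(1-\beta)}}\right),
\]
and to prove that $\sddbar_t f_1\ge 0$ and $\sddbar_t f_2\ge 0$ separately as $(1,1)$-currents on the disk $\triangle$; summing yields $\Delta f\ge 0$. Morally this is Berndtsson's convexity argument for the Ding functional along bounded geodesics, adapted here to a geodesic ray over the test configuration $\mX_\triangle$.

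For $f_1$, I would use the Bott--Chern interpretation from the remark following Definition~\ref{defF0IJ}, which writes $f_1(t)$ as $-\tfrac{1}{(n+1)!}$ times the fiber integral of the transgression form $BC(c_1(-K)^{n+1};h_\Omega|_{\mX_t},h_\Omega e^{-\Phi}|_{\mX_t})$. Since fiber integration $\pi_*$ commutes with $\sddbar$ on the base and the transgression satisfies $\sddbar\,BC(c_1^{n+1};h_0,h_1)=c_1(h_1)^{n+1}-c_1(h_0)^{n+1}$ as forms on the total space, one obtains
\[
\sddbar_t f_1(t)=\frac{1}{(n+1)!}\pi_*\bigl[\Omega^{n+1}-(\Omega+\sddbar\Phi)^{n+1}\bigr].
\]
The second term vanishes because $\Phi$ solves the homogeneous Monge--Amp\`ere equation \eqref{sHCMA}, while $\pi_*\Omega^{n+1}\ge 0$ since $\Omega$ is smooth and semipositive on $\mX_\triangle$. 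Hence $\sddbar_t f_1\ge 0$.

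For $f_2$, I would apply Berndtsson's subharmonicity theorem for fiber integrals of plurisubharmonic weights. Introduce the global weight
\[
\Psi_{\rm tot}=r(\beta)\Phi+(1-\beta)\log|\mathcal{S}|^2-\log dV(h_\Omega)
\]
on $\mX_\triangle$, so that $e^{-\Psi_{\rm tot}}$ equals the integrand of $f_2$. Using $-\sddbar\log h_\Omega=\Omega$, the Poincar\'e--Lelong identity $\sddbar\log|\mathcal{S}|^2=-\lambda\Omega+[\mathcal{D}]$, the relation $\sddbar\log dV(h_\Omega|_{\mX_t})=-\Omega$ (valid on the total space since $h_\Omega$ is a metric on $-K_{\mX/\mathbb{C}}$), and $r(\beta)=1-(1-\beta)\lambda$, a direct computation gives
\[
\sddbar\Psi_{\rm tot}=r(\beta)(\Omega+\sddbar\Phi)+(1-\beta)[\mathcal{D}]\ge 0.
\]
Berndtsson's theorem then yields that $t\mapsto -\log\int_{\mX_t}e^{-\Psi_{\rm tot}}$ is subharmonic on $\triangle$, whence $\sddbar_t f_2\ge 0$ (assuming $r(\beta)>0$, which holds throughout the cone-angle range of interest).

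The main technical obstacle is the low regularity of $\Phi$ combined with the isolated singularities of $\mX_0$: $\Phi$ is only bounded on $\mX_\triangle$ and $C^{1,\alpha}$ on $\mX_\triangle\setminus\{p_i\}$, so interchanging $\sddbar_t$ with fiber integration and invoking the Bott--Chern identity needs to be justified in a limiting sense. I would handle this by pulling back to the equivariant resolution $\tilde\mX\to\mX$ used in the construction of $\Phi$, approximating $\pi^*\Phi$ by smooth $\epsilon$-geodesics $\tilde\Phi_\epsilon$ solving $(\tilde\Omega+\sddbar\tilde\Phi_\epsilon)^{n+1}=\epsilon\,\tilde\Omega^{n+1}$ via Theorem~\ref{blo}, running both subharmonicity arguments in the smooth category for $f_1^\epsilon,f_2^\epsilon$, and letting $\epsilon\to 0$ using the uniform $L^\infty$ bound on $\tilde\Phi_\epsilon$ together with Bedford--Taylor-type stability of Monge--Amp\`ere masses. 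Berndtsson's theorem in its singular-weight formulation actually applies directly to $f_2$, so the regularization is essential only for the Bott--Chern manipulation controlling $f_1$.
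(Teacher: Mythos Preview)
Your proposal is correct and essentially matches the paper's proof: the same splitting $f=f_1+f_2$, the Bott--Chern/HCMA argument for $f_1$, and Berndtsson's positivity-of-direct-images for $f_2$ after checking that the curvature of the singular weight $H=h_\Omega e^{-r(\beta)\Phi}/|\mathcal{S}|^{2(1-\beta)}$ on $-K_{\mX/\mathbb{C}}$ is nonnegative. The one technical point handled differently is the isolated singularities of $\mX_0$ in the $f_2$ piece: rather than invoking a singular-weight Berndtsson theorem directly, the paper runs a Berndtsson--Paun exhaustion of $\mX$ by smooth Stein fibrations $\mX_j\to\mathbb{C}$, shows the relative Bergman kernels $\mathcal{K}_j$ of $\mathcal{O}_{\mX_j}=K_{\mX_j/\mathbb{C}}\otimes(-K_{\mX_j/\mathbb{C}})$ decrease to the Bergman kernel on $\mX$, and identifies the latter with $\|1\|_{L^2}^{-2}$.
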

Assuming the claim, let's finish the proof of Theorem \ref{specialdeg}. By maximal principle of subharmonic function, we have
\[
F_{\omega_1,(1-\beta)D}^{X}(\phi)=f(1)=\max_{t\in \partial \triangle}f(t)\ge f(0)=F_{\omega_0,(1-\beta)\mathcal{D}_0}^{\mX_0}(\Phi|_{\mX_0})
\]
Now since by assumption, there exists a conical K\"{a}hler-Einstein metric $\widehat{\omega}_{KE}=\omega_0+\sddbar\widehat{\phi}_{KE}$ on $(\mX_0,(1-\beta)\mD_0)$. By Berndtsson's
Theorem \ref{kecrit}, we have
\[
F_{\omega_0,(1-\beta)\mD_0}(\Phi|_{\mX_0})\ge F_{\omega_0,(1-\beta)\mD_0}(\widehat{\phi}_{KE})
\]
So combining the above two inequality, we indeed get the lower bound of log-Ding-energy:
\[
F_{\omega_1,(1-\beta)D}^{X}(\phi)\ge F_{\omega_0,(1-\beta)\mD_0}^{\mX_0}(\widehat{\phi}_{KE})
\]
Now, to prove the claim, we write $f(t)$ as parts: $f(t)={\rm I}+{\rm II}$:
\begin{eqnarray*}
{\rm I}&=&F_{\omega_t}^0(\Phi|_{\mX_t})=-\frac{1}{(n+1)!}\int_X
BC(\Omega^{n+1}, (\Omega+\sddbar{\Phi})^{n+1}),\\
{\rm
II}&=&-\frac{V}{r(\beta)}\log\left(\frac{1}{V}\int_{\mX_t}e^{-r(\beta)\Phi}\frac{dV(h_{\Omega}|_{\mX_t})}{|\mathcal{S}|^{2(1-\beta)}}\right)
\end{eqnarray*}
For part ${\rm I}$, see Remark \ref{BCF0}. We use the property of
Bott-Chern form and the geodesic equation to get that,
\begin{eqnarray*}
\sddbar {\rm I}&=&-\frac{1}{(n+1)!}\int_{\mX_t} \sddbar
BC(\Omega^{n+1},
(\Omega+\sddbar{\Phi})^{n+1})\\
&=&-\frac{1}{(n+1)!}\int_{\mX_t} (\Omega+\sddbar\Phi)^{n+1}-\Omega^{n+1}\\
&=&\frac{1}{(n+1)!}\int_{\mX_t}\Omega^{n+1}\ge 0
\end{eqnarray*}
For part ${\rm II}$, we can write locally $1=\partial_z\otimes dz$ in the decomposition
$\mathcal{O}_{\mX}=-K_{\mX/\mathbb{C}}+K_{\mX/\mathbb{C}}$. Then we
think $1\in\mathcal{O}_{\mathbb{C}}$ is a holomorphic section in
$\pi_*\mathcal{O}_\mX=\mathcal{O}_{\mathbb{C}}$.
\[
{\rm II}=-\frac{V}{r(\beta)}\log \|1\|_{L^2}^2
\]
where $\|\cdot\|_{L^2}^2$ is the $L^2$-metric induced by the
singular metric
$H=h_{\Omega}e^{-r(\beta)\Phi}/|\mathcal{S}|^{2(1-\beta)}$ on
$-K_{\mX/\mathbb{C}}$.  Then the subharmonicity is given by the next
proposition.
\end{proof}
\begin{prop}
${\rm II}$ is a subharmonic function of t.
\end{prop}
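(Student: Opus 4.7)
The plan is to recognize ${\rm II}$, up to a positive multiplicative constant, as the negative logarithm of a fiberwise $L^2$ norm of a canonical section in a direct image, and then to invoke Berndtsson's theorem on plurisubharmonic variation of such $L^2$ metrics. The first step is to compute the curvature of the Hermitian metric $H=h_\Omega e^{-r(\beta)\Phi}/|\mathcal{S}|^{2(1-\beta)}$ on the relative anticanonical bundle $-K_{\mX/\C}$. Using $-\sddbar\log h_\Omega=\Omega$, the Poincar\'e-Lelong identity $-\sddbar\log|\mathcal{S}|^2=\lambda\Omega-2\pi\{\mD\}$, and the relation $r(\beta)=1-(1-\beta)\lambda$, the curvature collapses to
\[
-\sddbar\log H \;=\; r(\beta)\bigl(\Omega+\sddbar\Phi\bigr)+2\pi(1-\beta)\{\mD\},
\]
which is a semi-positive $(1,1)$-current on $\mX$: the first summand is semi-positive because $\Phi$ is the bounded solution of the homogeneous complex Monge-Amp\`ere equation $(\Omega+\sddbar\Phi)^{n+1}=0$ with $\Omega+\sddbar\Phi\geq 0$, and the second is semi-positive because $\{\mD\}$ is the current of integration on an effective divisor (here I also use $r(\beta)>0$ and $\beta\in(0,1]$).

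Next I would use the decomposition $\mathcal{O}_\mX=K_{\mX/\C}\otimes(-K_{\mX/\C})$ to view the tautological section $1$ as a section of $K_{\mX/\C}\otimes(-K_{\mX/\C})$. The fiberwise $L^2$ norm induced by $H$ is exactly
\[
\|1\|^2_{L^2}(t)=\int_{\mX_t}\frac{e^{-r(\beta)\Phi}}{|\mathcal{S}|^{2(1-\beta)}}\,dV(h_\Omega|_{\mX_t}),
\]
so that ${\rm II}=-\frac{V}{r(\beta)}\log\|1\|^2_{L^2}$. By Berndtsson's positivity theorem for direct images, together with its extension to singular metrics, the semi-positivity of the curvature current of $H$ and the local integrability of $|\mathcal{S}|^{-2(1-\beta)}$ for $\beta>0$ imply that the induced $L^2$ metric on the rank-one direct image $\pi_*\bigl(K_{\mX/\C}\otimes(-K_{\mX/\C})\bigr)=\pi_*\mathcal{O}_\mX=\mathcal{O}_\C$ has semi-positive curvature. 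For a trivial line bundle this is just the statement that $-\log\|1\|^2_{L^2}$ is subharmonic in $t$, and multiplying by the positive constant $V/r(\beta)$ yields the desired subharmonicity of ${\rm II}$.

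The main technical obstacle is that $\pi$ is not a submersion over $0\in\C$: the central fibre $\mX_0$ carries isolated $\Q$-Gorenstein singularities, so Berndtsson's theorem does not apply in the naive form across $t=0$. My plan is to first establish subharmonicity of ${\rm II}$ on the punctured disk $\triangle^*=\triangle\setminus\{0\}$, where $\pi$ is a smooth family and the argument above goes through verbatim. To extend across $t=0$, I would verify that ${\rm II}$ is locally bounded near the origin---this uses the $L^\infty$ bound on $\Phi$, the local integrability of $|\mathcal{S}|^{-2(1-\beta)}$ on $\mX_0$, and the klt-type nature of the $\Q$-Gorenstein singularities---and invoke the classical removable-singularity theorem for subharmonic functions on the disk. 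Alternatively one can pull back to an equivariant resolution $\tilde\mX\to\mX$, where the pulled-back curvature current is still semi-positive and (thanks to the isolated nature of the singularities, combined with a direct $L^2$ comparison for sections of $-K$) the $L^2$ integral along fibres agrees with the one downstairs, so that Berndtsson's theorem applies directly on the resolution.
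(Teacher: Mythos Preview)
Your overall strategy coincides with the paper's: compute the curvature of $H=h_\Omega e^{-r(\beta)\Phi}/|\mathcal{S}|^{2(1-\beta)}$ on $-K_{\mX/\C}$, check it is a semi-positive current, recognize ${\rm II}=-\frac{V}{r(\beta)}\log\|1\|_{L^2}^2$ for the induced $L^2$ metric on $\pi_*\mathcal{O}_\mX=\mathcal{O}_\C$, and invoke Berndtsson's positivity of direct images. (Your curvature simplification $r(\beta)\Omega_\Phi+2\pi(1-\beta)\{\mD\}$ is correct.)

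Where you diverge from the paper is in crossing the singular central fibre, and here your argument has a gap. Your route (a) observes that Berndtsson applies on $\triangle^*$ and that ${\rm II}$ is bounded near $0$. But boundedness only tells you that ${\rm II}|_{\triangle^*}$ extends subharmonically across $0$ with the value $\limsup_{t\to0}{\rm II}(t)$; to conclude that ${\rm II}$ itself is subharmonic you still need ${\rm II}(0)\le\limsup_{t\to0}{\rm II}(t)$, i.e.\ lower semi-continuity of $t\mapsto\int_{\mX_t}e^{-r(\beta)\Phi}|\mathcal{S}|^{-2(1-\beta)}dV(h_\Omega)$ at the singular fibre. You do not address this, and it is not automatic. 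Your route (b) via a resolution $\tilde\mX\to\mX$ is more problematic: on $\tilde\mX$ the bundle in play is $K_{\tilde\mX/\C}\otimes\pi^*(-K_{\mX/\C})$, not $K_{\tilde\mX/\C}\otimes(-K_{\tilde\mX/\C})$, and the discrepancy divisor intervenes; matching the fibrewise $L^2$ integrals across the exceptional locus is exactly the delicate point.

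The paper avoids both issues by following Berndtsson--P\u{a}un: one builds a $\C^*$-equivariant exhaustion $\mX_j\nearrow\mX$ so that each $\pi_j:\mX_j\to\C$ is a \emph{smooth} fibration with Stein fibres, applies Berndtsson on each $\mX_j$ to get that the relative Bergman kernel $\log\mathcal{K}_j$ is psh, and then uses $\mathcal{K}_j\searrow\mathcal{K}$ so that $\log\mathcal{K}$ is psh as a decreasing limit of psh functions. Since $H^0(\mX_t,\mathcal{O})=\C$, the extremal characterization gives $\mathcal{K}=\|1\|_{L^2}^{-2}$, and subharmonicity of ${\rm II}$ on all of $\triangle$ (including $t=0$) follows in one stroke, with no separate semi-continuity argument needed.
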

\begin{proof}
First note that
\begin{eqnarray*}
-\sddbar\log
H&=&\Omega+(1-\lambda(1-\beta))\sddbar\Phi+(1-\beta)(-\lambda\Omega+\{\mathcal{D}=0\})\\
&=&(1-\lambda(1-\beta))\Omega_{\Phi}+(1-\beta)\lambda\Omega+(1-\beta)\{\mathcal{D}\}
\end{eqnarray*}
is a positive current. If $\mX$ is smooth, then the subharmonicity
follows immediately from Berndtsson's result in \cite{Bern2}. In our
case, $\mX$ has isolated singularities. We can use divisors to cut
out this singularity and reduce the problem to smooth fibrations of
Stein manifolds. We apply Berndtsson-Paun's argument in \cite{BePa}.
They construct a sequence of smooth fibrations $\pi_j:
\mX_j\rightarrow \mathbb{C}$, such that
\begin{enumerate}
\item
$\pi_j$ is a smooth fibration. Each fibre is a Stein manifold.
\item
As $j\rightarrow+\infty$, $\{\mX_j\}$ form an exhaustion of $\mX$.
\end{enumerate}
Note that in our equivariant setting, we can also require the
$\mX_j$ is $\mathbb{C}^*$-invariant.

In \cite{BePa} Berndtsson-Paun proved that the relative Bergman
kernel metric $h_j$ of the bundle
$\mathcal{O}_{\mX_j}=K_{\mX_j/\mathbb{C}}+(-K_{\mX_j/\mathbb{C}})$
has semipositive curvature current. In other words, $-\log
|1|_{h_j}^2$ is plurisubharmonic on $\mX_k$. If we use
$\|\cdot\|_{j}$ to denote the $L^2$-metric on
$(\pi_j)_*\mathcal{O}_{\mX_j}$ induced by $H=h_{\Omega}
e^{-r(\beta)\Phi}/|\mathcal{S}|^{2(1-\beta)}$ on
$K_{\mX_j/\mathbb{C}}^{-1}$ and $\mathcal{K}_j(z,z)$ to denote the
relative Bergman kernel of
$\mathcal{O}_{\mX_j}=K_{\mX_j/\mathbb{C}}+(-K_{\mX_j/\mathbb{C}})$ ,
then
\[
\mathcal{K}_{j}(z,z)=\max\{|f|^2; \|f\|_{j}\le 1\},\quad
|1|_{h_j}^2=\frac{1}{\mathcal{K}_{j}(z,z)}.
\]
Now, as showed by Berndtsson-Paun, the Bergman kernel $\mathcal{K}$
of $\mathcal{O}_{\mX}=K_{\mX/\mathbb{C}}+(-K_{\mX/\mathbb{C}})$ is
the decreasing limit of the Bergman kernel $\mathcal{K}_j$, and
hence the Bergman kernel metric on $\mathcal{O}_{\mX}$ also has
semipositive curvature current. Since, for any $t\in \mathbb{C}$,
$H^0(\mX_t,\mathcal{O}_{\mX}|_{\mX_t})=\mathbb{C}$, using the
extremal characterization of the relative Bergman kernel, it's
straight to verify that the relative Bergman kernel metric (BK) on
$\mathcal{O}_\mX=K_{\mX/\mathbb{C}}+(-K_{\mX/\mathbb{C}})$ is given
by $|1|_{BK}^2=\frac{1}{\mathcal{K}(z,z)}=\|1\|_{L^2}^2$ which is a
pull-back function from the base $\mathbb{C}$. So we get that ${\rm
II}=-\log\|1\|_{L^2}^2$ is plurisubharmonic on the disk $\{|w|\le
1\}$. Note that, since ${\rm II}$ is rotationally symmetric, this is
same as saying that ${\rm II}$ is a convex function of $t=|w|$.
\end{proof}

\begin{rem}
When the central fiber is smooth, this is a theorem of Chen
\cite{Chen}, where a more general statement concerning constant
scalar curvature K\"ahler metrics is proved,  using the weak
convexity of  Mabuchi functional on the space of K\"ahler metrics.
It seems difficult to adapt Chen's argument to the singular setting.
The advantage here(in the log setting) is to use Ding's functional,
which requires much weaker regularity of the geodesics. A
fundamental result of Berndtsson \cite{Bern3} says that the Ding
functional is genuinely geodesically convex. This technique has been
demonstrated in \cite{Bern3},  \cite{BBEGZ} and \cite{Berm2}.
\end{rem}
\begin{rem}
During the writing of this paper, the paper by Berman \cite{Berm2} appeared in which more general results about subharmonicity of Ding-functional in the singular setting was proved. It seems very likely that his argument implies the following result.
\begin{conj}\label{conj1}
Let $\pi: (\mX, \mD)\rightarrow \C$ be a special degeneration for
$(X, D)$. Suppose the central fiber $(\mX_0, \mD_0)$ admits a
singular K\"ahler-Einstein metric of cone angle $2\pi\beta$ along
$\mD_0$.  Then  the log-Ding functional
$F_{\omega, (1-\beta)D}$ is bounded below. As a consequence, the
log-Mabuchi functional $\mM_{\omega, (1-\beta)D}$ is also bounded below.
\end{conj}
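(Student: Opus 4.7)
The plan is to reproduce the structure of the proof of Theorem \ref{specialdeg} while replacing every step that used the isolated-singularity hypothesis by a more robust singular input. Fix a smooth K\"ahler potential $\phi$ on $X$, and on the unit disc $\triangle$ solve the homogeneous Monge--Amp\`ere equation
\begin{equation*}
(\Omega+\sddbar\Phi)^{n+1}=0,\qquad \Phi\big|_{S^1\times X}=\phi,
\end{equation*}
on $\mX_\triangle$, where $\Omega$ comes from an equivariant projective embedding of the test configuration as in Section \ref{pfspecial}. Existence of a bounded, $S^1$-invariant weak solution $\Phi$ follows from Theorem \ref{weakgeod} applied either directly on $\mX_\triangle$ or after passing to an equivariant resolution $\tilde{\mX}\to \mX$; as before, plurisubharmonicity along exceptional fibers forces the solution to descend to $\mX$.

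Once $\Phi$ is constructed, define, for $t\in\triangle$,
\begin{equation*}
f(t)=F^0_{\omega_t}(\Phi|_{\mX_t})-\frac{V}{r(\beta)}\log\!\left(\frac{1}{V}\int_{\mX_t} e^{-r(\beta)\Phi}\,\frac{dV(h_\Omega|_{\mX_t})}{|\mathcal{S}|^{2(1-\beta)}}\right),
\end{equation*}
so that $f(1)=F_{\omega,(1-\beta)D}(\omega_\phi)$ and $f(0)=F_{\omega_0,(1-\beta)\mD_0}(\Phi|_{\mX_0})$. The Bott--Chern identity together with the geodesic equation gives $\sddbar f^{(1)}=\tfrac{1}{(n+1)!}\Omega^{n+1}|_{\mX_t}\ge 0$ for the first term, exactly as in Section \ref{pfspecial}. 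For the second term, one rewrites it as $-\frac{V}{r(\beta)}\log\|1\|^2_{L^2}$ for the singular relative metric
\begin{equation*}
H=h_\Omega\, e^{-r(\beta)\Phi}/|\mathcal{S}|^{2(1-\beta)}
\end{equation*}
on $-K_{\mX/\C}$, whose curvature current $(1-\lambda(1-\beta))\Omega_\Phi+(1-\beta)\lambda\Omega+(1-\beta)\{\mD\}$ is positive. The aim is to invoke Berndtsson-type subharmonicity of fiberwise $L^2$-norms to conclude that the second term is plurisubharmonic in $t$, whence $f$ is subharmonic, so the maximum principle gives
\begin{equation*}
F_{\omega,(1-\beta)D}(\omega_\phi)=f(1)\ge f(0)\ge F_{\omega_0,(1-\beta)\mD_0}(\widehat{\omega}_{KE}),
\end{equation*}
using Theorem \ref{kecrit} (Berndtsson) on the central fiber.

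The hard part is to justify the subharmonicity of $-\log\|1\|^2_{L^2}$ when the total space has non-isolated singularities: the Berndtsson--Paun Stein-exhaustion argument used in Section \ref{pfspecial} breaks down because one can no longer remove the singular locus by a finite collection of divisors while keeping each fiber a Stein manifold with controlled geometry. The natural route is to use the approximation/pluripotential framework developed in \cite{Berm2} and \cite{BBEGZ}, where the subharmonicity of the Ding functional in $t$ is established for families of klt pairs by approximating the singular weight $H$ by smooth positively curved weights, checking uniform energy bounds, and passing to the limit via upper-semicontinuity of the regularized fiber integrals. An alternative is to resolve $\pi:\mX\to\C$ equivariantly to $\tilde{\pi}:\tilde\mX\to\C$, lift $\Phi$ to $\tilde\Phi$, and apply Berndtsson--Paun on $\tilde\mX$ with the pulled-back singular weight (which still has positive curvature current because the exceptional divisors appear with coefficients controlled by discrepancies in the klt range); one then has to argue that the resulting subharmonic function on $\triangle$ equals the original $f$, using that the fiberwise push-forward $\tilde\pi_\ast\mathcal{O}_{\tilde\mX}=\mathcal{O}_\C$ in both cases.

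Finally, the statement for the log-Mabuchi functional follows immediately from the log-Ding case by Proposition \ref{energy functional property}\eqref{logMgeF}, which bounds $\mathcal{M}_{\omega,(1-\beta)D}$ below by $r(\beta)F_{\omega,(1-\beta)D}$ plus a fixed constant. The only non-routine analytic ingredient is therefore the singular subharmonicity step; everything else is parallel to Section \ref{pfspecial}.
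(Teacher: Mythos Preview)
The statement you are proving is labeled \emph{Conjecture} in the paper and is not proved there. The paper explicitly says only that ``it seems very likely that [Berman's] argument implies the following result,'' referring to \cite{Berm2}. So there is no paper proof to compare against: your proposal is an attempt to fill in what the authors deliberately left open.

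That said, your outline is precisely the route the paper gestures at. You correctly isolate the one nontrivial issue: the Berndtsson--Paun Stein-exhaustion argument in Section~\ref{pfspecial} genuinely needs the singular set to be cut out by divisors so that each $\mX_j$ is a smooth fibration of Stein manifolds, and this is what fails for general klt central fibers. Your two suggested fixes --- invoking the subharmonicity results of \cite{Berm2}/\cite{BBEGZ} for the Ding functional over families of klt pairs, or resolving and controlling discrepancies --- are the natural candidates and are exactly what the remark preceding the conjecture points to. However, neither route is carried out here with enough detail to count as a proof: for the first you would need to check that the fiberwise integrals $\int_{\mX_t} e^{-r(\beta)\Phi}\,dV/|\mathcal S|^{2(1-\beta)}$ are finite and vary upper-semicontinuously through the singular fiber (this uses the klt hypothesis in an essential way), and for the second you would need to verify that the pulled-back weight on the resolution still has semipositive curvature and that the relative Bergman kernel descends. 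These are the substantive steps, and until one of them is written out, the argument remains at the level of a plausible sketch --- which is consistent with the paper's own assessment that this is a conjecture rather than a theorem.
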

\end{rem}

\section{K\"ahler-Einstein metrics on
$X=\mathbb{P}^2$ singular along a conic}\label{P2case}
\subsection{Proof of Theorem \ref{conic}}\label{proofconic}

We first apply Phong-Song-Sturm-Weinkove's properness result in
Theorem \ref{PSSW} to show that the Ding-energy
$F^{\mathbb{P}^2}_{\omega}$ is proper on the space of
$SO(3)$-invariant K\"{a}hler metrics. For this, we need to show that
the centralizer of $SO(3,\mathbb{R})$ in $SU(3)$ is finite. Indeed,
if $\gamma\in \rm{Centr}_{SO(3,\mathbb{R})}SU(3)$, then $\gamma\cdot
C$ is a degree 2 curve invariant under $SO(3,\mathbb{R})$. But
there is a unique curve invariant under $SO(3,\mathbb{R})$ which is
just $C$ itself. So $\gamma\cdot C=C$ and we conclude $\gamma\in
SO(3,\mathbb{R})$. Since the center of $SO(3,\mathbb{R})$ is finite,
so the conclusion follows.

By the calculation in Example \ref{deg2slope}, we see that
$(\mathbb{P}^2,(1-\beta)D)$ is unstable when $0<\beta<1/4$, so there
is no conical K\"ahler-Einstein metric for $\beta\in (0, 1/4)$, by
Corollary \ref{mabuchimin} and Proposition \ref{Mabuchistable}. For the case $\beta=1/4$ we see the deformation to the normal cone considered in Proposition \ref{defnormalcone} shows that $(\mathbb P^2, 3/4D)$ is not log-K-polystable, so we can conclude the nonexistence of conical K\"ahler-Einstein metrics for $\beta=1/4$ by appealing to the more general result of Berman \cite{Berm2}(see Remark \ref{Bermanstability}).

To prove the existence for all $\beta\in (1/4,1]$, by Proposition
\ref{interpolate}, we only need to show the lower boundedness of
log-Mabuchi-energy when $\beta=1/4$. To do this,
 we  construct a special degeneration to conical K\"ahler-Einstein variety and apply Theorem \ref{specialdeg}. The special degeneration comes from deformation to the normal cone.
Let $\tilde{\mX}$ be the blow up of $\mathbb{P}^2\times\mathbb{C}$
along $D\times\{0\}$. Choose the line bundle
$\mL_{3/2}:=\pi^*K_{\mathbb{P}^2}^{-1}-3/2 E$ where $E$ is the
exceptional divisor. Then $\mL_{3/2}$ is semi-ample and the map
given by the complete linear system $|k \mL_{3/2}|$ for k
sufficiently big contracts the $\mathbb{P}^2$ in the central fibre
and we get a special test configuration $\mX$ with central fibre
being the weighted projective space $\mathbb{P}(1,1,4)$. It inherits an orbifold K\"{a}hler-Einstein
metric from the standard Fubini-Study metric on $\mathbb{P}^2$ by
the quotient map $\mathbb{P}^2=\mathbb{P}(1,1,1)\rightarrow
\mathbb{P}(1,1,4)$ given by $(Z_0,Z_1,Z_2)\rightarrow
(Z_0,Z_1,Z_2^4)=:[W_0,W_1,W_2]$. (See example \ref{P(1,1,4)} in
section \ref{relSW} for a toric explanation) The induced orbifold
K\"{a}hler-Einstein metric is the same as the conical
K\"{a}hler-Einstein metric on $\mathbb{P}(1,1,4)$ singular along the divisor $[W_2=0]$ with cone angle $2\pi/4$.
There is one orbifold singularity on $\mathbb{P}(1,1,4)$ of type
$\frac{1}{4}(1,1)$ as explained in example \ref{1/4(1,1)}. But this
does not cause any difficulty by the discussion in Section
\ref{singkahler}. So by Theorem \ref{specialdeg}, we get that the
log-Ding-energy $F^{\mathbb{P}^2}_{\omega,3/4 D}$ is bounded from
below. So by Proposition \ref{interpolate}, $F_{\omega,
(1-\beta)D}(\phi)$ is proper for $\beta\in (1/4,1]$ on the space of
$SO(3,\mathbb{R})$ invariant conical metrics. So by the existence
theorem explained in section \ref{existence}, we get the existence
of conical K\"{a}hler-Einstein metric on $(\mathbb{P}^2,
(1-\beta)D)$ for any $\beta\in (1/4,1]$.

\begin{rem}\label{coverP2}
When $\beta=1/2$, there exists an orbifold metric on
$(\mathbb{P}^2,1/2D)$ coming from the branched covering map given by
\begin{eqnarray*}
p: \mathbb{P}^1\times\mathbb{P}^1&\rightarrow& \mathbb{P}^2\\
{[}U_0,U_1{]},[V_0, V_1]&\mapsto &[U_0 V_0+U_1V_1, i(U_0V_1+U_1V_0),
i(U_0V_0-U_1V_1)]
\end{eqnarray*}
This is a degree 2 cover branching along the diagonal
$\triangle=\{[U_0,U_1],[U_0,U_1]\}\subset
\mathbb{P}^1\times\mathbb{P}^1$. Note that
\[
p(\triangle)=D=\{Z_0^2+Z_1^2+Z_2^2=0\}\subset \mathbb{P}^2.
\]
$Aut^0(\mathbb{P}^1\times\mathbb{P}^1)=PSL(2,\mathbb{C})\times
PSL(2,\mathbb{C})$. $PSL(2,\mathbb{C})$ acts diagonally on
$\mathbb{P}^1\times\mathbb{P}^1$ and we will denote such action by
$PSL(2)^{\rm diag}$. The map induces a morphism between groups:
\begin{eqnarray*}
\varphi: PSL(2, \mathbb C)^{\rm diag}&\longrightarrow & PSO(3,\mathbb{C})\\
\left(\begin{array}{cc} a &b \\c&d \end{array}
\right)&\mapsto&\frac{1}{ad-bc}\left(
\begin{array}{ccc}
(a^2+b^2+c^2+d^2)/2&-i(ab+cd)& -i(a^2-b^2+c^2-d^2)/2\\
i(ac+bd)&ad+bc&ac-bd\\
i(a^2+b^2-c^2-d^2)/2&ab-cd&(a^2-b^2-c^2+d^2)/2
\end{array}
\right)
\end{eqnarray*}
Note that $\varphi(SU(2))=SO(3,\mathbb{R})$. Actually this morphism
can be seen as the complexification of the covering map from $SU(2)$
to $SO(3,\mathbb{R})$.

\end{rem}

From the above proof it is tempting to expect that

\begin{conj} \label{conj2}The conical K\"ahler-Einstein metric $\omega_{\beta}$ on $\cP^2$ with cone angle $2\pi \beta$ along a smooth degree $2$ curve  converge in the Gromov-Hausdorff sense to the standard orbifold K\"ahler-Einstein metric on $\cP(1,1,4)$ as $\beta$ tends to $1/4$.
\end{conj}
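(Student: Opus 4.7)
The plan is to combine a uniform-estimate / compactness argument for the family $\{\omega_\beta\}_{\beta>1/4}$ with an algebraic identification of the Gromov-Hausdorff limit, using the special degeneration already produced in the proof of Theorem \ref{conic}. Since $r(\beta)=1-2(1-\beta)=2\beta-1$ approaches $1/2>0$ as $\beta\downarrow 1/4$, the equation $(*)$ gives $Ric(\omega_\beta)\ge r(\beta)\omega_\beta$ away from $D$ with a uniform positive lower bound. By $SO(3,\mathbb{R})$-invariance of the conic $D$ and uniqueness of the conical K\"ahler-Einstein metric in Theorem \ref{kecrit}, each $\omega_\beta$ can be taken $SO(3,\mathbb{R})$-invariant; combined with Myers-type bounds and volume normalization this yields uniform diameter and volume non-collapsing, hence subsequential Gromov-Hausdorff convergence to a compact length space $(Z_\infty,d_\infty)$ with a residual $SO(3,\mathbb{R})$-action.

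Next I would promote $Z_\infty$ to a normal projective variety carrying a weak K\"ahler-Einstein structure. For this the natural tool is a partial $C^0$-estimate adapted to the conical setting (in the spirit of Donaldson-Sun and Tian's program, together with Berman's results on conical K\"ahler-Einstein metrics) applied to sections of $-kK_{\mathbb{P}^2}$ for a fixed large $k$; this produces a uniform Kodaira-type embedding into a fixed $\mathbb{P}^N$ in which $\omega_\beta/k$ is uniformly equivalent to the Fubini-Study metric on the embedded $\mathbb{P}^2$. Taking the flat limit of the embedded varieties in the Hilbert scheme then equips $Z_\infty$ with the structure of a normal $\mathbb{Q}$-Fano variety together with a limit divisor $\mathcal{D}_\infty$, so that $(Z_\infty,\tfrac{3}{4}\mathcal{D}_\infty)$ is obtained from $(\mathbb{P}^2,\tfrac{3}{4}D)$ by a $\mathbb{C}^*$-degeneration compatible with the residual $SO(3,\mathbb{R})$-action.

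Now I would identify $Z_\infty$ with $\mathbb{P}(1,1,4)$. By Corollary \ref{kestable}, any conical K\"ahler-Einstein pair is log-K-semistable, and in the limit the only obstruction to semistability of $(\mathbb{P}^2,\tfrac{3}{4}D)$ at $\beta=1/4$ is the deformation to the normal cone along $D$ computed in Proposition \ref{defnormalcone}, which has vanishing log-Futaki invariant precisely at $\beta=1/4$. Up to the $SO(3,\mathbb{R})$-symmetry this special degeneration is unique, and its central fiber is $\mathbb{P}(1,1,4)$ with boundary divisor $\{W_2=0\}$. Combined with the fact (Theorem \ref{specialdeg}) that $(\mathbb{P}(1,1,4),\tfrac{3}{4}D_0)$ carries a conical K\"ahler-Einstein metric of cone angle $\pi/2$, coming from the standard orbifold quotient, uniqueness of the conical K\"ahler-Einstein metric in the polystable class (Berndtsson/Berman) identifies the limit metric on $Z_\infty\cong\mathbb{P}(1,1,4)$ with the standard orbifold Fubini-Study-type K\"ahler-Einstein metric. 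Uniqueness of the full (not merely subsequential) limit then follows, since every subsequence produces the same rigid algebraic-metric object.

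The main obstacle will be step two: setting up a partial $C^0$-estimate and algebraicity of tangent cones/limits that is uniform as $\beta\to 1/4$, where the divisor $D$ and its tubular neighborhood are undergoing geometric collapse into the orbifold $\tfrac{1}{4}(1,1)$-singularity of $\mathbb{P}(1,1,4)$. In particular the cone angle is not bounded away from the degenerate value, and Donaldson's linear conical theory in Theorem \ref{DoIFT} does not provide uniform Schauder estimates across the endpoint. Once this technical ingredient is in place (which seems in reach via the $SO(3,\mathbb{R})$-reduction to a one-variable problem along a geodesic normal to $D$, or via the approach to convergence at singular cone angles developed for Donaldson's continuity method), the remainder of the argument is essentially rigidity.
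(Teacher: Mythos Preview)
The statement you are attempting to prove is \emph{Conjecture}~\ref{conj2} in the paper, not a theorem: the paper offers no proof. The only evidence the authors give is the heuristic that the special degeneration of $(\mathbb{P}^2,\tfrac{3}{4}D)$ to $(\mathbb{P}(1,1,4),\tfrac{3}{4}D_0)$ constructed in Section~\ref{proofconic} should govern the metric limit, together with the numerical confirmation recorded in Remark~\ref{numerical}. So there is nothing in the paper to compare your argument against; you are sketching an approach to an open problem.

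As a proof sketch, your outline has the right architecture (Gromov--Hausdorff precompactness, partial $C^0$-estimate to algebraize the limit, then a stability/uniqueness argument to pin it down), and you correctly identify the genuine obstacle: a conical partial $C^0$-estimate uniform as the cone angle reaches the critical value $\beta=1/4$. That ingredient was not available at the time of the paper and is precisely why the statement is left as a conjecture. Beyond this acknowledged gap, two further points deserve care. First, your claim that ``up to the $SO(3,\mathbb{R})$-symmetry this special degeneration is unique'' is asserted without justification; one really needs an argument that any $\mathbb{Q}$-Gorenstein smoothable, $SO(3,\mathbb{R})$-equivariant, log-K-polystable limit of $(\mathbb{P}^2,\tfrac{3}{4}D)$ is $(\mathbb{P}(1,1,4),\tfrac{3}{4}D_0)$, not merely that one such degeneration exists. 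Second, a minor slip: for a conic in $\mathbb{P}^2$ one has $\lambda=2/3$, so $r(\beta)=1-\tfrac{2}{3}(1-\beta)$, not $1-2(1-\beta)$; the limit value $r(1/4)=1/2$ you quote is nonetheless correct.
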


Actually, more generally, assume there is a special degeneration
$(\mX,\mY)$ of the pair $(X,Y)$, such that $(\mX_0,\mY_0)$ is a
conical K\"{a}hler-Einstein pair. Then we expect $(X,Y)$ converges
to $(\mX_0,\mY_0)$ in Gromov-Hausdorff sense along certain
continuity method (either by the classical continuity method by
increasing Ricci curvature (cf. \cite{BM87},\cite{Li11b}), or by
changing cone angles(cf. \cite{Do11}) , or even by
log-K\"{a}hler-Ricci flow(cf. in \cite{SuWa}). This philosophy is
certainly  well known to the expects in the field. In particular,
this is related to \cite{Ti10} and \cite{DoSu}.

\begin{rem}
 In \cite{HaPr}, $\Q$-Gorenstein smoothable degenerations of $\mathbb{P}^2$ are classified. They are given by partial smoothings of weighted projective planes $\mathbb{P}^2(a^2,b^2,c^2)$ where
$(a,b,c)$ satisfies the Markov equation: $a^2+b^2+c^2=3abc$. Different solutions are related by an operation called mutation: $(a,b,c)\rightarrow (a,b,3ab-c)$.
The first several solutions are $(1,1,1)$, $(1,1,2)$, $(1,5,2)$, $(1,5,13)$, $(29,5,2)$. The above construction gives first geometric realization of such degeneration corresponding to
the mutation $(1,1,1)\rightarrow (1,1,2)$. We expect there is similar geometric realization of every mutation.
\end{rem}

\subsection{Calabi-Yau cone metrics on three dimensional $A_2$ singularity}
Through a stimulating discussion with Dr. Hans-Joachim Hein, we
learned that Theorem \ref{conic} has the following application.
Recall it was discovered by Gauntlett-Martelli-Sparks-Yau
\cite{GMSY} that there may not exist Calabi-Yau cone metrics on
certain isolated quasi-homogeneous hypersurface singularities, with
the obvious Reeb vector field. In particular, there are two
constraints: Bishop obstruction and Lichnerowicz obstruction. As an
example, the case of three dimensional $A_{k-1}$ singularities was
studied. Recall a three dimensional $A_{k-1}$ singularity is the
hypersurface in $\C^4$ defined by the following equation
$$x_1^2+x_2^2+x_3^2+x_4^{k}=0. $$
There is a standard Reeb vector field $\xi_k$ which generates the $\C^*$ action with weights $(k, k, k, 2)$.  Let $L_k$ be the Sasaki link of the $A_{k-1}$ singularity. Then the existence of a Calabi-Yau cone metric with Reeb vector field $\xi_k$ is equivalent to the existence of a  Sasaki-Einstein metric on $L_k$. In \cite{GMSY}, using the Bishop obstruction, it was proved that $L_k$ admits no Sasaki-Einstein metric  for $k>20$, and using Lichnerowicz obstruction this bound was improved to $k>3$. For $k=2$ this is the well-known conifold singularity  and there is a homogeneous Sasaki-Einstein metric on the link $L_2$. For $k=3$ by Matsushima's theorem the possible Sasaki-Einstein metric on $L_3$ must be invariant under $SO(3;\R)$ action, and is of cohomogeneity one. The ordinary differential equation has been written down explicitly in \cite{GMSY}, and it is an open question in \cite{GMSY} whether $L_3$ admits a Sasaki-Einstein metric. \\

In the language of Sasaki geometry, the above examples $L_k$ are all
quasi-regular, meaning that the Reeb vector field $\xi_k$ generates
an  $S^1$ action on $L_k$, and the quotient $M_k$ is a polarized
orbifold $M_k$(in the sense of \cite{RoTh2}). The existence of a
Sasaki-Einstein metric on $L_k$ is equivalent to the existence of an
orbifold K\"aler-Einstein metric on $M_k$. In the above concrete
cases, the orbifold $M_k$ is the hypersurface in $\cP(k, k, k, 2)$
defined by the same equation $x_1^2+x_2^2+x_3^2+x_4^{k}=0$. Note
that $\cP(k,k,k,2)$ is not well-formed. When $k=2m+1$ is odd, then
\begin{eqnarray*}
\cP(2m+1,\!2m+1,\!2m+1,\!2)&\stackrel{\cong}{\longrightarrow}&
\cP(2m+1,\!2m+1,\!2m+1,\!2(2m+1))=\cP(1,1,1,2)\\
{[ x_1,x_2,x_3,x_4 ]}&\mapsto& { [ x_1,x_2,x_3,x_4^{2m+1} ]}.
\end{eqnarray*}
When $k=2m$ is even, then
\begin{eqnarray*}
\cP(2m,2m,2m,2)=\cP(m,m,m,1)&\stackrel{\cong}{\longrightarrow}&\cP(m,m,m,m)=\cP(1,1,1,1)\\
{[x_1,x_2,x_3,x_4]}&\mapsto&{[x_1,x_2,x_3,x_4^{m}]}.
\end{eqnarray*}
So $M_k$ is isomorphic to $\{z_1^2+z_2^2+z_3^2+z_4=0\}\cong\cP^2$
when $k$ is odd, and to
$\{z_1^2+z_2^2+z_3^2+z_4^2=0\}\cong\cP^1\times\cP^1$ when $k$ is
even.

Regarding the non-well-formed orbifold structure it is not hard to
see that when $k$ is odd we get $(\cP^2, (1-1/k)D)$ and when $k$ is
even we get $(\cP^1\times\cP^1, (1-2/k)\Delta)$.  Thus we see the
close relationship between the existence of Sasaki-Einstein metric
on $L_k$ and the existence of conical K\"ahler-Einstein metric on
$(\cP^2, (1-1/k)D)$. In particular we know there is no
Sasaki-Einstein metric on $L_k$ for $k>3$ by Example
\ref{deg2slope},. This is not surprising at all, since by
\cite{RoTh2} the Lichnerowicz obstruction could be interpreted as
slope stability for orbifolds. The new observation here is the case
$k=3$ follows from Theorem \ref{conic}. So we know the three
dimensional $A_2$ singularity admits a Calabi-Yau cone metric with
the standard Reeb vector field. This is Corollary \ref{CYcone}.

The corresponding Sasaki-Einstein metric on $L_k$ is invariant under
the $SO(3;\R)$ action. It would be interesting to find an explicit
solution by solving the ODE written in \cite{GMSY}.   In \cite{Conti}
cohomogeneity one Sasaki-Einstein five manifolds were classified,
but the above result suggests that the classification is incomplete.

\begin{rem}\label{numerical}
 In \cite{Li12b}, the first author used numerical method to solve the ODE in \cite{GMSY}. The numerical
 results confirm our theoretical result. Moreover, numerical results show that Conjecture \ref{conj2} is true. Also,
 by calculating the simplest examples, one finds there are indeed cases which were ignored in \cite{Conti}. For details,
 see \cite{Li12b}.
\end{rem}

\section{K\"ahler-Einstein metrics from branched cover}\label{brcover}

One of our motivation for this paper is to construct smooth
K\"{a}hler-Einstein metrics using branch covers(see \cite{AGP}, \cite{GK} for such kind of constructions). If $D\sim m D_1$
with $D_1$ being an integral divisor, we can construct branch cover of
$X$ with branch locus $D$.
\begin{eqnarray*}
B&\subset&Y\\
\downarrow&&\downarrow\pi \\
 D&\subset& X
\end{eqnarray*}
The canonical divisors of $X$ and $Y$ are related by Hurwitz formula:
\[
K_Y=\pi^*(K_X+\frac{m-1}{m}D)
\]
In our setting, since $D\sim -\lambda K_X$, we get
\begin{equation}\label{Hurwitz}
K_Y^{-1}=\left(1-\frac{m-1}{m}\lambda\right)\pi^*K_X^{-1}=r(1/m)\pi^*K_X^{-1}
\end{equation}
We have the following 3 cases to consider.
\begin{enumerate}
\item(Positive Ricci) $-(K_X+(1-1/m)D)$ is ample. This is equivalent
to $r(1/m)>0$.

Example: $X=\mathbb{P}^2$. Define $\deg Y=(K_Y^{-1})^2$.
\begin{itemize}
\item
$\deg D=2$, $m=2$, $\lambda=2/3$. $\deg Y=4$.
$Y=\mathbb{P}^1\times\mathbb{P}^1$.
\item
$\deg D=3$, $m=3$, $\lambda=1$. $\deg Y=3$. $Y$ is a cubic surface.
\item
$\deg D=4$, $m=2$, $\lambda=4/3$. $\deg Y=2$. $Y$ is
$Bl_{7}\mathbb{P}^2$.
\end{itemize}

\item(Ricci flat) $K_X+(1-1/m)D\sim 0$. This is equivalent to
$r(1/m)=0$.

Example: $X=\mathbb{P}^2$.
\begin{itemize}
\item $\deg D=4$, $m=4$, $\lambda=4/3$. $Y$ is a K3 surface in $\mathbb{P}^3$.
\item $\deg D=6$, $m=2$, $\lambda=2$. $Y$ is a K3 surface.
\end{itemize}
\item(Negative Ricci) $K_X+(1-1/m)D$ is ample.

Example: $X$ is $\mathbb{P}^2$ and $D$ is a general smooth, degree
$d$ curve such that $\lambda=d/3$. Choose $m|d$. Except for the
cases already listed above, $K_X+(1-1/m)D$ is ample.
\end{enumerate}
Assume we have already constructed an orbifold K\"{a}hler-Einstein
metric $\widehat{\omega}_{KE}$ on $(X,(1-1/m)D)$. Then
$\pi^*\widehat{\omega}_{KE}$ is a smooth K\"{a}hler-Einstein metric
on $Y$. Note that orbifold K\"{a}hler metric can be seen as a
special case of conical K\"{a}hler metric, i.e. when the cone angle is
equal to $2\pi/m$ for some $m\in\mathbb{Z}$. So existence of conical
K\"{a}hler-Einstein metrics with angle $2\pi/m$ will give rise to
smooth K\"{a}hler-Einstein metrics.
Using the existence theory for conical K\"ahler-Einstein metrics, we can construct a lot of
smooth K\"{a}hler-Einstein metrics on Fano manifolds using branch
covers. More precisely, using the notation of branch-covering above, we have
\begin{thm}\label{KEcover}
If there is conical K\"{a}hler-Einstein metric on $(X,(1-1/m)D)$, then
there is a smooth K\"{a}hler-Einstein metric on $Y$. In particular,
if $X$ admits a K\"{a}hler-Einstein metric and $\lambda\ge 1$, then
there exists smooth K\"{a}hler-Einstein metric on $Y$.
\end{thm}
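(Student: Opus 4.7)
The plan is to use the standard branched-cover resolution of a conical singularity: a cone angle of $2\pi/m$ becomes smooth under an $m$-fold cyclic cover. Locally near the branch locus $B\subset Y$, choose coordinates $(w_1,\dots,w_n)$ so that $\pi$ is given by $z_1=w_1^m$, $z_j=w_j$ for $j\ge 2$, with $D=\{z_1=0\}$. A direct computation shows that the model conical metric
\[
\frac{\sqrt{-1}\,dz_1\wedge d\bar{z}_1}{|z_1|^{2(1-1/m)}}+\sum_{j\ge 2}\sqrt{-1}\,dz_j\wedge d\bar{z}_j
\]
pulls back to $m^2\sqrt{-1}\,dw_1\wedge d\bar{w}_1+\sum_{j\ge 2}\sqrt{-1}\,dw_j\wedge d\bar{w}_j$, a smooth K\"ahler form. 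Since $\hat{\omega}_{KE}$ is quasi-isometric to this model near $D$, the pullback $\omega_Y:=\pi^*\hat{\omega}_{KE}$ is a bounded positive $(1,1)$-current on $Y$, quasi-isometric to a smooth K\"ahler metric across $B$.

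Next I will upgrade $\omega_Y$ to a smooth K\"ahler-Einstein metric. Away from $B$ the map $\pi$ is a local biholomorphism and the singular current $\{D\}$ in the conical K-E equation is irrelevant, so pulling back gives the smooth equation $\mathrm{Ric}(\omega_Y)=r(1/m)\omega_Y$ on $Y\setminus B$. Choose a smooth reference K\"ahler metric $\omega_0$ on $Y$ in the appropriate class (this exists using $-K_Y=r(1/m)\pi^*(-K_X)$ and finite-morphism positivity, whatever the sign of $r(1/m)$), and write $\omega_Y=\omega_0+\sqrt{-1}\partial\bar{\partial}\psi$; the quasi-isometric bound makes $\psi$ a bounded $\omega_0$-plurisubharmonic function. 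The K\"ahler-Einstein condition becomes the complex Monge-Amp\`ere equation
\[
(\omega_0+\sqrt{-1}\partial\bar{\partial}\psi)^n = c\, e^{h_0-r(1/m)\psi}\,\omega_0^n
\]
with smooth strictly positive right-hand side. It holds on the dense open set $Y\setminus B$, and since both sides are Radon measures assigning zero mass to the analytic subset $B$ (the left by Bedford-Taylor theory for bounded PSH functions, the right because it is smooth), it extends to a global identity of measures on $Y$. Ko{\l}odziej's $L^\infty$-theory then yields $\psi\in C^0(Y)$, and the standard elliptic bootstrap (Evans-Krylov $C^{2,\alpha}$ plus Schauder iteration) gives $\psi\in C^\infty(Y)$, so $\omega_Y$ is a smooth K\"ahler-Einstein metric.

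For the second statement, assume $X$ admits a smooth K\"ahler-Einstein metric and $\lambda\ge 1$. By Theorem \ref{main1}, $E(X,D)$ is a relatively open interval in $(0,1]$ containing $(0,1-\lambda^{-1}+\epsilon)$; by Corollary \ref{nohv} there are no holomorphic vector fields on $X$ tangent to $D$, so Donaldson's implicit function theorem (Theorem \ref{DoIFT}), applied as a perturbation from the smooth K\"ahler-Einstein endpoint, shows that $E(X,D)$ also contains an interval $(1-\delta,1)$. Connectedness then forces $E(X,D)=(0,1)$, and in particular $1/m\in E(X,D)$ for every integer $m$ with $D\sim m D_1$, so the first part of the theorem produces a smooth K\"ahler-Einstein metric on the corresponding cover $Y$.

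The main obstacle is the interior regularity across $B$ in the second paragraph: the bounded quasi-isometric bound on $\omega_Y$ is immediate from the explicit local model, but passing from a weak K\"ahler-Einstein current on $Y\setminus B$ to a smooth K\"ahler-Einstein metric on $Y$ depends on the pluripotential-theoretic fact that bounded plurisubharmonic functions place no mass on analytic subsets, which is what lets the Monge-Amp\`ere equation extend across $B$ before elliptic regularity takes over.
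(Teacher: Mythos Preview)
Your argument for the first part is in the right spirit but takes a different route from the paper, and the regularity step is where you should be more careful. The paper does not pass through pluripotential theory at all: it fixes an \emph{orbifold} reference metric on $(X,(1-1/m)D)$, so that the pullback $r(1/m)\pi^*\omega$ is already a smooth K\"ahler metric on $Y$, writes the pulled-back potential $\tilde\phi_{KE}=r(1/m)\pi^*\phi_{KE}$ as a bounded solution of the smooth Monge--Amp\`ere equation, and then invokes the polyhomogeneous asymptotic expansion of the conical KE metric from \cite[Prop.~4.3]{JMRL}. That expansion, read through the local model $z\mapsto z^m$, shows directly that $\tilde\phi_{KE}$ is $C^2$ across $B$, after which the usual elliptic bootstrap gives smoothness. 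Your approach uses only the quasi-isometric bound, which is weaker information, and then tries to recover smoothness from the equation. The quasi-isometric bound does give $C^{-1}\omega_0\le\omega_Y\le C\omega_0$, hence $\psi_{i\bar j}\in L^\infty$ and $\psi\in W^{2,p}$ for all $p$; but the step ``Evans--Krylov $C^{2,\alpha}$ plus Schauder'' is not the standard bootstrap here, since you only control the \emph{complex} Hessian, not the full real $C^{1,1}$ norm that Evans--Krylov usually assumes. A complex Evans--Krylov result for solutions with bounded mixed second derivatives does exist and closes the gap, but you should cite it explicitly rather than call it ``standard''. The paper's route via the JMRL expansion sidesteps this entirely.

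For the second statement, your use of Donaldson's implicit function theorem ``from the smooth K\"ahler--Einstein endpoint'' is exactly the delicate point the paper flags in Section~\ref{convke}: Theorem~\ref{DoIFT} is stated for $\beta_0\in(0,1)$, and perturbing from $\beta_0=1$ requires changing function spaces in a way Donaldson's linear theory does not uniformly control. You do not need it. A smooth K\"ahler--Einstein metric on $X$ makes $\mM_\omega=\mM_{\omega,0\cdot D}$ bounded below (indeed it is minimized there), and Corollary~\ref{betaproper} gives properness of $\mM_{\omega,(1-\beta)D}$ for small $\beta$; then the linearity in Proposition~\ref{interpolate} gives properness for every $\beta\in(0,1)$, hence existence of a conical KE on $(X,(1-1/m)D)$ by Theorem~\ref{JMRL}, and the first part finishes the job.
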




To begin the proof, we first observe the following
\begin{prop}\label{coverfunc}
Fix an orbifold K\"{a}hler metric $\omega$ on $(X,(1-1/m)D)$. The branch cover
$\pi$ induces a map from $\mathcal{PSH}(\omega)$ to
$\mathcal{PSH}(\pi^*\omega)$ by pulling back. The energy functionals
are compatible with this pull back.
\[
F^{Y}_{r(1/m)\pi^*\omega}(r(1/m)\pi^*\phi)=mF^{X}_{\omega,(1-1/m)D}(\phi),
\]
\[
\mathcal{M}^{Y}_{r(1/m)\pi^*\omega}(r(1/m)\pi^*\phi)=m\mathcal{M}^X_{\omega,(1-1/m)D}(\phi).
\]
Similar relation holds for the functionals $F^0_\omega(\phi)$, $I$
and $J$.
\end{prop}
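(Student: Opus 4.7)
My plan is a direct calculation exploiting the geometric compatibility between the orbifold structure of $(X,(1-1/m)D)$ and the ramification of the branch cover $\pi$. The key local observation is that in coordinates $z=t^m$ near $D$, the model conical form $|z|^{-2(1-1/m)}\,dz\wedge d\bar z$ pulls back to $m^2\,dt\wedge d\bar t$, so $\pi^*\omega$ is a smooth K\"ahler form on $Y$; by the Hurwitz relation \eqref{Hurwitz}, its rescaling $\omega_Y := r(1/m)\pi^*\omega$ represents $2\pi c_1(Y)$. The pullback map $\pi^*$ sends $\mathcal{PSH}(\omega)$ into $\mathcal{PSH}(\pi^*\omega)$ because it commutes with $\sddbar$ on functions and preserves positivity of $(1,1)$-currents.

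For $F^0_\omega$, $I_\omega$, $J_\omega$ the identities reduce to elementary scaling, using $\omega_{Y,\phi_Y} = r(1/m)\pi^*\omega_\phi$ (where $\phi_Y := r(1/m)\pi^*\phi$) together with the degree formula $\int_Y \pi^*\alpha = m\int_X\alpha$ for integrable top-degree forms $\alpha$; each integrand pulls back, while each volume factor contributes a power of $r(1/m)$.

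The central step for the log-Ding and log-Mabuchi identities is the exact identification of Ricci potentials $H_{\omega_Y} = \pi^* H_{\omega,(1-1/m)D}$. This rests on the Hurwitz identity at the level of currents,
\[
\pi^* Ric(\omega) \;=\; Ric(\pi^*\omega) \;+\; 2\pi(m-1)\{B\},
\]
where $B$ is the ramification divisor on $Y$ with $\pi^* D = mB$. I would prove it by introducing the smooth $(1,1)$-forms $s_X := Ric(\omega) - 2\pi(1-1/m)\{D\}$ and $s_Y := Ric(\pi^*\omega)$ and checking $\pi^* s_X = s_Y$ locally: in the model coordinates, the orbifold density $|z|^{-2(1-1/m)}$ appearing in $\omega^n/n!$ exactly cancels the Jacobian $m^2|t|^{2(m-1)}$ of $\pi$, so both quantities reduce to the same smooth form. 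Combining this with $\pi^*\{D\} = m\{B\}$ and the defining equation of $H_{\omega,(1-1/m)D}$ yields $\sddbar \pi^*H = Ric(\omega_Y) - \omega_Y = \sddbar H_{\omega_Y}$, and the constant of integration vanishes because $\int_Y e^{\pi^*H}\,\omega_Y^n/n! = r(1/m)^n m \int_X e^H \omega^n/n! = V(Y)$.

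With these ingredients the Ding and Mabuchi identities follow by substitution. The scaling $\phi_Y = r(1/m)\pi^*\phi$ is tuned precisely so that $e^{H_{\omega_Y}-\phi_Y} = \pi^*\bigl(e^{H_{\omega,(1-1/m)D}-r(1/m)\phi}\bigr)$, which makes the exponential weights in the Ding functional transform covariantly under the degree formula; the factor $r(1/m)^{-1}$ appearing in the denominator of $F^X_{\omega,(1-1/m)D}$ then absorbs the remaining class rescaling on the $Y$-side. For $\mathcal{M}$ I would either repeat the argument term-by-term on the expression in Definition \ref{deflogfunc}, or invoke Proposition \ref{energy functional property}(\ref{MFdif}) to reduce to the Ding identity. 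The main obstacle is establishing the Hurwitz identity at the current level rigorously; once this is in hand, the remainder is a bookkeeping exercise in tracking factors of $m$ (from the degree of the cover) and powers of $r(1/m)$ (from the K\"ahler class rescaling).
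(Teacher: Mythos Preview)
Your proposal is correct and follows essentially the same route as the paper: the core step in both is the identification $h_{\tilde\omega}=\pi^*H_{\omega,(1-1/m)D}$ of Ricci potentials, obtained by pulling back the defining equation for $H_{\omega,(1-1/m)D}$ and using that $\pi^*\omega^n$ is smooth (the orbifold poles cancel the Jacobian), after which the functional identities follow by the degree formula and scaling. Your presentation is somewhat more explicit (isolating the current-level Hurwitz identity and checking the normalization constant), but the argument is the same.
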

\begin{proof}
For any orbifold K\"{a}hler metric  $\omega\in 2\pi c_1(X)$, there
exists $H_{\omega,(1-1/m)D}$ such that
\begin{equation}\label{orbifoldH}
Ric(\omega)-r(1/m)\omega-(1-1/m)\{D\}=\sddbar H_{\omega,(1-1/m)D}.
\end{equation}
$\tilde{\omega}=r(1/m)\pi^*\omega$ is a smooth K\"{a}hler metric in
$c_1(Y)$ (see \eqref{Hurwitz}). Note that $\omega^n$ has poles along $D$, but  $\pi^*\omega^n$ is a smooth volume form. From \eqref{orbifoldH}, we get
\[
Ric(\tilde{\omega})-\tilde{\omega}=\sddbar
\pi^*H_{\omega,(1-1/m)D}.
\]
So $h_{\tilde{\omega}}:=H_{\tilde{\omega},0}=\pi^*H_{\omega,(1-1/m)D}$ and
$e^{h_{\tilde{\omega}}}\tilde{\omega}^n=\pi^*(e^{H_{\omega,(1-1/m)D}}\omega^n)$.
\[
\int_X
e^{H_{\omega,(1-\beta)D}-r(1/m)\phi}\omega^n/n!=\frac{1}{m}\int_Y
e^{h_{\tilde{\omega}}-\pi^*(r(1/m)\phi)}\tilde{\omega}^n/n!.
\]
So we get the identity for log-Ding-energy on $X$ and $Y$. Similarly, by the defining formula for the $F_\omega^0(\phi)$, $I$, $J$ functional in
Definition \ref{defF0IJ}, the relation stated in the proposition
holds.
\end{proof}
\begin{proof}[Proof of Theorem \ref{KEcover}]
We can choose the reference metric $\omega$ on $X$ to be orbifold
metric. Then the pull back $\tilde{\omega}=r(1/m)\pi^*\omega$ is a smooth
K\"{a}hler metric on $Y$. If $\omega_{KE}=\omega+\sddbar\phi_{KE}$
is the conical K\"{a}hler-Einstein metric on $(X,(1-1/m)D,c_1(X))$,
then $\tilde{\phi}_{KE}=r(1/m)\pi^*\phi_{KE}$ is the bounded solution of
the following Monge-Amp\`{e}re equation on $Y$.
\[
(\tilde{\omega}+\sddbar\tilde{\phi})^n=e^{h_{\tilde{\omega}}-\tilde{\phi}}\tilde{\omega}^n.
\]
Because locally, the covering map is given by $z\rightarrow z^m$, by
asymptotic expansion obtained in \cite[Proposition 4.3]{JMRL}, one
verifies $\tilde{\phi}_{KE}$ is $C^2$. So by elliptic theory,
$\tilde{\phi}_{KE}$ is indeed a smooth solution of
K\"{a}hler-Einstein equation on $Y$.
\end{proof}

\section{Convergence of conical KE to smooth KE}\label{convke}
In this section, we prove the convergence statement in Corollary
\ref{maincor} and related discussions following it. So we assume there exists smooth K\"{a}hler-Einstein
metric on $X$. When $Aut(X)$ is discrete, then $\omega_{KE}$ is
invariant under $Aut(X)$. In this case, the Mabuchi energy is proper
on $\hat{\mathcal{H}}(\omega)$.
\begin{thm}\label{autdiscrete}
Assume $\omega_\beta=\omega+\sddbar\phi_\beta$, and
$\omega_{KE}=\omega+\sddbar\phi_{KE}$ then, $\phi_\beta$ converges
to $\phi_{KE}$ in $C^0$-norm. Moreover, $\phi_\beta$ converges
smoothly on any compact set away from $D$.
\end{thm}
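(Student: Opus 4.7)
The plan is to establish $C^0$ convergence via (i) a uniform $L^\infty$ bound on $\{\phi_\beta\}$ as $\beta \to 1$, (ii) extraction of subsequential limits identified with $\phi_{KE}$ by uniqueness, and (iii) upgrading to smooth convergence on compact subsets of $X\setminus D$ by standard elliptic regularity.

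For step (i), I would exploit that $\phi_\beta$ minimizes $\mM_{\omega,(1-\beta)D}$ by Corollary \ref{mabuchimin}, combined with the exact linearity in $\beta$ recorded in \eqref{linearmabuchi}. Concretely, write $\mM_{\omega,(1-\beta)D}(\phi) = \mM_\omega(\phi) - (1-\beta) G(\phi)$ for a $\beta$-independent correction $G$ (involving $F^0_\omega$ and $\int\log|s_D|^2(\omega^n-\omega_\phi^n)$) satisfying $|G(\phi)| \le C(1+\|\phi\|_\infty)$. Testing $\mM_{\omega,(1-\beta)D}(\phi_\beta) \le \mM_{\omega,(1-\beta)D}(\phi_{KE})$ gives
\begin{equation*}
\mM_\omega(\phi_\beta) \le C_0 + C_1(1-\beta)\|\phi_\beta\|_\infty.
\end{equation*}
Since $X$ admits a smooth KE metric with $Aut(X)$ discrete, Tian's theorem \cite{Ti97} (with the refinement of \cite{PSSW}) yields linear properness $\mM_\omega(\phi) \ge \epsilon J_\omega(\phi) - C$, while Proposition \ref{c0byint} applied along Aubin's continuity path producing $\phi_\beta$ (with the normalization $\sup\phi_\beta = 0$) converts the $\|\phi_\beta\|_\infty$ control into control by $J_\omega(\phi_\beta)$. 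For $\beta$ sufficiently close to $1$ the resulting inequality absorbs the $\beta$-dependent term on the right, forcing $J_\omega(\phi_\beta) \le C$ and hence $\|\phi_\beta\|_\infty \le C$ uniformly.

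For steps (ii) and (iii), pluripotential compactness gives that any sequence $\beta_k \to 1$ has a subsequence with $\phi_{\beta_k}$ converging in $L^1$ to some $\phi_\infty \in PSH(\omega)\cap L^\infty$. The Monge-Amp\`ere equations
\begin{equation*}
\omega_{\phi_{\beta_k}}^n = e^{h_\omega - r(\beta_k)\phi_{\beta_k}}\,|s_D|^{-2(1-\beta_k)}\,\omega^n
\end{equation*}
have right-hand sides converging in $L^p$ (any $p \ge 1$) to $e^{h_\omega - \phi_\infty}\omega^n$, since $|s_D|^{-2(1-\beta_k)} \to 1$ in $L^p$ as $\beta_k \to 1$. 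Kolodziej's stability theorem then upgrades this to $C^0$ convergence $\phi_{\beta_k} \to \phi_\infty$, and elliptic regularity identifies $\phi_\infty$ as a smooth KE potential. Bando-Mabuchi uniqueness, available since $Aut(X)$ is discrete, forces $\phi_\infty = \phi_{KE}$; as the limit is independent of the subsequence, the full family converges in $C^0$. On any compact $K \Subset X\setminus D$ the geometric data including $|s_D|^{-2(1-\beta)}$ are smooth and uniformly bounded, so Yau's $C^2$-estimate, Evans-Krylov $C^{2,\alpha}$-estimate, and Schauder bootstrap yield smooth convergence on $K$.

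The main technical obstacle is the uniform $L^\infty$ estimate of step (i): it requires balancing the vanishing rate $(1-\beta)$ against the growth rate of properness for $\mM_\omega$. Merely monotone properness is insufficient to absorb the $\|\phi_\beta\|_\infty$ term, so \emph{linear} properness is indispensable; it is precisely this balance that fails when $Aut(X)$ is not discrete, motivating the bifurcation analysis alluded to in Section \ref{convke}.
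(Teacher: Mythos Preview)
Your argument is correct, but the route you take for the uniform $C^0$ bound differs from the paper's in an instructive way. You split $\mM_{\omega,(1-\beta)D}=\mM_\omega+(1-\beta)\mathscr{F}_{\omega,D}$, bound $|\mathscr{F}_{\omega,D}(\phi)|\le C(1+\|\phi\|_\infty)$ (which does hold after one integration by parts turning $\int\log|s_D|^2(\omega_\phi^n-\omega^n)$ into integrals of $\phi$ against bounded measures and $\{D\}$), and then absorb the small $(1-\beta)$ correction into Tian's linear properness of $\mM_\omega$. This works, but it only gives the bound for $\beta$ close to $1$ and makes the linear (as opposed to merely coercive) properness of $\mM_\omega$ essential, as you note. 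The paper instead exploits the interpolation result (Proposition~\ref{interpolate}) directly: since $\mM_{\omega,(1-\beta)D}$ is linear in $\beta$ and is linearly proper both for small $\beta$ (alpha-invariant, Corollary~\ref{betaproper}) and for $\beta=1$ (Tian), it is \emph{uniformly} linearly proper on all of $(0,1]$ with $\beta$-independent constants. Testing at the reference metric then gives $\mM_{\omega,(1-\beta)D}(\omega_\beta)\le 0$, hence $I_\omega(\omega_\beta)\le C$ and $\|\phi_\beta\|_\infty\le C$ for every $\beta$, with no absorption needed. Your approach is a bit more self-contained (it avoids the small-angle alpha-invariant input), while the paper's is cleaner and yields a global bound. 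For the interior smooth convergence the paper uses Chern--Lu's inequality with a conical reference metric of uniformly bounded bisectional curvature (Appendix of \cite{JMRL}) rather than Yau's second-order estimate; on compacta away from $D$ either is fine, but Chern--Lu is the more natural choice here since one has a uniform Ricci lower bound on $\omega_{\phi_\beta}$ rather than on the reference.
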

\begin{proof}
By Theorem \ref{interpolate}, the log-Mabuchi-energy
$\mathcal{M}_{\omega,(1-\beta)D}$ is proper on $\hat{\mathcal{H}}$
for $\beta\in (0,1]$. Furthermore, there exists a constant $C$
independent of $\beta$ such that
\begin{equation}\label{uniformproper}
\mathcal{M}_{\omega,(1-\beta)D}(\omega_\phi)\ge
C_1I_\omega(\omega_\phi)-C_2.
\end{equation}
When $\omega_\phi=\omega_\beta$ is the conical K\"{a}hler-Einstein
metric on $(X,(1-\beta)D)$ then
\[
\mathcal{M}_{\omega,(1-\beta)D}(\omega_\beta)\le
\mathcal{M}_{\omega,(1-\beta)D}(\omega)=0.
\]
So from \eqref{uniformproper} , we see that there exists a constant
$C$ independent of $\beta$ such that
\[
I_\omega(\omega_\beta)\le C.
\]
Assume $\omega_\beta=\omega+\sddbar\phi_\beta$. By Proposition
\ref{c0byint}(\cite{JMRL}), there exists a constant $C$ independent
of $\beta$ such that
\[
Vol(X)\cdot Osc(\phi_\beta)\le
I_\omega(\omega_\beta)+C=\int_X\phi(\omega^n-\omega_\beta^n)/n!+C.
\]
So $\|\phi_\beta\|_{C^0}$ is uniformly bounded. Now the theorem
follows from standard pluripotential theory. For the last statement,
we use the same argument as in the in the proof of existence result
in section \ref{existence}. First we use Chern-Lu's inequality:
\[
\Delta_{\beta}(\log tr_{\omega_\beta}\omega-C\phi_\beta)\ge
(C_1-\lambda n)+(\lambda-C_2 tr_{\omega_\beta}\omega).
\]
to get $C^2$-estimate on $\phi_\beta$. Note that it's easy to verify
from the calculation in the Appendix of \cite{JMRL} that we can
choose the upper bound on bisectional curvature to be independent of
$\beta$ at least when $\beta\ge \delta>0$. Then we can use Krylov-Evan's estimate to get uniform
higher order estimate on any compact set away from $D$. The smooth
convergence follows from these uniform estimates.
\end{proof}

When $Aut(X)$ is continuous, then $Aut(X)$ is the complexification
of $G:=Isom(X,\omega_{KE})$. By \cite{BM87}, the moduli space of
K\"{a}hler-Einstein metrics (denoted by $\mathscr{M}_{KE}$) is
isomorphic to the symmetric space $G^{\mathbb{C}}/G $. So
\[
T_{\omega_{KE}}\mathscr{M}_{KE}=\mathfrak{g}=Lie G.
\]
Recall that
\[
\mathfrak{g}=(\Lambda_1^{\mathbb{R}})_0=\{\theta\in C^{\infty}(X); (\Delta_{KE}+1)\theta=0, \int_X\theta\omega_{KE}^n/n!=0\}.
\]
Now we want to identify the limit $\omega_{KE}^D$ as $\beta\rightarrow 1$. $\omega_{KE}^D$
is the critical point of the following functional, which is part of
log-Mabuchi-functional.
\begin{lem}
Define the functional
\[
\mathscr{F}_{\omega,D}(\omega_\phi)=\lambda(I-J)_{\omega}(\omega_\phi)+\int_X\log|s|_h^2(\omega_\phi^n-\omega^n)/n!
\]
where $\lambda\omega=-\sddbar \log|\cdot|_h^2$. Then $\mathscr{F}$
satisfies the following properties:
\begin{enumerate}
\item
\begin{equation}\label{partmabuchi}
\mathcal{M}_{\omega,(1-\beta)D}(\omega_\phi)=\mathcal{M}_{\omega}(\omega_\phi)+(1-\beta)\mathscr{F}_{\omega,D}(\omega_\phi).
\end{equation}
\item
$\mathscr{F}_{\omega}$ satisfies the cocycle condition. More
precisely, for $\phi, \psi\in\mathcal{PSH}_\infty(\omega)$, we have
\begin{eqnarray*}
&&\mathscr{F}_{\omega,D}(\omega_\phi)-\mathscr{F}_{\omega_\psi,D}(\omega_\phi)=\mathscr{F}_{\omega,D}(\omega_\psi),\\
&&\mathscr{F}_{\omega}(\omega_\psi)=-\mathscr{F}_{\omega_\psi}(\omega).
\end{eqnarray*}
\item
$\mathscr{F}$ is convex along geodesics of K\"{a}hler metrics. .
\end{enumerate}
\end{lem}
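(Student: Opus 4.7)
Parts (1) and (2) are essentially formal, while part (3) is where the substance lies. For (1) the plan is simply to unwrap the explicit expression \eqref{linearmabuchi}: substitute $r(\beta)=1-\lambda(1-\beta)$ and use the easily verified identity $\int_X\phi\,\omega_\phi^n/n!+F_\omega^0(\phi)=-(I-J)_\omega(\omega_\phi)$, after which every $\beta$-dependent term in $\mathcal{M}_{\omega,(1-\beta)D}(\omega_\phi)-\mathcal{M}_\omega(\omega_\phi)$ carries the common factor $(1-\beta)$ and the bracketed remainder collapses to exactly $\lambda(I-J)_\omega(\omega_\phi)+\int_X\log|s|_h^2(\omega_\phi^n-\omega^n)/n!=\mathscr{F}_{\omega,D}(\omega_\phi)$. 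Part (2) is then a direct consequence of (1): Proposition \ref{energy functional property}\eqref{cocycle} gives the cocycle identity both for $\mathcal{M}_\omega$ and for $\mathcal{M}_{\omega,(1-\beta)D}$, and subtracting and dividing by $(1-\beta)$ transfers the cocycle identity to $\mathscr{F}_{\omega,D}$. The antisymmetry follows by specializing the cocycle to the chain $\omega\to\omega_\psi\to\omega$ and observing $\mathscr{F}_{\omega,D}(\omega)=0$, which is immediate from the definition.

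For part (3) the plan is to recast $\mathscr{F}_{\omega,D}$ in a form where convexity along geodesics is manifest. Using the Poincar\'e--Lelong identity $\sddbar\log|s|_h^2=-\lambda\omega+2\pi\{D\}$ and integration by parts --- the very computation used earlier in Section 2.2 to express $F^0_{\omega,2\pi D}$ in terms of $\mathcal{J}^\omega_\omega$ --- together with the relation $nF^0_\omega-\mathcal{J}^\omega_\omega=(I-J)_\omega$, one arrives at the clean identity
\[
\mathscr{F}_{\omega,D}(\omega_\phi)=\lambda n\,F^0_\omega(\phi)-2\pi\,F^0_{\omega,D}(\phi).
\]
Convexity now reduces to two well-known facts. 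First, $F^0_\omega$ is affine along geodesics (Mabuchi--Semmes--Donaldson for smooth geodesics, extended to bounded ones by Bedford--Taylor/Berndtsson theory). Second, $F^0_{\omega,D}$ is concave along geodesics of $X$: differentiating twice and integrating by parts on the closed manifold $D$, together with the fact that $\sddbar f|_D=\sddbar_D(f|_D)$ for functions, gives
\[
\tfrac{d^2}{dt^2}F^0_{\omega,D}(\phi_t)=-\int_D\bigl(\ddot\phi-|\bar\partial_D\dot\phi|^2_{\omega_{\phi_t}|_D}\bigr)\frac{\omega_{\phi_t}^{n-1}}{(n-1)!},
\]
and the geodesic equation on $X$ combined with the Schur-complement inequality $|\bar\partial_D\dot\phi|^2_{\omega_{\phi_t}|_D}\leq|\bar\partial\dot\phi|^2_{\omega_{\phi_t}}=\ddot\phi$ makes the integrand non-negative. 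Hence $-F^0_{\omega,D}$ is convex and $\mathscr{F}_{\omega,D}=(\textrm{affine})+(\textrm{convex})$ is convex.

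The main technical obstacle is to justify the second-variation computation for the bounded weak geodesics produced by Theorem \ref{weakgeod} rather than for smooth $C^{1,1}$ geodesics. For $F^0_\omega$ this is already absorbed in standard pluripotential theory. For $F^0_{\omega,D}$ I would use the tautological identification $F^0_{\omega,D}(\phi)=F^0_{\omega|_D}(\phi|_D)$, together with the fact that the restriction of a bounded $\omega$-psh function to the smooth divisor $D$ is a bounded $\omega|_D$-psh function, so the restriction $\Phi|_{D\times\Delta}$ of the weak geodesic $\Phi$ on $X\times\Delta$ is a bounded $\omega|_D$-subgeodesic; then I would appeal to the Bedford--Taylor-level statement that $F^0_{\omega|_D}$ is concave along bounded $\omega|_D$-subgeodesics. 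Alternatively, one can approximate $\Phi$ by smooth $\epsilon$-subgeodesics on $X\times\Delta$, apply the classical computation above, and pass to the limit $\epsilon\to 0$ using the weak continuity of the Monge--Amp\`ere masses involved.
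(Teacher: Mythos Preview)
Your proof is correct and follows essentially the same route as the paper. Parts (1) and (2) match the paper's argument verbatim (unwrap \eqref{linearmabuchi}, then subtract the cocycle identities for $\mathcal{M}_{\omega,(1-\beta)D}$ and $\mathcal{M}_\omega$), and for part (3) the paper also reduces to the identity $\mathscr{F}_{\omega,D}=n\lambda\,F^0_\omega-F^0_{\omega,2\pi D}$, uses that $F^0_\omega$ is affine along geodesics, and computes the second derivative of $-F^0_{\omega,2\pi D}$ as $\int_{2\pi D}|(\nabla\dot\phi)^\perp|^2\,\omega_\phi^{n-1}/(n-1)!$, which is exactly your Schur-complement expression $\ddot\phi-|\bar\partial_D\dot\phi|^2$ rewritten via the geodesic equation. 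Your additional discussion of bounded weak geodesics is not needed here: in the paper's application this lemma is only invoked along smooth geodesics in $\mathscr{M}_{KE}$ generated by holomorphic vector fields.
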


\begin{proof}
The first item follows from the expression for log-Mabuchi energy
in, for example, formula \eqref{linearmabuchi}. The second statement
follows from the cocycle properties of
$\mathcal{M}_{\omega,(1-\beta)D}$ and $\mathcal{M}_\omega$. It can
also be verified by direct calculations.
For the last statement, it is well known that $\mathscr{M}$ is a
totally geodesic submanifold of the space of smooth K\"{a}hler
metrics in $c_1(X)$ and $(I-J)_\omega(\omega_\phi)$ is convex on the
space of smooth K\"{a}hler metrics. Assume $\phi(t)$ is a geodesic,
i.e. $\ddot{\phi}-|\nabla\dot{\phi}|_{\omega_\phi}^2=0$.
\begin{eqnarray*}
\frac{d}{dt}(I-J)_\omega(\omega_\phi)&=&-\int_X\phi\Delta_{\omega_\phi}\dot{\phi}\omega_\phi^n/n!
=-\int_X\dot{\phi}(\omega_\phi-\omega)\wedge\omega_\phi^{n-1}/(n-1)!\\
&=&n\frac{d}{dt}F_\omega^0(\phi)+\int_X\dot{\phi}\omega\wedge\omega_\phi^{n-1}/(n-1)!.
\end{eqnarray*}
\begin{eqnarray*}
\frac{d}{dt}\int_X\log|s|_h^2(\omega_\phi^n-\omega^n)/n!&=&\int_X\log|s|_h^2\Delta_{\omega_\phi}\dot{\phi}\omega_\phi^n/n!=\int_X(-\lambda\omega+2\pi\{D\})\dot{\phi}\omega_\phi^{n-1}/(n-1)!.
\end{eqnarray*}
So combining the above two identities, we get
\[
\frac{d}{dt}\mathscr{F}_{\omega,D}(\omega_\phi)=\frac{d}{dt}(n\lambda
F_\omega^0(\phi)-F_{\omega,2\pi D}^0(\phi)).
\]
This is certainly true by the way how we integrate the log-Futaki
invariant to get the log Mabuchi energy. Now since
$F_\omega^0(\phi)$ is affine along geodesics of K\"{a}hler metrics.
\begin{eqnarray}\label{2nder}
\frac{d^2}{dt^2}\mathscr{F}_{\omega,2\pi
D}(\omega_\phi)&=&-\frac{d^2}{dt^2}F_{\omega,D}^0(\phi)=\frac{\int_{2\pi
D}(\omega+\partial\bar{\partial}\Phi)^{n}/n!}{dt\wedge
d\bar{t}}\nonumber\\
&=&\int_{2\pi D}\ddot{\phi}\omega_{KE}^{n-1}/(n-1)!-\int_{2\pi D}\partial\dot{\phi}\wedge\bar{\partial}\dot{\phi}\wedge\omega_{KE}^{n-2}/(n-2)!\nonumber\\
&=&\int_{2\pi D}(|\nabla\dot{\phi}|_{\omega_{KE}}^2-|\nabla^D\dot{\phi}|^2_{\omega_{KE}|_D})\omega_{KE}^{n-1}/(n-1)!\nonumber\\
&=&\int_{2\pi
D}|(\nabla\dot{\phi})^{\perp}|_{\omega_{KE}}^2\omega_{KE}^{n-1}/(n-1)!\ge
0.
\end{eqnarray}
\end{proof}
\begin{lem}
We have the following different formulas for the Hessian of
$\mathscr{F}_{\omega,D}$ on $\mathscr{M}_{KE}$.
\begin{eqnarray}
Hess\mathscr{F}(\theta,\theta)&=&\int_{2\pi D}|\nabla\theta^{\perp}|^2\omega_{KE}^{n-1}/(n-1)!\label{hess1}\\
&=&\lambda\int_X\theta^2\omega_{KE}^n/n!+\int_X(-\theta^2+\theta^i\theta_i)(\lambda\phi-\log|s|_h^2)\omega_{KE}^n/n!\nonumber\\
&=&\lambda\int_X\theta^2\omega_{KE}^n/n!+\int_X(\theta^2-\theta^i\theta_i)(\log|s|_{he^{-\lambda\phi}}^2)\omega_{KE}^n/n!.\label{hess2}
\end{eqnarray}
\end{lem}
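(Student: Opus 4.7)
The first formula (\ref{hess1}) will follow directly from the computation (\ref{2nder}) in the preceding Lemma: I take a geodesic $\phi(t)$ in the totally geodesic submanifold $\mathscr{M}_{KE}$ with $\phi(0)=0$ and $\dot\phi(0)=\theta\in\mathfrak{g}$, and read off the value of $d^2\mathscr{F}_{\omega,D}/dt^2$ at $t=0$, substituting $\dot\phi = \theta$ into the boundary-integral expression already derived.

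For the bulk formulas in (\ref{hess2}), the plan is to transfer the integration over $D$ to an integration over $X$ via Poincar\'e-Lelong. I equip $\mathcal{O}(D)\cong -\lambda K_X$ with the Hermitian metric $h' = he^{-\lambda\phi}$, where $\phi$ is the potential of $\omega_{KE}$ relative to $\omega$; its Chern curvature is exactly $\lambda\omega_{KE}$, so the current identity $2\pi\{D\} = \lambda\omega_{KE} + \sddbar\log|s|^2_{h'}$ gives, for any smooth $(n-1,n-1)$-form $\Omega$ on $X$,
\[
\int_{2\pi D}\Omega = \lambda\int_X\Omega\wedge\omega_{KE} + \int_X\log|s|^2_{h'}\,\sddbar\Omega,
\]
the last integral being well defined by integration by parts since $\log|s|^2_{h'}\in L^1$. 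I apply this to the two pieces of the integrand in (\ref{hess1}), namely $\Omega_1 = |\nabla\theta|^2\omega_{KE}^{n-1}/(n-1)!$ and $\Omega_2 = \sqrt{-1}\partial\theta\wedge\bar\partial\theta\wedge\omega_{KE}^{n-2}/(n-2)!$, using that $\Omega_2|_D = |\nabla^D\theta|^2(\omega_{KE}|_D)^{n-1}/(n-1)!$ as already observed in (\ref{2nder}). The two ``ambient'' terms from the two applications of the displayed formula combine to a multiple of $\int_X|\nabla\theta|^2\omega_{KE}^n/n!$; an integration by parts using $\Delta_{KE}\theta = -\theta$ then converts this cleanly into the desired $\lambda\int_X\theta^2\omega_{KE}^n/n!$.

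What remains is the integral $\int_X\log|s|^2_{h'}(\sddbar\Omega_1 - \sddbar\Omega_2)$. A direct computation gives $\sddbar\Omega_1 = \Delta|\nabla\theta|^2\cdot\omega_{KE}^n/n!$, while the identity $\sqrt{-1}\partial\theta\wedge\bar\partial\theta = \tfrac12\sddbar(\theta^2) - \theta\sddbar\theta$, combined with the standard $(\sddbar\theta)^2\wedge\omega_{KE}^{n-2}/(n-2)! = ((\Delta\theta)^2 - |\sddbar\theta|^2)\omega_{KE}^n/n!$, yields $\sddbar\Omega_2 = -(\theta^2 - |\sddbar\theta|^2)\omega_{KE}^n/n!$. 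The decisive input is the Bochner formula on the K\"ahler-Einstein manifold $(X,\omega_{KE})$ with $Ric = \omega_{KE}$, specialised to a holomorphic potential $\theta$: since $\nabla^{1,0}\theta$ is a holomorphic vector field one has $\theta_{,\bar i\bar j} = 0$, and the Bochner computation collapses to
\[
\Delta|\nabla\theta|^2 = |\sddbar\theta|^2 - \theta^i\theta_i.
\]
Substituting this cancels the $|\sddbar\theta|^2$ contributions and leaves the integrand $(\theta^2 - \theta^i\theta_i)\log|s|^2_{h'}$. Writing $-\log|s|^2_{h'} = \lambda\phi - \log|s|^2_h$ produces the second formula of (\ref{hess2}); the third is then the trivial rewriting $\log|s|^2_{he^{-\lambda\phi}} = -(\lambda\phi - \log|s|^2_h)$.

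The principal technical obstacle is the precise form of the Bochner identity with the convention $\Delta\theta = -\theta$ and the vanishing $\theta_{,\bar i\bar j}=0$; once that is carried out carefully, the remaining combinatorics of the Poincar\'e-Lelong reduction and the two integrations by parts are routine.
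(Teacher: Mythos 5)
Your argument is correct, but for the two bulk formulas it follows a genuinely different route from the paper. The paper obtains \eqref{hess2} by a second direct variational computation: it runs the flow $\sigma_t$ of the holomorphic field $\nabla\theta$, uses the geodesic equation to compute $\frac{d^2}{dt^2}\big|_{t=0}\omega_t^n=(\theta^2-\theta^i\theta_i)\,\omega_{KE}^n$, and then differentiates $\mathscr{F}_{\omega,D}=\lambda(I-J)_\omega+\int_X\log|s|_h^2(\omega_\phi^n-\omega^n)/n!$ twice along this path, so \eqref{hess2} is derived independently of \eqref{hess1}. You instead take \eqref{hess1} as the starting point and convert the divisor integral into bulk integrals via Poincar\'e--Lelong for the metric $h'=he^{-\lambda\phi}$ with curvature $\lambda\omega_{KE}$, i.e. $2\pi\{D\}=\lambda\omega_{KE}+\sddbar\log|s|^2_{h'}$, applied to $\Omega_1=|\nabla\theta|^2\omega_{KE}^{n-1}/(n-1)!$ and $\Omega_2=\sqrt{-1}\partial\theta\wedge\bar\partial\theta\wedge\omega_{KE}^{n-2}/(n-2)!$; your bookkeeping (the ambient terms giving $\lambda\int_X|\nabla\theta|^2\omega_{KE}^n/n!=\lambda\int_X\theta^2\omega_{KE}^n/n!$, and $\sddbar\Omega_2=-((\Delta\theta)^2-|\sddbar\theta|^2)\omega_{KE}^n/n!$) is accurate, and the Bochner identity you isolate, $\Delta|\nabla\theta|^2=|\sddbar\theta|^2-\theta^i\theta_i$ for $\theta\in\Lambda_1$ on a K\"ahler--Einstein manifold with $Ric=\omega_{KE}$, is exactly right — in fact it is the same identity that is hidden in the paper's computation of $\frac{d^2}{dt^2}\omega_t^n$, so the two proofs rest on the same analytic core. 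What your route buys is transparency: the equality of the boundary expression \eqref{hess1} and the bulk expression \eqref{hess2} is exposed as nothing but Poincar\'e--Lelong plus Bochner, with no need to re-derive second variations of $(I-J)_\omega$ and of the $\log|s|_h^2$ term; the small price is that you must justify the integration by parts against the $L^1$ quasi-psh function $\log|s|^2_{h'}$ (routine, since this is just the definition of $\sddbar$ of that current tested against smooth forms) and carry out the Bochner computation explicitly, whereas the paper's route produces \eqref{hess2} directly from the variational structure without passing through the divisor integral at all.
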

\begin{proof}
The first identity follows from \eqref{2nder} because $\theta=\frac{\partial\phi}{\partial t}|_{t=0}$.
Let's prove the 2nd identity. For $\theta\in\Lambda_1=Ker(\Delta_{KE}+1)$,
$\nabla\theta$ is a holomorphic vector field generating a one
parameter subgroup $\sigma_t$ in $Aut(X)$. Let
$\sigma_t^*\omega_{KE}=\omega_{KE}+\sddbar\phi_t$. Then $\phi_t$
satisfies the geodesic equation:
$\ddot{\phi}-|\nabla\dot{\phi}|_{\omega_\phi}^2=0$ with initial
velocity $\frac{d}{dt}\phi|_{t=0}=\theta$.
\begin{eqnarray*}
\left.\frac{d^2}{dt^2}\right|_{t=0}\omega_t^n&=&\frac{d}{dt}(\Delta\dot{\phi}\omega_t^n)=(\dot{\Delta}\dot{\phi}+\Delta\ddot{\phi}+(\Delta\dot{\phi})^2)\omega_{KE}^n\\
&=&(-\theta^{i\bar{j}}\theta_{i\bar{j}}+(\theta^i\theta_i)_j^{\phantom{j}j}+\theta^2)\omega_{KE}^n\\
&=&(-\theta^{i\bar{j}}\theta_{i\bar{j}}+\theta^i_{\phantom{i}j}\theta_i^{\phantom{i}j}+\theta^{i\phantom{j}j}_{\phantom{i}j}\theta_i+\theta^2)\omega_{KE}^n/n!\\
&=&(\theta^2-\theta^i\theta_i)\omega_{KE}^n
\end{eqnarray*}
Note that in the last identity, the relation $\Delta\theta=\theta_i^{\phantom{j}i}=-\theta$ was used. So we get
\begin{eqnarray*}
Hess\mathscr{F}(\theta,\theta)&=&\frac{d^2}{dt^2}\left(\lambda(I-J)_\omega(\omega_t)+\int_X\log|s|_h^2(\omega_t^n-\omega^n)/n!\right)\\
&=&-\lambda\int_X\dot{\phi}\Delta\dot{\phi}\omega_{KE}^n/n!+\int_X(-\lambda\phi+\log|s|_h^2)\frac{d^2}{dt^2}\omega_t^n/n!\\
&=&\lambda\int_X\theta^2\omega_{KE}^n/n!+\int_X(\log|s|_{he^{-\lambda\phi}}^2)(\theta^2-\theta^i\theta_i)\omega_{KE}^n/n!.
\end{eqnarray*}
\end{proof}
\begin{lem}\label{nondeg}
If there is no holomorphic vector field on $X$ which is tangent to
$D$, i.e. $Aut(X,D)$ is discrete, then $Hess\mathscr{F}$ is
non-degenerate at any point $\omega_{KE}\in\mathscr{M}_{KE}$. In
particular, this holds when $\lambda\ge 1$.
\end{lem}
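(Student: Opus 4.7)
The plan is to combine the boundary-integral formula \eqref{hess1} with an elementary analytic-continuation argument on the divisor $D$. Formula \eqref{hess1} immediately shows that $\mathrm{Hess}\,\mathscr{F}$ is positive semi-definite on $T_{\omega_{KE}}\mathscr{M}_{KE}\cong\mathfrak{g}=(\Lambda_1^{\mathbb{R}})_0$, because the integrand $|\nabla\theta^{\perp}|^2_{\omega_{KE}}$ is pointwise non-negative. So non-degeneracy reduces to showing that $\mathrm{Hess}\,\mathscr{F}(\theta,\theta)=0$ forces $\theta=0$.

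Suppose therefore that $\theta\in(\Lambda_1^{\mathbb{R}})_0$ satisfies $\mathrm{Hess}\,\mathscr{F}(\theta,\theta)=0$. Since $\theta\in\ker(\Delta_{KE}+1)$, the gradient $v=\nabla^{1,0}\theta$ is a (real) holomorphic vector field on $X$. The vanishing of the boundary integral forces $v^{\perp}\equiv 0$ along $D$, i.e.\ at every point $p\in D$ the vector $v(p)$ lies in $T_pD\subset T_pX$. But then the holomorphic flow generated by $v$ preserves $D$ pointwise-to-infinitesimally, so the time-$t$ map $\sigma_t\in\mathrm{Aut}(X)$ satisfies $\sigma_t(D)\subset D$ for all small $t$; equivalently, $v\in \mathrm{Lie}\,\mathrm{Aut}(X,D)$. (To see this directly without appealing to a flow: the vanishing of $v^{\perp}$ on $D$ means the holomorphic section $v|_D$ of $TX|_D$ actually lies in the holomorphic subbundle $TD\hookrightarrow TX|_D$.)

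Under the standing hypothesis that $\mathrm{Aut}(X,D)$ is discrete, i.e.\ there is no nonzero holomorphic vector field on $X$ tangent to $D$, we conclude $v=\nabla^{1,0}\theta\equiv 0$. Hence $\theta$ is constant on $X$, and because $\int_X\theta\,\omega_{KE}^n/n!=0$ we obtain $\theta=0$. This proves non-degeneracy.

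For the final clause, note that when $\lambda\ge 1$ Corollary \ref{nohv} already guarantees that no nontrivial holomorphic vector field on $X$ can be tangent to $D$, so the first part of the lemma applies verbatim. The only conceivable obstacle would be a failure of the flow-preservation argument in step two, but this is essentially automatic for a holomorphic vector field: pointwise tangency along a smooth complex submanifold is equivalent to the field restricting to a section of $TD$, which is all that is needed.
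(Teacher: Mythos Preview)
Your proof is correct and follows essentially the same route as the paper: use formula \eqref{hess1} to get semi-definiteness, observe that equality forces $(\nabla\theta)^{\perp}\equiv 0$ on $D$ so the holomorphic vector field $\nabla\theta$ is tangent to $D$, and then invoke the hypothesis (respectively Corollary \ref{nohv} when $\lambda\ge 1$). The paper's version is terser but the logic is identical; your added remark that $\theta$ constant plus the normalization $\int_X\theta\,\omega_{KE}^n=0$ gives $\theta=0$ just makes explicit a step the paper leaves implicit.
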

\begin{proof}
We have seen $Hess\mathscr{F}$ is non-negative at any point
$\omega_{KE}\in\mathscr{M}_{KE}$ using formula \eqref{hess1}.
$Hess\mathscr{F}$ is degenerate if and only if $\int_{2\pi
D}|(\nabla\theta)^{\perp}|^2\omega_{KE}^{n-1}/(n-1)!=0$. This
happens if and only if $(\nabla\theta)^{\perp}\equiv0$ on $D$, i.e.
when $\nabla\theta$ is tangent to $D$. The last statement follows
from Corollary \ref{nohv} (see also \cite{SW}).
\end{proof}
\begin{lem}\label{chooseke}
When restricted to $\mathscr{M}_{KE}$, there exists a unique minimum $\omega_{KE}^{D}$ of $\mathscr{F}_{\omega,D}(\omega_\phi)$.
\end{lem}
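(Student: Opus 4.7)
The plan is to combine the strict convexity of $\mathscr{F}_{\omega,D}$ on $\mathscr{M}_{KE}$ (coming from Lemma \ref{nondeg}) with a properness estimate, using the fact that $\mathscr{M}_{KE}\cong G^{\mathbb{C}}/G$ is a finite-dimensional Hadamard manifold. The key observation, which lets us convert what we know about log-Mabuchi energy on the full space $\hat{\mathcal{H}}(\omega)$ into information about $\mathscr{F}_{\omega,D}$ on the finite-dimensional submanifold $\mathscr{M}_{KE}$, is that by Bando-Mabuchi, $\mathcal{M}_\omega$ attains its infimum precisely on $\mathscr{M}_{KE}$, where it is constant with value $c_0:=\mathcal{M}_\omega(\omega_{KE})$. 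Combined with the decomposition \eqref{partmabuchi}, this yields, for every $\beta\in(0,1)$ and every $\omega_\phi\in\mathscr{M}_{KE}$,
\[
\mathscr{F}_{\omega,D}(\omega_\phi)=\tfrac{1}{1-\beta}\bigl(\mathcal{M}_{\omega,(1-\beta)D}(\omega_\phi)-c_0\bigr).
\]

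For existence I would fix some $\beta_0\in(0,1-\lambda^{-1}+\epsilon)$ (possible since $\lambda\geq 1$) for which Corollary \ref{betaproper} guarantees that $\mathcal{M}_{\omega,(1-\beta_0)D}$ is proper on $\hat{\mathcal{H}}(\omega)$. The displayed identity then gives a lower bound of the form $\mathscr{F}_{\omega,D}(\omega_\phi)\geq f(I_\omega(\omega_\phi))-C$ for an unbounded increasing $f$. Since $\mathscr{M}_{KE}$ is totally geodesic in the space of K\"ahler metrics and $I_\omega$ is comparable to the squared Mabuchi $L^2$-distance, $I_\omega$ is exhausting on $\mathscr{M}_{KE}$. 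Therefore sublevel sets of $\mathscr{F}_{\omega,D}|_{\mathscr{M}_{KE}}$ are compact in the finite-dimensional manifold $\mathscr{M}_{KE}$, and $\mathscr{F}_{\omega,D}$ attains its infimum there at some $\omega_{KE}^D$.

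For uniqueness I would use strict convexity. Since $\lambda\geq 1$, Corollary \ref{nohv} implies $\mathrm{Aut}(X,D)$ is discrete, so by Lemma \ref{nondeg} the Hessian of $\mathscr{F}_{\omega,D}$ is positive definite at every point of $\mathscr{M}_{KE}$. The moduli space $\mathscr{M}_{KE}\cong G^{\mathbb{C}}/G$ is a symmetric space of non-compact type, hence a simply-connected non-positively curved (Hadamard) manifold in which any two points are joined by a unique geodesic. If $\omega_1,\omega_2\in\mathscr{M}_{KE}$ were two minimizers, restricting $\mathscr{F}_{\omega,D}$ to the unique geodesic joining them would yield a strictly convex function on $[0,1]$ with equal values at both endpoints that also realize its minimum, which is impossible. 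Hence $\omega_{KE}^D$ is unique.

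The one place where care is needed is verifying that $I_\omega$ really is exhausting on $\mathscr{M}_{KE}$ with respect to the symmetric-space topology, i.e. that $I_\omega(\sigma_t^*\omega_{KE})\to\infty$ as $t\to\infty$ along any one-parameter subgroup $\sigma_t=\exp(tv)\subset G^{\mathbb{C}}\setminus G$; this follows from the explicit Bando-Mabuchi parametrization of $\mathscr{M}_{KE}$ together with the fact that along such a path the Mabuchi geodesic length grows linearly in $t$. Once this is granted, the compactness of sublevel sets is automatic and the rest of the argument is formal.
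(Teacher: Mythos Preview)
Your proposal is correct and follows essentially the same approach as the paper. Both arguments use the decomposition \eqref{partmabuchi} together with the fact that $\mathcal{M}_\omega$ is constant on $\mathscr{M}_{KE}$ to reduce properness of $\mathscr{F}_{\omega,D}|_{\mathscr{M}_{KE}}$ to properness of the log-Mabuchi energy, and then invoke (strict) convexity on the symmetric space $G^{\mathbb{C}}/G$; you supply more detail on the uniqueness step and on why $I_\omega$ exhausts $\mathscr{M}_{KE}$, but the skeleton is the same.
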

\begin{proof}
By the previous Lemma, $\mathscr{F}_{\omega,D}$ is a convex functional on the space $\mathscr{M}_{KE}\cong G^{\mathbb{C}}/G$. To prove the existence of critical point, we only need to show it's proper on $G^{\mathbb{C}}/G$.   Because we assumed $\lambda\ge 1$ and there exists K\"{a}hler-Einstein on $X$, by Theorem
\ref{mabuchimin}, $\mathcal{M}_{\omega,(1-\beta)D}$ is proper for $\beta\in (0,1)$. Because the Mabuchi energy is constant on $\mathscr{M}_{KE}$, by equality \eqref{partmabuchi},  $\mathscr{M}_{\omega,(1-\beta)D}=(1-\beta)\mathscr{F}_{\omega,D}$+constant is proper on $\mathscr{M}_{KE}$.
\end{proof}
Write $\omega_{KE}^D=\omega+\sddbar\phi_{KE}^D$, then it satisfies the critical point equation
\[
\int_X(\log|s|_h^2-\lambda\phi_{KE}^D)\psi(\omega_{KE}^D)^n/n!=0.
\]
for any
$T_{\omega_{KE}^D}\mathscr{M}_{KE}\cong\Lambda_1(\omega_{KE}^D)/\mathbb{C}$.
In other words, $\lambda\phi_{KE}^D-
\log|s|_h^2\in\Lambda_1^{\perp}$.
\begin{prop}
As $\beta\rightarrow 1$, the conical K\"{a}hler-Einstein metrics
$\omega_{\beta}$ converges to a unique smooth K\"{a}hler-Einstein
metric $\omega_{KE}^D\in\mathscr{M}_{KE}$.
\end{prop}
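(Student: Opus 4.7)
The plan is to proceed by a Bando-Mabuchi style bifurcation argument. The point $\omega_{KE}^D$ was singled out in Lemma \ref{chooseke} precisely as the unique minimizer on $\mathscr{M}_{KE}$ of the functional $\mathscr{F}_{\omega,D}$, which captures the linear $(1-\beta)$-part of the log-Mabuchi energy via \eqref{partmabuchi}. So the strategy is: show the family $\{\omega_\beta\}$ is precompact modulo the action of $G^{\mathbb{C}}$ (which acts with discrete stabilizer on $(X,D)$ by Corollary \ref{nohv} since $\lambda\geq 1$), extract a subsequential limit in $\mathscr{M}_{KE}$, and use an energy inequality to force the limit to equal $\omega_{KE}^D$.

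The first step is to exploit the minimizing property of $\omega_\beta$: by Corollary \ref{mabuchimin}, $\omega_\beta$ minimizes $\mathcal{M}_{\omega,(1-\beta)D}$, so testing against $\omega_{KE}^D\in\mathscr{M}_{KE}$ (on which $\mathcal{M}_\omega$ vanishes) and using the decomposition \eqref{partmabuchi} gives
\[
\mathcal{M}_\omega(\omega_\beta)+(1-\beta)\mathscr{F}_{\omega,D}(\omega_\beta)\leq (1-\beta)\mathscr{F}_{\omega,D}(\omega_{KE}^D).
\]
Since $X$ admits a smooth K\"ahler-Einstein metric, $\mathcal{M}_\omega\geq 0$ globally, so this simultaneously yields $\mathcal{M}_\omega(\omega_\beta)=O(1-\beta)$ and $\mathscr{F}_{\omega,D}(\omega_\beta)\leq \mathscr{F}_{\omega,D}(\omega_{KE}^D)$.

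The second step is to obtain uniform $C^0$-bounds on the potentials. Because $\mathrm{Aut}(X)$ is not discrete, I would first normalize via the $G^{\mathbb{C}}$-action: choose $\sigma_\beta\in G^{\mathbb{C}}$ so that the potential $\tilde{\phi}_\beta$ of $\sigma_\beta^*\omega_\beta$ is $L^2$-orthogonal to $\Lambda_1(\omega_{KE}^D)$. On this slice, the almost-vanishing of $\mathcal{M}_\omega$ combined with Tian's properness result as refined in Theorem \ref{PSSW} gives a uniform bound on $I_\omega(\sigma_\beta^*\omega_\beta)$, and hence on $\|\tilde{\phi}_\beta\|_{C^0}$ via Proposition \ref{c0byint}. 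Then standard pluripotential theory produces a subsequential $C^0$-limit $\tilde\omega_\infty=\omega+\sddbar\tilde{\phi}_\infty$ on which $\mathcal{M}_\omega(\tilde\omega_\infty)=0$, placing $\tilde\omega_\infty$ in $\mathscr{M}_{KE}$. The inequality $\mathscr{F}_{\omega,D}(\tilde\omega_\infty)\leq \mathscr{F}_{\omega,D}(\omega_{KE}^D)$ passes to the limit by lower semicontinuity, and by Lemmas \ref{chooseke} and \ref{nondeg} the convex functional $\mathscr{F}_{\omega,D}|_{\mathscr{M}_{KE}}$ has $\omega_{KE}^D$ as its unique minimizer, so $\tilde\omega_\infty=\omega_{KE}^D$; discreteness of $\mathrm{Aut}(X,D)$ keeps $\sigma_\beta$ bounded and the convergence upgrades from subsequential to full.

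The main obstacle will be the uniform $C^0$-estimate in step two as $\beta\uparrow 1$. Donaldson's Fredholm theory (Proposition \ref{DoFred}) degenerates in this limit because the admissible H\"older exponent $\gamma<\beta^{-1}-1$ depends on $\beta$, so one cannot directly transplant estimates in a single fixed function space across the whole interval. This is the "one technical point" flagged in the introduction, and circumventing it requires either a careful regularization of the $\omega_\beta$ by smooth $\omega$-plurisubharmonic potentials together with lower-semicontinuity of the log-Mabuchi and log-Ding energies along $L^1$-limits, or a direct adaptation of Bando-Mabuchi's bifurcation analysis to the conical setting with estimates uniform in $\beta$.
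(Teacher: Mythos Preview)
Your overall strategy---compactness plus an energy pinching argument---is \emph{not} what the paper does, despite your opening reference to ``Bando--Mabuchi style bifurcation.'' The paper's proof is genuinely constructive: starting from $\omega_{KE}^D$, it writes the conical K\"ahler--Einstein equation for $\phi=\phi_{KE}^D+\theta+\psi'$ with $\theta\in\Lambda_1$, $\psi'\in\Lambda_1^\perp$, projects onto $\Lambda_1^\perp$ and solves for $\psi'=\psi'_{\beta,\theta}$ by the implicit function theorem (the linearization $(1-P_0)(\Delta_\theta+r(\beta))$ is uniformly invertible near $(\beta,\theta)=(1,0)$), and then reduces to a finite-dimensional equation $\Gamma(\beta,\theta)=0$ on $\Lambda_1$. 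The heart of the argument is the computation that $D_2\Gamma(1,0)=-\mathrm{Hess}\,\mathscr{F}_{\omega,D}$, which by Lemma~\ref{nondeg} is nondegenerate; another application of the implicit function theorem then produces $\omega_\beta$ for $\beta$ near $1$ with $\omega_\beta\to\omega_{KE}^D$ by continuity. So the role of $\mathscr{F}_{\omega,D}$ in the paper is \emph{infinitesimal} (its Hessian is the obstruction operator), whereas in your argument it is \emph{global} (its minimum on $\mathscr{M}_{KE}$ is the unique possible limit).

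Your variational approach is attractive but has a genuine gap at the point where you invoke Theorem~\ref{PSSW}. That theorem gives properness of the (Ding or Mabuchi) energy on the space of $K$-\emph{invariant} potentials for a compact $K$ with finite centralizer in $G$; it says nothing about properness on a transversal slice to the $G^{\mathbb{C}}$-orbit defined by $L^2$-orthogonality to $\Lambda_1$. These are different subsets of $\hat{\mathcal{H}}$, and the conical metrics $\omega_\beta$ (or their gauge transforms) have no reason to be $K$-invariant. What you would need is properness modulo $G^{\mathbb{C}}$ (Tian's conjecture), which the paper neither states nor proves in this setting. A related difficulty: applying $\sigma_\beta\in G^{\mathbb{C}}$ moves the divisor, so $\sigma_\beta^*\omega_\beta$ has cone singularities along $\sigma_\beta^{-1}(D)$ rather than $D$, which complicates both the $C^0$ estimate via Proposition~\ref{c0byint} and the passage to the limit in $\mathscr{F}_{\omega,D}$. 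Your final paragraph correctly identifies that both routes share the same analytic soft spot (uniform estimates across the family of function spaces $C^{2,\gamma,\beta}$ as $\beta\uparrow 1$), but the paper's bifurcation argument at least isolates it as a purely local issue near the fixed smooth limit, whereas your compactness route would need the much stronger global input above.
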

\begin{proof}
Recall that the conical K\"{a}hler-Einstein equation can be written
as
\[
(\omega+\sddbar{\phi})^n=e^{h_\omega-r(\beta)\phi}\frac{\omega^n}{|s|^{2(1-\beta)}}.
\]
Any $\omega_{KE}=\omega+\sddbar\phi_{KE}\in\mathscr{M}_{KE}$ satisfies the equation
\[
(\omega+\sddbar\phi_{KE})^n=e^{h_\omega-\phi_{KE}}\omega^n.
\]
By Lemma \ref{chooseke}, there exists a unique minimum
$\omega_{KE}^D$ of the functional $\mathscr{F}_{\omega,D}$ on
$\mathscr{M}_{KE}$.  We will choose $\omega_{KE}=\omega_{KE}^D$ in
the following argument. Divide the above two equations to get
\[
\log\frac{(\omega+\sddbar\phi)^n}{(\omega+\sddbar\phi_{KE})^n}=\phi_{KE}-r(\beta)\phi-(1-\beta)\log|s|_{h}^2.
\]
Let $\phi=\phi_{KE}+\psi$ and $\psi=\theta+\psi'$ with $\theta\in \Lambda_1$ and $\psi'\in \Lambda_1^{\perp}$, then
\begin{equation}\label{kestart}
\log\frac{(\omega_{KE}+\sddbar(\theta+\psi'))^n}{\omega_{KE}^n}+r(\beta)(\theta+\psi')=(1-\beta)(\lambda\phi_{KE}-\log|s|_h^2).
\end{equation}
We use Bando-Mabuchi's bifurcation method to solve the equation for
$\beta$ close to 1. First project to $\Lambda_1^{\perp}$ to get
\begin{equation}\label{projection1}
(1-P_0)\left(\log\frac{(\omega_{KE}+\sddbar(\theta+\psi'))^n}{\omega_{KE}^n}\right)+r(\beta)\psi'=(1-\beta)(\lambda\phi_{KE}-\log|s|_h^2).
\end{equation}

The equation is satisfied for $(\beta,\psi,\theta)=(1,0,0)$. The
linearization of the left side of this equation with respect to
$\psi'$ is
\[
(1-P_0)(\tilde{\Delta}_\theta+r(\beta))\psi'
\]
where $\tilde{\Delta}_\theta$ is the Laplacian with respect to
$\omega_{KE}+\sddbar\theta$. Since $\Lambda_1=Ker(\Delta_{KE}+1)$,
there exists a positive constant $\delta>0$, such that
\[
(1-P_0)(-\Delta_{\omega_{KE}}-1)\ge \delta>0.
\]
By continuity, it's easy to see that
\[
(1-P_0)(-\tilde{\Delta}_\theta-r(\beta))\ge \delta/2>0.
\]
for $(\beta,\theta)$ close to $(1,0)$.  In other words, the inverse
of $(1-P_0)(\tilde{\Delta}_{\theta}+r(\beta))$ has uniformly bounded
operator norm for $(\beta,\theta)$ close to $(1,0)$. So by implicit
function theorem, there exists solution $\psi'_{\beta,\theta}$ for
$\beta$ near 1 and $\theta$ small. Now to solve the equation
\eqref{kestart}, we only need to solve the following equation,
obtained by projecting to $\Lambda_1$,
\begin{equation}\label{projection2a}
P_0\left(\log\frac{(\omega_{KE}+\sddbar(\theta+\psi'))^n}{\omega_{KE}^n}\right)=-r(\beta)\theta.
\end{equation}
To solve this, we need to take the gaugh group $G=Isom(X,\omega_{KE})$ into account and rewrite \eqref{projection2a} in another form. For any
$\sigma\in G$ near $Id$, we have a function $\theta=\theta_\sigma$ satisfying $\sigma^*\omega_{KE}=\omega_{KE}+\sddbar(\theta+\psi'_{1,\theta})$.
Because $\sigma^*\omega_{KE}$ is a smooth K\"{a}hler-Einstein metric, we have the equation
\[
\log\frac{(\omega_{KE}+\sddbar(\theta+\psi'_{1,\theta}))^n}{\omega_{KE}^n}=-(\theta+\psi'_{1,\theta})
\]
Now let $\psi_{\beta,\theta}'=\psi_{1,\theta}'+(1-\beta)\xi_{\beta,\theta}$. We can rewrite the equation \eqref{kestart} in the following form
\begin{eqnarray*}
&&\log\frac{(\omega_{KE}+\sddbar(\theta+\psi'_{1,\theta}+(1-\beta)\xi_{\beta,\theta}))^n}{\omega_{KE}^n}=\\
&=&-(1-\lambda(1-\beta))(\theta+\psi'_{1,\theta}+(1-\beta)\xi_{\beta,\theta})
+(1-\beta)(\lambda\phi_{KE}-\log|s|_h^2)\\
&=&\log\left(\frac{\omega_\theta^n}{\omega_{KE}^n}\right)+(1-\beta)\lambda(\theta+\psi'_{1,\theta})-(1-\beta)r(\beta)\xi_{\beta,\theta}+(1-\beta)(\lambda\phi_{KE}-\log|s|_h^2).
\end{eqnarray*}
where $\omega_\theta=\omega_{KE}+\sddbar(\theta+\psi'_{1,\theta})$.  In particular, it's easy to see that \eqref{projection1} is equivalent to
\[
\frac{1}{1-\beta}(1-P_0)\left(\log\frac{(\omega_{KE}+\sddbar(\theta+\psi'_{1,\theta}+(1-\beta)\xi_{\beta,\theta}))^n}{(\omega_{KE}+\sddbar(\theta+\psi'_{1,\theta}))^n}\right)=\lambda\psi_{1,\theta}'-r(\beta)\xi_{\beta,\theta}+(\lambda\phi_{KE}-\log|s|_h^2).
\]
Let $\beta\rightarrow1$ to get
\[
(1-P_0)\left((\Delta_\theta+1)\xi_{1,\theta}\right)-\lambda\psi'_{1,\theta}=\lambda\phi_{KE}-\log|s|_h^2.
\]
where $\Delta_\theta$ is the Laplacian with respect to the metric $\omega_\theta$. In particular, $\Delta_0=\Delta_{KE}$. Since $Im(\Delta_0+1)=(Ker(\Delta_0+1))^{\perp}=\Lambda_1^{\perp}$, so in particular,
\begin{equation}\label{xi10eq}
(\Delta_0+1)\xi_{1,0}=\lambda\phi_{KE}-\log|s|_h^2.
\end{equation}
Now the equation \eqref{projection2a} is equivalent to
\begin{equation}\label{projection2b}
P_0\left(\frac{1}{1-\beta}\log\frac{(\omega_{KE}+\sddbar(\theta+\psi'_{1,\theta}+(1-\beta)\xi_{\beta,\theta}))^n}{(\omega_{KE}+\sddbar(\theta+\psi'_{1,\theta}))^n}\right)-\lambda\theta=0
\end{equation}
Denote by $\Gamma(\beta,\theta)$ the term on the left side, Then
\[
\Gamma(1,0)=0, \quad \Gamma(1,\theta)=P_0(\Delta_\theta\xi_{1,\theta})-\lambda\theta.
\]
Let $\theta(t)=t\theta\in\Lambda_1=Ker(\Delta_0+1)$. For any $\theta'\in\Lambda_1$,
\begin{eqnarray*}
\int_X\left.\frac{d}{dt}\Gamma(1,\theta)\right|_{t=0}\theta'\omega_{KE}^n/n!&=&-\lambda\int_X\theta\theta'\omega_{KE}^n/n!+\int_X(\dot{\Delta_\theta}\xi_{1,0}
+\Delta_0\dot{\xi}_{1,0})\theta'\omega_{KE}^n/n!\\
&=&-\lambda\int_X\theta\theta'\omega_{KE}^n/n!+\int_X-\theta^{i\bar{j}}(\xi_{1,0})_{i\bar{j}}\theta'\omega_{KE}^n/n!.
\end{eqnarray*}
Let $\xi=\xi_{1,0}\in \Lambda_1^{\perp}$. As the calculation in \cite{BM87}, we have
\begin{eqnarray*}
\int_X\theta^{i\bar{j}}\xi_{i\bar{j}}\theta'\omega_{KE}^n/n!&=&-\int_X(\theta^i\xi\indices{_{i\bar{j}}^{\bar{j}}}\theta'+\theta^i\xi_{i\bar{j}}\theta'^{\bar{j}})\omega_{KE}^n/n!=-\int_X(\theta^i\xi\indices{_{\bar{j}}^{\bar{j}}_{i}}\theta'-\theta^i\xi_{i}\theta'^{\bar{j}}_{\phantom{\bar{j}}\bar{j}})\omega_{KE}^n/n!\\
&=&-\int_X\theta^i((\Delta+1)\xi)_i\theta'\omega_{KE}^n/n!=\int_X(-\theta\theta'+\theta^i\theta'_i)(\Delta+1)\xi\omega_{KE}^n/n!\\
&=&-\int_X(\theta\theta'-\theta^i\theta'_i)(\lambda\phi-\log|s|_h^2)\omega_{KE}^n/n!
\end{eqnarray*}
In the last identiy, we used the relation in \eqref{xi10eq}. So
\begin{eqnarray*}
D_2\Gamma(1,0)(\theta)\theta'&=&-\lambda\int_X\theta\theta'\omega_{KE}^n/n!+\int_X(\theta\theta'-\theta^i\theta'_i)(\lambda\phi-\log|s|_h^2)\omega_{KE}^n/n!\\
&=&-Hess\mathscr{F}(\theta,\theta'). \quad (\mbox{ by equation }\eqref{hess2})
\end{eqnarray*}

By Lemma \ref{nondeg}, $D_2\Gamma(1,0)$ is invertible, so by
implicit function theorem, \eqref{projection2b} is solvable for
$\beta$ close to $1$.  So we get conical K\"{a}hler-Einstein metrics
$\omega_\beta$ for $\beta$ close to $1$ and by continuity
$\phi_\beta$ converges to $\phi_{KE}^D$ as $\beta\rightarrow 1$.
\end{proof}
\begin{rem}
As in \cite{BM87}, we can continue to calculate:
\begin{eqnarray*}
\int_X\theta^{i\bar{j}}\xi_{i\bar{j}}\theta'\omega_{KE}^n/n!&=&-\int_X(\theta\theta'-\theta^i\theta'_i)(\lambda\phi-\log|s|_h^2)\omega_{KE}^n/n!\\
&=&\frac{1}{2}\int_X(\Delta\theta\theta'+\theta\Delta\theta'+\theta^i\theta'_i+\theta_i\theta'^i)(\Delta+1)\xi\omega_{KE}^n/n!\\
&=&\frac{1}{2}\int_X(\Delta(\theta\theta'))(\Delta+1)\xi\omega_{KE}^n/n!=\frac{1}{2}\int_X\theta\theta'\Delta(\Delta+1)\xi\omega_{KE}^n/n!\\
&=&\frac{1}{2}\int_X\theta\theta'\Delta(\lambda\phi_1-\log|s|_h^2)\omega_{KE}^n/n!\\
&=&\frac{1}{2}\int_X\theta\theta'n\sddbar(\lambda\phi_1-\log|s|_h^2)\wedge\omega_{KE}^{n-1}/n!\\
&=&\frac{1}{2}n\lambda\int_X\theta\theta'\omega_{KE}^n/n!-\frac{1}{2}\int_{2\pi
D}\theta\theta'\omega_{KE}^{n-1}/(n-1)!.
\end{eqnarray*}
so that
\[
D_2\Gamma(1,0)(\theta)\theta'=-\lambda(1+n/2)\int_X\theta\theta'\omega_{KE}^n/n!+\frac{1}{2}\int_{2\pi
D}\theta\theta'\omega_{KE}^{n-1}/(n-1)!.
\]
However, it seems not straightforward to see that $D_2\Gamma(1,0)$
is nondegenerate using this formula.
\end{rem}
\begin{rem}
 One reason why we packed all the conical
spaces together in the space of admissible functions is because that
we need to work in different function space corresponding to
different cone angles. Strictly speaking, there are subtleties in
applying implicit functional theorem in this setting. However, we
expect one can generalize Donaldson's argument to validate the
application of implicit function theorem.
\end{rem}

\section{Relations to Song-Wang's work}\label{relSW}

In this section, we will briefly explain Song-Wang's results and
derive one of its implications.

For one thing, they also observe the interpolation property for the
log-Ding-energy. Secondly, their prominent idea of considering
pluri-anticanonical sections corresponds to the $\lambda\ge 1$ case
in our paper. Recall that $R(X)$ in the introduction (see
\eqref{R(X)}) is defined to be the greatest lower bound of Ricci
curvature of smooth K\"{a}hler metrics in $c_1(X)$. $R(X)$ was
studied in (
\cite{Ti92},\cite{Sze},\cite{Li11a},\cite{Li12},\cite{SW}).
Donaldson (\cite{Do11}) made the following conjecture
\begin{conj}[Donaldson]
Let $D\in |-K_X|$ be a smooth divisor, then there exists a conical
K\"{a}hler-Einstein metric on $(X,(1-\beta)D)$ if and only if
$\beta\in (0,R(X))$.
\end{conj}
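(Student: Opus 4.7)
My plan is to establish both directions of the conjecture using the interpolation-degeneration framework developed in this paper, with the full implication attainable only in favorable contexts such as the toric case of Proposition~\ref{Doconj}. For the forward direction, suppose $\omega_\beta$ is a conical K\"ahler-Einstein metric on $(X,(1-\beta)D)$. The defining equation forces $\mathrm{Ric}(\omega_\beta)\ge \beta\omega_\beta$ in the sense of currents, since $(1-\beta)\{D\}\ge 0$. I would then regularize the singular complex Monge-Amp\`ere equation for $\omega_\beta$ by replacing the singular weight $|s_D|^{-2(1-\beta)}$ with $(|s_D|^2+\epsilon)^{-(1-\beta)}$ and solving the resulting smooth equation to produce smooth approximants $\omega_\epsilon\in 2\pi c_1(X)$ with $\mathrm{Ric}(\omega_\epsilon)\ge (\beta-o(1))\omega_\epsilon$. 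Since $\lambda=1$ in the conjecture, Corollary~\ref{nohv} gives no holomorphic vector fields tangent to $D$, so Donaldson's openness Theorem~\ref{DoIFT} perturbs the solution to slightly larger angle $\beta+\eta$; running the smoothing on that perturbed metric then yields $R(X)\ge \beta+\eta>\beta$, unless $\beta=1$ and $X$ already carries a smooth K\"ahler-Einstein metric.

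For the reverse implication, I would combine Proposition~\ref{interpolate} with a degeneration argument. Theorem~\ref{main1} (via Berman's log alpha invariant estimate) already guarantees existence of a conical K\"ahler-Einstein metric on $(X,(1-\beta)D)$ for all sufficiently small $\beta>0$. Fixing any $\beta_0<R(X)$, the linearity of $\mM_{\omega,(1-\beta)D}$ in $\beta$ from Proposition~\ref{interpolate} reduces the problem to exhibiting a single $\beta_1\in[\beta_0,R(X))$ for which $\mM_{\omega,(1-\beta_1)D}$ is bounded below; properness for all $0<\beta\le\beta_0$ then follows by interpolation, and existence by Theorem~\ref{JMRL}. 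To produce such a $\beta_1$ I would invoke Theorem~\ref{specialdeg}, constructing a special degeneration of $(X,(1-\beta_1)D)$ whose central fiber is itself a conical K\"ahler-Einstein pair. In the toric setting of Proposition~\ref{Doconj}, this is precisely where Song-Wang's toric existence result enters: one chooses the sub-linear system $\mathscr{L}_\lambda$ so that a general smooth member $D\in\mathscr{L}_\lambda$ degenerates to the toric divisor $D_0$ supporting their toric conical K\"ahler-Einstein metric at the required angle $\gamma\lambda^{-1}$.

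The main obstacle is producing such a degeneration, together with its conical K\"ahler-Einstein central fiber, for angles approaching $R(X)$ when no toric or explicit structure is available; this is why the conjecture remains open in general and is reduced in this paper to Song-Wang's toric existence theorem. A secondary technical issue is ensuring smoothness of the general divisor $D\in\mathscr{L}_\lambda$, which is automatic when $\dim X\le 2$ but requires a delicate choice of sub-linear system in higher dimensions (cf.\ Remark~\ref{smooth}); if $D$ happens to be non-smooth, one still obtains a weak bounded solution via pluripotential theory, but the identification of the existence threshold with $R(X)$ becomes subtle. Finally, when $\mathrm{Aut}(X)$ is non-discrete (as is typical for toric $X$), full properness of the Mabuchi energy is unavailable, and one must restrict to invariant potentials and apply Theorem~\ref{PSSW} to extract the required properness, paralleling the treatment in the proof of Theorem~\ref{conic}.
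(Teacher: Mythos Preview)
This statement is stated in the paper as a \emph{conjecture}, not a theorem; the paper contains no proof of it. What the paper does establish is the weaker toric version, Proposition~\ref{Doconj}, for pluri-anticanonical divisors in a specially chosen sub-linear system. So there is no ``paper's own proof'' to compare against, and you yourself acknowledge midway through that ``the conjecture remains open in general.''

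That said, your outline for the existence direction tracks the paper's proof of Proposition~\ref{Doconj} closely: properness for small $\beta$ from the log-alpha-invariant, interpolation via Proposition~\ref{interpolate}, and a lower bound at the endpoint via Theorem~\ref{specialdeg} applied to a degeneration whose central fibre is conical K\"ahler-Einstein (supplied in the toric case by Song--Wang). You correctly isolate the missing ingredient in general: no one knows how to produce such a degeneration with K\"ahler-Einstein central fibre at angles approaching $R(X)$ for an arbitrary smooth $D\in|-K_X|$.

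On the non-existence side your approach differs from what the paper does in the proof of Proposition~\ref{Doconj}. You propose to regularize the conical metric and solve a smoothed Monge--Amp\`ere equation to obtain smooth metrics with $\mathrm{Ric}\ge(\beta-o(1))\omega$, then use openness to get strict inequality. The paper instead argues via energy functionals: the twisted Ding energy (whose properness characterizes $R(X)$, cf.\ \cite{Li12}) is bounded below by the log-Ding energy, so existence of a conical K\"ahler-Einstein metric at angle $\beta$ forces $\beta\le R(X)$; equality is then excluded by Donaldson's openness. Your regularization route is in the spirit of \cite{SW} and can be made to work, but the assertion that the smoothed solution automatically satisfies $\mathrm{Ric}(\omega_\epsilon)\ge(\beta-o(1))\omega_\epsilon$ requires checking that the regularized current $\omega+\sqrt{-1}\partial\bar\partial\log(|s_D|_h^2+\epsilon)$ is nonnegative, which is true but not stated; the energy-comparison argument is cleaner and is what the paper actually cites.
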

Song-Wang proved a weak version of Donaldson's conjecture by
allowing pluri-anticanonical divisor and its dependence on $\beta$.
Translating their result in our notations, they proved
\begin{thm}[Song-Wang,\cite{SW}]
For any $\gamma\in (0,R(X))$ there exists a large
$\lambda\in\mathbb{Z}$ and a smooth divisor $D\in |\lambda
K_X^{-1}|$ such that there exists a conical K\"{a}hler-Einstein metric
on $(X, \lambda^{-1}(1-\gamma)D)$.
\end{thm}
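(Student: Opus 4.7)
The plan is to follow the interpolation--degeneration strategy already used for Theorem \ref{conic}, now fed by the toric existence input of Song--Wang. Fix $\gamma_0\in(0,R(X))$ and, for $\lambda$ sufficiently divisible, let $D_0\in|-\lambda K_X|$ be the smooth torus-invariant divisor and $\widehat{\omega}_{KE}$ the toric conical K\"ahler--Einstein metric on $\bigl(X,\lambda^{-1}(1-\gamma_0)D_0\bigr)$ produced by \cite{SW}. In the notation of the paper this corresponds to the cone angle $\beta_0=1-(1-\gamma_0)/\lambda$, and the Ricci curvature constant $r(\beta_0)$ equals $\gamma_0$.

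First I would construct the sublinear system $\mathscr{L}_\lambda$ tautologically from the $T$-action. The section $s_0\in H^0(X,-\lambda K_X)$ cutting out $D_0$ is a weight vector for the maximal torus $T\subset\operatorname{Aut}(X)$. Pick a one-parameter subgroup $\rho:\C^*\to T$ under which $s_0$ has weight $w_0$, and let $W\subset H^0(X,-\lambda K_X)$ be the linear span of $s_0$ together with the $T$-weight vectors whose $\rho$-weight is strictly greater than $w_0$. Set $\mathscr{L}_\lambda:=\mathbb P(W)$. For any $s\in W$ with nonzero $s_0$-component one has $t^{-w_0}\rho(t)\cdot s\to s_0$ as $t\to 0$, so applying $\rho$ fiberwise on the trivial family $X\times\C$ produces a special degeneration of $(X,D)$ to $(X,D_0)$ whose total space is $X\times\C$ itself (in particular smooth).

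Next I would chain the three main ingredients. Since the central pair $\bigl(X,\lambda^{-1}(1-\gamma_0)D_0\bigr)$ is a conical K\"ahler--Einstein variety and its singular locus is empty, Theorem \ref{specialdeg} gives that the log-Ding functional, and hence via Proposition \ref{energy functional property}.\eqref{logMgeF} the log-Mabuchi functional, of $\bigl(X,\lambda^{-1}(1-\gamma_0)D\bigr)$ is bounded from below for general $D\in\mathscr{L}_\lambda$. Independently, Corollary \ref{betaproper} supplies an $\epsilon(\lambda)>0$ such that the log-Mabuchi functional of $(X,(1-\beta)D)$ is proper for every $\beta\in(0,1-\lambda^{-1}+\epsilon(\lambda))$, i.e.\ for $\gamma$ in a small nonempty interval $(0,\lambda\epsilon(\lambda))$. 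Interpolation (Proposition \ref{interpolate}) then makes the log-Mabuchi functional proper for every $\beta$ in the interval between these two endpoints, and the existence theorem (Theorem \ref{JMRL}) produces a conical K\"ahler--Einstein metric on $\bigl(X,\lambda^{-1}(1-\gamma)D\bigr)$ for every $\gamma\in(0,\gamma_0]$. Letting $\gamma_0\uparrow R(X)$ yields the ``if'' direction. The ``only if'' direction is soft in the toric setting: a conical K\"ahler--Einstein metric with Ricci potential constant $\gamma$ can be averaged by $T$ and smoothed along an auxiliary continuity path to force $R(X)\ge\gamma$.

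The hardest technical point is the smoothness of a general member of $\mathscr{L}_\lambda$ -- this is precisely what Remark \ref{smooth} flags. The space $W$ constructed above may have a base locus along certain torus-invariant strata, and Bertini does not apply verbatim to sublinear systems with base points. The plan is therefore to refine the choice of $\rho$ and of the weight threshold $w_0$ so that $W$ becomes base-point free outside a set of codimension $\ge 2$, which is automatic in dimension $\le 2$ (handling the examples of principal interest) but in higher dimensions requires a careful analysis of the toric fan and of the Newton polytope of $-\lambda K_X$ in relation to the weight of $s_0$. Failing that smoothness, one still retrieves a bounded weak solution of the conical K\"ahler--Einstein equation by invoking a pluripotential version of Theorem \ref{JMRL}, which is what the last sentence of the remark records.
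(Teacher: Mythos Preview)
Your proposal conflates two distinct results of Song--Wang. The theorem in question is stated for an arbitrary Fano manifold $X$; nothing in its hypotheses makes $X$ toric. Your argument, however, opens by choosing ``the smooth torus-invariant divisor $D_0$'' and a one-parameter subgroup $\rho:\C^*\to T$ of ``the maximal torus $T\subset\operatorname{Aut}(X)$'', so it only makes sense when $X$ is toric. What you have written is, in effect, the proof of Proposition~\ref{Doconj} (the toric version of Donaldson's conjecture proved later in the section using exactly this interpolation--degeneration scheme), not of the present theorem. The ``only if'' direction you append is further evidence of the mix-up: the statement at hand has no ``only if'' clause.

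The paper's own explanation is far more elementary and needs no structure on $X$ beyond Fano. One uses the characterization of $R(X)$ through properness of the twisted functional: for any $t<R(X)$ the map $\phi\mapsto\int_X e^{p(h_\omega-\gamma\phi)}\omega^n$ with $p=t/\gamma$ is under control. Given $\gamma\in(0,R(X))$, pick $t\in(\gamma,R(X))$ and apply H\"older with $p=t/\gamma$, $q=t/(t-\gamma)$:
\[
\int_X e^{h_\omega-\gamma\phi}\,\frac{\omega^n}{|s|^{2(1-\gamma)/\lambda}}
\;\le\;
\Bigl(\int_X e^{p(h_\omega-\gamma\phi)}\omega^n\Bigr)^{1/p}
\Bigl(\int_X |s|^{-2q(1-\gamma)/\lambda}\omega^n\Bigr)^{1/q}.
\]
The first factor supplies properness; the second is a finite constant independent of $\phi$ as soon as $2q(1-\gamma)/\lambda<2$, i.e.\ $\lambda>(1-\gamma)\,R(X)/(R(X)-\gamma)$, with $D=\{s=0\}\in|{-}\lambda K_X|$ chosen smooth by Bertini. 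This already gives properness of the log-Ding functional for such $(\lambda,D)$ and hence existence via the continuity method---no degeneration, no interpolation, and no toric input.
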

\begin{rem}
Note that in general, $\lambda$ and $D$ may depend on $\gamma$.
$\gamma$ is related to the cone angle parameter $\beta$ by the
relation $\lambda^{-1}(1-\gamma)=1-\beta$ or equivalently,
$\gamma=r(\beta)=1-\lambda(1-\beta)$.
\end{rem}
The proof of this theorem can be
explained through the H\"{o}lder's inequality
\[
\int_X
e^{h_\omega-\gamma\phi}\frac{\omega^n}{n!|s|^{2(1-\gamma)/\lambda}}
\le \left(\int_X
e^{p(h_\omega-\gamma\phi)}\omega^n/n!\right)^{1/p}\left(\int_X
|s|^{-2q(1-\gamma)/\lambda}\omega^n/n!\right)^{1/q}
\]
where $p^{-1}+q^{-1}=1$. To make contact with the invariant $R(X)$,
one choose $p=\frac{t}{\gamma}$ for any $t\in (\gamma,R(X))$. (This
is related to the characterization of $R(X)$ through the properness
of twisted Ding-energy as in \cite{Li12}) Then
$q=(1-p^{-1})^{-1}=\frac{t}{t-\beta}$. Now the integrability of the
second integral on the right gives the restriction on $\lambda$:
$\frac{2q(1-\gamma)}{\lambda}-1<1$. This gives the the lower bound
of $\lambda$ in Song-Wang's theorem.
\[
\lambda> (1-\gamma)\frac{R(X)}{R(X)-\gamma}.
\]

One other important result Song-Wang proved is the construction of
toric conical K\"{a}hler-Einstein metrics can be combined with the
strategy in our paper to prove a version of Donaldson's conjecture
on toric Fano manifolds. We will explain this briefly.

Any toric Fano manifold $X_\triangle$ is determined by a reflexive
lattice polytope $\triangle\subset\mathbb{R}^n$ containing only $O$
as the interior lattice point. For any $P\in\mathbb{R}^n$, $P$
determines a toric $\mathbb{R}$-divisor $D_P\sim_{\mathbb{R}}-K_X$.
More concretly, assume that the polytope is defined by the
inequalities $ l_j(x)=\langle x,\nu_j\rangle+a_j\ge 0$. Then
$D_P=\sum_{j}l_j(P)D_j$. If $P\in\overline{\triangle}$ is a rational
point, then for any integer $\lambda$ such that $\lambda P$ is an
integral lattice point, there exist a genuine holomorphic section
$s^{\lambda}_P$ of $-\lambda K_X$ and an integral divisor $\lambda
D_P$.

Let $P_c$ be the barycenter of $\triangle$, then the ray
$\overrightarrow{P_cO}$ intersect the boundary $\partial\triangle$
at a unique point $Q$. Note that in general, $Q$ is a rational
point. In \cite{Li11a}, the first author proved $R(X)$ is given by
\begin{equation}\label{RX}
R(X)=\frac{|\overline{OQ}|}{|\overline{P_cQ}|}.
\end{equation}
For any $\gamma\in [0,1]$, define
$P_\gamma=-\frac{\gamma}{1-\gamma}P_c$. Then
$P_\gamma\in\overline{\triangle}$ if and only if $\gamma\in
[0,R(X)]$, which is also equivalent to $D_{P_\gamma}$ being
effective. In particular, $P_{R(X)}=Q$. Using these notations,
Song-Wang proved the following theorem by adapting the method in
Wang-Zhu's work (\cite{WZ04}) on the existence of K\"{a}hler-Ricci
solitons on toric Fano manifolds.
\begin{thm}[Song-Wang,\cite{SW}]
For any $\gamma\in [0,1]$, there exists toric solution to the
following equation:
\[
Ric(\omega)=\gamma\omega+(1-\gamma)\{D_{P_\gamma}\}.
\]
When $\gamma\in [0,R(X)]$ is rational, then the solution
$\omega_\gamma$ is a conical K\"{a}hler-Einstein metric on
$(X,(1-\gamma)D_{P_\gamma})$. In particular, when $\gamma=R(X)$,
there exists a conical K\"{a}hler-Einstein metric on $(X,
(1-R(X))D_{Q})$.
\end{thm}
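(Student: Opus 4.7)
The plan is to adapt Wang-Zhu's existence argument for K\"{a}hler-Ricci solitons on toric Fano manifolds \cite{WZ04} to this twisted/conical setting. Since the equation and the divisor $D_{P_\gamma}$ are invariant under the compact torus $T\subset(\mathbb{C}^*)^n$, I would look for $T$-invariant solutions. On the open $(\mathbb{C}^*)^n$-orbit a $T$-invariant K\"{a}hler form in $2\pi c_1(X_\triangle)$ corresponds to a strictly convex function $\phi$ on $\mathfrak{t}\cong\mathbb{R}^n$ whose gradient image is the interior of $\triangle$, and the equation $Ric(\omega)=\gamma\omega+(1-\gamma)\{D_{P_\gamma}\}$ reduces to the real Monge-Amp\`ere equation
\[
\det(\phi_{ij})(x)=C\,\exp\!\bigl(-\gamma\phi(x)+(1-\gamma)\langle P_\gamma,x\rangle\bigr)
\]
on $\mathbb{R}^n$, with asymptotic behavior of $\phi$ at infinity prescribed by the normal fan of $\triangle$.

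Concretely I would run a continuity method in the parameter $t\in[0,\gamma]$,
\[
Ric(\omega_t)=t\omega_t+(1-t)\{D_{P_t}\},\qquad P_t=-\tfrac{t}{1-t}P_c,
\]
starting from the canonical toric representative at $t=0$ (where the right-hand side is just $\{D_{0}\}$). Openness is a standard application of the implicit function theorem, reduced to inverting the linearized operator $-\Delta_{\omega_t}-t$ on $T$-invariant functions of zero mean; since no nontrivial $T$-invariant holomorphic vector field appears in this reduction, the linearization is an isomorphism for $t\in(0,\gamma]$.

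The main obstacle, as in \cite{WZ04}, is the uniform $C^0$ estimate, and this is precisely where the threshold $R(X)$ enters. After the customary normalization $\phi(0)=\min\phi=0$, bounding $\mathrm{osc}(\phi)$ reduces to bounding the location of the minimum of $\phi$ in $\mathbb{R}^n$. Integrating $\nabla\phi$ against the Monge-Amp\`ere measure and passing to the Legendre transform yields a moment identity of the form
\[
\int_\triangle u\,d\mu_t=P_t\cdot\mathrm{Vol}(\triangle)+\text{l.o.t.},
\]
which is exactly the vanishing of the log-Futaki invariant for $(X_\triangle,(1-t)D_{P_t})$ along the toric $(\mathbb{C}^*)^n$-action; this is what makes the choice $P_t=-\tfrac{t}{1-t}P_c$ natural. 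A Moser-iteration argument, as in Wang-Zhu but applied to the twisted weight $\langle P_t,x\rangle$, then converts this vanishing into a uniform oscillation bound on $\phi$ so long as $P_t$ stays strictly in the interior of $\triangle$; by \eqref{RX} this is precisely the condition $t<R(X)$. For $t\in[R(X),1]$ the point $P_t$ lies on or outside $\overline{\triangle}$, the associated $\mathbb{R}$-divisor $D_{P_t}$ has some negative coefficients, and the real Monge-Amp\`ere equation still admits a convex solution on $\mathbb{R}^n$, but this solution no longer represents an honest conical K\"{a}hler-Einstein metric.

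Once the $C^0$ bound is in hand, the higher-order estimates follow from Yau's classical $C^2$ estimate together with the conical Evans-Krylov theory reviewed in Section \ref{existence}, giving smoothness off $D_{P_t}$ and H\"{o}lder wedge regularity along $D_{P_t}$; this closes the continuity path. Setting $t=\gamma$ produces the desired $T$-invariant solution, and for $\gamma\in[0,R(X)]$ rational the point $P_\gamma$ lies in $\overline{\triangle}$ with rational barycentric coordinates, so $(1-\gamma)D_{P_\gamma}$ is a genuine effective $\mathbb{Q}$-divisor and $\omega_\gamma$ is a bona fide conical K\"{a}hler-Einstein metric. The endpoint $\gamma=R(X)$ is obtained by passing to the limit $t\nearrow R(X)$ using the uniform convexity estimates on $\phi$ inherited from the polytope geometry, yielding the distinguished conical K\"{a}hler-Einstein metric on $(X_\triangle,(1-R(X))D_Q)$.
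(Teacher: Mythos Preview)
The paper does not give its own proof of this statement. The theorem is explicitly attributed to Song--Wang \cite{SW}: the authors write that ``Song-Wang proved the following theorem by adapting the method in Wang-Zhu's work (\cite{WZ04}) on the existence of K\"{a}hler-Ricci solitons on toric Fano manifolds,'' state the result, illustrate it with Example \ref{P(1,1,4)}, and then immediately use it as a black box to prove Proposition \ref{Doconj}. So there is nothing in the paper to compare your argument against beyond that one-line description.

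Your proposal is consistent with that description: you are indeed sketching an adaptation of the Wang--Zhu real Monge--Amp\`ere method on $\mathbb{R}^n$, with the choice $P_t=-\tfrac{t}{1-t}P_c$ dictated by the vanishing of the toric log-Futaki invariant. A couple of points in your sketch would need more care in an actual proof. First, your continuity path simultaneously deforms the parameter $t$ and the divisor $D_{P_t}$; for the implicit function theorem step you should check that this does not cause trouble (the twist term $(1-t)\langle P_t,x\rangle$ depends smoothly on $t$, so on the open orbit it is fine, but the boundary behavior of the potentials changes with $P_t$). Second, you assert that for $t\in[R(X),1]$ ``the real Monge--Amp\`ere equation still admits a convex solution on $\mathbb{R}^n$'' but give no argument; your $C^0$ estimate explicitly breaks down once $P_t$ leaves the interior of $\triangle$, so the existence claim for $\gamma\in(R(X),1]$ (which the theorem does assert, at least as a weak toric solution) is not covered by your outline. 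Finally, the endpoint $\gamma=R(X)$ requires a genuine limiting argument since $Q\in\partial\triangle$ and some coefficients of $D_Q$ vanish; ``uniform convexity estimates inherited from the polytope geometry'' is too vague here.
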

\begin{exmp}\label{P(1,1,4)}
The above theorem can be generalized to toric orbifold case. (See
\cite{YaZh} for related work where the K\"{a}hler-Ricci soliton on
toric Fano orbifolds was considered) We will illustrate this by
showing the conical K\"{a}hler-Einstein on $X=\mathbb{P}(1,1,4)$
considered in section \ref{P2case} in the toric language. The
polytope determining $(X,-K_X)$ is the following rational polytope
$\triangle$.
\begin{figure}[h]
\begin{center}
\vspace*{-25mm}

\setlength{\unitlength}{0.8mm}
\begin{picture}(50,50)
\put(-18, 0){\line(1,0){56}} \put(0, -18){\line(0,1){30}}

{\thicklines\put(-8,-8){\line(1,0){48}}} \put(-8,-8){\line(0,1){12}}
\put(-8,4){\line(4,-1){48}}
\multiput(-16,-16)(8,0){9}{\multiput(0,0)(0,8){4}{\circle*{1}}}
\put(8,-4){\circle*{1.5}} \put(10,-6){$P_c$}
\put(8,-4){\line(-2,1){16}} \put(-14,4){$Q$}
\put(-8,4){\circle*{1.5}} \put(0,0){\circle*{1.5}} \put(-4,-4){$O$}
\put(-12,-12){$A$} \put(41,-12){$B$}
\end{picture}
\end{center}
\vspace*{10mm}
\end{figure}
 Note that $-2K_X$ is Cartier because $2\triangle$ is
a lattice polytope. $Q=(-1,1/2)$, $P_c=(1,-1/2)$. So
$R(X)=|\overline{OQ}|/|\overline{P_cQ}|=1/2$. $D_Q=3/2 D$, where the
divisor $D$ corresponds to the facet $\overline{AB}$. The conical
K\"{a}hler-Einstein satisfies the equation:
\[
Ric(\omega)=\frac{1}{2}\omega+(1-\frac{1}{2})\cdot\frac{3}{2}D.
\]
So the cone angle along $D$ is $2\pi\beta$ with $\beta=1-3/4=1/4$.
\end{exmp}
Now we show that Song-Wang's nice existence result implies Theorem
\ref{Doconj}.

\begin{proof}[Proof of Theorem \ref{Doconj}]
Let $\mathcal{F}_Q$ be the minimal face of $\triangle$ containing
$Q$. For any $\lambda\in\mathbb{Z}$ such that $\lambda Q$ is an
integral point, define a set of rational points by
\[
\mathcal{R}(Q,\lambda)=\{Q\}\bigcup \left(
(\overline{\triangle}\;\backslash\;\overline{\mathcal{F}_Q})\bigcap
\frac{1}{\lambda}\mathbb{Z}^n\right).
\]

Then we define the linear system $\mathscr{L}_{\lambda}$ to be the
linear subspace spanned by the holomorphic sections corresponding to
rational points in $\mathcal{R}(Q,\lambda)$:
\[
\mathscr{L}_{\lambda}=Span_{\mathbb{C}}\left\{s_P^{\lambda};
P\in\mathcal{R}(Q,\lambda)\right\} .
\]
Choose any general element $D\in\mathscr{L}_{\lambda}$, the
coefficient of the term $s_Q^{\lambda}$ is nonzero. Because $Q$ is a
vertex of the convex hull of $\mathcal{R}(Q,\lambda)$, there exists
a $\mathbb{C}^*$ action denoted by $\sigma(t)$ contained in the
torus action, such that
\[
\lim_{t\rightarrow 0} \sigma(t)^* D=\lambda D_Q.
\]
In this way, we construct a degeneration
$(\mathcal{X},\frac{1-R(X)}{\lambda}\mathcal{D},K_X^{-1})$ with
$\mathcal{X}=X\times\mathbb{C}$ and $\mathcal{D}_t=\sigma_t^* D$. By
Song-Wang's theorem in \cite{SW}, the central fibre
$(\mX_0,\frac{1-R(X)}{\lambda}(\lambda D_Q))=(X,(1-R(X))D_Q)$ has
conical K\"{a}hler-Einstein metric. So we use Theorem
\ref{specialdeg} to get the lower bound of
$\mathcal{M}_{X,\frac{1-R(X)}{\lambda} D}$. (As explained in the
proof of Theorem \ref{specialdeg}, in the present case, since
$\mathcal{X}=X\times\mathbb{C}$, we just need to use the trivial
geodesic and apply Berndtsson's result in \cite{Bern3} to get the
subharmonicity and complete the proof) On the other hand, because
$\lambda\ge 1$, we can use the interpolation result in Proposition
\ref{interpolate} to see that
$\mathcal{M}_{X,\frac{1-\gamma}{\lambda }D}$ is proper for any
$\gamma\in (0,R(X))$ (actually for any $\gamma\in (1-\lambda,R(X))$.
So there exists a conical K\"{a}ler-Einstein metric on
$(X,\frac{1-\gamma}{\lambda} D)$ for any $\gamma\in (0,R(X))$. There
can not be conical K\"{a}hler-Einstein metric for $\gamma\in
(R(X),1)$ is easy to get because the twisted energy is bounded from
below by the log-Ding-energy. For details, see \cite{SW} and also
\cite{Li12}. The non-existence for $\gamma=R(X)$ is implied by
Donaldson's openness theorem in \cite{Do11} (see Theorem
\ref{DoIFT}), since otherwise there exists conical
K\"{a}hler-Einstein for some $\gamma\in (R(X),1)$.
\end{proof}
\begin{rem}\label{smooth}
The smoothness of the generic member seems to be more subtle than we
first thought. We will discuss this a little bit using standard
toric geometry. For this, we first denote $\{H_i\}_{i=1}^N$ to be
the set of codimensional 1 face (i.e. facet) of $\triangle$. Define
\[
\mathcal{B}(\mathcal{F}_Q)=\left(\bigcup_{\mathcal{F}_Q\not\subset
H_i}H_i\right)\bigcap \mathcal{F}_Q.
\]
Now it's easy to see that the base locus of $\mathscr{L}_{\lambda}$
is equal to
\[
\mathbb{B}_Q=\bigcup_{\sigma\subset\mathcal{B}(\mathcal{F}_Q)}X_\sigma.
\]
Here for any face of $\triangle$ we denote $X_\sigma$ to be the
toric subvariety determined by $\sigma$. Indeed, this follows from
the following fact: if $P$ is any lattice point and $\mathcal{F}_P$
is the minimal face containing $P$. Define
\[
\textbf{Star}(\mathcal{F}_P)=\bigcup_{\mathcal{F}_P\subset
\sigma}\sigma.
\]
where $\sigma$ ranges over all the (closed) faces of $\triangle$.
(including $\triangle$ itself). Then the zero set of the
corresponding holomorphic section $s_P$ is the toric divisor
corresponding to the set
\[
\overline{\triangle}\;\backslash\left(\textbf{Star}(\mathcal{F}_P)\right)^{\circ}=\bigcup_{\mathcal{F}_P\not\subset
H_i}H_i\subset\partial\triangle.
\]
By Bertini's Theorem (\cite{GH}), the generic element
$D\in\mathscr{L}_{\lambda}$ is smooth away from $\mathbb{B}_Q$. To
analyze the situation near $\mathbb{B}_Q$, fix any vertex $P$ of
$\mathcal{F}$. We can choose integral affine coordinates
$\{x_i\}_{i=1}^n$ such that
\[
\mathcal{F}_Q=\bigcap_{i=m+1}^n\{x_i=0\}.
\]
We can also write $Q=\lambda(d_1,\dots,d_m,0,\dots,0)$ with $\lambda
d_i$ being positive integers. On the other hand, by standard toric
geometry, the normal fan of $\triangle$ at $P$ determines an affine
chart $\mathcal{U}_P$ on $X$. There exists complex coordinate
$\{z_i\}_{i=1}^n$ such that
$X_{\mathcal{F}_Q}\cap\mathcal{U}_P=\{z_{m+1}=0,\dots,z_n=0\}$.
Locally, the generic member $D$ in $\mathscr{L}_\lambda$ is given by
the equation of the form:
\[
\prod_{i=1}^m a_i z_i^{\lambda d_i}+\sum_{j=m+1}^n b_j z_j(1+
f_j(z_1,\dots,z_m))+\sum_{j,k=m+1}^n c_{jk} z_j z_k
g(z_1,\dots,z_n)).
\]
where $a_i,b_j\neq 0$. If we delete the lattice points corresponding
to terms $z_j f_j(z_1,\dots,z_m)$, then $C$ would be smooth near
$X_{\mathcal{F}_Q}\cap\mathcal{U}_P$. Since $\mathbb{B}_Q\subset
X_{\mathcal{F}_Q}$ and $\mathcal{U}_P$ covers $X_{\mathcal{F}_Q}$ as
$P$ ranges over all the vertices of $\mathcal{F}_Q$ we conclude that
$D$ is smooth at points in $\mathbb{B}_Q$ as well. This certainly
puts a lot of restriction on the sub-linear system. However, even if
we don't delete these lattice points, the generic member in
$\mathscr{L}_\lambda$ could be smooth. For example, this is the case
when $\mathcal{F}_Q$ has dimension $\le 1$ in which case the base
locus consists of isolated points. In particular, this is true when
the toric variety has dimension $\le 2$.
\end{rem}
\begin{rem}
The degeneration behavior in the toric case is closely related to
the study of degenerations in \cite{Li11b} where the current
$D_{P_\gamma}$ is replaced by $(1-\gamma)\omega$ with $\omega$ being
a smooth reference metric.
\end{rem}
\begin{exmp}
Let $X=Bl_p\mathbb{P}^2$. Let $[Z_0,Z_1,Z_2]$ be homogeneous
coordinate on $\mathbb{P}^2$. We can assume
$p=(1,0,0)\in\mathbb{C}^2=\{Z_0\neq 0\}\subset\mathbb{P}^2$. Let
$\pi: X\rightarrow\mathbb{P}^2$ be the blow down of exceptional
divisor $E$. For simplicity we use $H$ to denote both the hyperplane
class on $\mathbb{P}^2$ and its pull-back on $X$. Then $-K_X=3H-E$
and $-2K_X=6H-2E$. So divisors in $|-2K_X|$ correspond to the sextic
curves on $\mathbb{P}^2$ whose vanishing order at $0$ is at least 2.
More precisely, if $C$ is such a curve representing $6H$, then the
corresponding divisor $D(C)$ in $|-2K_X|=|6H-2E|$ is the strict
transform of $C$. In toric language $X$ is determined by the
following polytope:
\begin{figure}[h]
\begin{center}
\begin{picture}(300,50)
 \put(50,0){
\setlength{\unitlength}{0.8mm}
\begin{picture}(50,50)
\put(-18, 0){\line(1,0){40}} \put(0, -18){\line(0,1){40}}
\put(-12,0){\line(0,1){24}} \put(-12,0){\line(1,-1){12}}
\put(-12,24){\line(1,-1){36}} \put(0,-12){\line(1,0){24}}
\put(-6,-6){\line(1,1){7}} \put(0,0){\circle*{1}}
\put(1,1){\circle*{1}} \put(-6,-6){\circle*{1}} \put(-10,-10){$Q$}
\put(-3,1){$O$} \put(2,2){$P_c$} \put(-12,0){\circle*{1}}
\put(-17,2){$B_1$} \put(0,-16){$B_2$} \put(0,-12){\circle*{1}}
\end{picture}}
\put(200,0){ \setlength{\unitlength}{0.8mm}
\begin{picture}(50,50)
\thicklines{ \put(-18, 0){\line(1,0){40}} \put(0,
-18){\line(0,1){40}} \put(-12,0){\line(0,1){24}}
\put(-12,0){\line(1,-1){12}} \put(-12,24){\line(1,-1){36}}
\put(0,-12){\line(1,0){24}} } \put(0,0){\circle*{2}}
\put(-6,-6){\circle*{2}} \put(-10,-10){$Q$} \put(-12,0){\circle{2}}
\put(0,-12){\circle{2}}

\multiput(-12,24)(6,-6){4}{\multiput(0,0)(0,-6){4}{\circle*{2}}}
\multiput(24,-12)(-6,6){4}{\multiput(0,0)(-6,0){4}{\circle*{2}}}

{\thinlines \put(-6,-6){\line(-1,2){6}} \put(-6,-6){\line(2,-1){12}}
}



\end{picture}
}

\end{picture}
\end{center}
\vspace{10mm}
\end{figure}

The invariant $R(X)=6/7$ was calculated in \cite{Sze} and
\cite{Li11a}. Since the point $Q=(-1/2,-1/2)$, it's easy to see that
$D_Q=1/2(F_1+F_2)+2 D_\infty$. By Song-Wang \cite{SW}, there is a
conical K\"{a}hler-Einstein metric on $(X,(1-R(X)) D_Q)=(X,1/7
D_Q)$.

Now $\lambda Q$ is integral when $\lambda$ is even. The generic
divisors in the linear system $\mathscr{L}_2$ correspond to the
sextic curves given by degree $6$ homogeneous polynomial of the form
\[
C: Z_0^4 Z_1Z_2+\sum_{i=0}^3\sum_{j=0}^{6-i}a_{ij}Z_0^i
Z_1^jZ_2^{6-i-j}=0.
\]
Let $\sigma_t$ be the $\mathbb{C}^*$-action given by
\[
(Z_0,Z_1,Z_2)\rightarrow (Z_0,t^{-1}Z_1,t^{-1}Z_2).
\]
Then $\lim_{t\rightarrow 0}\sigma_t\cdot C=\{Z_0^4 Z_1 Z_2=0\}$.
Equivalently, by taking strict transform, we get $\lim_{t\rightarrow
0}\sigma_t\cdot D(C)= 2D_Q$. The same argument applies to
$\lambda=2m$ being even, where the divisors in $\mathscr{L}_{2m}$
correspond to the degree $6m$ curves of the form:
\[
Z_0^{4m}Z_1^mZ_2^m+\sum_{i=0}^{4m-1}\sum_{j=0}^{6m-i}a_{ij}Z_0^{i}Z_1^jZ_2^{6m-i-j}=0.
\]
Note that the strict transform of such generic curves are smooth at
the base locus $\mathbb{B}_Q=B_1\cup B_2$ and so smooth everywhere.
\end{exmp}
\begin{rem}
From the above discussion, we see that when $\lambda$ is even, the
divisor degenerates while the ambient space stays the same. The case
when $\lambda=1$, or more generally when $\lambda$ is odd, is still
open. From the point of view in our strategy, the problem is that
the right degeneration to conical K\"{a}hler-Einstein pair is still
missing. In this case, we expect the degeneration also happens to
the ambient space, similar with the degree $2$ plane curve case
studied in section \ref{P2case}.
\end{rem}

\vspace*{7mm}

\noindent Chi Li\\ Department of Mathematics, Princeton University,
Princeton, NJ 08544, USA. \\E-mail: chil@math.princeton.edu

\vspace*{5mm}

\noindent Song Sun\\ Department of Mathematics, Imperial College,
London SW7 2AZ, U.K. \\E-mail: s.sun@imperial.ac.uk

\end{document}